\definecolor{dmagenta}{rgb}{.4,.1,.4}
\definecolor{dblue}{rgb}{.0,.0,.5}
\definecolor{mblue}{rgb}{.0,.4,.7}
\definecolor{ddblue}{rgb}{.0,.0,.4}
\definecolor{dred}{rgb}{.9,.0,.0}
\definecolor{dgreen}{rgb}{.0,.5,.0}
\definecolor{Eeom}{rgb}{.0,.0,.5}
\definecolor{dbrown}{rgb}{.6,.0,.0}
\numberwithin{equation}{section}
\numberwithin{figure}{section}
\theoremstyle{plain}
\newtheorem{thm}{\protect\theoremname}[section]
  \theoremstyle{definition}
  \newtheorem{defn}[thm]{\protect\definitionname}
  \theoremstyle{plain}
  \newtheorem{lem}[thm]{\protect\lemmaname}
  \theoremstyle{remark}
  \newtheorem{rem}[thm]{\protect\remarkname}
  \theoremstyle{plain}
  \newtheorem{cor}[thm]{\protect\corollaryname}
  \newtheorem{example}{Example}[section]
\DeclareMathOperator*{\esssup}{ess\,sup}
\DeclareMathOperator*{\esssinf}{ess\,inf}
\DeclareMathOperator*{\esssliminf}{ess\,lim\,inf}
\def\Xint#1{\mathchoice
{\XXint\displaystyle\textstyle{#1}}%
{\XXint\textstyle\scriptstyle{#1}}%
{\XXint\scriptstyle\scriptscriptstyle{#1}}%
{\XXint\scriptscriptstyle\scriptscriptstyle{#1}}%
\!\int}
\def\XXint#1#2#3{{\setbox0=\hbox{$#1{#2#3}{\int}$ }
\vcenter{\hbox{$#2#3$ }}\kern-.6\wd0}}
\def\dashint{\Xint-}
\newcommand{\Addresses}{{
\bigskip
\footnotesize
\textsc{Abhrojyoti Sen,  Goethe-Universit\"{a}t Frankfurt, Institut f\"{u}r Mathematik, Robert-Mayer-Str. 10, D-60629 Frankfurt, Germany}\par\nopagebreak
\textit{E-mail address}: \href{mailto:sen@math.uni-frankfurt.de}{sen@math.uni-frankfurt.de}
  
\textsc{Jarkko Siltakoski, Department of Mathematics and Statistics, University of Helsinki, P.O. Box 68, FI-00014 Helsinki, Finland}\par\nopagebreak
\textit{E-mail address}:
\href{mailto:jarkko.siltakoski@helsinki.fi}{jarkko.siltakoski@helsinki.fi}}}
  \providecommand{\definitionname}{Definition}
  \providecommand{\lemmaname}{Lemma}
  \providecommand{\remarkname}{Remark}
\providecommand{\theoremname}{Theorem}
\providecommand{\corollaryname}{Corollary}
\begin{document}
\global\long\def\d{\,d}
\global\long\def\tr{\mathrm{tr}}
\global\long\def\supp{\operatorname{spt}}
\global\long\def\div{\operatorname{div}}
\global\long\def\osc{\operatorname{osc}}
\global\long\def\essup{\esssup}
\global\long\def\aint{\dashint}
\global\long\def\essinf{\esssinf}
\global\long\def\essliminf{\esssliminf}

\excludeversion{old}

\excludeversion{note}

\excludeversion{note2}


\title[Lipschitz regularity for parabolic double phase]{Lipschitz regularity for parabolic double phase equations with gradient nonlinearity}

\author{Abhrojyoti Sen and Jarkko Siltakoski}
\begin{abstract}
We establish the local Lipschitz regularity in space for the viscosity solutions to the parabolic double phase equation of the form 
\[
\smash{\partial_{t}u-\div \left(|Du|^{p-2}D u+a(z)|D u|^{q-2}D u\right)=f(z, Du)}
\]
by employing the Ishii-Lions method. In addition, we obtain H\"{o}lder estimate in time which turns out to be sharp in the degenerate regime. Here, $1< p\leq  q<\infty,$ and the coefficient $a\geq 0$ is assumed to be bounded, locally Lipschitz continuous in space, and continuous in time. Furthermore, the non-homogeneity $f$ is assumed to be continuous on $\Omega\times \mathbb{R}\times \mathbb{R}^N,$ and to satisfy a suitable gradient growth condition. We also establish the equivalence between bounded viscosity solutions and weak solutions, under appropriate additional regularity assumption on the coefficient $a.$
\end{abstract}
\keywords{parabolic double phase equation, gradient nonlinearity, Lipschitz regularity, weak solution, viscosity solution, Ishii-Lions method}
\subjclass[2020]{35B65, 35B51, 35D30, 35D40, 35K55.}
\maketitle
\tableofcontents
\section{Introduction and main results}
\subsection{Overview} In this paper, we consider the parabolic double phase problem with gradient nonlinearity which reads,
\begin{equation}\label{eq:p-para f}
\partial_{t}u-\div \left(|Du|^{p-2}D u+a(z)|D u|^{q-2}D u\right)=f(z, Du)
\end{equation}
and address the following questions:
\begin{itemize}
\item[\textit{Q 1.}] Can a notion of viscosity solution be formulated for \eqref{eq:p-para f}?

\item[\textit{Q 2.}] Can a Lipschitz regularity estimate be established for viscosity solutions to \eqref{eq:p-para f}?

\item[\textit{Q 3.}] Can the notions of viscosity and weak solutions to \eqref{eq:p-para f} be shown to be equivalent for $1<p\leq q<\infty$ and continuous $f$ with an appropriate growth condition?
\end{itemize}
\noindent We begin by reviewing the existing literature and providing motivation for addressing these three questions in the context of the parabolic double phase problem \eqref{eq:p-para f}.

A classical solution to a partial differential equation is a smooth function that satisfies the equation pointwise. However, many equations arising in applications lack such solutions, necessitating broader notions. Two widely studied generalizations are \textit{ weak solutions}, defined via integration by parts, and \textit{viscosity solutions}, based on generalized pointwise derivatives. When both notions  of solutions can be meaningfully defined, establishing their equivalence is crucial. This issue has been extensively studied, beginning with \cite{Ishii95}. The equivalence of viscosity and weak solutions for the $p$-Laplace equation and its parabolic version was first proven in \cite{equivalence_plaplace},
with an alternative proof in the elliptic case given in \cite{newequivalence}.
More recent contributions include equivalence results for the normalized 
$p$-Poisson equation \cite{OptimalC1}, the non-homogeneous 
$p$-Laplace equation \cite{chilepaper}, and the normalized $p(x)$-Laplace equation \cite{siltakoski18}. In the parabolic setting, equivalence for radial solutions has been addressed in \cite{ParvVaz}.  We also mention that an unpublished
version of \cite{lindqvist12reg} applies \cite{newequivalence} to
sketch the equivalence of solutions to (\ref{eq:p-para f}) when $a(z)\equiv 0$ and $f\equiv 0$ in the
with $p\geq2$. lastly, the equivalence of weak and viscosity solution for non homogeneous elliptic double phase 
\begin{align*}
    -\div\left(|Du|^{p-2}Du+a(x)|Du|^{q-2}Du\right)=f(x, u, Du),
\end{align*}
where $1<p\leq q\leq p+p/n$ and $f$ is continuous with certain growth condition has been established in \cite{FRZ24}, also see \cite{FZ22}.

The comparison principle plays a central role in proving such equivalences. For quasilinear parabolic equations, various comparison results are available. For instance,  \cite{junning90} establishes a comparison principle for
for $\smash{\partial_{t}u-\operatorname{div}\left(|Du|^{p-2}Du\right)+f(u,x,t)=0}$ when $p>2$ and
$\smash{f}$ is a continuous function such that $\smash{\left|f(u,x,t)\right|\leq g(u)}$
for some $\smash{g\in C^{1}}$. The homogeneous case for the $p$ parabolic equation is also considered in \cite{KL96}
and the general equation $\smash{\partial_{t}u-\div\mathcal{A}(x,t,Du)=0}$ treated
in \cite{korteKuusiParv10}. Equations with  explicit gradient dependence, such as
$$\smash{\partial_{t}u-\operatorname{div}\left(|Du|^{p-2}Du\right)-\left|Du\right|^{\beta}=0}$$
have been studied in \cite{Attouchi12} for $\smash{p>2}$ and $\smash{\beta>p-1}$. In the works
\cite{bobkovTakac14,bobkovTakac18}, both positive results and counter
examples are provided for the comparison, strong comparison and maximum
principles for the equation $$\smash{\partial_{t}u-\operatorname{div}\left(|Du|^{p-2}Du\right)-\lambda\left|u\right|^{p-2}u-f(x,t)=0}.$$

At this point, we note that no prior work has addressed comparison principles or investigated the equivalence between weak and viscosity solutions for parabolic double phase problems of the form \eqref{eq:p-para f}. This gap in the literature motivates our consideration of questions \textit{Q1} and \textit{Q 3}.

Now let us turn our attention to the regularity theory of elliptic and parabolic double phase equations. This classical model, originally introduced by Zhikov for the elliptic case \cite{ Zhi93, Zhi95, Zhi97}, has been widely used in the study of double phase problems, which arise in the modelling of strongly anisotropic materials. These equations have been widely studied and continue to attract significant interest. In the elliptic case, a broad spectrum of regularity results-such as $C^{\alpha}$-estimate, $ C^{1, \alpha}$-estimate, Harnack inequality, have been established in pioneering works \cite{BO20, CM15a, CM15b, BCM15}, under the assumption that the coefficient $a(\cdot)$ is H\"{o}lder continuous. Moreover, Lipschitz continuity for viscosity solutions has been obtained in \cite{FRZ24} when $0<a(x)\in C^1(\mathbb{R})$ .

When $a(\cdot)\equiv 0$ and $f\equiv 0,$ the equation \eqref{eq:p-para f} reduces to the classical parabolic $p$-Laplace equation for which $C^{\alpha}, C^{1, \alpha}$- estimates  are now classical results, thanks to the seminal works \cite{DF84, DF85a, DF85b, dibenedetto93}. On the other hand, when $a(\cdot)\not\equiv 0$, regularity results such as gradient higher integrability \cite{KKM23, KS24}, Calder\'{o}n-Zygmund estimates \cite{Kim24, Kim25}, and H\"{o}lder continuity \cite{KMS25, QL25} have been obtained only recently. We note that, in analogy with the elliptic setting, the coefficient $a(\cdot)$ is assumed to be H\"{o}lder continuous in all these works.

In this context, we are interested in establishing regularity results for equation \eqref{eq:p-para f} that go beyond H\"{o}lder continuity, particularly in the presence of lower-order terms. This motivates our investigation of question \textit{Q 2}.

\subsection{Main results} In this section, we state our main results and compare them with the existing literature. Our first result is local Lipschitz regularity for \eqref{eq:p-para f} under the natural range of exponents.

\begin{thm} \label{Lip thm}
Suppose that $1 < p \leq q \leq p + 1$. Let the coefficient $a\geq0$ be continuous in time and Lipschitz continuous in space. Furthermore, assume that there is $C_a \geq 0$ such that
\begin{equation}\label{eq:a monocnd}
    a(x, t)\leq C_a a(x, s) \text{ for all } (x, t), (x, s)\in B_1\times (-1, 0) \text{ with } t \leq s.
\end{equation}
Let u be a continuous and bounded weak solution to \eqref{eq:p-para f} in $B_1 \times (-1, 0)$. If $q = p + 1$, suppose additionally that $u$ is uniformly continuous in space variable.
Then we have the following local estimate:
\[
    |u(x,t) - u(y,s)| \leq C\left(|x-y|+|t-s|^{\alpha}\right) \quad\text{for all}\quad (x, t), (y, s) \in B_{1/2}\times(-1/2, 0),
\]
where
\[
    \alpha=\begin{cases}
    \frac{p}{p+q} & \text{if }1<p<2,\\
    1/2 & \text{if }p\geq2.
    \end{cases}
\]If $q < p + 1$, then the constant $C$ depends only on $N$, $p$, $q$, $\Vert a \Vert_{L^\infty}$, $\Vert Da \Vert_{L^\infty}$, the constants appearing in the growth condition \eqref{eq:gcnd}, and $\osc_{B_1 \times (-1, 0)} u$. If $q = p + 1$, then the constant $C$ also depends on the optimal modulus of continuity of $u$ in space in $B_1 \times (-1,0)$.
\end{thm}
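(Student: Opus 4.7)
The plan is to follow the Ishii-Lions doubling-variable strategy adapted to the double phase operator and the gradient nonlinearity, proving the space-Lipschitz bound first and then deriving the time H\"older estimate via explicit barriers. Fix $(x_0,t_0)$ in the inner cylinder and consider
\[
\Phi(x,y,t,s) := u(x,t) - u(y,s) - L\omega(|x-y|) - \eta(|x-x_0|^2 + |y-x_0|^2) - L_1 (t-s)^2,
\]
where $\omega(r) = r - \kappa r^{1+\alpha_0}$ is a strictly concave modulus with small parameters $\kappa, \alpha_0 > 0$. The aim is to show that for $L$ and $L_1$ sufficiently large (depending only on the quantities listed in the theorem), $\Phi \le 0$ everywhere, which gives the Lipschitz-in-space estimate. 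The strict concavity of $\omega$ is the engine of the method: the Hessian of $L\omega(|x-y|)$ has a large negative eigenvalue of order $-L|\bar x-\bar y|^{-1+\alpha_0}$ in the direction $\bar x-\bar y$, while the remaining eigenvalues are bounded by $CL|\bar x-\bar y|^{-1}$.

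Arguing by contradiction, assume $\Phi$ attains a positive interior maximum at $(\bar x,\bar y,\bar t,\bar s)$ with $\bar x \neq \bar y$. The parabolic Jensen-Ishii lemma provides jets $(b_1,\xi_1,X) \in \mathcal{P}^{2,+} u(\bar x,\bar t)$ and $(b_2,\xi_2,Y) \in \mathcal{P}^{2,-} u(\bar y,\bar s)$ satisfying $b_1 - b_2 = 2L_1(\bar t-\bar s)$, with gradients $\xi_i \approx L\omega'(|\bar x-\bar y|)e$ (where $e$ is the unit vector along $\bar x-\bar y$) and a matrix inequality forcing $X-Y$ to carry a negative eigenvalue of order $-L|\bar x-\bar y|^{-1+\alpha_0}$ along $e$. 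Subtracting the sub- and supersolution viscosity inequalities, the time terms cancel through $b_1-b_2$, and the ellipticity of $\mathcal{A}_r(\xi) := |\xi|^{r-2}I + (r-2)|\xi|^{r-4}\xi\otimes\xi$ applied to $X-Y$ produces a negative leading term of size $-cL^{p-1}|\bar x-\bar y|^{-(p-1)(1-\alpha_0)}$ (and a corresponding $a(\bar z)$-weighted $q$-term). This negative term must dominate the positive errors: the gradient growth of $f$ via \eqref{eq:gcnd} (which is controlled on $|\xi|\sim L$), and the $x$-dependence of $a$, which via the mean value theorem and $\|Da\|_{L^\infty} < \infty$ contributes at worst $\|Da\|_{L^\infty} L^{q-1}|\bar x-\bar y|^{-(q-1)(1-\alpha_0)+1}$. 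The extra factor $|\bar x-\bar y|$ furnished by the Lipschitz regularity of $a$, together with the assumption $q-1 \le p$, is precisely what permits this error to be absorbed into the $p$-phase negative term for $L$ large, yielding the contradiction.

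For the time H\"older estimate, the plan is to freeze $(x_0,t_0)$ and construct one-sided comparison barriers of the form $\psi^{\pm}(x,t) = u(x_0,t_0) \pm A|t-t_0|^\alpha \pm B|x-x_0|^2$ on small parabolic cylinders. The monotonicity condition \eqref{eq:a monocnd} is used here to ensure that the $q$-phase term keeps the correct sign when one replaces $a(x,t)$ by $a(x,t_0)$ in the computation. A suitable balance between $A$ and $B$ (essentially $B^{p-1} \sim A|t-t_0|^{\alpha-1}$, plus a $q$-phase term when $a\not\equiv 0$) makes $\psi^+$ a viscosity supersolution and $\psi^-$ a subsolution. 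Using the already-proved space-Lipschitz estimate to control boundary values and applying the comparison principle then yields $|u(x_0,t) - u(x_0,t_0)| \le C|t-t_0|^\alpha$. Optimizing the scaling gives $\alpha = 1/2$ in the degenerate regime $p \ge 2$ (dictated by the $p$-Laplace scaling) and $\alpha = p/(p+q)$ in the singular regime $1<p<2$ (from balancing the two phases at small gradients).

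The main obstacle is the space estimate, specifically the balancing of the variable-coefficient $q$-phase error against the Ishii-Lions gain from the $p$-phase. This is where the range $q \le p+1$ and the Lipschitz regularity of $a$ interact in an essential way, and where the endpoint $q=p+1$ is the most delicate: the Lipschitz factor $|\bar x-\bar y|$ no longer leaves any slack for absorption, so one must additionally invoke an a priori modulus of continuity of $u$ in space in order to close the inequality, explaining the extra hypothesis in that limit case.
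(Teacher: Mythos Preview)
Your overall architecture---Ishii--Lions doubling for the space estimate, then comparison with barriers for time---is the paper's, but the space argument has a genuine gap: going directly to the Lipschitz modulus $\omega(r)=r-\kappa r^{1+\alpha_0}$ does not close. At the maximum of $\Phi$ the gradients are $\xi_i=L\omega'e+O(1)$, and this $O(1)$ perturbation, fed through the second-order terms (whose norm is $\|X\|\approx L|\bar x-\bar y|^{-1}$), produces an error of order $L^{q-2}|\bar x-\bar y|^{-1}$; the Ishii--Lions gain from $\omega''$ is only $L^{p-1}|\bar x-\bar y|^{\alpha_0-1}$. Since $\alpha_0>0$, the error blows up faster than the gain as $|\bar x-\bar y|\to 0$, and positivity of $\Phi$ furnishes only an \emph{upper} bound on $|\bar x-\bar y|$---no lower bound---so this regime cannot be excluded. (Your estimate of the $a$-dependence error, with an extra factor $|\bar x-\bar y|^{-(q-1)(1-\alpha_0)+1}$, is also incorrect: the first-order term $|D\varphi|^{q-2}D\varphi\cdot Da$ contributes simply $\|Da\|_{L^\infty}L^{q-1}$ with no such factor.) The paper resolves this by a two-pass argument: first run Ishii--Lions with $\varphi(r)=r^\alpha$ to obtain an interior H\"older estimate (Lemma~\ref{lem:holder for S}), then rerun with $\varphi(r)=r-\kappa r^\beta$ using that H\"older bound as the modulus of $u$ (Lemma~\ref{lem:Lipschitz for S}). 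The H\"older input makes the perturbation error carry an extra factor $|\bar x-\bar y|^{\alpha/2}$ and sharpens the localization at the maximum to $|\bar x-\bar y|\le C L^{-1/(1-\alpha)}$; with $\alpha$ near $1$ all error terms are then absorbed. An a~priori modulus is thus needed throughout, not only at $q=p+1$; the borderline case is special only because there the H\"older constant from the first pass itself depends on the modulus.

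Two further corrections. The monotonicity hypothesis on $a$ is not used to ``keep the $q$-phase sign'' in the barrier; in the paper it enters only via the comparison principle for weak solutions (it is what makes Steklov-average time-approximation work, Lemma~\ref{lem:steklov lemma}), and that comparison principle is precisely what puts continuous weak solutions into the viscosity framework (the class~$\texttt{S}$, Lemma~\ref{cws and S})---a passage your proposal omits entirely. Also, the paper doubles only in space at a common time, and its time barrier has a \emph{linear} time term $\Theta(t-t_0)$ with spatial profile $|x|^{p/(p-1)}$ (or $|x|^2$ when $p\ge2$); the H\"older exponent in time comes from scaling the amplitude $A=(s_0-t_0)^{p/(p+q)}$, not from inserting $|t-t_0|^\alpha$ directly into the barrier.
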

Combining the above theorem with H{\"older} estimates of bounded weak solutions \cite{KMS25} and Lemma 5.2, we have the following immediate corollary in the degenerate regime.
\begin{cor}
Suppose that $2\leq p\leq q\leq p+1$. Assume that $a\geq0$
is continuous in time,  Lipschitz in space, and that \eqref{eq:a monocnd} holds. Suppose
that $u$ is a bounded weak solution to \eqref{eq:p-para f} in $B_{1}\times(-1,0)$
with $f\equiv0$. Then 
\[
\left|u(x,t)-u(y,s)\right|\leq C\left(\left|x-y\right|+|t-s|^{\frac{1}{2}}\right)\quad\text{for all }(x,t),(y,s)\in B_{1/2}\times(-1/2,0).
\]
The constant $C$ depends only on $N,p,q$, $\left\Vert a\right\Vert _{L^{\infty}},\left\Vert Da\right\Vert _{L^{\infty}}$
and $\osc_{B_{1}\times(-1,0)}u$.
\end{cor}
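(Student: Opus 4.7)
The plan is to combine Theorem \ref{Lip thm} with parabolic H\"older regularity for bounded weak solutions to \eqref{eq:p-para f} in the degenerate regime, as provided by \cite{KMS25} and packaged through Lemma 5.2. Since $f\equiv 0$, the growth condition \eqref{eq:gcnd} is trivially satisfied with vanishing constants, so the only hypothesis of the theorem that requires additional justification is the continuity of $u$ together with, in the endpoint case $q=p+1$, uniform continuity in space.

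The first step is to invoke \cite{KMS25} to deduce that every bounded weak solution $u$ of the homogeneous equation with $p\geq 2$ is locally H\"older continuous in $B_{1}\times(-1,0)$, with H\"older exponent and constant quantitatively controlled by $N$, $p$, $q$, $\|a\|_{L^{\infty}}$, $\|Da\|_{L^{\infty}}$ and $\osc_{B_{1}\times(-1,0)} u$. This immediately supplies the continuity hypothesis in Theorem \ref{Lip thm} for the whole range $p\leq q\leq p+1$, and in the endpoint case it also yields an explicit H\"older modulus of continuity in space, depending only on the structural data. Lemma 5.2 serves as the bridge here, translating the intrinsic H\"older estimate of \cite{KMS25} (typically formulated on intrinsic cylinders) into the Euclidean form that the hypotheses of Theorem \ref{Lip thm} require, possibly after a covering and scaling argument.

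With these ingredients in place, Theorem \ref{Lip thm} applies with $\alpha=1/2$, directly producing the desired inequality
\[
\left|u(x,t)-u(y,s)\right|\leq C\left(\left|x-y\right|+|t-s|^{1/2}\right)\quad\text{for all }(x,t),(y,s)\in B_{1/2}\times(-1/2,0).
\]
For $q<p+1$ the constant is already of the asserted form, while for $q=p+1$ the dependence on the optimal spatial modulus of continuity permitted by Theorem \ref{Lip thm} is replaced by the quantitative H\"older modulus coming from the first step, which depends only on the listed quantities; thus the final constant depends only on $N$, $p$, $q$, $\|a\|_{L^{\infty}}$, $\|Da\|_{L^{\infty}}$ and $\osc_{B_{1}\times(-1,0)} u$.

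I do not anticipate a genuine obstacle in this argument, as it is a direct synthesis of existing results. The one point that deserves care is the endpoint case $q=p+1$: one must verify that the abstract modulus of continuity entering Theorem \ref{Lip thm} can be replaced by the explicit H\"older modulus of \cite{KMS25} without introducing uncontrolled constants, which is the role of Lemma 5.2. Once this packaging is done, the corollary is immediate.
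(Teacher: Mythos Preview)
Your proposal is correct and follows essentially the same approach as the paper: invoke the H\"older estimates of \cite{KMS25} to supply the (uniform) continuity hypothesis of Theorem \ref{Lip thm} with a quantitative modulus, then apply Theorem \ref{Lip thm} in the degenerate range to obtain the stated estimate with $\alpha=1/2$. One minor correction: Lemma 5.2 in the paper is not a translation lemma between intrinsic and Euclidean geometries as you speculate, but rather the time H\"older $1/2$ estimate (Lemma \ref{lem:time holder 1/2}) used in the proof of Theorem \ref{Lip thm}; this does not affect the validity of your argument.
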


Several remarks are in order to place our result in the context of the existing literature.

\begin{rem}
(i)\,  In \cite{BDM13}, it is shown that the weak solution \( u \in L^q_{\mathrm{loc}}(0, T; W^{1,q}_{\mathrm{loc}}(\Omega)) \) of \eqref{eq:p-para f} with bounded and space independent coefficients, i.e., $a(x, t)=a(t)$ and $f\equiv 0$
satisfies a quantitative Lipschitz bound for the range \( 2 \leq p \leq q \leq p + \frac{4}{n} \).
A similar result for the singular case was established in \cite{Sin15}. More precisely, it is shown there that for \( a(x, t) = a(t) \) and \( \frac{2n}{n+2} < p \leq 2 \), the solution of \eqref{eq:p-para f} with the same assumptions above is Lipschitz continuous for \( p \leq q \leq p + \frac{4}{n+2} \). However, no precise result is available in those works for the general case of space-time dependent coefficients.

Our result, in particular, applies to viscosity solutions. Furthermore, the range of admissible \( p \) and \( q \) in our setting is optimal and consistent with the H\"older regularity result established in \cite{KMS25}.\\

\noindent (ii)\, In \cite{KMS25}, the authors study the regularity of solutions to \eqref{eq:p-para f} under the assumption that the coefficient $a$ is H\"{o}lder continuous in both space and time. Under this assumption and $2\leq p\leq q\leq p+\alpha,$ they prove that the solution exhibits H\"{o}lder continuity, 
where $\alpha \in (0, 1]$ is the H\"{o}lder exponent of the coefficient $a.$ This range is known to be optimal in general from the regularity theory of elliptic double phase equations.

In contrast, in this paper, we consider a stronger regularity assumption in space but weaker in time on the coefficient: we assume that $a$ is Lipschitz continuous in space and continuous in time. Since Lipschitz continuity corresponds to the case $\alpha=1,$ our result on the Lipschitz continuity of solutions aligns naturally with the framework of \cite{KMS25}, and can be viewed as a sharp extension of their result to the endpoint case where the H\"{o}lder exponent attains its maximum. We point out that the local monotonicity type assumption \eqref{eq:a monocnd} on $a$ is required only to prove comparison principle, in particular to obtain ``time approximation'' (cf. Lemma \ref{lem:steklov lemma}). As of now, to the best of our knowledge, some control over the ``zero set of $a$'' is essential in this context.

Beyond establishing spatial Lipschitz regularity, we also obtain improved regularity in time, particularly for the range $2\leq p\leq q<\infty$ and for $f\equiv 0. $ Additionally, for the singular range $1<p<2,$ we impose the assumption that the solutions to \eqref{eq:p-para f} with $f\equiv 0$ are continuous. This assumption is essential for formulating the notion of viscosity solutions in that setting. We note that proving the existence of continuous solutions to \eqref{eq:p-para f} in the singular regime requires substantially different techniques and falls outside the primary scope of this paper. We therefore leave this aspect for future investigation.

\medskip
\noindent (iii)\, Our approach to proving Theorem \ref{Lip thm} is based on the celebrated Ishii-Lions method, which is a cornerstone technique in the theory of viscosity solutions. Although Theorem \ref{Lip thm} is stated under the assumption of continuous weak solutions, we apply the Ishii-Lions technique to a more general auxiliary class of functions. Specifically, we consider the class $\texttt{S}$ which is introduced in Section \ref{sec:Lipschitz estimates} of the paper. Roughly speaking, this auxiliary class contains functions that are both viscosity subsolutions to a relaxed ``upper'' equation of \eqref{eq:p-para f} and viscosity supersolutions to a relaxed ``lower'' equation of \eqref{eq:p-para f}. The idea is that we do not need continuity from the lower-order terms of the equation to apply the Ishii-Lions method -- we only need to bound them. In Lemma \ref{cws and S}, we show that weak solutions are contained within this class whenever they obey a comparison principle.

\medskip
\noindent (iv)\, Another important direction in the theory of partial differential equations involving different phases is the study of multi-phase equations. In such equations, the primary challenge lies in understanding the interaction between the different phases, each governed by distinct growth behavior. While the regularity theory for elliptic multi-phase problems is now fairly well-developed-see, for instance, \cite{BBO21, DFO19, DF22, FRZ22} the parabolic counterpart has only recently begun to attract attention.
The prototypical elliptic multi-phase equation takes the form:
\begin{align}\label{multi-phase}
-\operatorname*{div}\left(|Du|^{p-2}Du + \sum_{i=1}^m a_i(x)|Du|^{p_i-2}Du\right)
 =0
\end{align}
in $\Omega$, where $1<p\leq p_1\leq \cdots \leq p_m$ and $0 \leq a_i(\cdot)\in C^{0,\alpha_i}(\overline{\Omega})$ with $\alpha_i \in (0,1]$ for all $i\in\{1,\cdots, m\}$
The regularity theory for the parabolic analogue of this equation has only recently begun to emerge, with contributions in \cite{Sen25, KOS25, KO24}. We sketch the Ishii-Lions method to establish Lipschitz regularity for viscosity solutions in this setting, at least for $m=2.$ Naturally, such an approach would require the coefficients $a_i$ to satisfy the assumptions of Theorem \ref{Lip thm}.
\end{rem}

Next we discuss our second result of the paper regarding the relationship of viscosity and weak solutions.

To show that weak supersolutions are viscosity supersolutions, we adapt the argument of \cite{equivalence_plaplace}, which relies on a comparison principle for weak solutions. However, we could not find a correct reference for comparison principle for the equation (\ref{eq:p-para f}). To the best of our knowledge, a comparison principle covering \eqref{eq:p-para f} in the present generality (with space-time dependent modulating coefficient and a gradient nonlinearity on the right hand side) is not available in the literature; in fact, deriving such a result even with $f\equiv 0$ appears to be open. The main difficulty stems from the $(x,t)$-dependence of $a$  and the time part of the equation. Nevertheless, we show that a comparison principle holds under the monotonicity condition \eqref{assumption increasing}. This way, we are able to derive the following theorem.
\begin{thm}\label{thm:weak is visc_intro}
Let $1<p\leq q\leq p+1$. Suppose that $a:\Xi \rightarrow [0,\infty)$ is locally Lipschitz in space, continuous in time. Suppose that for any $(x,t) \in \Xi$, there is $r > 0$ such that $a$ satisfies \eqref{assumption increasing} in $Q_r(x,t)$ with some constant $C_a \geq 0$. Let $u\in C(\Xi)$ be a local weak supersolution to (\ref{eq:p-para f})
in $\Xi$. Then $u$ is a viscosity supersolution to (\ref{eq:p-para f}) in $\Xi$.
\end{thm}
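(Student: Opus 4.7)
The plan is to follow the Juutinen--Lindqvist--Manfredi strategy of \cite{equivalence_plaplace}, reducing the problem to a direct application of the comparison principle for weak sub- and supersolutions of \eqref{eq:p-para f} that holds under \eqref{assumption increasing}. I would argue by contradiction: assume $u$ is a continuous weak supersolution in $\Xi$ that fails to be a viscosity supersolution at some interior point $(x_0, t_0) \in \Xi$. Then, by the definition of viscosity supersolution adopted in the paper, there exists a test function $\varphi \in C^2$ touching $u$ from below at $(x_0, t_0)$ with
$$\partial_t \varphi(x_0, t_0) - \operatorname{div}\bigl(|D\varphi|^{p-2} D\varphi + a(z)|D\varphi|^{q-2} D\varphi\bigr)(x_0, t_0) < f(x_0, t_0, D\varphi(x_0, t_0)).$$
After subtracting a lower-order perturbation such as $\varepsilon(|x-x_0|^4 + (t-t_0)^2)$ from $\varphi$---which does not alter the derivatives at $(x_0,t_0)$---we may assume that $(x_0, t_0)$ is a strict local minimum of $u - \varphi$.

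By continuity of $a$ in time, Lipschitz continuity of $a$ in space, continuity of $f$, and smoothness of $\varphi$, the strict inequality above propagates to a small parabolic cylinder $Q := Q_r(x_0, t_0) \subset \Xi$ on which $a$ satisfies \eqref{assumption increasing}; in $Q$, $\varphi$ is a classical, hence weak, subsolution of \eqref{eq:p-para f}. Strictness of the minimum gives $m := \min_{\partial_p Q}(u - \varphi) > 0$, so the shifted function $\tilde\varphi := \varphi + m/2$ satisfies $\tilde\varphi \leq u$ on $\partial_p Q$ while $\tilde\varphi(x_0, t_0) > u(x_0, t_0)$; since adding a constant alters neither $\partial_t \varphi$ nor $D\varphi$ nor $D^2 \varphi$, the shifted $\tilde\varphi$ remains a weak subsolution in $Q$. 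Invoking the comparison principle for weak sub- and supersolutions of \eqref{eq:p-para f} in $Q$---which, under \eqref{assumption increasing}, is the content of the comparison result to be established in Section~5 of the paper (building on Lemma~\ref{lem:steklov lemma})---I conclude that $\tilde\varphi \leq u$ throughout $Q$, contradicting $\tilde\varphi(x_0, t_0) > u(x_0, t_0)$.

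The main obstacle is, of course, the comparison principle itself: with a space-time dependent modulating coefficient $a$ and a gradient-dependent non-homogeneity $f$, no comparison result for \eqref{eq:p-para f} is directly available in the literature. The monotonicity condition \eqref{assumption increasing} permits a Steklov-type time approximation (Lemma~\ref{lem:steklov lemma}) that justifies using $(\tilde\varphi - u)_+$ as a test function in the weak formulation; the monotonicity of the $p$- and $q$-Laplace principal parts, together with the postulated gradient growth on $f$, then closes an energy estimate forcing $(\tilde\varphi - u)_+ \equiv 0$. A secondary delicate point---the possible vanishing of $D\varphi$ at the touching point, which is genuinely singular when $1 < p < 2$---is absorbed into the formulation of viscosity solution via the class $\texttt{S}$ introduced in Section~\ref{sec:Lipschitz estimates}, so no additional handling is needed beyond faithfully quoting that definition.
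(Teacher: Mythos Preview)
Your overall strategy---contradict the viscosity inequality, propagate strictness to a small cylinder, lift $\varphi$ by a constant, and apply the comparison principle---is exactly the route the paper takes. However, you underestimate two technical points that the paper handles with care, and one of your remarks is incorrect.

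First, your negation of the viscosity supersolution property is not the one dictated by the paper's definition. The definition does \emph{not} ask for an inequality at $(x_0,t_0)$; it asks that, for admissible $\varphi$ with $D\varphi(x,t)\neq0$ when $x\neq x_0$, a certain \emph{limsup} over $(x,t)\to(x_0,t_0)$ with $x\neq x_0$ and $(x,t)\notin\Gamma$ be nonnegative. Its failure therefore yields a radius $r$ and $\varepsilon>0$ such that the pointwise inequality $\partial_t\varphi-\div(|D\varphi|^{p-2}D\varphi+a|D\varphi|^{q-2}D\varphi)-f(\cdot,D\varphi)<-\varepsilon$ holds on $Q_r(x_0,t_0)\setminus(\{x=x_0\}\cup\Gamma)$, not at the touching point itself. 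Your appeal to the class $\texttt{S}$ is misplaced: $\texttt{S}$ is an auxiliary device introduced for the Ishii--Lions argument in Section~\ref{sec:Lipschitz estimates} and plays no role in the definition of viscosity solutions or in this theorem.

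Second, the implication ``classical, hence weak, subsolution'' is not immediate here. The coefficient $a$ is only Lipschitz in space, so $Da$ need not be defined on $\Gamma$, and when $1<p<2$ the operator is singular where $D\varphi=0$ (in particular possibly along $\{x=x_0\}$). The paper resolves this by mollifying $a$ in space to $a_j$, applying the divergence theorem on $Q_r\setminus\{|x-x_0|\le\rho\}$ (where everything is smooth and nonsingular), letting $\rho\to0$ so that the surface integral vanishes, and finally letting $j\to\infty$. Only after this does one obtain that the lifted $\tilde\varphi=\varphi+l$ is a genuine weak subsolution to which Lemma~\ref{thm:comparison principle} applies. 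Your proposal skips this step entirely.
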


To show that viscosity supersolutions are weak supersolutions, we adopt the technique introduced by Julin and Juutinen \cite{newequivalence}. In contrast to \cite{equivalence_plaplace}, we do not rely on the
uniqueness machinery for viscosity solutions. Instead, we approximate a viscosity supersolution $u$ by its inf-convolution $\smash{u_{\varepsilon}}$ (see also \cite{Sil21} for the parabolic $p$-Laplace equation with gradient nonlinearity). The key point is that $u_{\varepsilon}$ is a viscosity supersolution to a \emph{perturbed} equation, rather than to the original equation, in a smaller domain. Due to the $(x,t)$-dependence of the coefficients in \eqref{eq:p-para f}, the inf-convolution introduces an error term $E_{\varepsilon}(x,t)$, which is uniformly bounded and converges to zero almost everywhere as $\varepsilon \to 0$. Together with the pointwise properties of the inf-convolution, this implies that $\smash{u_{\varepsilon}}$ is also a weak supersolution in the smaller domain. We then prove a Caccioppoli-type estimate, which yields the convergence $Du_{\varepsilon} \to Du$ in a suitable Sobolev space. Finally, in Theorem~\ref{final thm: visc is weak}, we show that for $1 < p \leq q < p+1$, every bounded viscosity supersolution is a weak supersolution; in the borderline case $q - p = 1$, the same conclusion holds for (bounded) solutions. To control the error terms generated by inf-convolution, we need the following condition
\begin{equation} \label{eq:visc is weak acnd}
    \begin{cases} 
    \Gamma \subset \mathbb {R}^{N+1} \text{ is compact and } Da \text{ is continuous in } \mathbb R^{N+1} \setminus \Gamma,\\
    {\partial\{(x,t) \in \mathbb R^{N+1}:a(x,t)=0}\}\text{ has Lebesgue measure zero,}
    \end{cases}
\end{equation}
where, as before, $\Gamma$ is the set of points in $\mathbb R^{N+1}$ outside of which $a$ is differentiable in space. For discussion on why we need \eqref{eq:visc is weak acnd}, see Section \ref{sec: visc to weak}.

\begin{thm}\label{final thm: visc is weak_intro}
Let $1<p\leq q \leq p+1$. Suppose that $a:\Xi \rightarrow [0, \infty)$ is locally Lipschitz in space and continuous in time, and that \eqref{eq:visc is weak acnd} holds. Let $u$ be a bounded viscosity supersolution to
(\ref{eq:p-para f}) in $\Xi$. If $q = p +1$, suppose, moreover, that $u$ is locally Lipschitz continuous in space. Then $u$ is a local weak supersolution
to (\ref{eq:p-para f}) in $\Xi$.
\end{thm}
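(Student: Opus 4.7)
The plan is to follow the Julin--Juutinen inf-convolution strategy from \cite{newequivalence}, adapted to the parabolic double phase setting. I would approximate the bounded viscosity supersolution $u$ by its inf-convolution
\[
u_\varepsilon(x,t)=\inf_{(y,s)}\Bigl\{u(y,s)+\tfrac{1}{q_0\,\varepsilon^{q_0-1}}|x-y|^{q_0}+\tfrac{1}{2\varepsilon}(t-s)^2\Bigr\},
\]
with an exponent $q_0$ chosen large enough to dominate the gradient growth in \eqref{eq:gcnd} and to absorb the exponent $q$. In a slightly smaller cylinder $\Xi'\Subset \Xi$ (with $\varepsilon$ small), $u_\varepsilon$ is semi-concave in space, Lipschitz in time, converges monotonically to $u$, and by Alexandrov's theorem is twice differentiable almost everywhere. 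The first task is to show that $u_\varepsilon$ is a viscosity supersolution in $\Xi'$ to the \emph{perturbed} equation
\[
\partial_t u_\varepsilon-\div\!\left(|Du_\varepsilon|^{p-2}Du_\varepsilon+a(x,t)|Du_\varepsilon|^{q-2}Du_\varepsilon\right)\geq f(z,Du_\varepsilon)-E_\varepsilon(x,t),
\]
where the error $E_\varepsilon$ encodes the shift of the $(x,t)$-dependent coefficient $a$ and inhomogeneity $f$ between the touching point $(y,s)$ and the test point $(x,t)$. Uniform boundedness of $E_\varepsilon$ and pointwise a.e.\ convergence $E_\varepsilon\to 0$ should follow from the continuity of $f$ and the continuity of $Da$ off the compact set $\Gamma$ guaranteed by \eqref{eq:visc is weak acnd}, together with the fact that the distance between touching and test point is $O(\varepsilon)$.

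Next I would combine this perturbed viscosity inequality with Alexandrov second-order differentiability to obtain a pointwise a.e.\ differential inequality for $u_\varepsilon$. Testing against smooth nonnegative functions $\varphi$ and transferring derivatives onto $\varphi$ via integration by parts (justified by the semi-concavity of $u_\varepsilon$ in space and its Lipschitz character in time) upgrades the pointwise inequality to the statement that $u_\varepsilon$ is a local \emph{weak} supersolution in $\Xi'$ to the same perturbed equation. This intermediate step is decisive because it opens the door to testing with cutoffs built from $u_\varepsilon$ itself.

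The central analytic estimate is a Caccioppoli-type bound uniform in $\varepsilon$. Testing the weak formulation with an $\eta^{q}(u_\varepsilon-k)_-$-type cutoff, absorbing the term $f(z,Du_\varepsilon)$ via \eqref{eq:gcnd} and Young's inequality, and exploiting the uniform $L^\infty$ bound on $u_\varepsilon$ inherited from $u$, should yield locally uniform estimates on $Du_\varepsilon$ in $L^p$ and on $a^{1/q}Du_\varepsilon$ in $L^q$. In the borderline case $q=p+1$ these natural bounds are insufficient, and here one instead uses the hypothesis that $u$ is locally Lipschitz in space so that $|Du_\varepsilon|\leq \|Du\|_{L^\infty_{\mathrm{loc}}}$ pointwise. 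Weak compactness then extracts a subsequence with $Du_\varepsilon\rightharpoonup Du$ in $L^p_{\mathrm{loc}}$, and a Minty/monotonicity argument upgrades this to strong $L^p_{\mathrm{loc}}$ convergence sufficient to pass to the limit in the nonlinear vector fields $|Du_\varepsilon|^{p-2}Du_\varepsilon$ and $a(x,t)|Du_\varepsilon|^{q-2}Du_\varepsilon$.

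The main obstacle, and the reason for condition \eqref{eq:visc is weak acnd}, is controlling the error term $E_\varepsilon$ and the limit of the $q$-phase flux. Outside the compact singular set $\Gamma$ the continuity of $Da$ lets one expand $a$ to first order around the touching point and bound $E_\varepsilon$ by a quantity of order $\varepsilon \,|Du_\varepsilon|^{q-1}$ that vanishes pointwise; however, near the zero set $\{a=0\}$ the factor $|Du_\varepsilon|^{q-2}$ may be singular when $q<2$ and one cannot naively expand. The hypothesis that $\partial\{a=0\}$ has zero Lebesgue measure forces the problematic set to be negligible, so that $E_\varepsilon(x,t)\to 0$ almost everywhere; combined with the uniform $L^\infty$ bound of $E_\varepsilon$ on $\Xi'$, dominated convergence disposes of the error. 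Passing $\varepsilon\to 0$ in the weak formulation for $u_\varepsilon$ using the strong convergence of $Du_\varepsilon$ and the a.e.\ vanishing of $E_\varepsilon$ yields that $u$ is a local weak supersolution to \eqref{eq:p-para f}, which completes the proof.
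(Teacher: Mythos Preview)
Your outline follows essentially the same inf-convolution strategy as the paper, and the overall architecture is correct: perturbed viscosity inequality for $u_\varepsilon$, Alexandrov $+$ integration by parts to get a weak inequality, Caccioppoli to bound $Du_\varepsilon$ uniformly (with the spatial Lipschitz hypothesis taking over when $q=p+1$), strong gradient convergence, and passage to the limit.

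Two technical points where the paper's execution differs from what you sketch and which you should be aware of:

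\smallskip
\textbf{(i) Time scaling of the inf-convolution.} Your penalty $\tfrac{1}{2\varepsilon}(t-s)^2$ gives $|t-t_\varepsilon|=O(\sqrt{\varepsilon})$, but $a$ is only \emph{continuous} in time, so $|a(x,t)-a(x,t_\varepsilon)|\leq \omega_a(\sqrt{\varepsilon})$ has no prescribed rate. In the estimate of the second-order error (the term coming from $a(x,t)X-a(x_\varepsilon,t_\varepsilon)Y$ via the matrix inequality) this contribution is divided by $\varepsilon^{(q-1)(\ell-1)}$, and without a rate the resulting $E_\varepsilon$ need not be uniformly bounded. The paper avoids this by taking $\delta_\varepsilon=\bigl(\omega_a^{-1}(\varepsilon^{q(\ell-1)})\bigr)^2/(2\osc u)$, which forces $|a(x,t)-a(x,t_\varepsilon)|\leq \varepsilon^{q(\ell-1)}$ exactly, so the quotient is bounded. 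Your claim that ``the distance between touching and test point is $O(\varepsilon)$'' and that $E_\varepsilon$ is ``of order $\varepsilon\,|Du_\varepsilon|^{q-1}$'' is therefore too optimistic; in the paper $E_\varepsilon$ is merely bounded and converges to zero a.e., and the full error is $(1+|Du_\varepsilon|^{q-1})E_\varepsilon$.

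\smallskip
\textbf{(ii) Strong gradient convergence.} The paper does not use a Minty trick. Instead it tests the weak inequality for $u_\varepsilon$ with a truncation $(\delta-w_{jk})\theta$ of the difference $u_j-u_k$ and shows directly that $(Du_\varepsilon)$ is Cauchy in $L^{r_1,r_2}_{\mathrm{loc}}$ for any $r_1<p$, $r_2<q$. This is what gives pointwise a.e.\ convergence of $Du_\varepsilon$, which is what is actually needed to pass the nonhomogeneity $f(z,Du_\varepsilon)$ to the limit (a pure Minty identification of the flux would not handle $f$). The paper also replaces $f$ by a truncated $f_{K,\varepsilon}$ in the perturbed equation and sends $K\to\infty$ at the very end; this bookkeeping is absent from your sketch but is needed to keep the growth of the right-hand side under control during the $\varepsilon\to0$ limit.
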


If $a$ satisfies assumptions of both of the Thoerems \ref{thm:weak is visc_intro} and \ref{final thm: visc is weak_intro}, then we have the following equivalence result for solutions.

\begin{cor}
Assume that $1<p\leq q \leq p+1$. Suppose that $a$ satisfies the same assumptions as in Theorem \ref{thm:weak is visc_intro} and \ref{final thm: visc is weak_intro}. Then $u \in C(\Xi)$ is a local weak solution to \eqref{eq:p-para f} in $\Xi$ if and only if it is a viscosity solution to \eqref{eq:p-para f} in $\Xi.$ 
\end{cor}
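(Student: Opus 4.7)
The plan is to deduce this directly from the two supersolution equivalence theorems, together with a sign-change argument that handles the subsolution halves.

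First I would note that a continuous function $u$ is a (weak or viscosity) solution if and only if it is simultaneously a super- and subsolution. Applying Theorem \ref{thm:weak is visc_intro} to $u$ gives the supersolution half of the ``weak $\Rightarrow$ viscosity'' implication, while Theorem \ref{final thm: visc is weak_intro} supplies the supersolution half of the reverse implication. For the subsolution halves, I would exploit the odd symmetry of the principal part. Setting $v := -u$ and using that $\xi \mapsto |\xi|^{r-2}\xi$ is odd, one checks that $v$ satisfies
\[
\partial_t v - \div\bigl(|Dv|^{p-2}Dv + a(z)|Dv|^{q-2}Dv\bigr) = \tilde f(z, Dv), \qquad \tilde f(z, \xi) := -f(z, -\xi),
\]
where $\tilde f$ inherits continuity and the growth condition \eqref{eq:gcnd} from $f$. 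Since the supersolution theorems use only these structural hypotheses on the right-hand side, reapplying them to $v$ with $f$ replaced by $\tilde f$, and reading ``supersolution for $\tilde f$'' as ``subsolution for $f$'', closes both directions.

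The only point requiring a moment of care is the borderline case $q = p+1$ in the viscosity-to-weak direction, where Theorem \ref{final thm: visc is weak_intro} additionally demands local Lipschitz regularity in space. The plan here is to invoke Theorem \ref{Lip thm}: its hypotheses (Lipschitz-in-space and continuous-in-time $a$, the monotonicity \eqref{eq:a monocnd}, and local boundedness of $u$) are all available, since the first two are hypotheses of the corollary, the monotonicity \eqref{eq:a monocnd} is precisely \eqref{assumption increasing} used in Theorem \ref{thm:weak is visc_intro}, and local boundedness of a continuous $u$ holds on any compactly contained subdomain, which suffices for the local notions of solution. None of this presents a real obstacle; the entire argument is a bookkeeping exercise, which is why the statement is phrased as a corollary.
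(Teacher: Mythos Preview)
Your overall strategy is sound and matches the paper's intent: the corollary is meant to follow directly from Theorems~\ref{thm:weak is visc_intro} and~\ref{final thm: visc is weak_intro} applied to both $u$ and $-u$, and your sign-change argument with $\tilde f(z,\xi) = -f(z,-\xi)$ correctly reduces the subsolution halves to the supersolution theorems.

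However, your handling of the borderline case $q = p+1$ contains a circularity. You invoke Theorem~\ref{Lip thm} to obtain the local spatial Lipschitz continuity that Theorem~\ref{final thm: visc is weak_intro} requires, but Theorem~\ref{Lip thm} is stated (and proved) for \emph{continuous weak solutions}: its proof begins by passing through Lemma~\ref{cws and S} (\texttt{CWS}~$\subset$~\texttt{S}), which uses the weak formulation and the comparison principle. In the viscosity~$\Rightarrow$~weak direction you do not yet know $u$ is a weak solution, so you cannot cite Theorem~\ref{Lip thm} as stated. The correct route---and the one the paper actually points to in the paragraph preceding Theorem~\ref{final thm: visc is weak_intro}---is to use Lemma~4.4 (\texttt{VS}~$\subset$~\texttt{S}) to place the viscosity solution in the class~\texttt{S}, and then apply the Lipschitz estimates for class~\texttt{S} (Lemmas~\ref{lem:holder for S} and~\ref{lem:Lipschitz for S}) directly. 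These lemmas require only membership in~\texttt{S} and, in the borderline case, a local modulus of continuity in space, which a continuous function automatically has on compactly contained subdomains. With this correction your argument goes through.
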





The paper is organized as follows.
\begin{itemize}
    \item In Section \ref{sec: prelims}, we set the notation and define solution spaces with definitions of weak and viscosity solutions.
    \item The main contribution of Section  \ref{sec: weak to visc} is to show that weak solutions are viscosity solutions. For that, we first prove comparison principle in Section \ref{sebsec: caomparison principle}.
    \item Section \ref{sec:Lipschitz estimates} is devoted to prove the spatial Lipschitz estimate whereas in Section \ref{sec: time holder}, we prove time H\"{o}lder regularity and complete the proof of Theorem \ref{Lip thm}. Moreover, Section \ref{sec: multiphase} sketches the similar regularity results for parabolic multi-phase equations.
    \item Lastly, in Section \ref{sec: visc to weak}, we show that the viscosity solutions are weak solutions.
\end{itemize}

\section{Preliminaries}\label{sec: prelims}
\subsection{Notations} We use the following notations throughout this paper.
\begin{itemize}
\item The symbols $\Xi$ and $\Omega$ are reserved for bounded domains
in $\smash{\mathbb{R}^{N}\times\mathbb{R}}$ and $\smash{\mathbb{R}^{N}}$,
respectively. 

\item By $dz$ we mean integration with respect to space and
time variables, i.e. $dz=dx\d t$. 
\item For $z:=(x, t)\in \Xi$ and $\xi \in \mathbb{R}^N,$ we shall use the notation
\[H(z, \xi)=|\xi|^p+a(z)|\xi|^q\] and define the sets
\begin{align*}
L^H(\Xi) &:= \left\{ u\in L^1(\Xi): \int_\Xi |u|^p+a(z)|u|^q\d z < \infty \right\}, \\
W^H_{\mathcal P}(\Xi) & :=\left\{ u\in L^1(\Xi):\int_{\Xi}\left|Du\right|^{p}+a(z)\left|Du\right|^q\d z<\infty\right\},
\end{align*}
where $Du$ denotes the distributional weak gradient of $u$ in $\Xi$ in space variable.
\item We shall denote 
\[\Gamma:= \left\{(x, t)\in \Xi : Da(x, t)\,\, \text{does not exist in the classical sense}\,\right\}.\]
\item For $r>0$ and $(x_{0},t_{0})\in\mathbb{R}^{N+1}$,
we denote the parabolic cylinders
\begin{align*}
Q_{r}(x_{0},t_{0}) & :=B_{r}(x_{0})\times(-r+t_{0},t_{0})\quad\text{and}\quad Q_{r}:=Q_{r}(0,0).
\end{align*}
We also set $Q_r^-(x_0 ,t_0) := B_r(x_0,t_0)\times(-r+t_0,t_0]$ and $Q_r^-:=Q_r^-(0,0)$. The parabolic boundary of a cylinder is denoted by
\begin{equation*}
    \partial_p Q_r^-(x_0,t_0) = \partial_p Q_r(x_0 ,t_0 ) := (\overline B_r(x_0)\times\{-r+t_0\})\cup(\partial B_r(x_0)\times [-r+t_0,t_0]).
\end{equation*}
\item For all $(x,t)\in Q_{1}$, $\eta\in\mathbb{R}^{N}$ and $X\in S(N)$ where $S(N)$ is the set of all $N\times N$ symmetric matrices, we set
\begin{align}
F((x,t),\eta,X):= & \:\left|\eta\right|^{p-2}\left(\tr X+(p-2)\frac{\eta^{\prime}X\eta}{\left|\eta\right|^{2}}\right)
+a(x,t)\left|\eta\right|^{q-2}\left(\tr X+(q-2)\frac{\eta^{\prime}X\eta}{\left|\eta\right|^{2}}\right)\label{eq:F}
\end{align}
and
\begin{align*}
g((x,t),\eta) & :=\left\Vert Da\right\Vert _{L^{\infty}}\left|\eta\right|^{q-1}+C_f\left(1+\left|\eta\right|^{\beta_{1}}+a(x,t)\left|\eta\right|^{\beta_{2}}\right),
\end{align*}
where $C_f\geq0$ and $1\leq\beta_{1}< p$, $1\leq\beta_{2}<q$.
\end{itemize}

\noindent\textbf{Growth condition:}
The function $f: \Omega \times \mathbb{R}^{+} \times \mathbb{R}^N \to \mathbb{R}$ is
assumed to be continuous and satisfy the growth condition
\begin{align}\tag{{G}}\label{eq:gcnd}
    |f(z,  Du)|\leq C_f \left(1+|Du|^{\beta_1}+a(z)|Du|^{\beta_2}\right),
\end{align}
where $1\leq\beta_1<p$ and $1\leq\beta_2<q$.
\vspace{0bp}

\begin{defn}[Bounded weak solutions]\label{def:weak solutions}
A function $u:\Xi\rightarrow\mathbb{R}$ is a \textit{weak super}(or \textit{sub})\textit{solution}
to (\ref{eq:p-para f}) in $\Xi$ if $u\in W^{H}(\Xi)\cap L^\infty(\Xi)$ and
\begin{equation}
\int_{\Xi}-u\partial_{t}\varphi+\left(\left|Du\right|^{p-2}Du+a(z)\left|Du\right|^{q-2}Du\right)\cdot D\varphi-\varphi f(z, Du)\d z\geq (\leq )\, 0\label{eq:p-para-f weak eq}
\end{equation}
for all non-negative \textit{test functions} $\varphi\in C_0^\infty(\Xi)$.
We say that $u$ is a \textit{weak solution} if it is both a weak supersolution and a weak subsolution. If $u$ is a weak supersolution to \eqref{eq:p-para f} in every $\Xi^\prime \Subset \Xi$, then we say that it is a \textit{local weak supersolution to \eqref{eq:p-para f} in $\Xi$}. Local weak subsolutions and solutions are defined analogously.
\end{defn}
To define viscosity solutions to (\ref{eq:p-para f}), observe that for
all $\varphi\in C^{2}$ with $D\varphi\not=0$, we have
\begin{align*}
&\div\left(|D\varphi|^{p-2}D\varphi\right)+\div\left(a(z)|D\varphi|^{q-2}D\varphi\right)\\
&=\left|D\varphi\right|^{p-2}\left(\Delta\varphi+\frac{p-2}{\left|D\varphi\right|^{2}}\left\langle D^{2}\varphi D\varphi,D\varphi\right\rangle \right)\\
&+ a(z)|D\varphi|^{q-2}\left(\Delta \varphi+\frac{q-2}{|D \varphi|^2}\left\langle D^{2}\varphi D\varphi,D\varphi\right\rangle\right)+|D\varphi|^{q-2}D\varphi \cdot D a(z).
\end{align*}

Note that the right-hand side is singular if $p<2$ and the gradient of $\varphi$ vanishes. Moreover, $Da$ is not defined on $\Gamma$. Because of these issues, we use a limit procedure in the definition of viscosity solutions. 

\begin{defn}[Viscosity solution]
A lower semicontinuous and bounded function $u:\Xi\rightarrow\mathbb{R}$
is a \textit{viscosity supersolution} to (\ref{eq:p-para f}) in $\Xi$
if whenever $\varphi\in C^{2}(\Xi)$ and $(x_{0},t_{0})\in\Xi$ are
such that
\[
\begin{cases}
\varphi(x_{0},t_{0})=u(x_{0},t_{0}),\\
\varphi(x,t)<u(x,t) & \text{when }(x,t)\not=(x_{0},t_{0}),\\
D\varphi(x,t)\not=0 & \text{when }x\not=x_{0},
\end{cases}
\]
then
\begin{align*}
\limsup_{\substack{\substack{(x,t)\rightarrow(x_{0},t_{0})\\
x\not=x_{0}, (x,t)\not \in \Gamma
}
}
}&\Bigg[\partial_{t}\varphi(x,t)-\div\left(|D \varphi(x, t)|^{p-2}D\varphi(x, t)+a(x, t)|D \varphi(x, t)|^{q-2}D\varphi(x, t)\right)\\
&-f\left(x, t, D\varphi(x, t)\right)\Bigg]\geq0.    
\end{align*}
In this case, we say that $\varphi$ \textit{touches $u$ from below at} $\smash{(x_{0},t_{0})}$.

An upper semicontinuous and bounded function $u:\Xi\rightarrow\mathbb{R}$
is a \textit{viscosity subsolution} to (\ref{eq:p-para f}) in $\Xi$
if whenever $\varphi\in C^{2}(\Xi)$ and $(x_{0},t_{0})\in\Xi$ are
such that
\[
\begin{cases}
\varphi(x_{0},t_{0})=u(x_{0},t_{0}),\\
\varphi(x,t)>u(x,t) & \text{when }(x,t)\not=(x_{0},t_{0}),\\
D\varphi(x,t)\not=0 & \text{when }x\not=x_{0,}
\end{cases}
\]
then
\begin{align*}
\liminf_{\substack{\substack{(x,t)\rightarrow(x_{0},t_{0})\\
x\not=x_{0}, (x,t)\not \in \Gamma
}
}
}&\Bigg[\partial_{t}\varphi(x,t)-\div\left(|D \varphi(x, t)|^{p-2}D\varphi(x, t)+a(x, t)|D \varphi(x, t)|^{q-2}D\varphi(x, t)\right)\\
&-f\left(x, t, D\varphi(x, t)\right)\Bigg]\leq0.    
\end{align*}
In this case, we say that $\varphi$ \textit{touches $u$ from above at} $\smash{(x_{0},t_{0})}$.
A function that is both viscosity sub- and supersolution is a \textit{viscosity
solution}.
\end{defn}

We show in Section \ref{sec: visc to weak} that if $a$ is $C^1$ in $\mathbb{R}^{N+1}\setminus \Gamma$ and the boundary of the zero set of $a$ has measure zero, then bounded viscosity super- and subsolutions are local weak super- and subsolutions. However, for our regularity results, such control over the boundary of the zero set of $a$ is not needed. For this reason, it is convenient to define a relaxed class of solutions that contains both continuous weak and viscosity solutions, called as \textit{Class \texttt{S}} (see Section \ref{sec:Lipschitz estimates} for the definition).

\section{Weak solutions are viscosity solutions}\label{sec: weak to visc}

\subsection{Comparison principles}\label{sebsec: caomparison principle}

To show that weak solutions are viscosity solutions, we need some kind of comparison principle for weak solutions. To this end, we have to use a weak solution as a test function for itself. However, since in general weak solutions lack smoothness, an approximation is required. In the case of the elliptic $p$-Laplace equation, the approximation is straightforward since the solution is in a Sobolev space in which smooth functions are dense. On the other hand, with the $p$-parabolic equation, one has to deal with a time derivative which the solution, let alone a super- or subsolution, may not possess. This issue is often remedied by mollifying the equation in time or by using Steklov averages, see \eqref{defn: steklov avg}. However, for the parabolic double-phase equation, approximation is more complicated. Indeed, already in the elliptic setting, the regularity of the function $a$ and the difference $q-p$ are connected to the density of smooth functions in the Sobolev space corresponding to the functional $\xi \mapsto |\xi|^p+a(x)|\xi|^q$, see for example \cite{B24} and the references therein. With the parabolic double-phase equation, a novel difficulty occurs with time-mollification: even if $a$ is Lipschitz and independent of $x$, the Steklov average of a function $u\in W^H(\Xi)\cap C(\Xi)$ may fail to be in $W^H(\Xi)$, see the example below.

For $\varphi:\Xi\rightarrow\mathbb{R}$ and $h>0$, we define the left and right Steklov averages
\begin{align}\label{defn: steklov avg}
    [\varphi]_h(x,t):=\frac{1}{h}\int_{t-h}^t \varphi(x,s)\d s\quad\text{and}\quad [\widetilde \varphi]_h(x,t) := \frac{1}{h}\int_t^{t+h} \varphi(x,s)\d s,
\end{align}
where $\varphi$ is extended by zero to $\mathbb R^{N+1}$. 

\begin{example} Let $1\leq p<q<\infty$. Then in general, $u \in W^H(\Xi)\cap C(\Xi)$ does not imply $[u]_h \in W^H(\Xi)$. To see this, let $N=1$ and $\Xi := (0,1)\times(-1,1)$. Define $a:\Xi\rightarrow[0,\infty)$, $a(x,t):=\max(-t,0)$.
For small $\epsilon>0$, consider the function $u(x,t):=\left|x\right|^{1-\frac{1}{p}+\epsilon}\max(t,0).$
Then
\[
\left|Du(x,t)\right|=(1-\frac{1}{p}+\epsilon)\left|x\right|^{-\frac{1}{p}+\epsilon}\max(t,0)
\]
from which it follows that $u\in W^{H}(\Xi)$ since
\[
\int_{-1}^{1}\int_0^1\left|Du\right|^{p}+\max(-t,0)\left|Du\right|^{q}\d x\d t=\int_{-1}^{1}\int_0^1\left|Du\right|^{p}\d x\d t<\infty.
\]
On the other hand, for $t\in(-h,0)$, we have
\begin{align*}
D[u]_{h}(x,t)=[Du]_{h}(x,t) & =\frac{1}{h}\int_{t}^{t+h}(1-\frac{1}{p}+\epsilon)\left|x\right|^{-\frac{1}{p}+\epsilon}\max(s,0)\d s\\
 & =\frac{1}{h}(1-\frac{1}{p}+\epsilon)\left|x\right|^{-\frac{1}{p}+\epsilon}\int_{t}^{t+h}\max(s,0)\d s,\\
 & \geq \left(\frac{(t+h)^{2}}{2h}-\frac{t^2}{2h}\right)(1-\frac{1}{p}+\epsilon)\left|x\right|^{-\frac{1}{p}+\epsilon}.
\end{align*}
Thus,
\begin{align*}
\int_{-1}^{1}\int_0^1\max(-t,0)\left|D[u]_{h}\right|^{q}\d x \d t & \geq \int_{-h}^{-h/2} \int_{0}^{1} \frac{h}{2}|D[u]_h|^q\ \d x \d t \\ &
\geq\int_{-h}^{-h/2}\int_0^1\frac{h}{2}\left(\frac{h+2t}{2}(1-\frac{1}{p}+\epsilon)\right)^{q}\left|x\right|^{-\frac{q}{p}+\epsilon}\d x \d t,
\end{align*}
where the second integral is not finite since $-\frac{q}{p}+\epsilon<-1$.
Hence, $[u]_{h}\not\in W^{H}(\Xi)$.
\end{example}

Nevertheless, we can establish a time-approximation result if $a$ satisfies one of the conditions \eqref{eq:a decreasing} or \eqref{eq:a increasing} below. The idea is, roughly speaking, that if $a$ is decreasing in time in $Q_r$, we can use the left Steklov average to approximate $u\in W^H(Q_r)$. On the other hand, if $a$ is increasing in $Q_r$, we can use the right Steklov average. Observe that if $a$ is bounded away from zero or identically zero in $Q_r$, then both \eqref{eq:a decreasing} and \eqref{eq:a increasing} are trivial.

\begin{lem}\label{lem:steklov lemma}
Let $\rho>0$ and $a:Q_\rho \rightarrow [0,\infty)$ be bounded and measurable. Suppose moreover that there exists a constant $C_a \geq 0$, independent of $(x,t)$, such that 
\begin{equation}\label{eq:a decreasing}
    a(x,t) \leq C_a a(x,s) \quad \text{for all } (x,t),(x,s)\in Q_\rho \text{ with } s \leq t.
\end{equation}
or
\begin{equation}\label{eq:a increasing}
    a(x,t) \leq C_a a(x,s) \quad \text{for all } (x,t),(x,s)\in Q_\rho \text{ with } s \geq t.
\end{equation}
Let $u \in W^H(Q_\rho)$. Then, if (\ref{eq:a decreasing}) holds, for any small $h_{0}>0$, we have $[u]_h \in W^H(B_\rho \times(-\rho+h_0, 0))$ for all $h\in(0,h_0)$. Further, up to a subsequence
\[
\lim_{h\rightarrow0}\int_{B_{\rho}\times(-\rho+h_{0},0)}\left|D[u]_h-Du \right|^p+a(z)\left|D[u]_h-Du\right|^{q} \d z =0.
\]
If (\ref{eq:a increasing}) holds, then we have the analogous result for $[\widetilde u]_h$.
\end{lem}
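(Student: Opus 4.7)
The plan is twofold: I first establish $[u]_h \in W^H(B_\rho \times (-\rho + h_0, 0))$ via Jensen's inequality combined with the monotonicity \eqref{eq:a decreasing}, and then I obtain the norm convergence by approximating $Du$ by smooth functions in the weighted $L^q$ space.

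For boundedness, I use the commutation $D[u]_h = [Du]_h$ and apply Jensen pointwise to get
\[
|D[u]_h(x,t)|^p \leq \frac{1}{h}\int_{t-h}^t |Du(x,s)|^p\d s, \qquad |D[u]_h(x,t)|^q \leq \frac{1}{h}\int_{t-h}^t |Du(x,s)|^q\d s.
\]
For $h \in (0,h_0)$ and $t \in (-\rho + h_0, 0)$, the interval $(t-h,t)$ is contained in $(-\rho, 0)$, and \eqref{eq:a decreasing} gives $a(x,t) \leq C_a a(x,s)$ for every $s \in (t-h,t)$. Multiplying the $q$-inequality by $a(x,t)$ therefore moves the weight inside the time average:
\[
a(x,t) |D[u]_h(x,t)|^q \leq \frac{C_a}{h}\int_{t-h}^t a(x,s)|Du(x,s)|^q\d s.
\]
Integrating in $(x,t)$ and applying Fubini yields the uniform bound $\int (|D[u]_h|^p + a(z)|D[u]_h|^q)\d z \leq \|Du\|_{L^p(Q_\rho)}^p + C_a \int_{Q_\rho} a(z)|Du|^q\d z < \infty$, hence $[u]_h \in W^H(B_\rho \times (-\rho+h_0,0))$.

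For the convergence claim, the $L^p$ part $[Du]_h \to Du$ in $L^p(B_\rho \times (-\rho + h_0, 0))$ is classical Steklov averaging. For the weighted part, I argue by density: since $a \in L^\infty(Q_\rho)$ and $|Q_\rho| < \infty$, smooth compactly supported vector fields are dense in $L^q(Q_\rho; a\d z)$. Given $\epsilon > 0$, pick $\psi \in C_c^\infty(Q_\rho;\mathbb{R}^N)$ with $\int_{Q_\rho} a|Du - \psi|^q\d z \leq \epsilon^q$. Applying the boundedness estimate from the previous step with $Du - \psi$ in place of $Du$ gives $\int a|D[u]_h - [\psi]_h|^q\d z \leq C_a \epsilon^q$. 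Since $\psi$ is uniformly continuous with compact support, $[\psi]_h \to \psi$ uniformly on the smaller cylinder, and boundedness of $a$ combined with finite measure gives $\int a|[\psi]_h - \psi|^q\d z \to 0$. The triangle inequality with $h \to 0$ followed by $\epsilon \to 0$ yields the convergence (extracting a subsequence if desired). The case \eqref{eq:a increasing} is entirely analogous using the right Steklov average $[\widetilde u]_h$: now $s \in (t, t+h)$ satisfies $s \geq t$ and \eqref{eq:a increasing} supplies the same one-sided weight-transfer bound.

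The principal obstacle is the mismatch between the evaluation point $t$ of the weight $a(x,t)$ and the averaging variable $s \in (t-h,t)$ (or $(t, t+h)$) appearing in the Steklov average. As the example preceding the lemma shows, without a compatibility condition between $a(x,t)$ and $a(x,s)$ the Steklov average can fail to lie in $W^H$, so no convergence statement is possible. The monotonicity assumption is precisely what permits the one-sided transfer $a(x,t) \leq C_a a(x,s)$, and the direction of monotonicity dictates the choice of left versus right Steklov average.
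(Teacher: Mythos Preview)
Your proof is correct. For the membership $[u]_h \in W^H$, you and the paper both combine Jensen's inequality with the monotonicity \eqref{eq:a decreasing} to transfer the weight inside the time average; the only cosmetic difference is that the paper multiplies by $a^{1/q}$ first and then applies Jensen, arriving at $a(x,t)|[v]_h|^q \leq C_a\big([a^{1/q}|v|]_h\big)^q$, whereas you apply Jensen first and then move $a$. For the convergence, the two routes genuinely diverge: the paper observes that the majorant $C_a\big([a^{1/q}|v|]_h\big)^q + a|v|^q$ converges in $L^1$ (since $a^{1/q}|v| \in L^q(Q_\rho)$ and Steklov averages converge in $L^q$), and then invokes the generalized dominated convergence theorem together with pointwise a.e.\ convergence $[v]_h \to v$ extracted from the $L^p$ convergence---this is the origin of the subsequence in the statement. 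Your density argument in $L^q(a\d z)$, exploiting that the first step shows the Steklov averaging is uniformly bounded on that space with constant $C_a^{1/q}$, is the standard ``bounded operator plus convergence on a dense set'' paradigm; it is arguably cleaner and in fact yields convergence along the full sequence $h\to 0$, so no subsequence extraction is needed.
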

\begin{proof}
We only consider the case \eqref{eq:a decreasing} and $[u]_h$ as the proof for $[\widetilde u]_h$ when \eqref{eq:a increasing} holds is  analogous. 
Fix $h_{0}>0$ and suppose that $h<h_{0}$. For $i \in \{1,\ldots,N\}$, we denote $v := \partial_i u$. By standard properties of Steklov averages, we have $\partial_i[u]_h = [\partial_i u]_h = [v]_h$. Then for almost every $(x,t)\in B_{\rho}\times(-\rho+h_{0},0)$ we have
\begin{equation}\label{eq:time approx 1}
a(x,t)\left|[v]_{h}(x,t)-v(x,t)\right|^{q}\leq C(q)(a(x,t)\left|[v]_{h}(x,t)\right|^{q}+a(x,t)\left|v(x,t)\right|^{q}),
\end{equation}
where by (\ref{eq:a decreasing}), it holds that
\begin{align}\label{eq:time approx 2}
a(x,t)\left|[v]_{h}(x,t)\right|^{q}  =\left|\frac{1}{h}\int_{t-h}^{t}a^{1/q}(x,t)v(x,s)\d s\right|^{q}
  &\leq\left(\frac{1}{h}\int_{t-h}^{t}C_{a}^{1/q}a^{1/q}(x,s)\left|v(x,s)\right|\d s\right)^{q}\nonumber\\
 & =C_{a}([a^{1/q}\left|v\right|]_{h}(x,t))^{q}.
\end{align}
Observe that since $a^{1/q}|v| \in L^q(Q_\rho)$, we have $[a^{1/q}|v|]_h \in L^q(B_\rho \times (-\rho +h_0, \rho))$ by standard properties of Steklov average \cite[Lemma 2.4]{CDG17}, and so the above estimate implies that 
\[
    \int_{B_\rho \times (-\rho +h_0, \rho)} a(x,t) |[v]_h (x,t)|^q \d x \d t < \infty.
\]
Therefore, $[v]_h\in W^H(B_\rho \times(-\rho +h_0, \rho)).$
On the other hand, combining \eqref{eq:time approx 1} and \eqref{eq:time approx 2}, we have for almost every $(x,t)\in B_{\rho}\times(-\rho+h_{0},0)$
that
\[
a(x,t)\left|[v]_{h}(x,t)-v(x,t)\right|^{q}\leq C_{a}([a^{1/q}\left|v\right|]_{h}(x,t))^{q}+a(x,t)\left|v(x,t)\right|^{q}.
\]
Note that since $a^{1/q}\left|v\right|\in L^{q}(Q_{\rho})$, it follows from the standard properties of Steklov average (see \cite[Lemma 2.4 and Lemma 2.5]{CDG17}) that
\[
\lim_{h\rightarrow0}\int_{B_{\rho}\times(-\rho+h_{0},0)}C_{a}([a^{1/q}\left|v\right|]_{h})^{q}\d x\d t=\int_{B_{\rho}\times(-\rho+h_0, 0)}C_{a}(a^{1/q}\left|v\right|)^{q}\d x\d t.
\]
Moreover, since $u\in L^p(B_{\rho}\times(-\rho+h_{0},0))$, we have
$[v]_{h}\rightarrow v$ almost everywhere in $B_{\rho}\times(-\rho+h_{0},0)$
up to a subsequence. It now follows from a well known generalization
of Lebesgue's convergence theorem that
\[
\lim_{h\rightarrow0}\int_{B_{\rho}\times(-\rho+h_{0},0)}a(x,t)\left|[v]_{h}(x,t)-v(x,t)\right|^{q}\d x\d t=0.\qedhere
\]
\end{proof}

Next we consider space approximation. The following theorem is proven similarly to the elliptic result in \cite{BGS22}, but we give details for the benefit of the reader.

\begin{thm} \label{lem: spcae approximation}
Let $1< p\leq q\leq p+1$ and $r>0$. Assume that $a:Q_{r}\rightarrow[0,\infty)$
is bounded, measurable, and Lipschitz continuous in space. Suppose
that $u\in W^{H}(Q_{r})\cap L^{\infty}(-r,0;L^{1}(B_{r}))$ is compactly
supported in $Q_{r}$. For $\delta>0$, define
\[
u_{\delta}(x,t):=\int_{\mathbb{R}^{N}}u\left(x-y,t\right)\varrho_{\delta}(y)\d y,
\]
where $\varrho_{\delta}$ is the standard mollifier and the integrand
is read as zero when $x-y\not\in Q_{r}$. Then $\supp u_{\delta}\Subset Q_{r}$
for all small enough $\delta>0$. Furthermore, $u_{\delta}\in W^{H}(Q_{r})$ and
\begin{equation}
\lim_{\delta\rightarrow0}\int_{Q_{r}}\left|Du-Du_{\delta}\right|^{p}+a(z)\left|Du-Du_{\delta}\right|^{q}\d z=0.\label{eq:space approx grad conv}
\end{equation}
\end{thm}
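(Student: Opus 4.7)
My plan is to prove the three assertions in turn.

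The assertion $\supp u_\delta \Subset Q_r$ is straightforward: since $\supp u$ is compact in $Q_r$, there is $\eta>0$ with $\operatorname{dist}(\supp u,\partial Q_r)\geq \eta$, and for $\delta<\eta/2$ the mollification acts only in the spatial direction and enlarges the support by at most $\delta$, so $\supp u_\delta$ remains compactly contained in $Q_r$. The membership $u_\delta \in W^H(Q_r)$ will follow from the $L^\infty(-r,0;L^1(B_r))$-bound on $u$ combined with the pointwise estimates $\|\varrho_\delta\|_{L^\infty}\leq C\delta^{-N}$ and $\|D\varrho_\delta\|_{L^\infty}\leq C\delta^{-N-1}$: for each fixed $\delta$ one obtains $u_\delta, Du_\delta \in L^\infty(Q_r)$, so $u_\delta \in W^{1,\infty}(Q_r) \hookrightarrow W^H(Q_r)$ by the boundedness of $a$.

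The $L^p$-part of \eqref{eq:space approx grad conv} is then immediate: the weak spatial gradient commutes with space-mollification, so $Du_\delta = (Du)_\delta$, and since $Du \in L^p(Q_r)$ with compact support, standard mollifier theory gives $(Du)_\delta \to Du$ in $L^p(Q_r)$. For the weighted $L^q$-part, which is the main obstacle, I would start with Jensen's inequality and Fubini to write
\[
\int_{Q_r} a\left|Du_\delta - Du\right|^q \d z \leq \int_{B_\delta}\varrho_\delta(y) J(y)\, dy,\quad J(y):=\int_{Q_r} a(x,t)\left|Du(x-y,t)-Du(x,t)\right|^q \d z,
\]
and then use the spatial Lipschitz bound $a(x,t)\leq a(x-y,t)+L|y|$ with $L=\|Da\|_{L^\infty}$ to decompose $J(y)$ into a weighted translation piece with weight $a(x-y,t)$ plus a remainder carrying the factor $L|y|\leq L\delta$.

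The weighted translation piece should tend to zero as $y\to 0$ by continuity of translations in $L^q_a$, which I would establish by approximating $Du$ in the $W^H$-norm by smooth compactly supported functions, using that $a$ is bounded. The $L\delta$-remainder is where the restriction $q\leq p+1$ becomes decisive: since $s^q \leq s^p + s^{p+1}$ for $s\geq 0$, the remainder splits into an $L^p$-translation error (vanishing by $Du\in L^p$) and an integral of $s^{p+1}$. The latter is the hard part, since a priori $Du$ need not lie in $L^{p+1}$; I would control it by truncating $Du$ at level $M$, so that on $\{|Du|\leq M\}$ the contribution is $M$-bounded and vanishes as $\delta\to 0$, while on $\{|Du|>M\}$ the weighted $L^q$-mass is small uniformly in $\delta$ by absolute continuity of the integral $\int a|Du|^q\d z$. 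Balancing $\delta$ against $M$ closes the argument, mirroring the density theory for Musielak--Orlicz spaces with Lipschitz modulating coefficient developed in the elliptic setting of \cite{BGS22}.
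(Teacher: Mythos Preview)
Your handling of the support and the $L^p$-part is fine, and the Jensen--Fubini reduction of the weighted $L^q$-part is also correct. The gap is in the treatment of the Lipschitz remainder, and it is a genuine one.

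After the split $a(x,t)\leq a(x-y,t)+L|y|$, the remainder is $L|y|\int_{Q_r}|Du(x-y,t)-Du(x,t)|^q\,dz$, an \emph{unweighted} $L^q$-integral of the gradient (or, after your further split, an unweighted $L^{p+1}$-integral). Nothing in the hypotheses guarantees that this is finite: you only know $Du\in L^p$ and $a^{1/q}Du\in L^q$. Your proposed cure via truncation and absolute continuity of $\int a|Du|^q$ does not apply, because the weight on the remainder is $L|y|$, not $a$; on $\{|Du|>M\}$ the quantity you must control is $L\delta\int_{\{|Du|>M\}}|Du|^{p+1}$, and there is no reason for this to be small (or even finite). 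The same obstruction reappears in your ``continuity of translations in $L^q_a$'' step: when you transport the approximation error $Du-g$ under a spatial shift, the shifted weight $a(x-y,t)$ again differs from $a(x,t)$ by $L|y|$, producing an unweighted $\int|Du-g|^q$ that you cannot control. So both halves of your decomposition leak into unweighted $L^q$-mass that the data do not bound.

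The paper's route (following \cite{BGS22}) is structurally different and avoids this leak. The key observation is the pointwise bound $|Du_\delta(x,t)|\leq C_0\,\delta^{-1}$, obtained from the $L^\infty(-r,0;L^1(B_r))$-control on $u$ via Young's inequality. With this in hand, one does not attack the difference $Du_\delta-Du$ directly; instead one proves the pointwise domination
\[
|Du_\delta|^p+a(x,t)|Du_\delta|^q\leq C_1\bigl(|Du|^p+a|Du|^q\bigr)_\delta(x,t),
\]
by introducing the local minimizer $x_t^\ast$ of $a(\cdot,t)$ in $B_\delta(x)$ and a ratio trick: the Lipschitz error $(a(x,t)-a(x_t^\ast,t))|Du_\delta|^{q-p}\leq\delta\|Da\|_\infty|Du_\delta|^{q-p}$ is bounded precisely because $|Du_\delta|\leq C_0/\delta$ and $q-p\leq 1$. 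This domination yields uniform integrability of $|Du_\delta|^p+a|Du_\delta|^q$, and Vitali's theorem then gives the modular convergence. The condition $q\leq p+1$ is used in this single cancellation of $\delta$-powers, not in any splitting of $s^q$.
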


\begin{proof}
We need to verify \eqref{eq:space approx grad conv} and that
\[\int_{Q_r}|Du_{\delta}|^p+a(z)|Du_{\delta}|^q\, dz< \infty.\]
\textbf{Step 1:} Let $d:=\operatorname{dist}(\supp u,\partial Q_{r})$, $\delta_{0}:=d/8$
and $r^{\prime}:=r-d/4$. Then $u_{\delta}\equiv0$ in $Q_{r}\setminus Q_{r^{\prime}}$
for all $\delta<\delta_{0}$. Fix $(x,t)\in Q_{r^{\prime}}$, where
$t$ is such that $Du(\cdot,t)\in L^{1}(B_{r})$ and $\left\Vert u(\cdot,t)\right\Vert _{L^{1}(B_{r})}\leq\left\Vert u\right\Vert _{L^{\infty}(-r,0;L^{1}(B_{r}))}$.
Further, assume that $\delta<\delta_{0}$ and let $x_{t}^{\ast}\in\overline{B}_{\delta}(x)$
be such that 
\[
a(x_{t}^{\ast},t)=\inf_{y\in B_{\delta}(x)}a(y,t),
\]
which is well defined since $B_{\delta}(x)\Subset B_{r}$. By Young's
convolution inequality, we have
\begin{align*}
\left|Du_{\delta}(x,t)\right|=\left|D\int_{\mathbb{R}^{N}}u\left(x-y,t\right)\varrho_{\delta}(y)\d y\right| & =\left|\int_{\mathbb{R}^{N}}u(y,t)D\varrho_{\delta}(x-y)\d y\right|\\
 & \leq\left\Vert D\varrho_{\delta}\right\Vert _{L^{1}(\mathbb{R}^{N})}\left\Vert u(\cdot,t)\right\Vert _{L^{1}(B_{r})}\\
 & \leq\delta^{-1}\left\Vert D\varrho\right\Vert _{L^{1}(\mathbb{R}^{N})}\left\Vert u\right\Vert _{L^{\infty}(-r,0;L^{1}(B_{r}))},\\
 & =:\delta^{-1}C_{0}.
\end{align*}
where we used that $\left\Vert D\varrho_{\delta}\right\Vert _{L^{1}(\mathbb{R}^{N})}=\delta^{-1}\left\Vert D\varrho\right\Vert _{L^{1}(\mathbb{R}^{N})}$.
Using the above display, Lipschitz continuity of $a$ in space variable,
as well as that $\left|x-x_{t}^{\ast}\right|\leq\delta$, we obtain
\begin{align}
 & \left|Du_{\delta}(x,t)\right|^{p}+a(x,t)\left|Du_{\delta}(x,t)\right|^{q}\nonumber \\
 & =\frac{\left|Du_{\delta}(x,t)\right|^{p}+a(x,t)\left|Du_{\delta}(x,t)\right|^{q}}{\left|Du_{\delta}(x,t)\right|^{p}+a(x_{t}^{\ast},t)\left|Du_{\delta}(x,t)\right|^{q}}\left(\left|Du_{\delta}(x,t)\right|^{p}+a(x^{\ast},t)\left|Du_{\delta}(x,t)\right|^{q}\right)\nonumber \\
 & =\left(1+\frac{(a(x,t)-a(x_{t}^{\ast},t))\left|Du_{\delta}(x,t)\right|^{q}}{\left|Du_{\delta}(x,t)\right|^{p}+a(x_{t}^{\ast},t)\left|Du_{\delta}(x,t)\right|^{q}}\right)\left(\left|Du_{\delta}(x,t)\right|^{p}+a(x^{\ast},t)\left|Du_{\delta}(x,t)\right|^{q}\right)\nonumber \\
 & \leq\left(1+\frac{\delta\left\Vert Da\right\Vert _{L^{\infty}(Q_{r})}\left|Du_{\delta}(x,t)\right|^{q}}{\left|Du_{\delta}(x,t)\right|^{p}}\right)\left(\left|Du_{\delta}(x,t)\right|^{p}+a(x_{t}^{\ast},t)\left|Du_{\delta}(x,t)\right|^{q}\right)\nonumber \\
 & \leq\left(1+\delta\left\Vert Da\right\Vert _{L^{\infty}(Q_{r})}\left(\delta^{-1}C_{0}\right)^{q-p}\right)\left(\left|Du_{\delta}(x,t)\right|^{p}+a(x_{t}^{\ast},t)\left|Du_{\delta}(x,t)\right|^{q}\right)\nonumber \\
 & =\left(1+C_{0}\left\Vert Da\right\Vert _{L^{\infty}(Q_{r})}\right)\left(\left|Du_{\delta}(x,t)\right|^{p}+a(x_{t}^{\ast},t)\left|Du_{\delta}(x,t)\right|^{q}\right)\nonumber \\
 & =:C_{1}\left(\left|Du_{\delta}(x,t)\right|^{p}+a(x_{t}^{\ast},t)\left|Du_{\delta}(x,t)\right|^{q}\right).\label{eq:space approx 1-1}
\end{align}
Further, since $\int_{B_{\delta}}\rho_{\delta}(y)\d y=1$, we can
apply Jensen's inequality to estimate
\begin{align*}
 & \left|Du_{\delta}(x,t)\right|^{p}+a(x_{t}^{\ast},t)\left|Du_{\delta}(x,t)\right|^{q}\\
 & =\left|\int_{B_{\delta}}Du(x-y,t)\rho_{\delta}(y)\d y\right|^{p}+a(x_{t}^{\ast},t)\left|\int_{B_{\delta}}Du\left(x-y,t\right)\rho_{\delta}(y)\d y\right|^{q}\\
 & \leq\int_{B_{\delta}}\left|Du(x-y,t)\right|^{p}\rho_{\delta}(y)\d y+\int_{B_{\delta}}a(x_{t}^{\ast},t)\left|Du\left(x-y,t\right)\right|^{q}\rho_{\delta}(y)\d y\\
 & \leq\int_{B_{\delta}}\left|Du(x-y,t)\right|^{p}\rho_{\delta}(y)\d y+\int_{B_{\delta}}a(y,t)\left|Du\left(x-y,t\right)\right|^{q}\rho_{\delta}(y)\d y,\\
 & =(\left|Du\right|^{p})_{\delta}(x,t)+(a\left|Du\right|^{q})_{\delta}(x,t),
\end{align*}
where the last estimate follows from the choice of $x_{t}^{\ast}$.
Combining the last two displays, we arrive at
\begin{equation}
\left|Du_{\delta}(x,t)\right|^{p}+a(x,t)\left|Du_{\delta}(x,t)\right|^{q}\leq C_{1}(\left|Du\right|^{p}+a\left|Du\right|^{q})_{\delta}(x,t)\label{eq:space approx 4-1}
\end{equation}
for all $x\in B_{r^{\prime}}$ and almost all $t\in(-r^{\prime},0)$.

\textbf{Step 2:} We define $w:\mathbb{R}^{N+1}\rightarrow\mathbb{R}$
by setting $w=(\left|Du\right|^{p}+a\left|Du\right|^{q})$ in $Q_{r}$
and $w=0$ in $\mathbb{R}^{N+1}\setminus Q_{r}.$ Since $w\in L^{1}(\mathbb{R}^{N+1})$
with compact support, we have $w_{\delta}\rightarrow w$ in $L^{1}(\mathbb{R}^{N+1})$
by standard properties of mollifiers. On the other hand, by (\ref{eq:space approx 4-1}),
we have
\[
\left|Du_{\delta}\right|^{p}+a\left|Du_{\delta}\right|^{q}\leq C_{1}w_{\delta}\quad\text{almost everywhere in }Q_{r^{\prime}}.
\]
It follows that $\left|Du_{\delta}\right|^{p}+a\left|Du_{\delta}\right|^{q}$
is uniformly integrable in $Q_{r^{\prime}}$ with respect to $\delta$.
Therefore, by Vitali's convergence theorem, 
\[
\left|Du_{\delta}\right|^{p}+a\left|Du_{\delta}\right|^{q}\rightarrow\left|Du\right|^{p}+a\left|Du\right|^{q}\quad\text{in }L^{1}(Q_{r^{\prime}}).
\]
Then observe
\begin{align*}
 & \int_{Q_{r^{\prime}}}\left|Du-Du_{\delta}\right|^{p}+a(z)\left|Du-Du_{\delta}\right|^{q}\d z\\
 & \leq\int_{Q_{r^{\prime}}}2^{p-1}(\left|Du\right|^{p}+\left|Du_{\delta}\right|^{p})+2^{q-1}a(z)(\left|Du\right|^{q}+\left|Du_{\delta}\right|^{q})\d z,
\end{align*}
so that $\left|Du-Du_{\delta}\right|^{p}+a\left|Du-Du_{\delta}\right|^{q}$
is again uniformly integrable, and by Vitali's convergence theorem,
the left-hand side of the above inequality tends to zero as $\delta\rightarrow0$.
Since both $\left|Du\right|$ and $\left|Du_{\delta}\right|$ vanish
in $Q_{r}\setminus Q_{r^{\prime}}$ (provided that $\delta<\delta_{0}$),
the claim follows.
\end{proof}

With time and space approximation at hand, we prove a comparison principle. Part of our argument for the comparison principle is inspired by the ideas of \cite{IMJ97}. Although we shall work in the range $1<p\leq q \leq p+1,$ we split the cases $1<p<q<2,$ $1<p<2\leq q$ and $2\leq p.$ To prove the  comparison principle, we only use the assumption
\begin{align}\label{assumption increasing}
a(x,t) \leq C_a a(x,s) \quad \text{for all } (x,t),(x,s)\in Q_r \text{ with } t \leq s.    
\end{align}
Note that the above assumption on $a$ is not optimal. For instance, one can easily see that the function
\[a(x, t)=\max\left\{-(x+t+1), 0\right\},\] 
does not satisfy the assumption on a cube $Q_{r}(x, t)$ centered at a point $(x, t)$ that lies on the line $x=-(t+1).$
However, a comparison principle can be proven for this $a$ by using a suitable change of variable. This approach gives hope to generalize the above assumption \eqref{assumption increasing}. We plan to return to this matter in a subsequent work.

\begin{lem}\label{lem: first comparison}
Let $1<p\leq q\leq 2< p+1,$  and $\rho >0$. Suppose that $a:Q_\rho \rightarrow [0,\infty)$ is Lipschitz in space, continuous in time, satisfies \eqref{assumption increasing} and $f(z, \eta)$ is locally Lipschitz in $\eta$.  Let $u,v\in W^{H}(Q_\rho)\cap C(\overline{Q_\rho})$ be weak sub- and supersolutions, respectively, to
\[
\partial_{t}u-\div(\left|Du\right|^{p-2}Du+a(z)\left|Du\right|^{q-2}Du)=f(z, Du)\quad\text{in }\,\,\, Q_\rho.
\]
Also assume that $Du\in L^{\infty}(Q_{\rho}).$ Then, if $u\leq v$ on $\partial_{\mathcal{P}}Q_\rho$, it follows that $u\leq v$ in $Q_{\rho}.$
\end{lem}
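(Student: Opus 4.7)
The strategy is an $L^2$-energy comparison argument adapted to the double-phase setting through a Steklov time-averaging scheme. Write $A(\xi,z):=|\xi|^{p-2}\xi+a(z)|\xi|^{q-2}\xi$. For a fixed $\varepsilon>0$ the goal is to prove that $(u-v-\varepsilon)_+\equiv 0$ in $Q_\rho$; the full conclusion then follows by letting $\varepsilon\downarrow 0$. Since \eqref{assumption increasing} coincides exactly with \eqref{eq:a increasing}, Lemma~\ref{lem:steklov lemma} applies to the right Steklov averages $[\widetilde u]_h, [\widetilde v]_h\in W^{H}(B_\rho\times(-\rho+h_0,0))$, and guarantees strong convergence of their spatial gradients to $Du, Dv$ in the double-phase norm. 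The Steklov-averaged weak formulations then give, in $B_\rho\times(-\rho+h_0,0)$,
\[
\partial_t[\widetilde u]_h - \mathrm{div}\,[A(Du,\cdot)]_h \leq [f(\cdot,Du)]_h, \qquad \partial_t[\widetilde v]_h - \mathrm{div}\,[A(Dv,\cdot)]_h \geq [f(\cdot,Dv)]_h.
\]

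Fix $t_0\in(-\rho+h_0,0]$ and test the difference of the two relations against $\varphi:=([\widetilde u]_h-[\widetilde v]_h-\varepsilon)_+\mathbf{1}_{(-\rho+h_0,t_0)}(t)$. Continuity of $u,v$ up to $\partial_{\mathcal P}Q_\rho$ together with the assumption $u\leq v$ on the parabolic boundary ensures that, for $\varepsilon$ fixed and $h,h_0$ small, $\varphi$ has compact support in $B_\rho$ and vanishes at the initial time. The time part of the integration by parts produces the standard boundary contribution $\tfrac12\int_{B_\rho}([\widetilde u]_h-[\widetilde v]_h-\varepsilon)_+^2(\cdot,t_0)\,dx$. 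Passing to $h\to 0$ via Lemma~\ref{lem:steklov lemma} yields
\begin{equation*}
\tfrac12\!\int_{B_\rho}(u-v-\varepsilon)_+^2(\cdot,t_0)\,dx + \!\!\int_{-\rho}^{t_0}\!\!\!\int_{\{u>v+\varepsilon\}}\!(A(Du,z)-A(Dv,z))\cdot(Du-Dv)\,dz \leq \!\!\int_{-\rho}^{t_0}\!\!\!\int_{\{u>v+\varepsilon\}}\!(u-v-\varepsilon)(f(z,Du)-f(z,Dv))\,dz.
\end{equation*}

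In the regime $1<p\leq q\leq 2$ the classical vector monotonicity inequalities give the lower bound $(A(Du,z)-A(Dv,z))\cdot(Du-Dv)\geq c|Du-Dv|^2/(|Du|+|Dv|)^{2-p}$ and an analogous bound, weighted by $a(z)$, with exponent $q$. To estimate the $f$-term, I would split $\{u>v+\varepsilon\}$ according to whether $|Dv|\leq 2\|Du\|_{L^\infty(Q_\rho)}$ or not. On the first region both gradients are bounded, so local Lipschitz continuity of $f$ in $\eta$, combined with Cauchy--Schwarz and Young's inequality, permits absorption of the resulting $|Du-Dv|$ contribution into the diffusion term at the cost of a term of the form $C\int(u-v-\varepsilon)_+^2$. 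On the complementary region $|Du-Dv|\geq|Dv|/2$, and the growth condition \eqref{eq:gcnd} (with $\beta_1<p,\ \beta_2<q$) together with the monotonicity lower bound, whose denominator there stays comparable to $|Dv|^{2-p}$, allows an analogous absorption. The result is a differential inequality $\Phi(t_0)\leq C\int_{-\rho}^{t_0}\Phi(t)\,dt$ for $\Phi(t):=\int_{B_\rho}(u-v-\varepsilon)_+^2(\cdot,t)\,dx$, and since $\Phi(-\rho)=0$ Gronwall's lemma forces $\Phi\equiv 0$.

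The main obstacle is the compatibility of the time-regularization with the $(x,t)$-dependent double-phase structure: as the example preceding Lemma~\ref{lem:steklov lemma} shows, plain Steklov averaging of a $W^H$ function need not remain in $W^H$. Condition \eqref{assumption increasing} is precisely what ensures, through Lemma~\ref{lem:steklov lemma}, that the right Steklov averages belong to the natural energy space and admit strong gradient convergence in it. A secondary delicate point is the absorption of the gradient-nonlinearity term without an a priori $L^\infty$ bound on $Dv$, which is handled by the case split described above; this is where the assumption $Du\in L^\infty(Q_\rho)$ is essential.
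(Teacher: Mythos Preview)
Your strategy matches the paper's: Steklov time-regularization via Lemma~\ref{lem:steklov lemma}, monotonicity of the double-phase operator in the singular range, the case split $\{|Dv|\le 2\|Du\|_{L^\infty}\}$ versus its complement to absorb the $f$-term, and Gr\"onwall to conclude. The absorption argument you sketch for the set where $|Dv|$ is large is precisely what the paper does (there $|Du-Dv|\ge |Dv|/2$, and the monotonicity lower bound then dominates the growth-condition upper bound on $f$).

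Two technical points are worth flagging. First, under \eqref{assumption increasing} (which is \eqref{eq:a increasing}), the \emph{forward} Steklov average $[\widetilde u]_h$ lives in $W^H(B_\rho\times(-\rho,-h_0))$, not $B_\rho\times(-\rho+h_0,0)$ as you wrote; the averaging direction forces the domain to shrink from the top, so you should fix $t_0\in(-\rho,-h_0]$ and send $h_0\to 0$ at the end. Second, to test the Steklov-averaged formulation against $([\widetilde u]_h-[\widetilde v]_h-\varepsilon)_+$ rather than a smooth function, you implicitly need the space approximation (Theorem~\ref{lem: spcae approximation}), which the paper invokes explicitly; this is where the bound $q\le p+1$ and Lipschitz continuity of $a$ in space enter. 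One genuine simplification in your setup compared to the paper: by applying the Steklov average to the solution and using the \emph{same} averaged function $[\widetilde u]_h-[\widetilde v]_h$ in the test, the time term integrates exactly to $\tfrac12([\widetilde u]_h-[\widetilde v]_h-\varepsilon)_+^2$, whereas the paper tests with the un-averaged $m(u-v_\ell)$ and must invoke the convexity of $G(s)=\tfrac12 s_+^2$ to obtain the inequality $\partial_t[w]_h\,m(w)\ge\partial_t[G(w)]_h$.
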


\begin{proof}
We split the proof into three steps. We shall start with the forward Steklov average given in \eqref{defn: steklov avg} denoted as $[\tilde{\varphi}]_h.$

\textbf{Step 1:} Suppose that $\varphi\in W^H(Q_\rho)\cap C(\overline{Q}_{\rho})$ with $\supp \varphi \Subset Q_\rho$. Then the Steklov average $[\tilde{\varphi}]_h$ is well defined for small enough $h>0$. Moreover, $[\tilde{\varphi}]_h \in W^H(Q_\rho)$ by Lemma \ref{lem:steklov lemma}.
Further, set
\[
([\tilde{\varphi}]_{h})_{\delta}(x,t):=\int_{\mathbb{R}^{N}}[\tilde{\varphi}]_{h}(y,t)\rho_{\delta}(y-x)\d y.
\]
Then by Lemma \ref{lem: spcae approximation} we have that
\begin{equation}
\int_{Q_\rho}\left|D([\tilde{\varphi}]_{h})_{\delta}-D[\tilde{\varphi}]_{h}\right|^{p}+a(z)\left|D([\tilde{\varphi}]_h)_{\delta}-D[\tilde{\varphi}]_{h}\right|^{q}\d z\rightarrow0\quad\text{as }\delta\rightarrow0.\label{eq:modular lp}
\end{equation}
Since $([\tilde{\varphi}]_{h})_{\delta}$ is Lipschitz and compactly supported
in $Q_\rho$, it can be used as a test function in the weak formulation \eqref{eq:p-para-f weak eq}. This way, we obtain
\begin{equation}
\int_{Q_\rho}-u\partial_{t}([\tilde{\varphi}]_{h})_{\delta}+\left|Du\right|^{p-2}Du\cdot D([\tilde{\varphi}]_{h})_{\delta}+a(z)\left|Du\right|^{q-2}Du\cdot D([\tilde{\varphi}]_{h})_{\delta}- ([\tilde{\varphi}]_h)_{\delta}f(z, Du)\,dz \leq 0.\label{eq:regularized est}
\end{equation}
Now, observe that by H\"{o}lder's inequality
\begin{align*}
 & \int_{Q_\rho}\left|Du\right|^{p-2}Du\cdot(D([\tilde{\varphi}]_{h})_{\delta}-D[\tilde{\varphi}]_h)+a(z)\left|Du\right|^{q-2}Du\cdot(D([\tilde{\varphi}]_{h})_{\delta}-D[\tilde{\varphi}]_{h})\d z\\
 & \leq\left(\int_{Q_\rho}\left|Du\right|^{p}\d z\right)^{\frac{p-1}{p}}\left(\int_{Q_\rho}\left|D([\tilde{\varphi}]_{h})_{\delta}-D[\tilde{\varphi}]_{h}\right|^{p}\d z\right)^{\frac{1}{p}}\\
 & \phantom{\leq}+\left(\int_{Q_\rho}a(z)\left|Du\right|^{q}\d z\right)^{\frac{q-1}{q}}\left(\int_{Q_\rho}a(z)\left|D([\tilde{\varphi}]_{h})_{\delta}-D[\tilde{\varphi}]_{h}\right|^{q}\d z\right)^{\frac{1}{q}},
\end{align*}
where the right-hand side converges to zero as $\delta\rightarrow0$
by \eqref{eq:modular lp}. Furthermore, convergence of the time term
in \eqref{eq:regularized est} is clear since $[\tilde{\varphi}]_{h}$ is Lipschitz
in time. For the nonhomogeneous term, we note that
\begin{align*}
    \int_{Q_{\rho}}f(z, Du)\left(([\tilde{\varphi}]_h)_{\delta}-[\tilde{\varphi}]_h\right)\, dz\leq \int_{Q_{\rho}}C_f\left(1+|Du|^{\beta_1}+a(z)|Du|^{\beta_2}\right)\left|([\tilde{\varphi}]_h)_{\delta}-[\tilde{\varphi}]_h\right|\, dz.
\end{align*}
Since $\varphi \in C(\overline{Q_{\rho}})$, it is bounded and $|Du|^{\beta_1}([\tilde{\varphi}]_h)_{\delta}\to |Du|^{\beta_1}[\tilde{\varphi}]_h$ pointwise, dominated convergence can be used to conclude
\[
\int_{Q_\rho}-u\partial_{t}[\tilde{\varphi}]_{h}+\left|Du\right|^{p-2}Du\cdot D[\tilde{\varphi}]_{h}+a(z)\left|Du\right|^{q-2}Du\cdot D[\tilde{\varphi}]_{h}-f(z, Du)[\tilde{\varphi}]_h\leq  0.
\]
Computing the derivative of the Steklov average and using that $\varphi$
has compact support in time, we derive
\begin{align*}
\int_{Q_{\rho}}-u\partial_{t}[\tilde{\varphi}]_{h}\d z & =\int_{B_{\rho}}\frac{1}{h}\int_{-\rho}^{0}-u(x,t)(\varphi(x,t+h)-\varphi(x,t))\d t\d x\\
 & =\int_{B_{\rho}}\left[\frac{1}{h}\int_{-\rho}^{0}-u(x,t-h)\varphi(x,t)\d t+\int_{-\rho}^{0}u(x,t)\varphi(x,t)\,dt\right]\, dx\\
 & =\int_{B_{\rho}}\int_{-\rho}^{0}\frac{1}{h}(u(x,t)-u(x,t-h))\varphi(x,t)\, dt\, dx\\
 & =\int_{Q_{\rho}}\partial_{t}[u]_{h}\varphi\d z.
\end{align*}
Hence, we arrive at the following.
\[
\int_{Q_\rho}\partial_{t}[u]_{h}\varphi+\left|Du\right|^{p-2}Du\cdot D[\tilde{\varphi}]_{h}+a(z)\left|Du\right|^{q-2}Du\cdot D[\tilde{\varphi}]_{h}-f(z, Du)[\tilde{\varphi}]_h\d z\leq0.
\]
The analogous inequality holds for the supersolution $v$.

\textbf{Step 2:} Let $v_{\ell}:=v+\ell$ for $\ell>0$. We consider the function
\[
\varphi(x,t)=m(u(x,t)-v_{\ell}(x,t))\beta^{\tau,\varepsilon}(t),
\]
where
\[
m(s) := \max(s,0)\quad \text{and}\quad\beta^{\tau,\varepsilon}(t):=\begin{cases}
1, & t\leq\tau-\varepsilon,\\
1-(t-\tau+\varepsilon)/\varepsilon, & \tau-\varepsilon<t\leq\tau,\\
0, & t>\tau.
\end{cases}
\]
for small $\varepsilon>0$ and $\tau\in(-\rho, 0)$. Observe that $\varphi\in W^H(Q_\rho)$.
Moreover, $\varphi$ is compactly supported in $Q_\rho$ since
$u\leq v$ on $\partial_{\mathcal{P}}Q_\rho$ and $\beta$ vanishes
near zero. Thus, we have
\begin{equation}
\int_{Q_\rho}\partial_{t}[u-v_{\ell}]_{h}\varphi\leq-\int_{Q_\rho}A\cdot D[\tilde{\varphi}]_{h}+\int_{Q_{\rho}}|f(z, Du)-f(z, Dv)|[\tilde{\varphi}]_h\d z\label{eq:A ineq}
\end{equation}
where we denoted
\[
A:=(\left|Du\right|^{p-2}Du-\left|Dv\right|^{p-2}Dv)+a(z)(\left|Du\right|^{q-2}Du-\left|Dv\right|^{q-2}Dv).
\]
We set
\[
G(s):=\begin{cases}
s^2/2 &s>0,\\
0, & s\leq0,
\end{cases}
\]
and observe that $G^{\prime}=m$. Since $G$
is convex, at any point $a\in\mathbb{R}$, its graph has a supporting
hyperplane $s\mapsto G(a)+G^{\prime}(a)\cdot(s-a)$.
That is, for all $a,b\in\mathbb{R}$, we have
\[
m(a)\cdot(a-b)\geq G(a)-G(b).
\]
Denote $w:=u-v_{\ell}$ and recall that $\partial_{t}[w]_{h}(x,t)=\frac{1}{h}w(x,t)-w(x,t-h)$.
So that taking $a=w(x,t)$ and $b=w(x,t-h)$, we obtain
\begin{align*}
\partial_{t}[w]_{h}(x,t)\varphi(x,t) & =\frac{\beta^{\tau,\varepsilon}(t)}{h}m(w(x,t))(w(x,t)-w(x,t-h))\\
 & \geq\frac{\beta^{\tau,\varepsilon}(t)}{h}\big(G(w(x,t))-G(w(x,t-h))\big)\\
 & =\beta^{\tau,\varepsilon}(t)\partial_{t}[G(w)]_{h}(x,t).
\end{align*} 

Using the above estimate and integrating by parts, we obtain from
(\ref{eq:A ineq}) that 
\[
-\int_{Q_{\rho}}[G(u-v_{\ell})]_{h}\partial_{t}\beta^{\tau,\varepsilon}\d z\leq-\int_{Q_{\rho}}A\cdot D[\tilde{\varphi}]_{h}+\int_{Q_{\rho}}|f(z, Du)-f(z, Dv)|[\tilde{\varphi}]_h\,\d z.
\]
By Lemma \ref{lem:steklov lemma} and H\"{o}lder's inequality,
we can let $h\rightarrow0$, to obtain
\[
-\int_{Q_{\rho}}G(u-v_{\ell})\partial_{t}\beta^{\tau,\varepsilon}\d z\leq-\int_{Q_{\rho}}\beta^{\tau,\varepsilon}A\cdot D(m(u-v_{\ell}))+ \int_{Q_{\rho}}\beta^{\tau, \varepsilon}|f(z, Du)-f(z, Dv)|m(u-v_l)\d z.
\]
Letting $\varepsilon\rightarrow0$, this yields by continuity for
all $\tau\in(-\rho, 0)$ 
\begin{align*}
\int_{B_\rho\times\left\{ \tau\right\} }G(u-v_{\ell})\d x & \leq-\int_{B_\rho \times (-\rho, \tau)}A\cdot D(m(u-v_{\ell}))\d z\\
&+ \int_{B_{\rho}\times(-\rho, \tau)}|f(z, Du)-f(z, Dv)|m(u-v_l)\, dz
\end{align*}
Observing that $G(u-v_\ell) = \frac{1}{2}m^2(u-v_\ell)$, the above inequality can be written as
\begin{align}\label{eq:comparison main est 1<p<2}
\frac{1}{2}\int_{B_\rho \times \{\tau\}} m^2(u-v_\ell) \d x \leq &\int_{B_{\rho}\times(-\rho, \tau)}|f(z, Du)-f(z, Dv)|m(u-v_l)\, dz \nonumber\\
&- \int_{B_\rho \times (-\rho, \tau)} A \cdot D(m(u-v_\ell))\, dz:=I_1-I_2
\end{align}

\textbf{Step 3:} Our aim is to absorb some part of $\smash{I_{1}}$ into $\smash{I_{2}}$ so that we can conclude from Gr\"{o}nwall's inequality that $m(w):=m(u-v_{\ell})\equiv 0$ almost everywhere. Since $\smash{f}$ is locally Lipschitz continuous in $\eta$, there are constants
$\smash{M\geq\max\left\{2\left\Vert Du\right\Vert _{L^{\infty}(Q_{\rho})}, 1\right\}}$
and $\smash{L=L(M)}$ such that
\begin{equation}
\left|f(x, t, \xi)-f(x, t, \eta)\right|\leq L\left|\xi-\eta\right|\text{ when }\left|\xi\right|,\left|\eta\right|<M.\label{eq:comparison lipcnd}
\end{equation}
We denote $Q_{\tau}:=\left\{ (x,t)\in B_{\rho}\times (-\rho, \tau):m(w)>0\right\} $,
\[
A:=Q_{\tau}\cap\left\{ \left|Dv\right|<M\right\} \text{ and }B:=Q_{\tau}\cap\left\{ \left|Dv\right|\geq M\right\} .
\]
Observe that in $B,$ using the growth condition (\ref{eq:gcnd}),
choice of $M$ and the assumption that $\beta_1, \beta_2\geq 1$ we have
\begin{align}
\left|f(z, Du)\right|&\leq C_{f}\left(1+\left|Du\right|^{\beta_1}+a(z)|Du|^{\beta_2}\right)\nonumber\\
&\leq C_{f}\left(M+M^{\beta_1}+a(z)M^{\beta_2}\right)\leq2C_{f}\left(M^{\beta_1}+a(z)M^{\beta_2}\right)\label{eq:comparison B est}
\end{align}
and also
\begin{equation}
\left|f(z, Dv)\right|\leq 2C_{f}\left(\left|Dv\right|^{\beta_1}+a(z)|Dv|^{\beta_2}\right)\label{eq:comparison growthcnd}
\end{equation}
Now we will estimate $I_1$ in \eqref{eq:comparison main est 1<p<2}. It follows from (\ref{eq:comparison lipcnd}), (\ref{eq:comparison B est}),
(\ref{eq:comparison growthcnd}) and Young's inequality that
\begin{align}
I_{1}\leq & \int_{A}L\left|Du-Dv\right|m(w)\d z+\int_{B}\left(\left|f(z, Du)\right|+\left|f(z, Dv)\right|\right)m(w)\d z\nonumber \\
\leq & \int_{A}L\left|Du-Dv\right|m(w)\d z+\int_{B}4C_{f}\left(\left|Dv\right|^{\beta_1}+a(z)|Dv|^{\beta_2}\right)m(w)\d z\nonumber \\
\leq & \int_{A}\varepsilon\left|Du-Dv\right|^{2}+C(\varepsilon,L)m^2(w)\d z+\int_{B}\varepsilon\left|Dv\right|^{\frac{\beta_1 p}{\beta_1}}+C(\varepsilon,p,\beta_1,L,C_{f})m(w)^{\frac{p}{p-\beta_1}}\d z\nonumber \\
& +\int_{B}\varepsilon a(z) |Dv|^{q}+C(\varepsilon,q,\beta_2,L,C_{f})a(z)m(w)^{\frac{q}{q-\beta_2}}\d z \nonumber\\
\leq & \varepsilon\int_{A}\left|Du-Dv\right|^{2}\d z+\varepsilon\int_{B}\left|Dv\right|^{p}\d z+\varepsilon \int_{B}a(z)|Dv|^q\d z\nonumber\\
&+ C(\varepsilon,p, q,\beta_1, \beta_2, L,C_{f},\left\Vert w\right\Vert _{L^{\infty}(Q_{\rho})}, \left\Vert a\right\Vert_{L^{\infty}(Q_{\rho})})\int_{Q_{\tau}}m^2(w)\d z,\label{eq:comparison (1<p<2) I_1 est}
\end{align}
where in the last step we used that $\frac{p}{p-\beta_1}>2$ and $\frac{q}{q-\beta_2}>2$ to estimate
\begin{align*}
\int_{B}m(w)^{p/(p-\beta_1)}\d z&=\int_{B}m(w)^{p/\left(p-\beta_1\right)-2}m(w)^{2}\d z\\
&\leq\left\Vert w\right\Vert _{L^{\infty}(Q_{\rho})}^{p/(p-\beta_1)-2}\int_{B_{\rho}\times (-\rho, \tau)}m^2(w)\d z,
\end{align*}
and 
\begin{align*}
\int_{B}a(z)m(w)^{\frac{q}{(q-\beta_2)}}\d z&=\int_{B}a(z)m(w)^{\frac{q}{\left(q-\beta_2\right)}-2}m^2(w)\d z\\
&\leq\left\Vert w\right\Vert _{L^{\infty}(Q_{\rho})}^{\frac{q}{(q-\beta_2)}-2}\left\Vert a\right\Vert_{L^{\infty}(Q_{\rho})}\int_{B_{\rho}\times(-\rho, \tau)}m^2(w)\d z.
\end{align*}
To estimate $I_2$ of \eqref{eq:comparison main est 1<p<2}, we use the following vector inequalities from \cite[p98]{lindqvist_plaplace}.
\begin{align}
\left(\left|a\right|^{p-2}a-\left|b\right|^{p-2}b\right)\cdot\left(a-b\right)\geq\left(p-1\right)\left|a-b\right|^{2}\left(1+\left|a\right|^{2}+\left|b\right|^{2}\right)^{\frac{p-2}{2}},\label{eq:algebraic ineq 1<p<2}
\end{align}
and
\begin{align}
\left(\left|a\right|^{q-2}a-\left|b\right|^{q-2}b\right)\cdot\left(a-b\right)\geq\left(q-1\right)\left|a-b\right|^{2}\left(1+\left|a\right|^{2}+\left|b\right|^{2}\right)^{\frac{q-2}{2}}.\label{eq:algebraic ineq 1<q<2}
\end{align}
Using these inequalities, we obtain the following lower bound of $I_2.$
\begin{align}
I_{2}= & \int_{B_{\rho}\times (-\rho, \tau)}\left(\left|Du\right|^{p-2}Du-\left|Dv\right|^{p-2}Dv\right)\cdot Dm(w)\d z\nonumber\\
&+\int_{B_{\rho}\times (-\rho, \tau)}a(z)\left(\left|Du\right|^{q-2}Du-\left|Dv\right|^{q-2}Dv\right)\cdot Dm(w)\d z \nonumber \\
\geq & (p-1)\int_{Q_{\tau}}\frac{\left|Du-Dv\right|^{2}}{\left(1+\left|Du\right|^{2}+\left|Dv\right|^{2}\right)^{\frac{2-p}{2}}}\d z+ (q-1)\int_{Q_{\tau}}\frac{a(z)\left|Du-Dv\right|^{2}}{\left(1+\left|Du\right|^{2}+\left|Dv\right|^{2}\right)^{\frac{2-q}{2}}}\d z\nonumber \\
\geq & (p-1)\int_{A}\frac{\left|Du-Dv\right|^{2}}{\left(1+M^{2}+M^{2}\right)^{\frac{2-p}{2}}}\d z+(p-1)\int_{B}\frac{\left(\left|Dv\right|-\left|Du\right|\right)^{2}}{\left(3\left|Dv\right|^{2}\right)^{\frac{2-p}{2}}}\d z\nonumber \\
+ & (q-1)\int_{A}\frac{a(z)\left|Du-Dv\right|^{2}}{\left(1+M^{2}+M^{2}\right)^{\frac{2-q}{2}}}\d z+(q-1)\int_{B}\frac{a(z)\left(\left|Dv\right|-\left|Du\right|\right)^{2}}{\left(3\left|Dv\right|^{2}\right)^{\frac{2-q}{2}}}\d z \nonumber\\
\geq & C(p,M)\int_{A}\left|Du-Dv\right|^{2}\d z+\left(p-1\right)\int_{B}\frac{\left(\left|Dv\right|-\frac{1}{2}M\right)^{2}}{\left(3\left|Dv\right|^{2}\right)^{\frac{2-p}{2}}}\d z\nonumber \\
+ & C(q,M)\int_{A}a(z)\left|Du-Dv\right|^{2}\d z+\left(q-1\right)\int_{B}\frac{a(z)\left(\left|Dv\right|-\frac{1}{2}M\right)^{2}}{\left(3\left|Dv\right|^{2}\right)^{\frac{2-q}{2}}}\d z\nonumber \\
\geq &\min\{ C(p,M), C(q, M)\}\int_{A}\left|Du-Dv\right|^{2}\d z+\left(p-1\right)\int_{B}\frac{\left(\frac{1}{2}\left|Dv\right|\right)^{2}}{\left(3\left|Dv\right|^{2}\right)^{\frac{2-p}{2}}}\d z\nonumber\\
&+\left(q-1\right)\int_{B}\frac{a(z)\left(\frac{1}{2}\left|Dv\right|\right)^{2}}{\left(3\left|Dv\right|^{2}\right)^{\frac{2-q}{2}}}\d z\label{eq:comparison (1<p<2) I_2 est}
\end{align}
where $C(p, M), C(q,M), C(p), C(q)>0$. Combining the estimates (\ref{eq:comparison (1<p<2) I_1 est})
and (\ref{eq:comparison (1<p<2) I_2 est}) we arrive at
\begin{align*}
I_{1}-I_{2}\leq & \left(\varepsilon-C(p,M)\right)\int_{A}\left|Du-Dv\right|^{2}\d z+\left(\varepsilon-C(p)\right)\int_{B}\left|Dv\right|^{p}\d z\\
&+(\varepsilon-C(q))\int_{B}a(z)|Dv|^{q}\d z+C_{0}\int_{B_{\rho}\times(-\rho, \tau)}m^2(w)\d z,
\end{align*}
where $C_{0}=C(\varepsilon,p,q, \beta_1, \beta_2, L,C_{f},\left\Vert w\right\Vert _{L^{\infty}})$.
Recalling (\ref{eq:comparison main est 1<p<2}) and taking small enough
$\epsilon$ yields
\[
\int_{B_{\rho}\times\{\tau\}}m^2(w)\d x\leq 2C_{0}\int_{B_{\rho}\times(-\rho, \tau)}m^2(w)\d z.
\]
Since this holds for a.e.\ $\tau\in(-\rho,0)$, Gr\"{o}nwall's inequality implies
that $m(w)\equiv0$ a.e.\ in $Q_{\rho}$. Finally, letting $l\rightarrow0$
yields that $u-v\leq0$ a.e.\ in $Q_{\rho}$.
\end{proof}
\begin{lem}\label{lem:comparison lemma q<2}
Let $1<p<2\leq q$ and $\rho >0$. Suppose that $a:Q_\rho \rightarrow [0,\infty)$ is Lipschitz in space, continuous in time, satisfies \eqref{assumption increasing} and $f(z, \eta)$ is locally Lipschitz in $\eta$.  Let $u,v\in W^{H}(Q_\rho)\cap C(\overline{Q_\rho})$ be weak sub- and supersolutions, respectively, to
\[
\partial_{t}u-\div(\left|Du\right|^{p-2}Du+a(z)\left|Du\right|^{q-2}Du)-f(z, Du)\leq -\delta\quad\text{in }\,\,\, Q_\rho.
\]
Also assume that $Du\in L^{\infty}(Q_{\rho}).$ Then, if $u\leq v$ on $\partial_{\mathcal{P}}Q_\rho$, it follows that $u\leq v$ in $Q_{\rho}.$
\end{lem}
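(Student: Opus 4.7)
The argument follows the architecture of the proof of Lemma \ref{lem: first comparison} essentially line by line, with two targeted adjustments to accommodate $q\geq 2$, and the strict-subsolution slack $-\delta$ enters as an additional negative term that compensates for a contribution that is no longer directly absorbable. After running the same Steklov-average and spatial mollification machinery and testing the difference of the weak formulations for $u$ and $v$ against $\varphi = m(u-v_\ell)\beta^{\tau,\varepsilon}$, and then sending $h,\varepsilon \to 0$, one arrives (with $w := u-v_\ell$) at the analogue
\[
\tfrac12 \int_{B_\rho\times\{\tau\}} m^2(w)\d x \;\leq\; I_1 - I_2 - \delta \int_{B_\rho\times(-\rho,\tau)} m(w)\d z,
\]
where $I_1$ and $I_2$ are defined exactly as in the proof of Lemma \ref{lem: first comparison}.

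For $I_2$, the $p$-part with $1<p<2$ is handled via \eqref{eq:algebraic ineq 1<p<2} exactly as before, yielding a weighted $|Du-Dv|^2$ bound on $A=\{|Dv|<M\}$ and a $|Dv|^p$ bound on $B=\{|Dv|\geq M\}$. The $q$-part with $q\geq 2$ is instead treated via the classical monotonicity inequality
\[
\bigl(|a|^{q-2}a-|b|^{q-2}b\bigr)\cdot(a-b) \;\geq\; c(q)|a-b|^q,
\]
producing a contribution of $c(q)\int_{Q_\tau} a(z)|Du-Dv|^q\d z$. Since $M\geq 2\|Du\|_{L^\infty(Q_\rho)}$, we have $|Du-Dv|\geq |Dv|/2$ on $B$, which turns this into $c(q)2^{-q}\int_B a(z)|Dv|^q\d z$.

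The estimation of $I_1$ proceeds exactly as in the proof of Lemma \ref{lem: first comparison}, except that a new difficulty arises from the term $C(\varepsilon_0)\int_B a(z)\, m(w)^{q/(q-\beta_2)}\d z$ produced by Young's inequality on $B$: since $q\geq 2$ and $\beta_2$ may be as small as $1$, the exponent $q/(q-\beta_2)$ can be strictly less than $2$, so this term cannot be absorbed into $\int m^2(w)\d z$ by the $L^\infty$-bound on $w$ the way it was done previously. To handle it, for $r\in (1,2)$ and $t\in[0,\|w\|_{L^\infty}]$ I would use the elementary interpolation $t^r \leq \eta\, t + C(\eta, r, \|w\|_{L^\infty})\, t^2$, and take $\eta = \delta/(C(\varepsilon_0)\|a\|_{L^\infty})$ to obtain
\[
C(\varepsilon_0)\, a(z)\, m(w)^{q/(q-\beta_2)} \;\leq\; \delta\, m(w) + C(\delta,\varepsilon_0,\|a\|_{L^\infty},\|w\|_{L^\infty})\, m^2(w).
\]
The $\delta\, m(w)$ piece is then exactly absorbed by the slack term on the right-hand side, while the $m^2(w)$ piece enters Gronwall.

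After choosing the remaining Young parameters $\varepsilon$ small enough to absorb the $|Du-Dv|^2$, $|Dv|^p$, and $a(z)|Dv|^q$ contributions into $I_2$, one arrives at
\[
\int_{B_\rho \times\{\tau\}} m^2(w)\d x \;\leq\; C_0 \int_{B_\rho \times (-\rho,\tau)} m^2(w)\d z,
\]
and Gronwall's inequality in $\tau$ followed by $\ell\to 0$ yields $u\leq v$ a.e.\ in $Q_\rho$. The main obstacle to watch for is bookkeeping: the constant $C_0$ now depends on $1/\delta$, so the estimate is not stable as $\delta\to 0$; this is precisely why the strict-subsolution hypothesis is imposed in this range of exponents rather than the bare comparison of weak sub- and supersolutions.
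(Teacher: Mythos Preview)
Your proposal is correct and follows essentially the same route as the paper: the $p$-part of $I_2$ is handled via \eqref{eq:algebraic ineq 1<p<2} and the $q$-part via the $q\geq 2$ monotonicity inequality, and the potentially sub-quadratic power $m(w)^{q/(q-\beta_2)}$ is controlled by the elementary interpolation $t^{r}\leq \eta\,t + C(\eta)\,t^{2}$ for $r>1$, with the linear piece absorbed by the $-\delta\int m(w)$ slack. The paper records this interpolation as a general ``algebraic fact'' (covering both $r<2$ and $r\geq 2$) and applies it to all three powers $m(w)^{2}$, $m(w)^{p/(p-\beta_1)}$, $m(w)^{q/(q-\beta_2)}$ at once, but the substance is identical.
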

\begin{proof}
Keeping the notations fixed and repeating the second step of the previous lemma, we get
\begin{align}
\frac{1}{2}\int_{B_\rho \times \{\tau\}} m^2(u-v_\ell) \d x \leq &\int_{B_{\rho}\times(-\rho, \tau)}|f(z, Du)-f(z, Dv)|m(u-v_l)\, dz \nonumber\\
&- \int_{B_\rho \times (-\rho, \tau)} A \cdot D(m(u-v_\ell))\, dz
-\int_{B_{\rho}\times(-\rho, \tau)}\delta m(u-v_{\ell})\d z\nonumber \\
=: & I_{1}-I_{2}-\int_{B_{\rho}\times (-\rho, \tau)}\delta m(u-v_{\ell})\d z.\label{eq:comparison main est p>2}
\end{align}
Moreover, recalling $w=u-v_{\ell}$ and following the estimate of $I_1$ in the above Lemma \ref{lem: first comparison}, we get an inequality
\begin{align*}
    I_1 \leq &\int_{A}\varepsilon|Du-Dv|^2\,dz+ \int_{B}\varepsilon \left(|Dv|^p+a(z)|Dv|^q\right)\,dz\\
    &+ C(\varepsilon, p, q, \beta_1, \beta_2, L, C_f, \left\Vert a\right\Vert_{L^{\infty}(Q_{\rho})}) \int_{B_{\rho}\times(-\rho, \tau)} m(w)^2+m(w)^{\frac{p}{p-\beta_1}}+m(w)^{\frac{q}{q-\beta_2}}-\delta m(w)\, dz.
\end{align*}
To estimate $I_2,$ we use the following algebraic inequalities (see \cite[p95]{lindqvist_plaplace})
\begin{align}
 &\left(\left|a\right|^{p-2}a-\left|b\right|^{p-2}b\right)\cdot\left(a-b\right)\geq\left(p-1\right)\left|a-b\right|^{2}\left(1+\left|a\right|^{2}+\left|b\right|^{2}\right)^{\frac{p-2}{2}},\label{eq:second algebraic ineq 1<p<2}\\ 
&\left(\left|a\right|^{q-2}a-\left|b\right|^{q-2}b\right)\cdot\left(a-b\right)\geq2^{2-q}\left|a-b\right|^{q}.\label{eq:first algebraic ineq p>2} 
\end{align}
Indeed, using these inequalities, we estimate
\begin{align}
I_{2}= & \int_{B_{\rho}\times(-\rho, \tau)}\left(\left|Du\right|^{p-2}Du-\left|Dv\right|^{p-2}Dv\right)\cdot Dm(w)\d z \nonumber\\
&+\int_{B_{\rho}\times (-\rho, \tau)}a(z)\left(\left|Du\right|^{q-2}Du-\left|Dv\right|^{q-2}Dv\right)\cdot Dm(w)\d z \nonumber \\
 & \geq (p-1)\int_{Q_{\tau}}\frac{\left|Du-Dv\right|^{2}}{\left(1+\left|Du\right|^{2}+\left|Dv\right|^{2}\right)^{\frac{2-p}{2}}}\d z+ 2^{2-q}\int_{Q_{\tau}}a(z)\left|Du-Dv\right|^{q}\d z\nonumber \\
 & \geq C(p,M)\int_{A}\left|Du-Dv\right|^{2}\d z+C(p)\int_{B}|Dv|^p\d z+ 2^{2-q}\int_{B}a(z)|Du-Dv|^q\d z\nonumber \\
 & \geq C(p,M)\int_{A}\left|Du-Dv\right|^{2}\d z+C(p)\int_{B}|Dv|^p\d z+ C(q)\int_{B}a(z)|Dv|^q\d z, \label{Eq3.13}
\end{align}
where in the last step we used 
\begin{align*}
\left|Du-Dv\right|^{q}\geq\left(\left|Dv\right|-\left|Du\right|\right)^{q}\geq\left(\left|Dv\right|-\frac{1}{2}M\right)^{q}\geq C(q)\left|Dv\right|^{q}.    
\end{align*}
Therefore, combining the estimates \eqref{eq:comparison (1<p<2) I_1 est} and \eqref{Eq3.13}, we get
\begin{align*}
  I_{1}-I_{2}-\int_{B_{\rho}\times(-\rho, \tau)}\delta m(w) \, dz\leq & \left(\varepsilon-C(p,M)\right)\int_{A}\left|Du-Dv\right|^{2}\d z+\left(\varepsilon-C(p)\right)\int_{B}\left|Dv\right|^{p}\d z\\
  &+(\varepsilon-C(q))\int_{B}a(z)|Dv|^{q}\d z\\
  &+C_0\int_{B_{\rho}\times(-\rho, \tau)}m^2(w)+m(w)^{\frac{p}{p-\beta_1}}+m(w)^{\frac{q}{q-\beta_2}}-\delta m(w)\d z,
\end{align*}
where $C_0$ depends on $\varepsilon, p, q, L, \beta_1, \beta_2, C_f, \left\Vert a\right\Vert_{L^{\infty}(Q_{\rho})}.$ Thus, for small $\varepsilon, $ we have
\begin{align}\label{eq: main estimate: second comparison}
    \frac{1}{2}\int_{B_{\rho}\times \{\tau\}}m^2(w)\, dx\leq C_0 \int_{B_{\rho}\times(-\rho, \tau)}m^2(w)+m(w)^{\frac{p}{p-\beta_1}}+m(w)^{\frac{q}{q-\beta_2}}-\delta m(w)\d z,
\end{align}

Observe that since $w$ is bounded and $\frac{p}{p-\beta_1}, \frac{q}{q-\beta_2}>1$,
the integrand at the right-hand side is bounded by some constant times
$m^2(w)$. To argue this rigorously, we write down the following algebraic
fact.

If $a_{0},\delta,\gamma>0$ and $\alpha>1$, then there exists $C(\alpha,\gamma,\delta,a_{0})>0$
such that
\[
\gamma a^{\alpha}\leq\delta a+C(\alpha,\gamma,\delta,a_{0})a^{2}\text{ for all }a\in[0,a_{0}).
\]
To see this, let first $\alpha<2$. Then by Young's inequality
\begin{align*}
\gamma a^{\alpha}=\gamma a\cdot a^{\alpha-1} & \leq \frac{\delta}{1+a_{0}^{\frac{2}{3-\alpha}}}a^{\frac{2}{3-\alpha}}+C(\alpha,\gamma,\delta,a_{0})a^{\left(\alpha-1\right)\cdot\frac{2}{\alpha-1}}\\
& \leq \delta a+C(\alpha,\gamma,\delta,a_{0})a^{2}.
\end{align*}
If $\alpha\geq2,$ then
\[
\gamma a^{\alpha}=\gamma a^{\alpha-2}\cdot a^{2}\leq\gamma a_{0}^{\alpha-2}a^{2}.
\]

Applying the algebraic fact on \eqref{eq: main estimate: second comparison}
we get
\[
\int_{B_{\rho}\times\{\tau\}}m^2(w)\d x\leq C(p,q, \beta_1, \beta_2, L,C_{f},\delta,\left\Vert a\right\Vert _{L^{\infty}})\int_{B_{\rho}\times (-\rho, \tau)}m^2(w)\d z.
\]
The conclusion now follows from Gr\"{o}nwall's inequality and letting
$l\rightarrow0$.
\end{proof}
\begin{lem}
\label{lem:comparison lemma p>2}Let $2\leq p$ and $\rho >0$. Suppose that $a:Q_\rho \rightarrow [0,\infty)$ is Lipschitz in space, continuous in time, satisfies \eqref{assumption increasing} and $f(z, \eta)$ is locally Lipschitz in $\eta$.  Let $u,v\in W^{H}(Q_\rho)\cap C(\overline{Q_\rho})$ be weak sub- and supersolutions, respectively, to
\[
\partial_{t}u-\div(\left|Du\right|^{p-2}Du+a(z)\left|Du\right|^{q-2}Du)-f(z, Du)\leq -\delta\quad\text{in }\,\,\, Q_\rho.
\]
Also assume that $Du\in L^{\infty}(Q_{\rho}).$ Then, if $u\leq v$ on $\partial_{\mathcal{P}}Q_\rho$, it follows that $u\leq v$ in $Q_{\rho}.$
\end{lem}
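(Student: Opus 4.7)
The plan is to run the same argument as in the proof of Lemma \ref{lem:comparison lemma q<2}, making only one elementary algebraic substitution. Since now both exponents satisfy $p, q \geq 2$, the $1<p<2$ vector inequality \eqref{eq:second algebraic ineq 1<p<2} can be replaced by the $p\geq 2$ companion $(|a|^{p-2}a - |b|^{p-2}b)\cdot(a-b) \geq 2^{2-p}|a-b|^p$, which mirrors the inequality \eqref{eq:first algebraic ineq p>2} already used for the $q$-part. Steps 1 and 2 of Lemma \ref{lem: first comparison} rely only on Steklov averaging (Lemma \ref{lem:steklov lemma}), space mollification (Theorem \ref{lem: spcae approximation}), and the convexity of $G$, and do not depend on the sign of $p-2$; I would reuse them verbatim to arrive at
$$\tfrac{1}{2}\int_{B_\rho\times\{\tau\}} m^2(w)\,dx \leq I_1 - I_2 - \int_{B_\rho\times(-\rho,\tau)} \delta\,m(w)\,dz,$$
with $w = u - v_\ell$, $M \geq \max\{2\|Du\|_{L^\infty},1\}$, and the splitting $Q_\tau = A \cup B$ where $A = Q_\tau\cap\{|Dv|<M\}$ and $B = Q_\tau\cap\{|Dv|\geq M\}$.

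Next, I would use the two $p,q\geq 2$ algebraic inequalities to lower bound
$$I_2 \geq 2^{2-p}\int_{Q_\tau}|Du-Dv|^p\,dz + 2^{2-q}\int_{Q_\tau} a(z)|Du-Dv|^q\,dz,$$
and on $B$, where $|Du|\leq M/2 \leq |Dv|/2$ and hence $|Du-Dv|^r \geq 2^{-r}|Dv|^r$ for $r\in\{p,q\}$, bound this further from below by $C(p)\int_B |Dv|^p + C(q)\int_B a(z)|Dv|^q$. For the upper bound on $I_1$, Young's inequality with conjugate exponents $p, p'$ on $A$ gives $L|Du-Dv|\,m(w) \leq \varepsilon|Du-Dv|^p + C\,m(w)^{p/(p-1)}$; on $B$ the two growth contributions split into $\varepsilon(|Dv|^p + a(z)|Dv|^q) + C\,m(w)^{p/(p-\beta_1)} + C\,a(z)\,m(w)^{q/(q-\beta_2)}$. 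Picking $\varepsilon$ small enough so that the gradient parts of $I_1$ are absorbed by $I_2$ reduces the estimate to
$$\int_{B_\rho\times\{\tau\}} m^2(w)\,dx \leq C_0 \int_{B_\rho\times(-\rho,\tau)} \bigl[m^2(w) + m(w)^{\alpha_1} + m(w)^{\alpha_2} + m(w)^{\alpha_3} - \delta\,m(w)\bigr]\,dz,$$
where $\alpha_1 = \tfrac{p}{p-1}$, $\alpha_2 = \tfrac{p}{p-\beta_1}$, $\alpha_3 = \tfrac{q}{q-\beta_2}$ are all greater than $1$.

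The main (and in fact mild) obstacle is precisely these three super-linear powers of $m(w)$, which did not all appear in Lemma \ref{lem: first comparison}. I would dispose of them exactly as in Lemma \ref{lem:comparison lemma q<2}, using the elementary fact that for every $\alpha > 1$ and $a_0,\delta,\gamma>0$ there exists $C(\alpha,\gamma,\delta,a_0)$ with $\gamma a^\alpha \leq \delta a + C a^2$ on $[0,a_0]$: since $m(w) \leq \|w\|_{L^\infty}$, each $m(w)^{\alpha_i}$ can be dominated by $\tfrac{\delta}{3}m(w) + C m^2(w)$, and the three linear contributions together cancel the negative $-\delta\,m(w)$ coming from the perturbation. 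What remains is the Gr\"{o}nwall-ready estimate $\int_{B_\rho\times\{\tau\}} m^2(w)\,dx \leq C\int_{B_\rho\times(-\rho,\tau)} m^2(w)\,dz$, which forces $m(w)\equiv 0$ almost everywhere; letting $\ell \to 0$ then yields $u \leq v$ in $Q_\rho$. No genuinely new difficulty appears relative to the previous two comparison lemmas.
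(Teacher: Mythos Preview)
Your proposal is correct and follows essentially the same route as the paper: both replace the singular-range vector inequality by the $r\geq 2$ inequality $(|a|^{r-2}a-|b|^{r-2}b)\cdot(a-b)\geq 2^{2-r}|a-b|^{r}$ for $r\in\{p,q\}$, use Young with exponent $p$ on $A$ in the $I_1$ estimate, and then invoke the same algebraic fact from Lemma~\ref{lem:comparison lemma q<2} to absorb the three super-linear powers $m(w)^{p/(p-1)},\,m(w)^{p/(p-\beta_1)},\,m(w)^{q/(q-\beta_2)}$ into $\delta\,m(w)+C\,m^2(w)$ before applying Gr\"onwall. The only cosmetic difference is that the paper does not carry an explicit $m^2(w)$ term on the right-hand side prior to the algebraic absorption step, but this is immaterial.
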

\begin{proof}
Although the proof is similar to the previous ones, we provide all the details for completeness. Repeating the Step 2 of Lemma \ref{lem: first comparison}, we have
\begin{align}
\frac{1}{2}\int_{B_\rho \times \{\tau\}} m^2(u-v_\ell) \d x \leq &\int_{B_{\rho}\times(-\rho, \tau)}|f(z, Du)-f(z, Dv)|m(u-v_l)\, dz \nonumber\\
&- \int_{B_\rho \times (-\rho, \tau)} A \cdot D(m(u-v_\ell))\, dz
-\int_{B_{\rho}\times(-\rho, \tau)}\delta m(u-v_{\ell})\d z\nonumber \\
=: & I_{1}-I_{2}-\int_{B_{\rho}\times (-\rho, \tau)}\delta m(u-v_{\ell})\d z.\label{eq:comparison main est p>2}
\end{align}
Then by (\ref{eq:comparison lipcnd}), (\ref{eq:comparison B est}),
(\ref{eq:comparison growthcnd}) and Young's inequality
\begin{align}
I_{1}\leq & \int_{A}L\left|Du-Dv\right|m(w)\d z+\int_{B}\left(\left|f(x, Du)\right|+\left|f(z, Dv)\right|\right)m(w)\d z\nonumber \\
\leq & \int_{A}\varepsilon\left|Du-Dv\right|^{p}+C(\varepsilon,L)m(w)^{\frac{p}{p-1}}\d z+\int_{B}4C_{f}\left(\left|Dv\right|^{\beta_1}+a(z)|Dv|^{\beta_2}\right)m(w)\d z\nonumber \\
\leq & \varepsilon\int_{A}\left|Du-Dv\right|^{p}\d z+\varepsilon\int_{B}\left|Dv\right|^{p}\d z+\varepsilon \int_{B}a(z)|Dv|^q\d z \nonumber \\
&+C(\varepsilon,p,q, \beta_1, \beta_2, L,C_{f})\int_{B_{\rho}\times(-\rho, \tau)}m(w)^{\frac{p}{p-1}}+m(w)^{\frac{p}{p-\beta_1}}+ m(w)^{\frac{q}{q-\beta_2}}\d z.\label{eq:comparison (p>2) I_1 est}
\end{align}
Using the vector inequality
\begin{equation}
\left(\left|a\right|^{r-2}a-\left|b\right|^{r-2}b\right)\cdot\left(a-b\right)\geq2^{2-r}\left|a-b\right|^{r},\label{eq:algebraic ineq p>2}
\end{equation}
which holds when $r\geq2$ \cite[p95]{lindqvist_plaplace}, we get
\begin{align*}
I_{2}\geq  C(p)\int_{A}\left|Du-Dv\right|^{p}\d z+C(p)\int_{B}\left|Du-Dv\right|^{p}\d z+ c(q)\int_{B}a(z)|Du-Dv|^q\d z.
\end{align*}
Furthermore, since in $B$ it holds
\[
\left|Du-Dv\right|^{r}\geq\left(\left|Dv\right|-\left|Du\right|\right)^{r}\geq\left(\left|Dv\right|-\frac{1}{2}M\right)^{r}\geq C(p)\left|Dv\right|^{r},
\]
we arrive at
\begin{equation}
I_{2}\geq C(p)\int_{A}\left|Du-Dv\right|^{p}\d z+C(p)\int_{B}\left|Dv\right|^{p}\d z+ c(q)\int_{B}a(z)|Dv|^q\d z.\label{eq:comparison (p>2) I_2 est}
\end{equation}
Combining (\ref{eq:comparison (p>2) I_1 est}) and (\ref{eq:comparison (p>2) I_2 est})
with (\ref{eq:comparison main est p>2}) we get
\begin{align*}
&\frac{1}{2}\int_{B_{\rho}\times \{\tau\}}m(w)^{2}\d x\\
&\leq \left(\varepsilon-C(p)\right)\left(\int_{A}\left|Du-Dv\right|^{p}\d z+\int_{B}\left|Dv\right|^{p}\d z\right)+ \left(\varepsilon-C(q)\right)\int_{B}a(z)|Dv|^q\d z\\
 & +\int_{B_{\rho}\times(-\rho, \tau)}C(\epsilon, p, q,\beta_1,\beta_2, L,C_{f})\left(m(w)^{\frac{p}{p-1}}+m(w)^{\frac{p}{p-\beta_1}}+m(w)^{\frac{q}{q-\beta_2}}\right)-\delta m(w)\d z.
\end{align*}
By taking small enough $\varepsilon$, the above becomes
\begin{align*}
&\int_{B_{\rho}\times\{\tau\}}m(w)^{2}(x,s)\d x\\
&\leq\int_{B_{\rho}\times(-\rho, \tau)}C(p, q, \beta_1, \beta_2, L,C_{f})\left(m(w)^{\frac{p}{p-1}}+m(w)^{\frac{p}{p-\beta_1}}+m(w)^{\frac{q}{q-\beta_2}}\right)-\delta m(w)\d z.\label{eq:comparison (p>2) some ineq}
\end{align*}
Again, since $\frac{p}{p-1}, \frac{p}{p-\beta_1}, \frac{q}{q-\beta_2}>1,$ we can use the algebraic fact from the above Lemma \ref{lem:comparison lemma q<2} to conclude
\begin{align*}
    \int_{B_{\rho}\times\{\tau\}} m(w)^2\, dx \leq C (\varepsilon,p,q, \beta_1, \beta_2, L,C_{f}, \left\Vert a\right\Vert_{L^{\infty}(Q_{\rho})})\int_{B_{\rho}\times(-\rho, \tau)}m(w)^2 \, dz.
\end{align*}
Passing to the limit $l\to 0$ and applying Gr\"{o}nwall's inequality we complete the proof.
\end{proof}
Next we use the previous comparison results to prove the comparison
principle for general continuous $f$.
\begin{lem}\label{thm:comparison principle}
Let $1<p\leq q\leq p+1,$  and $\rho >0$. Suppose that $a:Q_\rho \rightarrow [0,\infty)$ is Lipschitz in space, continuous in time, satisfies \eqref{assumption increasing}.  Let $u,v\in W^{H}(Q_\rho)\cap C(\overline{Q_\rho})$ be weak sub- and supersolutions, respectively, to
\[
\partial_{t}u-\div(\left|Du\right|^{p-2}Du+a(z)\left|Du\right|^{q-2}Du)=f(z, Du)\quad\text{in }\,\,\, Q_\rho.
\]
Also assume that $Du\in L^{\infty}(Q_{\rho}).$ Then, if $u\leq v$ on $\partial_{\mathcal{P}}Q_\rho$, it follows that $u\leq v$ in $Q_{\rho}.$
\end{lem}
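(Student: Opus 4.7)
The plan is to reduce the case of a general continuous $f$ to the locally Lipschitz cases already handled by Lemmas \ref{lem: first comparison}, \ref{lem:comparison lemma q<2}, and \ref{lem:comparison lemma p>2}, via a double approximation: mollifying $f$ in the gradient variable and introducing a small time-affine perturbation of $u$ and $v$ that restores strict sub/supersolution inequalities. Since $Du, Dv \in L^{\infty}(Q_\rho)$, set $M := 1 + \max(\|Du\|_{L^{\infty}(Q_\rho)}, \|Dv\|_{L^{\infty}(Q_\rho)})$. After extending $f$ continuously to $\mathbb{R}^{N+1} \times \mathbb{R}^N$ (preserving the growth bound), a standard mollification in the $\eta$-variable produces a family $\{f_\delta\}_{\delta > 0}$ of nonhomogeneities, each locally Lipschitz in $\eta$, satisfying \eqref{eq:gcnd} with constants uniformly bounded as $\delta \to 0$, and such that $\|f - f_\delta\|_{L^{\infty}(\overline{Q_\rho} \times \overline{B_M})} < \delta$.

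For $\sigma > \delta > 0$ small, I would then introduce the perturbations
\[
\tilde u(x,t) := u(x,t) - \sigma(t+\rho), \qquad \tilde v(x,t) := v(x,t) + \sigma(t+\rho),
\]
so that $D\tilde u = Du$ and $D\tilde v = Dv$ remain bounded, and $\tilde u, \tilde v \in W^H(Q_\rho) \cap C(\overline{Q_\rho})$. Since $\sigma(t+\rho)$ vanishes at $t=-\rho$ and is nonnegative on $Q_\rho$, the ordering $\tilde u \leq u \leq v \leq \tilde v$ persists on $\partial_p Q_\rho$. Using the weak subsolution property of $u$ together with $\partial_t \tilde u = \partial_t u - \sigma$ and $|f - f_\delta| \leq \delta$ on $\overline{Q_\rho} \times \overline{B_M}$, a direct test-function computation yields, for every nonnegative $\varphi \in C_0^\infty(Q_\rho)$,
\[
\int_{Q_\rho} -\tilde u\,\partial_t\varphi + \bigl(|D\tilde u|^{p-2}D\tilde u + a(z)|D\tilde u|^{q-2}D\tilde u\bigr)\cdot D\varphi - \varphi f_\delta(z, D\tilde u)\,dz \leq -(\sigma - \delta)\int_{Q_\rho}\varphi\,dz,
\]
i.e.\ $\tilde u$ is a weak \emph{strict} subsolution of the $f_\delta$-equation with margin $\sigma - \delta > 0$; analogously $\tilde v$ is a weak strict supersolution of the same equation.

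Depending on the regime of $(p,q)$, I would then apply the relevant preceding result: Lemma \ref{lem: first comparison} for $1 < p \leq q \leq 2$ (where the strictness is in fact superfluous), Lemma \ref{lem:comparison lemma q<2} for $1 < p < 2 \leq q$, and Lemma \ref{lem:comparison lemma p>2} for $2 \leq p$. In each case the remaining hypotheses---the assumptions on $a$, the monotonicity \eqref{assumption increasing}, boundedness of $D\tilde u, D\tilde v$, and local Lipschitz continuity of $f_\delta$ in $\eta$---are inherited from the setup. Each of these lemmas yields $\tilde u \leq \tilde v$ in $Q_\rho$, hence
\[
u(x,t) - v(x,t) \leq 2\sigma(t+\rho) \leq 2\sigma\rho \quad \text{in } Q_\rho.
\]
Letting $\sigma \to 0$ (with, say, $\delta = \sigma/2$) completes the proof. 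The main technical point is the approximation step: one must verify that the quantitative constants in Lemmas \ref{lem:comparison lemma q<2} and \ref{lem:comparison lemma p>2}---which depend on $C_f, \beta_1, \beta_2$---stay bounded uniformly in $\delta$, so that the final passage to the limit is legitimate. This follows from the fact that the mollified $f_\delta$ satisfy \eqref{eq:gcnd} with controlled constants, but is worth recording explicitly.
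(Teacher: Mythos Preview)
Your overall strategy matches the paper's: perturb $u$ and $v$ in time to create a strict margin, replace $f$ by a locally Lipschitz approximant, invoke Lemmas \ref{lem: first comparison}, \ref{lem:comparison lemma q<2}, \ref{lem:comparison lemma p>2}, and let the parameters vanish. Your linear perturbation $\sigma(t+\rho)$ is a clean variant of the paper's rational perturbation $\varepsilon/(T-t/2)$, and the case splitting is correct.

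There is, however, a genuine gap. You write ``Since $Du, Dv \in L^{\infty}(Q_\rho)$'', but the hypothesis only gives $Du\in L^\infty(Q_\rho)$; nothing is assumed on $Dv$. This matters precisely at the step where you claim ``analogously $\tilde v$ is a weak strict supersolution of the same equation'': to transfer the supersolution property from the $f$-equation to the $f_\delta$-equation you need $|f(z,Dv)-f_\delta(z,Dv)|\leq\delta$ almost everywhere, and your mollification only guarantees this on $\overline{Q_\rho}\times\overline{B_M}$. With $Dv$ possibly unbounded, the inequality $\sigma+f(z,Dv)-f_\delta(z,Dv)\geq 0$ need not hold. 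The paper circumvents this by appealing to a \emph{global} uniform approximation of continuous functions by locally Lipschitz ones (the result of Miculescu \cite{miculescu00}), obtaining $\|f-f_\varepsilon\|_{L^\infty}\leq \varepsilon/(4T^2)$ on all of the gradient variable; with that in hand no bound on $Dv$ is needed. Replacing your mollification step by such a global approximation (which also preserves the growth condition \eqref{eq:gcnd} up to an additive $\delta$) closes the gap, and the rest of your argument goes through unchanged.
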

\begin{proof}
For any $\varepsilon>0$, define
\[
u_{\varepsilon}:=u-\frac{\varepsilon}{T-t/2}.
\]
Then for any non-negative test function $\varphi\in C_{0}^{\infty}(Q_{\rho})$
we have by integration by parts
\begin{align*}
\int_{Q_{\rho}}-u_{\varepsilon}\partial_{t}\varphi\d z= & \int_{Q_{\rho}}-u\partial_{t}\varphi+\frac{\varepsilon}{T-t/2}\partial_{t}\varphi\d z\\
= & \int_{Q_{\rho}}-u\partial_{t}\varphi-\varphi\frac{\varepsilon}{2\left(T-t/2\right)^{2}}\d z\\
\leq & \int_{Q_{\rho}}-u\partial_{t}\varphi-\varphi\frac{\varepsilon}{2T^{2}}\d z.
\end{align*}
Since $f$ is continuous, there exists sequence of locally Lipschitz continuous
functions $\{\smash{f_{\varepsilon}}\}$ such that $\smash{\left\Vert f-f_{\varepsilon}\right\Vert _{L^{\infty}(\mathbb{R}^{N})}\leq\frac{\varepsilon}{4T^2}}$
(see e.g.\ \cite{miculescu00}). Then, since $u$ is a weak subsolution,
we have
\begin{align*}
&\int_{Q_{\rho}} -u_{\varepsilon}\partial_{t}\varphi+\left(\left|Du_{\varepsilon}\right|^{p-2}Du_{\varepsilon}+a(z)|Du_\varepsilon|^{q-2}Du_{\varepsilon}\right)\cdot D\varphi-\varphi f_{\varepsilon}(z, Du_{\varepsilon})\d z\\
&\leq \int_{Q_{\rho}}-u\partial_{t}\varphi+\left(\left|Du\right|^{p-2}Du+a(z)|Du|^{q-2}Du\right)\cdot D\varphi-\varphi f(z, Du)+\varphi\left\Vert f-f_{\varepsilon}\right\Vert _{L^{\infty}(\mathbb{\mathbb{R}^{N}})}-\varphi\frac{\varepsilon}{2T^{2}}\d z\\
&\leq\int_{Q_{\rho}}-\frac{\varepsilon}{4T^{2}}\varphi\d z.
\end{align*}
Hence $u_{\varepsilon}$ is a weak subsolution to 
\[
\partial_{t}u_{\varepsilon}-\div\left(|Du_\varepsilon|^{p-2}Du+a(z)|Du_\varepsilon|^{q-2}Du_\varepsilon\right)-f_{\varepsilon}(z, Du_{\varepsilon})\leq-\frac{\varepsilon}{4T^{2}}\quad\text{in }Q_{\rho}.
\]
Similarly, since $v$ is a weak supersolution, we define
\[
v_{\varepsilon}:=v+\frac{\varepsilon}{T-t/2}
\]
 and deduce that $v_{\varepsilon}$ is a weak supersolution to
\[
\partial_{t}v_{\varepsilon}-\div\left(|Du_{\varepsilon}|^{p-2}Du_{\varepsilon}+a(z)|Dv_{\varepsilon}|^{q-2}Dv_{\varepsilon}\right)-f_{\varepsilon}(z, Dv_{\varepsilon})\geq\frac{\varepsilon}{4T^2}\quad\text{in }Q_{\rho}.
\]
Furthermore, we have $u_{\varepsilon}\leq v_{\varepsilon}$ on $\partial_{\mathcal{P}}Q_{\rho}.$ Hence, by using the comparison Lemmas \ref{lem: first comparison}, \ref{lem:comparison lemma q<2}
and \ref{lem:comparison lemma p>2}, we conclude that $u_{\varepsilon}\leq v_{\varepsilon}$
a.e.\ in $Q_{\rho}$. Hence, we have
\[
u\leq v+\frac{2\varepsilon}{T-t/2}\text{\quad a.e.\ in }Q_{\rho}.
\]
Letting $\varepsilon\rightarrow 0$ finishes the proof.
\end{proof}
Now, using the comparison principle, we will show that weak solutions are viscosity solutions. 

\begin{thm}
\label{thm:weak is visc}Let $1<p\leq q\leq p+1$. Suppose that $a:\Xi \rightarrow [0,\infty)$ is locally Lipschitz in space, continuous in time. Suppose that for any $(x,t) \in \Xi$, there is $r > 0$ such that $a$ satisfies \eqref{assumption increasing} in $Q_r(x,t)$ with some constant $C_a=C_a(x,t,r)$. Let $u\in C(\Xi)$ be a local weak supersolution to (\ref{eq:p-para f})
in $\Xi$. Then $u$ is a viscosity supersolution to (\ref{eq:p-para f}) in $\Xi$.
\end{thm}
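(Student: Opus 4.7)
The plan is to argue by contradiction via the comparison principle Lemma \ref{thm:comparison principle}. Suppose $u$ fails to be a viscosity supersolution at some $(x_0,t_0)\in\Xi$: then there is $\varphi\in C^2(\Xi)$ touching $u$ strictly from below at $(x_0,t_0)$ with $D\varphi(x,t)\neq 0$ for $x\neq x_0$, and
\[
\limsup_{\substack{(x,t)\to(x_0,t_0)\\ x\neq x_0,\,(x,t)\notin\Gamma}}\mathcal{L}\varphi(x,t)<0,\quad \mathcal{L}\varphi:=\partial_t\varphi-\div\bigl(|D\varphi|^{p-2}D\varphi+a|D\varphi|^{q-2}D\varphi\bigr)-f(\cdot,D\varphi).
\]
I would then fix $r>0$ so small that $Q_r:=Q_r(x_0,t_0)\Subset\Xi$, that $a$ satisfies \eqref{assumption increasing} on $Q_r$, and that $\mathcal{L}\varphi\leq -\delta$ for some $\delta>0$ pointwise a.e.\ in $Q_r$. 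The exceptional set is contained in $\Gamma\cup(\{x_0\}\times\mathbb{R})$, which has Lebesgue measure zero (Rademacher plus Fubini, since $a$ is Lipschitz in space).

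Next, set $m:=\min_{\partial_p Q_r}(u-\varphi)>0$, which is positive by strict touching and continuity of $u-\varphi$, and define $\tilde\varphi:=\varphi+m/2\in C^2(\overline{Q}_r)$. Then $\tilde\varphi\leq u$ on $\partial_p Q_r$, $\tilde\varphi(x_0,t_0)>u(x_0,t_0)$, and $D\tilde\varphi\in L^\infty(Q_r)$. The key step is to promote the pointwise a.e.\ inequality $\mathcal{L}\tilde\varphi\leq-\delta$ to the weak formulation, showing that $\tilde\varphi$ is a weak subsolution on $Q_r$. For every non-negative $\eta\in C_0^\infty(Q_r)$ this reduces to the identity
\[
\int_{Q_r}\bigl(|D\tilde\varphi|^{p-2}D\tilde\varphi+a|D\tilde\varphi|^{q-2}D\tilde\varphi\bigr)\cdot D\eta\,dz=-\int_{Q_r}\eta\,\div\bigl(|D\tilde\varphi|^{p-2}D\tilde\varphi+a|D\tilde\varphi|^{q-2}D\tilde\varphi\bigr)\,dz,
\]
which I would justify by the standard $\sigma$-regularization: replace $|D\tilde\varphi|^{p-2}$, $|D\tilde\varphi|^{q-2}$ by $(|D\tilde\varphi|^2+\sigma)^{(p-2)/2}$, $(|D\tilde\varphi|^2+\sigma)^{(q-2)/2}$, integrate by parts classically, and let $\sigma\to 0^+$ via dominated convergence, using that the dominating integrand $(|D\tilde\varphi|^{p-2}+a|D\tilde\varphi|^{q-2})|D^2\tilde\varphi|+\|Da\|_{L^\infty}|D\tilde\varphi|^{q-1}$ is in $L^1(Q_r)$ since $\tilde\varphi\in C^2$ and $p,q>1$.

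Once $\tilde\varphi$ is known to be a weak subsolution to (\ref{eq:p-para f}) in $Q_r$, Lemma \ref{thm:comparison principle} applies with $\tilde\varphi$ as subsolution (bounded gradient) and $u$ as supersolution, and $\tilde\varphi\leq u$ on $\partial_p Q_r$; it yields $\tilde\varphi\leq u$ throughout $Q_r$, contradicting $\tilde\varphi(x_0,t_0)>u(x_0,t_0)$. The main technical obstacle is the integration by parts in the singular regime $1<p<2$, where $|D\tilde\varphi|^{p-2}D\tilde\varphi$ fails to be Lipschitz at the zero set of $D\tilde\varphi$; the $\sigma$-regularization sketched above handles this uniformly, since on the measure-zero set $\Gamma\cup(\{x_0\}\times\mathbb{R})$ the pointwise inequality is simply ignored and elsewhere the convergence of the regularized expression to $\mathcal{L}\tilde\varphi$ is dominated.
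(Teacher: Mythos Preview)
Your overall strategy---assume the viscosity inequality fails for some $C^2$ test function, lift $\tilde\varphi=\varphi+m/2$ to a weak subsolution on a small cylinder, and invoke the comparison principle of Lemma~\ref{thm:comparison principle}---matches the paper's. The gap is in the passage from the pointwise inequality $\mathcal{L}\tilde\varphi\leq-\delta$ a.e.\ to the weak formulation via your $\sigma$-regularised integration by parts. You close the limit $\sigma\to0^+$ by dominated convergence with dominant $(|D\tilde\varphi|^{p-2}+a|D\tilde\varphi|^{q-2})|D^2\tilde\varphi|+\|Da\|_{L^\infty}|D\tilde\varphi|^{q-1}$, justified ``since $\tilde\varphi\in C^2$ and $p,q>1$''. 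This is not enough when $1<p<2$: the test-function condition $D\tilde\varphi(x,t)\neq0$ for $x\neq x_0$ gives no lower bound on how fast $D\tilde\varphi$ vanishes along $\{x_0\}\times\mathbb{R}$, and there are admissible $C^2$ test functions for which $|D\tilde\varphi|^{p-2}|D^2\tilde\varphi|\notin L^1$ near $x_0$. For instance, in one space dimension take $\tilde\varphi(x)=\int_0^x s^3(2+\sin(1/s))\,ds$; this is $C^2$ with $\tilde\varphi'(x)\neq0$ for $x\neq0$, yet for $1<p\leq4/3$ one checks that $\int_0^1|\tilde\varphi'|^{p-2}|\tilde\varphi''|\,dx=\infty$, since $|\tilde\varphi'|^{p-2}|\tilde\varphi''|\gtrsim x^{3p-5}$ on a set of positive density at $0$.

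The paper circumvents this by excising the ball $\{|x-x_0|\leq\rho\}$ before integrating by parts (and mollifying $a$ in space to $a_j$ so that $Da_j$ is smooth). On $Q_r\setminus\{|x-x_0|\leq\rho\}$ the gradient $D\tilde\varphi$ is bounded away from zero, so the divergence computation is classical and the pointwise bound $\mathcal{L}\tilde\varphi\leq-\varepsilon/2$ can be applied directly. The price is a surface integral on $\{|x-x_0|=\rho\}$, but this involves only the flux $|D\tilde\varphi|^{p-2}D\tilde\varphi+a_j|D\tilde\varphi|^{q-2}D\tilde\varphi$, whose norm is bounded by $|D\tilde\varphi|^{p-1}+\|a\|_{L^\infty}|D\tilde\varphi|^{q-1}\leq C$ (here is where $p,q>1$ is genuinely used), so the surface term is $O(\rho^{N-1})\to0$ as $\rho\to0$. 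This excision is precisely the missing ingredient in your $\sigma$-regularisation, and once inserted the rest of your outline goes through.
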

\begin{proof}
On the contrary, let there exists a $\smash{\varphi\in C^{2}(\Xi)}$
touching $u$ from below at $\smash{(x_{0},t_{0})\in\Xi}$, $\smash{D\varphi(x,t)\not=0}$
for $\smash{x\not=x_{0}}$ and
\begin{align}
\limsup_{\substack{\substack{(x,t)\rightarrow(x_{0},t_{0})\\
x\not=x_{0}}}}
&\Big(\partial_{t}\varphi(x,t)-\div\left(|D\varphi(x, t)|^{p-2}\varphi(x,t)+a(x, t)|D\varphi(x, t)|^{q-2}D\varphi(x, t)\right)\nonumber\\
&-f(x, t, D\varphi(x,t))\Big)<0.\label{eq:weak is visc lipschitz 1<p<2}
\end{align}
It follows from above that there are $r>0$ and $\varepsilon>0$ such that
\begin{equation}
\partial_{t}\varphi-\div\left(|D\varphi|^{p-2}D\varphi+a(z)|D\varphi|^{q-2}D\varphi\right)-f(z, D\varphi)<-\varepsilon/2\quad\text{in }\mathcal{Q}_r\setminus\left\{ x=x_{0}\right\}\cup \Gamma,\label{eq:weak is visc lipschitz 1<p<2 someineq}
\end{equation}
where  $\mathcal{Q}_r:=B_{r}(x_{0})\times(t_{0}-r,t_0).$
Indeed, otherwise there would be a sequence $(x_{n},t_{n})\rightarrow(x_{0},t_{0})$
such that $x_{n}\not=x_{0}$ and 
\begin{align*}
&\partial_{t}\phi(x_n, t_n)-\div\left(|D\phi(x_n, t_n)|^{p-2}D\phi(x_n, t_n)+a(x_n , t_n)|D\phi(x_n, t_n)|^{q-2}D\phi(x_n, t_n)\right)\\
&-f(x_n, t_n, D\phi(x_n , t_n))>-\frac{1}{n},
\end{align*}
which contradicts (\ref{eq:weak is visc lipschitz 1<p<2}). Further, taking smaller $r>0$ if necessary, we may assume that $a$ satisfies \eqref{assumption increasing} in $\mathcal Q_r$ with some constant $C_a \geq 0$.
Let $a_{j}$ be the standard mollification of $a$ in space direction.
By taking large enough $j\in\mathbb{N}$, we have that $\left\Vert a-a_{j}\right\Vert _{L^{\infty}(\mathcal{Q}_{r})}<\varepsilon/4$. 
Since $|\Gamma|=0,$ using
Gauss divergence theorem, we obtain for any non-negative test function $\psi\in C_{0}^{\infty}(\mathcal{Q}_{r})$
that 
\begin{align}\label{div thm}
&\text{LHS}:=\int_{\mathcal{Q}_{r}\setminus\{x-x_0| \leq \rho\}} -\varphi\partial_{t}\psi+\left(\left|D\varphi\right|^{p-2}D\varphi+a_j(z)|D\varphi|^{q-2}D\varphi\right)\cdot D\psi-\psi f(z, D\varphi)\d z\nonumber\\
&=\Bigg(\int_{\mathcal{Q}_{r}\setminus\left\{ \left|x-x_{0}\right|\leq\rho\right\} }\psi\partial_{t}\varphi-\psi\div\left(\left|D\varphi\right|^{p-2}D\varphi+a_{j}(z)\left|D\varphi\right|^{q-2}D\varphi\right)-\psi f(z, D\varphi)\d z\nonumber\\
 & \ \ \ \ \ \ +\int_{t_{0}-r}^{t_{0}}\int_{\left\{ \left|x-x_{0}\right|=\rho\right\} }\psi\left(\left|D\varphi\right|^{p-2}D\varphi+a_{j}(z)\left|D\varphi\right|^{q-2}D\varphi\right)\cdot\frac{(x-x_{0})}{\rho}\d S\d t\Bigg):=\text{RHS}
\end{align}
Note that, since $D\varphi(x, t)\neq 0$ for $x\neq x_0,$ we use dominated convergence theorem in the first term of RHS above as $j \to \infty,$ and obtain
\begin{align*}
 &\int_{\mathcal{Q}_{r}\setminus\left\{ \left|x-x_{0}\right|\leq\rho\right\} }\psi\left(\partial_{t}\varphi-\div\left(\left|D\varphi\right|^{p-2}D\varphi+a_{j}(z)\left|D\varphi\right|^{q-2}D\varphi\right)- f(z, D\varphi)\right)\d z\\
&=\int_{\mathcal{Q}_r\setminus \{|x-x_0|\leq \rho\}}\psi\Big(\partial_t \varphi-\div\left(|D\varphi|^{p-2}D\varphi\right)-a_j(z)\div\left(|D\varphi|^{q-2}D\varphi\right)\\
&-|D\varphi|^{q-2}D\varphi\cdot Da_j(z)-f(z, D\varphi)\Big)\, dz\\
\overset{j \to \infty}{\to}\\
&\int_{\mathcal{Q}_r\setminus \{|x-x_0|\leq \rho\}}\psi\Big(\partial_t \varphi-\div\left(|D\varphi|^{p-2}D\varphi\right)-a(z)\div\left(|D\varphi|^{q-2}D\varphi\right)\\
&-|D\varphi|^{q-2}D\varphi\cdot Da(z)-f(z, D\varphi)\Big)\, dz\\
&=\int_{\mathcal{Q}_{r}\setminus\left\{ \left|x-x_{0}\right|\leq\rho\right\} }\psi\left(\partial_{t}\varphi-\div\left(\left|D\varphi\right|^{p-2}D\varphi+a(z)\left|D\varphi\right|^{q-2}D\varphi\right)- f(z, D\varphi)\right)\d z
\end{align*}
Now using \eqref{eq:weak is visc lipschitz 1<p<2} and $\rho \to 0$ in both sides of \eqref{div thm}, we obtain
\begin{align*}
\text{LHS}:=\int_{\mathcal{Q}_{r}} -\varphi\partial_{t}\psi+\left(\left|D\varphi\right|^{p-2}D\varphi+a_j(z)|D\varphi|^{q-2}D\varphi\right)\cdot D\psi-\psi f(z, D\varphi)\d z \leq \int_{\mathcal{Q}_r}-\psi\varepsilon/2\, dz    
\end{align*}
Let $l:=\min_{\partial_{p}\mathcal{Q}_{r}}\left(u-\varphi\right)>0$ and set $\widetilde{\varphi}:=\varphi+l$.
Then tending $j\to \infty$ in the above inequality, we conclude that $\widetilde{\varphi}$ is a weak subsolution
to
\[
\partial_{t}\widetilde{\varphi}-\div\left(\left|D\widetilde{\varphi}\right|^{p-2}D\widetilde{\varphi}+a(z)\left|D\widetilde{\varphi}\right|^{q-2}D\widetilde{\varphi}\right)-f(z, D\widetilde{\varphi})\leq-\varepsilon/2\quad\text{in }\mathcal{Q}_{r}
\]
and on $\partial_{p}\mathcal{Q}_{r}$ it holds $\tilde{\varphi}=\varphi+l\leq\varphi+u-\varphi=u$.
Since $a$ satisfies \eqref{assumption increasing} in $\mathcal Q_r$, it follows from Theorem \ref{thm:comparison principle} that $\widetilde{\varphi}\leq u$
in $\mathcal{Q}_{r}$. But this is not possible since $\widetilde{\varphi}(x_{0},t_{0})>u(x_{0},t_{0})$ and $u$ is continuous.

\begin{old}\textbf{(Case
$p\geq2$)} Assume on contrary that there is a $\varphi\in C^{2}(\overline{\Omega}\times[0,T])$
testing $u$ from below at $(x_{0},t_{0})\in\Omega_{T}$ and
\[
\partial_{t}\phi(x_{0},t_{0})-\Delta_{p}\phi(x_{0},t_{0})-f(D\phi(x_{0},t_{0}))<-\delta
\]
for some $\delta>0$. By continuity we have $r>0$ such that 
\[
\partial_{t}\phi-\Delta_{p}\phi-f(D\phi)<-\delta\text{ in }Q_{r}(x_{0},t_{0}).
\]
Multiplying the above inequality by $\varphi\in C_{0}^{\infty}(Q_{r}(x_{0},t_{0}))$
and integrating by parts we arrive at
\[
\int_{\Omega_{T}}\phi\partial_{t}\varphi+\left|D\phi\right|^{p-2}D\phi\cdot D\varphi-\varphi f(D\phi)\d z\leq-\int_{\Omega_{T}}\delta\varphi\d z.
\]
Let $l=\min_{\partial_{p}Q_{r}(x_{0},t_{0})}u-\phi$ and set $\tilde{\phi}=\phi+l$.
Then $\tilde{\phi}$ is a weak subsolution to 
\[
\partial_{t}\tilde{\phi}-\Delta_{p}\tilde{\phi}-f(D\tilde{\phi})\text{ in }Q_{r}(x_{0},t_{0})
\]
and on $\partial_{p}Q_{r}(x_{0},t_{0})$ it holds that $\tilde{\phi}=\phi+l\leq\phi+u-\phi=u$.
Hence Lemma \ref{lem:comparison lemma p>2} implies that $\tilde{\phi}\leq u$
in $Q_{r}(x_{0},t_{0})$. But this is not possible since $\tilde{\phi}(x_{0},t_{0})>u(x_{0},t_{0})$.\end{old}
\end{proof}
\begin{rem}
\begin{enumerate}
\item [(1)] Note that in Lemmas \ref{lem: first comparison}, \ref{lem:comparison lemma q<2}, \ref{lem:comparison lemma p>2}, and \ref{thm:comparison principle}, we assumed that the subsolutions are locally Lipschitz continuous. This assumption does not diminish the novelty of our results, as in the final proof of Theorem \ref{thm:weak is visc}, these results are applied to a $C^2$-regular function.
\item[(2)]In our comparison principle, we assume the condition \eqref{assumption increasing} on the whole cylinder. A more general comparison result is expected via covering arguments, but we do not pursue that here and postpone it for future work.
\item[(3)] The continuity assumption on the sub (super) solution could also be replaced by boundedness and lower (upper) semicontinuity, provided that the assumption $u \leq v$ on the boundary is interpreted in a suitable way.
\end{enumerate}
\end{rem}
\section{Lipschitz regularity for class $\texttt{S}$}\label{sec:Lipschitz estimates}

In this section, we apply the Ishii-Lions method from the theory
of viscosity solutions to prove Lipschitz estimates for an auxiliary class of relaxed viscosity solutions, denoted by $\texttt{S}$. In Section \ref{sec: time holder} we use these results on weak solutions to establish Theorem \ref{Lip thm}.
To define the class \texttt{S}, we need the concept of relative semi-jets, given below.

We use $Q_r^-(x,t)$ to denote parabolic cylinders that include the top. That is, we set
\[
    Q_r^-(x,t) := B_r (x)\times(t-r,t]\quad\text{and}\quad Q_r^- := Q_r^- (0,0).
\]

\begin{defn}[Relative semi-jets] Let $u: Q_r^- \to \mathbb{R}$ be continuous. For $(x,t)\in Q_{r}^-$, we define the sets
\begin{align*}
\mathcal{P}_{Q_{r}^-}^{2,-}u(x,t):=\Big\{(\theta,\eta,X):u(y,s)\geq &\ u(x,t)+(s-t)\theta+(x-y)\cdot\eta+\frac{1}{2}(x-y)^{\prime}X(x-y)\\
 & +o(\left|x-y\right|^{2}+\left|s-t\right|)\quad\text{as}\quad(y,s)\rightarrow(x,t),(y,s)\in Q_{r}^-\Big\}.
\end{align*}
and 
\begin{align*}
\mathcal{P}_{Q_{r}^-}^{2,+}u(x,t):=\Big\{(\theta,\eta,X):u(y,s)\leq & \ u(x,t)+(s-t)\theta+(x-y)\cdot\eta+\frac{1}{2}(x-y)^{\prime}X(x-y)\\
 & +o(\left|x-y\right|^{2}+\left|s-t\right|)\quad\text{as}\quad(y,s)\rightarrow(x,t),(y,s)\in Q_{r}^-\Big\}.
\end{align*}
We call $\mathcal{P}_{Q_{r}^-}^{2,-}u(x, t)$ the second order subjet of $u$ at $(x, t)$ relative to $Q_r^-$ and $\mathcal{P}_{Q_{r}^-}^{2,+}u(x, t)$ the second order superjet of $u$ at $(x, t)$ relative to $Q_r^-.$
\end{defn}
Suppose that $a:Q_1^- \rightarrow [0, \infty)$ is Lipschitz in space variable. Then we define
\[
    g(x,t,\eta) := ||Da||_{L^\infty(Q_1)}|\eta|^{q-1}+C_f(1+|\eta|^{\beta_1}+a(x,t)|\eta|^{\beta_2}),
\]
where $C_f,\beta_1$ and $\beta_2$ are the constants of the growth condition \eqref{eq:gcnd}.

\begin{defn}[Class $\texttt{S}$]
\label{def:Class S}Let $u:Q_{1}^-\rightarrow\mathbb{\mathbb{R}}$
be continuous. We denote $u\in \texttt{S}(Q_{1}^-)$ if whenever $(\theta,\eta,X)\in\mathcal{P}_{Q_{1}^-}^{2,-}u(x,t)$
with $\eta\not=0$, we have
\begin{align}\label{relaxed visc ineq 1}
\theta-F((x,t),\eta,X)+g((x,t),\eta)\ge0
\end{align}
and whenever $(\theta,\eta,X)\in\mathcal{P}_{Q_{1}^-}^{2,+}u(x,t)$ with $\eta \not=0$,
we have
\begin{align}\label{relaxed visc ineq 2}
\theta-F((x,t),\eta,X)-g((x,t),\eta)\leq0.
\end{align}
\end{defn}

In the definition of class \texttt{S}, we rule out test functions whose gradient vanishes because in the Ishii-Lions method, we only need test functions whose gradient is large.
In the next lemma, we verify that continuous weak solutions to \eqref{eq:p-para f}
belong in \texttt{S} under the natural range of exponents, provided that weak solutions satisfy the comparison principle. The argument is the same as the one used to show that weak solutions are viscosity solutions, but we give details for completeness. The general technique goes back to at least \cite{parabolic viscosity solutions}. In what follows, we denote

\texttt{CWS}:= set of all continuous weak solutions to \eqref{eq:p-para f}, and \texttt{VS}:= set of all viscosity solutions to \eqref{eq:p-para f}.

\begin{lem}[\texttt{CWS} $\subset$ \texttt{S}]\label{cws and S}
Let $1<p\leq q<\infty$, $\beta_{1}\in[1,p)$, $\beta_{2}\in[1,q)$
and $a:Q_{1}^-\rightarrow[0,\infty)$ be Lipschitz in space and continuous in time. Suppose, moreover, that $a$ satisfies \eqref{assumption increasing} in $Q_1$. Let $u:Q_{1}^-\rightarrow\mathbb{R}$ be continuous
and a weak solution to \eqref{eq:p-para f} in $Q_1$.  Then $u\in \texttt{S}(Q_{1}^-)$.
\end{lem}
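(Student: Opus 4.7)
The plan is to argue by contradiction, reducing the class--$\texttt{S}$ inequalities to the already-proved comparison principle (Theorem \ref{thm:comparison principle}), in the same spirit as the proof of Theorem \ref{thm:weak is visc}. Suppose the subjet inequality \eqref{relaxed visc ineq 1} fails, i.e.\ there exist $(x_0,t_0)\in Q_1^-$ and $(\theta,\eta,X)\in \mathcal{P}_{Q_1^-}^{2,-}u(x_0,t_0)$ with $\eta\neq0$ such that
\[
    \theta-F((x_0,t_0),\eta,X)+g((x_0,t_0),\eta)<-\varepsilon
\]
for some $\varepsilon>0$. Using the definition of the subjet and standard paraboloid constructions, I will exhibit $\varphi\in C^2$ with $\varphi(x_0,t_0)=u(x_0,t_0)$, $\varphi<u$ in a punctured neighborhood of $(x_0,t_0)$, and $(\partial_t\varphi,D\varphi,D^2\varphi)(x_0,t_0)=(\theta,\eta,X)$; subtracting a higher-order term such as $|x-x_0|^4+(t-t_0)^2$ ensures the strict touching.

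Since $\eta\neq0$ and $a$ is continuous, both $F(\cdot,D\varphi,D^2\varphi)$ and $g(\cdot,D\varphi)$ are continuous at $(x_0,t_0)$; hence there is a parabolic cylinder $\mathcal{Q}_r(x_0,t_0)\Subset Q_1^-$ on which $D\varphi\neq0$ and
\[
    \partial_t\varphi - F(z,D\varphi,D^2\varphi) + g(z,D\varphi) < -\varepsilon/2.
\]
At points of $\mathcal{Q}_r\setminus\Gamma$ (a set of full measure by Rademacher applied to $a$), the equation in strong form reads
\[
    \partial_t\varphi - \div\bigl(|D\varphi|^{p-2}D\varphi+a(z)|D\varphi|^{q-2}D\varphi\bigr) - f(z,D\varphi) = \partial_t\varphi - F(z,D\varphi,D^2\varphi) - |D\varphi|^{q-2}D\varphi\cdot Da(z) - f(z,D\varphi).
\]
Bounding $|D\varphi|^{q-2}D\varphi\cdot Da(z)+f(z,D\varphi) \geq -g(z,D\varphi)$ by the very definition of $g$ and the growth condition \eqref{eq:gcnd}, I obtain
\[
    \partial_t\varphi - \div\bigl(|D\varphi|^{p-2}D\varphi+a(z)|D\varphi|^{q-2}D\varphi\bigr) - f(z,D\varphi) < -\varepsilon/2 \quad\text{a.e.\ in }\mathcal{Q}_r.
\]
Since $\varphi$ is smooth and $D\varphi\neq0$ on $\mathcal{Q}_r$, the product $a|D\varphi|^{q-2}D\varphi$ is Lipschitz in space, so integration against a non-negative test function $\psi\in C_0^\infty(\mathcal{Q}_r)$ (alternatively mollifying $a$ as in Theorem \ref{thm:weak is visc} and passing to the limit, using that $|\Gamma|=0$) shows that $\varphi$ is a weak subsolution in $\mathcal{Q}_r$ of the perturbed equation with strict negative right-hand side.

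Finally, set $l:=\min_{\partial_p\mathcal{Q}_r}(u-\varphi)>0$ (positive by strict touching) and $\tilde\varphi:=\varphi+l$. Then $\tilde\varphi$ is still a weak subsolution to the same perturbed equation, $\tilde\varphi\leq u$ on $\partial_p\mathcal{Q}_r$, and $D\tilde\varphi\in L^\infty(\mathcal{Q}_r)$. The assumption \eqref{assumption increasing} on $a$ in $Q_1$ is exactly what is needed to invoke the comparison Lemmas \ref{lem: first comparison}, \ref{lem:comparison lemma q<2}, \ref{lem:comparison lemma p>2} (in the relevant range of $p,q$), yielding $\tilde\varphi\leq u$ in $\mathcal{Q}_r$. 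This contradicts $\tilde\varphi(x_0,t_0)=u(x_0,t_0)+l>u(x_0,t_0)$. The superjet case \eqref{relaxed visc ineq 2} is entirely analogous with the roles of sub- and supersolution reversed.

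The only genuine subtlety, and the step I expect to require the most care, is step where one passes from the pointwise (strong) PDE inequality for $\varphi$ to a \emph{weak} subsolution inequality. This is where the non-differentiability of $a$ on $\Gamma$ enters and where one has to use $|\Gamma|=0$ together with dominated convergence on the mollified coefficients $a_j$; everything else is a mild adaptation of the argument already used to prove Theorem \ref{thm:weak is visc}.
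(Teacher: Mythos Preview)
Your approach is essentially identical to the paper's: both argue by contradiction, construct a $C^2$ test function $\varphi$ touching $u$ strictly from below with the prescribed jet data, use continuity and $\eta\neq0$ to propagate the strict inequality $\partial_t\varphi-F+g<-\varepsilon/2$ to a small cylinder, mollify $a$ to pass from the pointwise estimate to a weak subsolution inequality, then lift by $l=\min_{\partial_p}(u-\varphi)>0$ and apply the comparison principle to reach a contradiction. One small point: the range-specific Lemmas \ref{lem: first comparison}--\ref{lem:comparison lemma p>2} all assume $f$ is locally Lipschitz in $\eta$, which is not part of the hypotheses here; the paper instead invokes Lemma~\ref{thm:comparison principle}, which handles merely continuous $f$ (by internally approximating $f$ with Lipschitz $f_\varepsilon$), so you should cite that one.
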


\begin{proof}
Suppose on the contrary that $u\not\in \texttt{S}(Q_{1}^-)$. Then one of the
inequalities in Definition \ref{def:Class S} fails. We may assume
that it is the first one, as the case of the other one is analogical.
There exists $(\theta,\eta,X)\in\mathcal{P}_{Q_{1}^-}^{2,-}u(x,t)$,
$\eta\not=0$, such that for some $\delta>0$
\[
\theta-F(x,t,\eta,X)+g(x,t,\eta)<-\delta.
\]
A standard argument (as in the elliptic case \cite[Lemma 10, p29]{nikos}) yields $\varphi\in C^{2}(Q_{1}^-)$ such that $(\theta,\eta,X)=(\partial_{t}\varphi(x,t),D\varphi(x,t),D^{2}\varphi(x,t))$,
where $\partial_{t}\varphi(x,t)$ means the left-derivative if $t=T$,
and $\varphi(x,t)=u(x,t)$, $\varphi(y,s)<u(y,s)$ if $(y,s)\not=(x,t)$.
It follows from continuity and the fact $\eta \not=0$ that there is $\rho>0$ such that
\begin{equation}
\partial_{t}\varphi(y,s)-F(y,s,D\varphi(y,s),D^{2}\varphi(y,s))+g(y,s,D\varphi(y,s))<-\delta/2\quad\text{in }Q_{\rho}(x,t).\label{eq:weak is in S 1}
\end{equation}
and $D\varphi \not=0$ in $Q_\rho (x,t)$. Let $a_{j}$ be the standard mollification of $a$ in space direction.
By taking large enough $j\in\mathbb{N}$, we can make $\left\Vert a-a_{j}\right\Vert _{L^{\infty}(Q_{1/2})}$ arbitrarily small. 
Then, assuming $\rho <1/2$ if necessary, we can ensure
that (below we write simply $D\varphi=D\varphi(y,s)$ and so on)
\[
\left|a(y,s)-a_{j}(y,s)\right|\left|\left|D\varphi\right|^{q-2}\left(\tr D^{2}\varphi+(q-2)\frac{D\varphi^{\prime}D^{2}\varphi D\varphi}{\left|D\varphi\right|^{2}}\right)\right|\leq\frac{\delta}{4}\quad\text{in }Q_{\rho}(x,t)
\]
for all large enough $j$. Further, by Lipschitz continuity, we have
$\left\Vert Da_{j}\right\Vert _{L^{\infty}(Q_{1/2})}\le\left\Vert Da\right\Vert _{L^{\infty}(Q_{1})}$.
Now we have in $Q_{\rho}(x,t)$ (below we apply the growth condition \eqref{eq:gcnd} on $f$)
\begin{align*}
 & \partial_{t}\varphi-\div(\left|D\varphi\right|^{p-2}D\varphi+a_{j}(y,s)\left|D\varphi\right|^{q-2}D\varphi)-f(y,s,D\varphi)\\
 & =\partial_{t}\varphi-\div(\left|D\varphi\right|^{p-2}D\varphi)-a_{j}(y,s)\left|D\varphi\right|^{q-2}\left(\tr D^{2}\varphi+(q-2)\frac{D\varphi^{\prime}D^{2}\varphi D\varphi}{\left|D\varphi\right|^{2}}\right)\\
 & \ \ \ -\left|D\varphi\right|^{q-2}D\varphi\cdot Da_{j}(y,s)-f(y,s,D\varphi)\\
 & \leq\partial_{t}\varphi-\div(\left|D\varphi\right|^{p-2}D\varphi)-a(y,s)\left|D\varphi\right|^{q-2}\left(\tr D^{2}\varphi+(q-2)\frac{D\varphi^{\prime}D^{2}\varphi D\varphi}{\left|D\varphi\right|^{2}}\right)+\frac{\delta}{4}\\
 & \ \ \ +\left|D\varphi\right|^{q-1}\left\Vert Da\right\Vert _{L^{\infty}(Q_1)}+C_{f}(1+\left|\eta\right|^{\beta_1}+a(y,s)|\eta|^{\beta_2})\\
 & =\partial_{t}\varphi-F(y,s,D\varphi,D^{2}\varphi)+g(y,s,D\varphi)+\frac{\delta}{4}\\
 & <-\frac{\delta}{4},
\end{align*}
where in the last inequality we used (\ref{eq:weak is in S 1}). Now,
let $\phi\in C_{0}^{\infty}(Q_{\rho}(x,t))$ be a non-negative test
function. Multiplying the above inequality by $\phi$ and integrating
by parts, we obtain
\[
\int_{Q_{\rho}(x,t)}-\varphi \partial_{t}\phi-\left|D\varphi\right|^{p-2}D\varphi-a_{j}(z)\left|D\varphi\right|^{q-2}D\phi-\phi f(z,D\varphi)\d z\leq0.
\]
Thus, letting $j\rightarrow\infty$, we see that $\varphi$ is a weak
subsolution to \eqref{eq:p-para f} in $Q_{\rho}(x,t)$. Then so is $\tilde{\varphi}:=\varphi+l$,
where $l:=\inf_{(y,s)\in\partial_{\mathcal{P}}Q_{\rho}(x,t)}u-\varphi>0$.
Since $\tilde{\varphi}\leq u$ on $\partial_{\mathcal{P}}Q_{\rho}(x,t)$,
it follows from comparison principle in Lemma \ref{thm:comparison principle} that $\tilde{\varphi}\leq u$
in $Q_{\rho}(x,t)$. But this is a contradiction since $\tilde{\varphi}(x,t)=\varphi(x,t)+l=u(x,t)+l>u(x,t)$.
\end{proof}

Next we show that viscosity solutions are in $\texttt{S}$. To extend the viscosity inequality to the top of the cylinder, we use an argument similar to the one in \cite[p209]{DFO14}.

\begin{lem}[\texttt{VS} $\subset$ \texttt{S}]Let $1<p\leq q<\infty$, $\beta_{1}\in[1,p)$, $\beta_{2}\in[1,q)$
and $a:Q_{1}^{-}\rightarrow[0,\infty)$ be Lipschitz in space and
continuous in time. Suppose that $u\in C(Q_{1}^{-})$ is a viscosity
solution to \eqref{eq:p-para f} in $Q_{1}$. Then $u\in\texttt{S}(Q_{1}^{-})$.
\end{lem}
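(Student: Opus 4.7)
The plan is to read off a $C^2$ test function from any relative semijet and then feed it into the viscosity definition. Fix $(\theta,\eta,X)\in\mathcal P^{2,-}_{Q_1^-}u(x_0,t_0)$ with $\eta\neq 0$. By the standard jet-to-test-function lemma (cf.\ \cite[Lemma 10]{nikos} adapted to the parabolic setting), we obtain $\rho>0$ and $\varphi\in C^2(Q_\rho(x_0,t_0))$ with $\varphi(x_0,t_0)=u(x_0,t_0)$, $(\partial_t\varphi,D\varphi,D^2\varphi)(x_0,t_0)=(\theta,\eta,X)$, and $\varphi<u$ on $(Q_\rho(x_0,t_0)\cap Q_1^-)\setminus\{(x_0,t_0)\}$. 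Since $\eta\neq 0$ and $D\varphi$ is continuous, after shrinking $\rho$ we may assume $D\varphi\neq 0$ throughout $Q_\rho(x_0,t_0)$; in particular $D\varphi(x,t)\neq 0$ whenever $x\neq x_0$.

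Next I apply the viscosity supersolution property of $u$ to $\varphi$. When $t_0\in(-1,0)$ the test point lies in the interior of $Q_1$ and this is immediate. When $t_0=0$, the relative subjet is one-sided in time but $\Xi=Q_1$ does not contain the top, so I follow the argument indicated in \cite[p.\ 209]{DFO14}: apply the viscosity condition at the shifted points $(x_0,-\delta)$ using a mildly perturbed test function whose jet entries converge to $(\theta,\eta,X)$ as $\delta\to 0^+$, and pass to the limit using continuity of $F$, $g$ and of $\varphi$ in all its arguments. Either way, we obtain
\[
\limsup_{\substack{(x,t)\to(x_0,t_0)\\ x\neq x_0,\ (x,t)\notin\Gamma}}\Big[\partial_t\varphi-\div\big(|D\varphi|^{p-2}D\varphi+a(x,t)|D\varphi|^{q-2}D\varphi\big)-f(x,t,D\varphi)\Big]\geq 0.
\]

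For $(x,t)\notin\Gamma$ the divergence expands as $F((x,t),D\varphi,D^2\varphi)+|D\varphi|^{q-2}D\varphi\cdot Da(x,t)$. Since $(x,t)\notin\Gamma$, we have the pointwise bound $|D\varphi\cdot Da(x,t)|\leq\|Da\|_{L^\infty(Q_1)}|D\varphi|^{q-1}$; combined with the growth condition \eqref{eq:gcnd} on $f$, the bracketed expression is dominated by
\[
\partial_t\varphi(x,t)-F((x,t),D\varphi(x,t),D^2\varphi(x,t))+g((x,t),D\varphi(x,t)).
\]
Therefore there exists a sequence $(x_n,t_n)\to(x_0,t_0)$ with $x_n\neq x_0$, $(x_n,t_n)\notin\Gamma$, along which the latter quantity is at least $-1/n$. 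Letting $n\to\infty$ and invoking continuity of $\partial_t\varphi$, $D\varphi$, $D^2\varphi$ in $(x,t)$, of $F$ in its three arguments on the set $\eta\neq 0$, and of $a$ (hence of $g$), I obtain
\[
\theta-F((x_0,t_0),\eta,X)+g((x_0,t_0),\eta)\geq 0,
\]
which is precisely \eqref{relaxed visc ineq 1}. The reverse inequality \eqref{relaxed visc ineq 2} follows by the symmetric argument: start from $(\theta,\eta,X)\in\mathcal P^{2,+}_{Q_1^-}u(x_0,t_0)$ with $\eta\neq 0$, build a $C^2$ function touching $u$ from above, and use the viscosity subsolution $\liminf\leq 0$ together with the opposite bound $-|D\varphi\cdot Da|\geq -\|Da\|_{L^\infty}|D\varphi|^{q-1}$ and $f(x,t,D\varphi)\geq -C_f(1+|D\varphi|^{\beta_1}+a|D\varphi|^{\beta_2})$.

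The only real obstacle is the boundary case $t_0=0$, where the viscosity definition does not directly apply; this is circumvented by the interior-approximation device of \cite{DFO14}. All other steps are routine manipulations once the jet is converted into a $C^2$ test function, and the hypothesis $\eta\neq 0$ in Definition \ref{def:Class S} ensures that both $F$ and $g$ are continuous throughout the passage to the limit.
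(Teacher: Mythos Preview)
Your proof is correct and follows the same approach as the paper: construct a $C^2$ test function from the jet, apply the viscosity definition, expand the divergence, bound the $Da$- and $f$-terms by $g$, and pass to the limit. The only difference is that the paper spells out the boundary case $t_0=0$ explicitly via the penalty $\varphi^\varepsilon(x,t)=\varphi(x,t)+\varepsilon/t$ and shows the interior minimizer $(x^\varepsilon,t^\varepsilon)$ of $u-\varphi^\varepsilon$ converges to $(x_0,0)$, whereas you cite \cite{DFO14} and sketch it; note that the touching point is not literally $(x_0,-\delta)$ but rather this drifting minimizer, so your phrasing there is slightly imprecise, though the idea is right.
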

\begin{proof}
Let $(\theta,\eta,X)\in\mathcal{P}_{Q_{1}^{-}}^{2,-}u(x_{0},t_{0})$,
$\eta\not=0$, with $(x_{0},t_{0})\in Q_{1}^{-}$. We need to verify
the inequality \eqref{relaxed visc ineq 1}. If $(x_{0},t_{0})\in Q_{1}$, then $\mathcal{P}_{Q_{1}^{-}}^{2,-}u(x_{0},t_{0})=\mathcal{P}^{2,-}u(x_{0},t_{0})$
and the inequality \eqref{relaxed visc ineq 1} follows immediately from the definition of viscosity solutions. Therefore, it suffices
to consider the case $t_{0}=0$. We may also suppose that $x_{0}=0$
for brevity. By a standard construction, there exists $\varphi\in C^{2}(Q_{1}^{-})$
such that it touches $u$ from below at $(0,0)$ in $Q_{1}^{-}$ and
$(\theta,\eta,X)=(\partial_{t}\varphi(0,0),D\varphi(0,0),D^{2}\varphi(0,0))$,
where $\partial_{t}\varphi(0,0)$ is the left-derivative. That is,
we have
\[
\varphi(0,0)=u(0,0)\quad\text{and}\quad\varphi(x,t)<u(x,t)\quad\text{when }(x,t)\in Q_{1}^{-}\setminus\left\{ (0,0)\right\} .
\]
We denote $w(x,t):=u(x,t)-\varphi(x,t)$ and 
\begin{equation}
\nu(h):=\inf_{(x,t)\in Q_{3/4}\cap\left\{ \left|x\right|+\left|t\right|>h\right\} }w(x,t).\label{eq:dist}
\end{equation}
Observe that $\nu(h)>0$ for all $h>0$. For $\varepsilon\in(0,1)$,
define $\varphi^{\varepsilon}\in C^{2}(Q_{1})$ by
\[
\varphi^{\varepsilon}(x,t):=\varphi(x,t)-\frac{\varepsilon}{\left|t\right|}=\varphi(x,t)+\frac{\varepsilon}{t}
\]
so that
\[
w^{\varepsilon}(x,t):=u(x,t)-\varphi^{\varepsilon}(x,t)=w(x,t)+\varepsilon/\left|t\right|.
\]
Now, let $(x^{\varepsilon},t^{\varepsilon})\in\overline{B}_{1/2}\times[-1/2,0)$
be such that
\begin{equation}
\tilde{w}(x^{\varepsilon},t^{\varepsilon})=\inf_{(y,s)\in\overline{B}_{1/2}\times[-1/2,0)}\tilde{w}(y,s).\label{eq:minimizer}
\end{equation}
We have that $(x^{\varepsilon},t^{\varepsilon})\rightarrow(0,0)$
as $\varepsilon\rightarrow0$. Indeed, otherwise, passing to a subsequence
if necessary, we would have $\left|x^{\varepsilon}\right|+\left|t^{\varepsilon}\right|>h$
for some $h>0$ and all $\varepsilon>0$. Then by (\ref{eq:dist})
\[
\tilde{w}(x^{\varepsilon},t^{\varepsilon})>\nu(h)\quad\text{for all }\varepsilon>0.
\]
However, this is not possible, since $(0,\varepsilon^{1/2}/2)\in\overline{B}_{1/2}\times[-1/2,0)$,
and so by definition of $(x^{\varepsilon},t^{\varepsilon})$ in (\ref{eq:minimizer}),
we have
\[
\tilde{w}(x^{\varepsilon},t^{\varepsilon})\leq\tilde{w}(0,\varepsilon^{1/2}/2)=w(0,\varepsilon^{1/2}/2)+\frac{2\varepsilon}{\varepsilon^{1/2}}\rightarrow0\quad\text{as }\varepsilon\rightarrow0,
\]
where we used continuity of $w$ and that $w(0,0)=0$. 

Since $(x^{\varepsilon},t^{\varepsilon})\rightarrow(0,0)$ and $\eta=\varphi(0,0)\not=0$,
by continuity we have $D\varphi(x^{\varepsilon},t^{\varepsilon})\not=0$
for all small $\varepsilon>0$. Therefore, since $u-\varphi^{\varepsilon}$
has a minimum at $(x^{\varepsilon},t^{\varepsilon})$ and $u$ is a viscosity solution, we have
\[
\partial_{t}\varphi^{\varepsilon}(x^{\varepsilon},t^{\varepsilon})\geq\Delta_{p}\varphi(x^{\varepsilon},t^{\varepsilon})+a(x^{\varepsilon},t^{\varepsilon})\Delta_{q}\varphi(x^{\varepsilon},t^{\varepsilon})-g(x^{\varepsilon},t^{\varepsilon},D\varphi(x^{\varepsilon},t^{\varepsilon})).
\]
On the other hand, we have also
\[
\partial_{t}\varphi^{\varepsilon}(x^{\varepsilon},t^{\varepsilon})=\partial_{t}\varphi(x^{\varepsilon},t^{\varepsilon})-\frac{\varepsilon}{t^{2}}\leq\partial_{t}\varphi(x^{\varepsilon},t^{\varepsilon}).
\]
Letting $\varepsilon\rightarrow0$ and using that by continuity $(\partial_{t}\varphi(x^{\varepsilon},t^{\varepsilon}),D\varphi(x^{\varepsilon},t^{\varepsilon}),D^{2}\varphi(x^{\varepsilon},t^{\varepsilon}))$
converges to $(\theta,\eta,X)$, we obtain \eqref{relaxed visc ineq 1}.
\end{proof}

\subsection{The Ishii-Lions method} Comparing two solutions of a partial differential equation is a fundamental step toward establishing uniqueness. For a wide class of elliptic and parabolic equations, such comparison principles can indeed be proved. When working within the framework of viscosity solutions, a common and powerful technique is the so-called ``doubling of variables''. A similar idea appears in the context of hyperbolic conservation laws and Hamilton-Jacobi equations, where the $L^1$-contraction principle is also derived via doubling the variables, which essentially leads to the uniqueness of solutions.

In the context of the comparison principle, the core idea is to construct a test function that penalizes the difference between the solutions at two points and analyze its maximum. To illustrate, one considers the function
\begin{align*}
    \Phi_{u, v, \gamma}(x, y):=u(x)-v(y)-\underbrace{\frac{\gamma}{2}|x-y|^2}_{=\varphi(|x-y|)}
\end{align*}
studies its maximum over $\Omega\times \Omega,$ sending $\gamma \to +\infty.$ This structure allows one to compare to $u$ and $v$ and derive uniqueness results.

In their seminal work \cite{IL90}, Ishii and Lions observed that this method can also be adapted to obtain regularity estimates for viscosity solutions. Specifically, to prove that a solution $u$ is spatially continuous with modulus $\varphi$, it suffices to show that
\[
\sup_{(x,y) \in \overline{\Omega}\times \overline{\Omega}} \left( u(x) - u(y) - \varphi(|x - y|) \right) \leq 0.
\]
Assuming the contrary--i.e., the supremum is positive--one arrives at a contradiction by analyzing the behavior at the maximum point. If this contradiction can be reached, the conclusion is that $u$ is continuous with the modulus of continuity governed by $\varphi$.

In the parabolic setting, we need to modify this idea to account for the time variable. That is, we consider
\[
\Phi(x, y, t) := u(x, t) - u(y, t) - L \varphi(|x - y|),
\]
and aim to show that $\Phi(x, y, t) \leq 0$ for all $(x, y, t) \in B_1 \times B_1 \times (0, T)$. To prove this, we assume toward a contradiction that there exists a point $(x_0, y_0, t_0)$ such that $\Phi(x_0, y_0, t_0) > 0$. The contradiction then arises by applying the viscosity inequality at the maximum point and carefully estimating the derivatives using the structure of the equation. The proof proceeds in two stages:

\begin{enumerate}
\item [(1)] \textbf{H\"older regularity:} We first choose $\varphi(s) = s^{\alpha}$ for some $\alpha \in (0, 1]$ and establish that $u$ is spatially H\"older continuous. The concavity of $\varphi$ plays a key role in handling the second-order terms in the viscosity inequalities.
    
\item [(2)] \textbf{Lipschitz regularity:} With the H\"older continuity already established, we refine the estimate by choosing a test function of the form $\varphi(s) = s - \kappa s^{\beta}$ for appropriate constants $\kappa > 0$ and $\beta \in (1, 2)$. This perturbation accounts for the fact that the second derivative of the Lipschitz candidate $\varphi(s) = s$ vanishes, and the small convex correction $\kappa s^{\beta}$ provides the necessary curvature to close the argument. This ultimately yields the desired spatial Lipschitz continuity of the solution $u$.
\end{enumerate}

To formalize the common structure shared by both stages, we isolate a key lemma, which encapsulates the basic estimate (see \eqref{eq:lemma main est}). This lemma is then applied with different choices of $\varphi$ to derive the respective continuity estimates. Here we follow \cite{LPS25}.

\begin{lem}
\label{lem:Ishii-Lions lemma} Suppose that $1\leq p\leq q<\infty$,
$\beta_{1}\in[1,\infty),$ $\beta_{2}\in[1,\infty)$. Suppose that $a:Q_1^- \rightarrow [0,\infty)$ is continuous in time and Lipschitz in space. Let $u\in \texttt{S}(Q_{1}^-)$
be bounded and suppose that there is an increasing modulus $\omega:[0,\infty)\rightarrow[0,\infty)$,
$\lim_{s\rightarrow0}\omega(s)=0$, such that 
\[
\left|u(x,t)-u(y,t)\right|\leq\omega(\left|x-y\right|)\quad\text{for all }(x,t),(y,t)\in Q_{1}.
\]
 Let $(x_{0},t_{0}),(y_{0},t_{0})\in Q_{1/2}^-$ and define the function
\begin{equation}
\Psi(x,y,t):=u(x,t)-u(y,t)-L\varphi(\left|x-y\right|)-\frac{K}{2}\left|x-x_{0}\right|^{2}-\frac{K}{2}\left|y-y_{0}\right|^{2}-\frac{K}{2}\left|t-t_{0}\right|^{2},\label{eq:lemma blah}
\end{equation}
where $K:=8\osc_{Q_{1}}u$, and $\varphi:[0,2]\rightarrow[0,\infty)$
is a $C^{2}$-function in $(0,2)$ such that
\begin{equation}
\varphi(0)=0,\quad\ensuremath{\left|\varphi^{\prime\prime}(s)\right|<\frac{\varphi^{\prime}(s)}{s}}\quad\text{and}\quad\varphi^{\prime\prime}<0<c_{\varphi}<\varphi^{\prime}\text{\ensuremath{\quad\text{for }\text{some }c_{\varphi}>0.}}\label{eq:Ishii-Lions lemma cnd}
\end{equation}
Then, if $L>L^{\prime}$ for some $L^{\prime}$ that depends only
on $c_{\varphi}$ and $\osc_{Q_{1}}u$, the following holds: If $\Psi$
has a positive maximum at $(\hat{x},\hat{y},\hat{t})\in\overline{B}_{1}\times\overline{B}_{1}\times[-1,0]$,
we have
\begin{align}
-K\leq\: & C(L\varphi^{\prime})^{p-2}\Big(L\varphi^{\prime\prime}+(L\varphi^{\prime})^{\gamma}+\sqrt{K}(L\varphi^{\prime})^{\gamma-1}\frac{\omega^{1/2}(\left|z\right|)}{\left|z\right|}\Big)\nonumber \\
 & +C(a(\hat{x},\hat{t})+a(\hat{y},\hat{t}))(L\varphi^{\prime})^{q-2}\left(L\varphi^{\prime\prime}+(L\varphi^{\prime})^{\gamma}\right),\label{eq:lemma main est}
\end{align}
where $\left|z\right|:=\left|\hat{x}-\hat{y}\right|$, $\varphi^{\prime}:=\varphi^{\prime}(\left|z\right|)$,
$\varphi^{\prime\prime}:=\varphi^{\prime\prime}(\left|z\right|)$
and 
\begin{equation}
\gamma:=\max(q-p,\beta_{1}-p+1,\beta_{2}-q+1)+1.\label{eq:gamma}
\end{equation}
The constant $C\geq1$ depends only on $N$, $p$, $q$, $\left\Vert a\right\Vert _{L^\infty (Q_1)}$,
$\left\Vert Da\right\Vert _{L^\infty (Q_1)}$, $C_f$, $\beta_{1}$
and $\beta_{2}$. Finally, we have the estimate
\begin{equation}
\varphi^{\prime}(\left|z\right|)\leq\frac{\omega(\left|z\right|)}{L\left|z\right|}.\label{eq:lemma derivative est}
\end{equation}
\end{lem}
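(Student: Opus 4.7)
The plan is to run the standard parabolic Ishii--Lions doubling-of-variables argument, with all the technical weight concentrated in a careful matrix-inequality analysis adapted to the two-phase structure.

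First I would verify the geometric setup. Since $\Psi(\hat x,\hat y,\hat t) > 0$ and $K = 8\osc_{Q_1}u$, the three quadratic penalties force $|\hat x - x_0|, |\hat y - y_0|, |\hat t - t_0| < 1/2$; hence $(\hat x,\hat t)$ and $(\hat y,\hat t)$ lie well inside $Q_1^-$ and the class~$\texttt S$ inequalities are applicable there. The same positivity forces $|z|:=|\hat x - \hat y| > 0$ because $\varphi(0)=0$, and comparing with the spatial modulus of $u$ gives $L\varphi(|z|) \leq u(\hat x,\hat t) - u(\hat y,\hat t) \leq \omega(|z|)$; combined with the concavity bound $\varphi(|z|) \geq |z|\varphi'(|z|)$ (which follows from $\varphi'' < 0$ and $\varphi(0)=0$), this already proves \eqref{eq:lemma derivative est}.

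Next I would apply the parabolic Crandall--Ishii lemma to extract
\begin{equation*}
(\theta_1,\eta_1,X) \in \mathcal P_{Q_1^-}^{2,+}u(\hat x,\hat t), \qquad (\theta_2,\eta_2,Y)\in \mathcal P_{Q_1^-}^{2,-}u(\hat y,\hat t),
\end{equation*}
with $\eta_1 = L\varphi'(|z|)\tfrac{z}{|z|} + K(\hat x - x_0)$, $\eta_2 = L\varphi'(|z|)\tfrac{z}{|z|} - K(\hat y - y_0)$, time parts obeying $\theta_1 - \theta_2 = K(\hat t - t_0) \geq -K$, and symmetric matrices $X,Y \in S(N)$ satisfying the standard matrix inequality whose diagonal symbol is the Hessian
\begin{equation*}
B := L\varphi''(|z|)\tfrac{z\otimes z}{|z|^2} + \tfrac{L\varphi'(|z|)}{|z|}\Bigl(I - \tfrac{z\otimes z}{|z|^2}\Bigr).
\end{equation*}
Choosing $L > L'(c_\varphi,\osc_{Q_1}u)$ and using $\varphi' > c_\varphi$ guarantees that $|\eta_1|$ and $|\eta_2|$ are comparable to $L\varphi'(|z|)$ and bounded away from zero, so the jet gradients are non-vanishing as required by the definition of class~$\texttt S$. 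Subtracting the two class-$\texttt S$ inequalities \eqref{relaxed visc ineq 1}--\eqref{relaxed visc ineq 2} then yields
\begin{equation*}
-K \leq \theta_1 - \theta_2 \leq \bigl[F(\hat x,\hat t,\eta_1,X) - F(\hat y,\hat t,\eta_2,Y)\bigr] + g(\hat x,\hat t,\eta_1) + g(\hat y,\hat t,\eta_2),
\end{equation*}
so everything reduces to bounding the $F$-difference and absorbing the $g$-terms.

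The hard part, which carries essentially all the technical content of the lemma, is the matrix estimate. I would split $F = F_p + aF_q$ with $F_r(\eta,X) = |\eta|^{r-2}\bigl(\tr X + (r-2)\langle X\tfrac{\eta}{|\eta|},\tfrac{\eta}{|\eta|}\rangle\bigr)$ and decompose
\begin{equation*}
F(\hat x,\hat t,\eta_1,X) - F(\hat y,\hat t,\eta_2,Y) = \bigl[F_p(\eta_1,X) - F_p(\eta_2,Y)\bigr] + a(\hat x,\hat t)\bigl[F_q(\eta_1,X) - F_q(\eta_2,Y)\bigr] + \bigl[a(\hat x,\hat t) - a(\hat y,\hat t)\bigr] F_q(\eta_2,Y).
\end{equation*}
The two bracketed differences are handled by testing the matrix inequality against vectors of the form $(\xi,\xi)$, which yields an $O(K)$ contribution, and against $(\tfrac{z}{|z|},-\tfrac{z}{|z|})$, which exploits the negative eigenvalue $L\varphi''$ of $B$ in the radial direction to extract the coercive (negative) pieces $(L\varphi')^{p-2}L\varphi''$ and $a(L\varphi')^{q-2}L\varphi''$ appearing in \eqref{eq:lemma main est}. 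Since $\eta_1$ and $\eta_2$ differ from the common vector $L\varphi'\tfrac{z}{|z|}$ only by $O(K)$ perturbations, expanding around this common gradient produces mixed cross-terms of order $K\cdot(L\varphi')^{r-2}(L\varphi'/|z|)$; by Young's inequality together with \eqref{eq:lemma derivative est}, which allows trading a factor $L\varphi'/|z|$ for $\omega(|z|)/(L|z|^2)$, these become the $\sqrt{K}(L\varphi')^{\gamma-1}\omega^{1/2}(|z|)/|z|$ error term of \eqref{eq:lemma main est}. The cross-coefficient term is controlled by the Lipschitz regularity of $a$, namely $|a(\hat x,\hat t) - a(\hat y,\hat t)|\,|F_q(\eta_2,Y)| \lesssim \|Da\|_{L^\infty}|z|(L\varphi')^{q-2}(L\varphi'/|z| + K)$, whose leading $\|Da\|_{L^\infty}(L\varphi')^{q-1}$ piece matches the corresponding term of $g$. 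Finally, the remaining $g$-contributions $C\bigl(1 + \|Da\|_{L^\infty}(L\varphi')^{q-1} + (L\varphi')^{\beta_1} + a(L\varphi')^{\beta_2}\bigr)$ are absorbed into $(L\varphi')^{p-2}(L\varphi')^\gamma + (a(\hat x,\hat t) + a(\hat y,\hat t))(L\varphi')^{q-2}(L\varphi')^\gamma$ precisely because \eqref{eq:gamma} enforces $\gamma \geq q - p + 1$, $\gamma \geq \beta_1 - p + 2$ and $\gamma \geq \beta_2 - q + 2$, which are exactly the exponent relations needed to match the $F$-difference bounds. Assembling these pieces yields \eqref{eq:lemma main est}.
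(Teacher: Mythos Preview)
Your overall scheme is correct and close in spirit to the paper's, but the key technical step---handling the weighted matrix term $a(\hat x,\hat t)X - a(\hat y,\hat t)Y$---is organized differently. You add and subtract $a(\hat x)F_q(\eta_2,Y)$ and bound the cross piece $[a(\hat x)-a(\hat y)]F_q(\eta_2,Y)$ via $\|Da\|_{L^\infty}|z|\cdot\|Y\|\cdot(L\varphi')^{q-2}\lesssim \|Da\|_{L^\infty}(L\varphi')^{q-1}$. This works, but it leaves only $a(\hat x,\hat t)$ in front of the coercive $(L\varphi')^{q-2}L\varphi''$, not the symmetric $a(\hat x,\hat t)+a(\hat y,\hat t)$ in \eqref{eq:lemma main est}; you would need to average with the mirror decomposition to recover the stated coefficient (for the downstream Lemmas the coefficient is irrelevant, since the whole second line is nonpositive and gets discarded). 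The paper instead freezes the gradient at $\tilde\eta=L\varphi'\,z/|z|$ and tests the Ishii matrix inequality against the vectors $(\sqrt{a(\hat x)}\,\xi,\pm\sqrt{a(\hat y)}\,\xi)$: the ``$+$'' choice on $\xi=z/|z|$ extracts the symmetric factor $(\sqrt{a(\hat x)}+\sqrt{a(\hat y)})^2$ in front of $L\varphi''$ directly, while the ``$-$'' choice on generic $\xi$ produces the error $(\sqrt{a(\hat x)}-\sqrt{a(\hat y)})^2\,L\varphi'/|z|\le |a(\hat x)-a(\hat y)|\,L\varphi'/|z|\le \|Da\|_{L^\infty}L\varphi'$, the same endpoint you reach.

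Two small corrections. First, testing the matrix inequality with $(\xi,\xi)$ gives $\xi'(X-Y)\xi\le 0$, not an $O(K)$ term; the $O(K)$ contribution comes from the $\pm KI$ corrections that the quadratic penalties add to the jet matrices (the paper's $T_3$), which you have not written explicitly. Second, the $\sqrt K\,\omega^{1/2}(|z|)/|z|$ error is not obtained via \eqref{eq:lemma derivative est} or Young's inequality: it comes directly from positivity of $\Psi$, which forces $\tfrac{K}{2}|\hat x-x_0|^2\le\omega(|z|)$ and hence $|\eta_i-\tilde\eta|=K|\hat x-x_0|\le\sqrt{2K}\,\omega^{1/2}(|z|)$; inserting this into the Lipschitz-type bound $|F_r(\eta_i,Z)-F_r(\tilde\eta,Z)|\lesssim (L\varphi')^{r-3}\|Z\|\,|\eta_i-\tilde\eta|$ with $\|Z\|\lesssim L\varphi'/|z|$ (the paper's $T_2$) gives the term in one step.
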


\begin{proof}
First, we have $\left|z\right|\not=0$ since otherwise the maximum
at $(\hat{x},\hat{y},\hat{t})$ would be non-positive. Secondly, we
have
\begin{align}
0 & <\left|u(\hat{x},\hat{t})-u(\hat{y},\hat{t})\right|-L\varphi(\left|\hat{x}-\hat{y}\right|)-\frac{K}{2}\left|\hat{x}-x_{0}\right|^{2}-\frac{K}{2}\left|\hat{y}-y_{0}\right|^{2}-\frac{K}{2}\left|\hat{t}-t_{0}\right|^{2}\label{eq:lemma whatever}
\end{align}
so that 
\begin{equation}
\left|\hat{t}-t_{0}\right|,\left|\hat{x}-x_{0}\right|,\left|\hat{y}-y_{0}\right|\leq\sqrt{\frac{2}{K}\left|u(\hat{x},\hat{t})-u(\hat{y},\hat{t})\right|}\leq\sqrt{\frac{1}{4}}=\frac{1}{2}.\label{eq:lemma est 11}
\end{equation}
Since $x_{0},y_{0}\in B_{1/2}$ and $t_{0}\in(-1/2,0]$, this implies
$\hat{x},\hat{y}\in B_{1}$ and $\hat{t}\in(-1,0]$, which means that
the maximum point $(\hat{x},\hat{y},\hat{t})$ is in $B_{1}\times B_{1}\times(-1,0]$.

By definition of $\omega$ and uniform continuity of $u$, we have
\begin{equation}
\left|u(x,s)-u(y,s)\right|\leq\omega(\left|x-y\right|)\quad\text{for all }(x,s),(y,s)\in Q_1^-,\label{eq:3-1}
\end{equation}
Therefore, (\ref{eq:lemma est 11}) implies that 
\begin{align*}
K\left|\hat{t}-t_{0}\right|,K\left|\hat{x}-x_{0}\right|,K\left|\hat{y}-y_{0}\right|\leq & \sqrt{K}\omega^{1/2}(\left|\hat{x}-\hat{y}\right|).
\end{align*}
We also have by concavity of $\varphi$ and (\ref{eq:lemma whatever})
that $\left|z\right|\varphi^{\prime}(\left|z\right|)\leq\int_{0}^{\left|z\right|}\varphi^{\prime}(s)\d s\leq\frac{\omega(\left|z\right|)}{L}$
so that (\ref{eq:lemma derivative est}) holds.

Since $\hat{x}\not=\hat{y}$, the function $\phi(x,y)\mapsto\varphi(\left|x-y\right|)$
is $C^{2}$ in the neighborhood of $(\hat{x},\hat{y})$. Therefore,
we may invoke the parabolic Theorem of sums \cite[Theorem 9]{DFO14}
(see also \cite[Theorem 8.3]{userguide}). For any $\mu>0$, there
exist matrices $X,Y\in S(N)$ and $b_{1},b_{2}\in\mathbb{R}$ such
that 
\[
b_{1}+b_{2}\geq\partial_{t}(L\varphi(\left|x-y\right|))(\hat{x},\hat{y})=0
\]
and
\begin{align*}
(b_{1},D_{x}(L\varphi(\left|x-y\right|))(\hat{x},\hat{y}),X) & \in\overline{\mathcal{P}}_{Q_{1}^-}^{2,+}(u-\frac{K}{2}\left|x-x_{0}\right|^{2}-\frac{K}{2}\left|t-t_{0}\right|^{2})(\hat{x},\hat{t}),\\
(-b_{2},-D_{y}(L\varphi(\left|x-y\right|))(\hat{x},\hat{y}),Y) & \in\overline{\mathcal{P}}_{Q_{1}^-}^{2,-}(u+\frac{K}{2}\left|y-y_{0}\right|^{2})(\hat{x},\hat{y}).
\end{align*}
Denoting $z:=\hat{x}-\hat{y}$ and
\begin{align*}
\eta_{1} & :=L\varphi^{\prime}(\left|z\right|)\frac{z}{\left|z\right|}+K(\hat{x}-x_{0}),\\
\eta_{2} & :=L\varphi^{\prime}(\left|z\right|)\frac{z}{\left|z\right|}-K(\hat{y}-y_{0}),\\
\theta_{1} & :=b_{1}+K(\hat{t}-t_{0}),\\
\theta_{2} & :=-b_{2},
\end{align*}
these can be written as
\begin{align*}
(\theta_{1},\eta_{1},X+KI)\in\overline{\mathcal{P}}_{Q_{1}^-}^{2,+}u(\hat{x},\hat{t}),\quad & (\theta_{2},\eta_{2},Y-KI)\in\overline{\mathcal{P}}_{Q_{1}^-}^{2,-}u(\hat{y},\hat{t}).
\end{align*}
Assuming $L$ is large enough depending on $K$, $\osc_{Q_{1}}u$
and $c_{\varphi}$, we have
\begin{align}
\left|\eta_{1}\right|,\left|\eta_{2}\right| & \leq L\varphi^{\prime}(\left|z\right|)+\sqrt{K}\omega^{1/2}(\left|\hat{x}-\hat{y}\right|)\leq L\varphi^{\prime}(\left|z\right|)+Lc_{\varphi}\leq2L\varphi^{\prime}(\left|z\right|)\nonumber \\
\left|\eta_{1}\right|,\left|\eta_{2}\right| & \geq\frac{1}{2}L\varphi^{\prime}(\left|z\right|)+\frac{1}{2}Lc_{\varphi}-\sqrt{K}\omega^{1/2}(\left|\hat{x}-\hat{y}\right|)\geq\frac{1}{2}L\varphi^{\prime}(\left|z\right|)\geq1,\label{eq:lemma gradient estimate}
\end{align}
Furthermore, by Theorem of sums, we have 
\begin{align}
-(\mu+4\left\Vert B\right\Vert ) & I\leq\begin{pmatrix}X & 0\\
0 & -Y
\end{pmatrix}\leq\begin{pmatrix}B & -B\\
-B & B
\end{pmatrix}+\frac{2}{\mu}\begin{pmatrix}B^{2} & -B^{2}\\
-B^{2} & B^{2}
\end{pmatrix},\label{eq:lemma matrix ineq}
\end{align}

where
\begin{align*}
B & =L\varphi^{\prime\prime}(\left|z\right|)\frac{z\otimes z}{\left|z\right|^{2}}+L\frac{\varphi^{\prime}(\left|z\right|)}{\left|z\right|}\left(I-\frac{z\otimes z}{\left|z\right|^{2}}\right),\\
B^{2}=BB & =L^{2}\left|\varphi^{\prime\prime}(\left|z\right|)\right|^{2}\frac{z\otimes z}{\left|z\right|^{2}}+L^{2}\frac{\left|\varphi^{\prime}(\left|z\right|)\right|^{2}}{\left|z\right|^{2}}\left(I-\frac{z\otimes z}{\left|z\right|^{2}}\right).
\end{align*}
By assumptions on $\varphi$, we have
\[
\left|\varphi^{\prime\prime}(\left|z\right|)\right|\leq\varphi^{\prime}(\left|z\right|)/\left|z\right|\quad\text{and}\quad\varphi^{\prime\prime}(\left|z\right|)<0<\varphi^{\prime}(\left|z\right|).
\]
Thus we deduce that 
\begin{equation}
\left\Vert B\right\Vert \leq\frac{L\varphi^{\prime}(\left|z\right|)}{\left|z\right|}\quad\text{and}\quad\left\Vert B^{2}\right\Vert \leq L^{2}\frac{(\varphi^{\prime}(\left|z\right|))^{2}}{\left|z\right|^{2}}.\label{eq:lemma B est}
\end{equation}
Moreover, we choose
\[
\mu:=\frac{2L\varphi^{\prime}(\left|z\right|)}{\left|z\right|},
\]
so that
\begin{equation}
\left\langle B\frac{z}{\left|z\right|},\frac{z}{\left|z\right|}\right\rangle +\frac{2}{\mu}\left\langle B^{2}\frac{z}{\left|z\right|},\frac{z}{\left|z\right|}\right\rangle \leq\frac{L}{2}\varphi^{\prime\prime}(\left|z\right|).\label{eq:lemma est 33}
\end{equation}
Denote below
\begin{align*}
F((z,s),\zeta,Z) & :=\left|\zeta\right|^{p-2}(\tr Z+(p-2)\frac{\zeta^{\prime}Z\zeta}{\left|\zeta\right|^{2}})+a((z,s))\left|\zeta\right|^{q-2}(\tr Z+(q-2)\frac{\zeta^{\prime}Z\zeta}{\left|\zeta\right|^{2}}).
\end{align*}
Since $u\in \texttt{S}(Q_{1}^-)$, we have
\[
\theta_{1}\leq F((\hat{x},\hat{t}),\eta_{1},X+KI)+g(\hat{y},\left|\eta_{1}\right|)\quad\text{and\ensuremath{\quad}}\theta_{2}\geq F((\hat{y},\hat{t}),\eta_{2},Y-KI)-g(\hat{x},\left|\eta_{2}\right|)
\]
We denote $\tilde{\eta}:=L\varphi^{\prime}(\left|z\right|)z/\left|z\right|$.
Then we subtract the equations and use that $\theta_{1}-\theta_{2}=b_{1}+b_{2}+K(\hat{t}-t_{0})\geq K(\hat{t}-t_{0})$,
we obtain 
\begin{align*}
K(\hat{t}-t_{0}) & \leq F((\hat{x},\hat{t}),\eta_{1},X+KI)-F((\hat{y},\hat{t}),\eta_{2},Y-KI)+g((\hat{x},\hat{t}),\left|\eta_{1}\right|)+g((\hat{y},\hat{t}),\left|\eta_{2}\right|)\\
 & =F((\hat{x},\hat{t}),\eta_{1},X)-F((\hat{y},\hat{t}),\eta_{2},Y)\\
 & \ \ \ +F((\hat{x},\hat{t}),\eta_{1},X+KI)-F((\hat{x},\hat{t}),\eta_{1},X)-F((\hat{y},\hat{t}),\eta_{2},Y-KI)+F((\hat{y},\hat{t}),\eta_{2},Y)\\
 & \ \ \ +g((\hat{x},\hat{t}),\left|\eta_{1}\right|)+g((\hat{y},\hat{t}),\left|\eta_{2}\right|)\\
 & =F((\hat{x},\hat{t}),\tilde{\eta},X)-F((\hat{y},\hat{t}),\tilde{\eta},Y)\\
 & \ \ \ +F((\hat{x},\hat{t}),\eta_{1},X)-F((\hat{x},\hat{t}),\tilde{\eta},X)+F((\hat{y},\hat{t}),\tilde{\eta},Y)-F((\hat{y},\hat{t}),\eta_{2},Y)\\
 & \ \ \ +F((\hat{x},\hat{t}),\eta_{1},X+KI)-F((\hat{x},\hat{t}),\eta_{1},X)-F((\hat{y},\hat{t}),\eta_{2},Y-KI)+F((\hat{y},\hat{t}),\eta_{2},Y)\\
 & \ \ \ +g((\hat{x},\hat{t}),\left|\eta_{1}\right|)+g((\hat{x},\hat{t}),\left|\eta_{2}\right|)\\
 & =:T_{1}+T_{2}+T_{3}+T_{4}.
\end{align*}

\textbf{Estimate of $T_{1}$:} Using the matrix inequality (\ref{eq:lemma matrix ineq})
with $b,c\in[0,\infty)$, we obtain for all $\xi\in\mathbb{R}^{N}$
with $\left|\xi\right|=1$ that
\begin{align}
\xi^{\prime}(bX-cY)\xi & =\begin{pmatrix}b^{1/2}\xi\\
c^{1/2}\xi
\end{pmatrix}^{\prime}\begin{pmatrix}X & 0\\
0 & -Y
\end{pmatrix}\begin{pmatrix}b^{1/2}\xi\\
c^{1/2}\xi
\end{pmatrix}\nonumber \\
 & \le\begin{pmatrix}b^{1/2}\xi\\
c^{1/2}\xi
\end{pmatrix}^{\prime}\left(\begin{pmatrix}B & -B\\
-B & B
\end{pmatrix}+\frac{2}{\mu}\begin{pmatrix}B^{2} & -B^{2}\\
-B^{2} & B^{2}
\end{pmatrix}\right)\begin{pmatrix}b^{1/2}\xi\\
c^{1/2}\xi
\end{pmatrix}\nonumber \\
 & \leq(b^{1/2}-c^{1/2})^{2}\left(\xi^{\prime}B\xi+\frac{2}{\mu}\xi^{\prime}B^{2}\xi\right).\nonumber \\
 & \leq(b^{1/2}-c^{1/2})^{2}\left(\left\Vert B\right\Vert +\frac{2}{\mu}\left\Vert B\right\Vert ^{2}\right)\nonumber \\
 & \leq2(b^{1/2}-c^{1/2})^{2}\frac{L\varphi^{\prime}(\left|z\right|)}{\left|z\right|},\label{eq:bX-cY est 1}
\end{align}
where in the last inequality we used (\ref{eq:lemma B est}). On the
other hand, we have similarly
\begin{align}
\frac{z^{\prime}}{\left|z\right|}(bX-cY)\frac{z}{\left|z\right|} & =\begin{pmatrix}b^{1/2}z/\left|z\right|\\
-c^{1/2}z/\left|z\right|
\end{pmatrix}^{\prime}\begin{pmatrix}X & 0\\
0 & -Y
\end{pmatrix}\begin{pmatrix}b^{1/2}z/\left|z\right|\\
-c^{1/2}z/\left|z\right|
\end{pmatrix}\nonumber \\
 & \leq(b^{1/2}+c^{1/2})^{2}\left(\frac{z^{\prime}}{\left|z\right|}B\frac{z}{\left|z\right|}+\frac{2}{\mu}\frac{z^{\prime}}{\left|z\right|}B^{2}\frac{z}{\left|z\right|}\right)\nonumber \\
 & =(b^{1/2}+c^{1/2})^{2}\frac{L}{2}\varphi^{\prime\prime}(\left|z\right|),\label{eq:bX-cY est 2}
\end{align}
where in the last estimate we used (\ref{eq:lemma est 33}). Now,
denoting by $\lambda_{\min}$ and $\lambda_{\max}$ the minimum and
maximum eigenvalues of a matrix, we obtain using (\ref{eq:bX-cY est 1})
and (\ref{eq:bX-cY est 2}) with $b=a(\hat{x},\hat{t})$ and $c=a(\hat{y},\hat{t})$
\begin{align*}
 & \tr(a(\hat{x},\hat{t})X-a(\hat{y},\hat{t})Y)+(q-2)\frac{\tilde{\eta}^{\prime}(a(\hat{x},\hat{t})X-a(\hat{y},\hat{t})Y)\tilde{\eta}}{\left|\tilde{\eta}\right|^{2}}\\
 & \leq\lambda_{\min}(a(\hat{x},\hat{t})X-a(\hat{y},\hat{t})Y)+(N-1)\lambda_{\max}(a(\hat{x},\hat{t})X-a(\hat{y},\hat{t})Y)\\
 & \ \ \ +(q-2)\frac{\tilde{\eta}^{\prime}(a(\hat{x},\hat{t})X-a(\hat{y},\hat{t})Y)\tilde{\eta}}{\left|\tilde{\eta}\right|^{2}}\\
 & \leq(q-1)\frac{\tilde{\eta}^{\prime}(a(\hat{x},\hat{t})X-a(\hat{y},\hat{t})Y)\eta}{\left|\tilde{\eta}\right|^{2}}+(N-1)\lambda_{\max}(a(\hat{x},\hat{t})X-a(\hat{y},\hat{t})Y)\\
 & \leq\frac{q-1}{2}(a^{1/2}(\hat{x},\hat{t})+a^{1/2}(\hat{y},\hat{t}))^{2}L\varphi^{\prime\prime}(\left|z\right|)+2(N-1)(a^{1/2}(\hat{x},\hat{t})-a^{1/2}(\hat{y},\hat{t}))^{2}L\frac{\varphi^{\prime}(\left|z\right|)}{\left|z\right|}.\\
 & \leq C(q)(a(\hat{x},\hat{t})+a(\hat{y},\hat{t}))L\varphi^{\prime\prime}(\left|z\right|)+C(N,\left\Vert Da\right\Vert _{L^{\infty}})L\left|\tilde{\eta}\right|^{q-2}\varphi^{\prime}(\left|z\right|),
\end{align*}
where we used that $\left|a^{1/2}(\hat{x},\hat{t})-a^{1/2}(\hat{y},\hat{t})\right|^{2}\leq C\left|a(\hat{x},\hat{t})-a(\hat{x},\hat{t})\right|\leq C\left\Vert Da\right\Vert _{L^{\infty}}\left|\hat{x}-\hat{y}\right|$.
Thus we obtain
\begin{align*}
T_{1} & =F((\hat{x},\hat{t}),\tilde{\eta},X)-F((\hat{y},\hat{t}),\tilde{\eta},Y)\\
 & =\left|\tilde{\eta}\right|^{p-2}\left(\tr(X-Y)+(p-2)\frac{\tilde{\eta}^{\prime}(X-Y)\tilde{\eta}}{\left|\tilde{\eta}\right|^{2}}\right)\\
 & \ \ \ +\left|\tilde{\eta}\right|^{q-2}\left(\tr(a(\hat{x},\hat{t})X-a(\hat{y},\hat{t})Y)+(q-2)\frac{\tilde{\eta}^{\prime}(a(\hat{x},\hat{t})X-a(\hat{y},\hat{t})Y)\tilde{\eta}}{\left|\tilde{\eta}\right|^{2}}\right)\\
 & \le C(p)\left|\tilde{\eta}\right|^{p-2}L\varphi^{\prime\prime}(\left|z\right|)+C(q)(a(\hat{x},\hat{t})+a(\hat{y},\hat{t}))\left|\tilde{\eta}\right|^{q-2}L\varphi^{\prime\prime}(\left|z\right|)\\
 & \ \ \ +C(N,\left\Vert Da\right\Vert _{L^{\infty}})L\left|\tilde{\eta}\right|^{q-2}\varphi^{\prime}(\left|z\right|).
\end{align*}

\textbf{Estimate of $T_{2}$:} First we show the following algebraic
inequality
\begin{align}
 & \left|\left|\xi_{1}\right|^{p-2}(\tr Z+(p-2)\left|\xi_{1}\right|^{-2}\xi_{1}^{\prime}Z\xi_{1})-\left|\xi_{2}\right|^{p-2}(\tr Z+(p-2)\left|\xi_{2}\right|^{-2}\xi_{2}^{\prime}Z\xi_{2})\right|\nonumber \\
 & \ \ \ \leq C(N,p)\max(\left|\xi_{1}\right|^{p-3},\left|\xi_{2}\right|^{p-3})\left|\xi_{1}-\xi_{2}\right|\left\Vert Z\right\Vert \label{eq:ineq}
\end{align}
for all $\xi_{1},\xi_{2}\in\mathbb{R}^{N}\setminus\left\{ 0\right\} $
and $Z\in S(N)$. To this end, we observe that by \cite{The75}
\begin{align*}
\left|\xi_{1}^{\prime}Z\xi_{1}-\xi_{2}Z\xi_{2}\right| & =\left|\tr((\xi_{1}\otimes\xi_{1}-\xi_{2}\otimes\xi_{2})Z)\right|\\
 & \leq N\left\Vert \xi_{1}\otimes\xi_{1}-\xi_{2}\otimes\xi_{2}\right\Vert \left\Vert Z\right\Vert \\
 & =N\left\Vert (\xi_{1}-\xi_{2})\otimes\xi_{1}-\xi_{2}\otimes(\xi_{2}-\xi_{1})\right\Vert \left\Vert Z\right\Vert \\
 & \leq N(\left|\xi_{1}\right|+\left|\xi_{2}\right|)\left\Vert \xi_{1}-\xi_{2}\right\Vert \left\Vert Z\right\Vert .
\end{align*}
We also use the elementary inequality
\[
\left|a^{r}-b^{r}\right|\leq |r|\max(a^{r-1},b^{r-1})\left|a-b\right|\quad\text{for all }a,b>0\quad\text{and}\quad r\in\mathbb{R}.
\]
Furthermore, may assume that $\left|\xi_{2}\right|\leq\left|\xi_{1}\right|$
by changing notation if necessary. Using these, we estimate
\begin{align*}
 & \left|\left|\xi_{1}\right|^{p-2}-\left|\xi_{2}\right|^{p-2}\right|\left|\tr Z\right|+(p-2)\left|\xi_{1}\right|^{p-4}\left|\xi_{1}^{\prime}Z\xi_{1}-\xi_{2}^{\prime}Z\xi_{2}\right|+(p-2)\left|\left|\xi_{1}\right|^{p-4}-\left|\xi_{2}\right|^{p-4}\right|\left|\xi_{2}^{\prime}Z\xi_{2}\right|\\
 & \leq N\left|p-2\right|\max(\left|\xi_{1}\right|^{p-3},\left|\xi_{2}\right|^{p-3})\left|\xi_{1}-\xi_{2}\right|\left\Vert Z\right\Vert +N\left|p-2\right|\left|\xi_{1}\right|^{p-4}(\left|\xi_{1}\right|+\left|\xi_{2}\right|)\left\Vert \xi_{1}-\xi_{2}\right\Vert \left\Vert Z\right\Vert \\
 & \ \ \ +\left|p-2\right|\left|p-4\right|\max(\left|\xi_{1}\right|^{p-5},\left|\xi_{2}\right|^{p-5})\left|\xi_{1}-\xi_{2}\right|\left|\xi_{2}\right|^{2}\left\Vert Z\right\Vert \\
 & \leq C(N,p)\left|p-2\right|\max(\left|\xi_{1}\right|^{p-3},\left|\xi_{2}\right|^{p-3})\left|\xi-\xi_{2}\right|\left\Vert Z\right\Vert ,
\end{align*}
which proves the algebraic inequality (\ref{eq:ineq}). 

Observe now that
\begin{equation}
\left|\eta_{1}-\tilde{\eta}\right|+\left|\eta_{2}-\tilde{\eta}\right|=K(\left|x_{0}-\hat{x}\right|+\left|y_{0}-\hat{y}\right|)\leq2\sqrt{K}\omega^{1/2}(\left|z\right|).\label{eq:lemma gradient tilde est}
\end{equation}
and that by (\ref{eq:lemma matrix ineq}) we have
\begin{equation}
\left\Vert X\right\Vert ,\left\Vert Y\right\Vert \leq\max(\left\Vert B\right\Vert +\frac{2}{\mu}\left\Vert B\right\Vert ^{2},\mu+4\left\Vert B\right\Vert )\leq C\frac{L\varphi^{\prime}(\left|z\right|)}{\left|z\right|}.\label{eq:matrix norms}
\end{equation}
Therefore, using (\ref{eq:ineq}), we obtain
\begin{align*}
T_{2} & =F((\hat{x},\hat{t}),\eta_{1},X)-F((\hat{x},\hat{t}),\tilde{\eta},X)+F((\hat{y},\hat{t}),\tilde{\eta},Y)-F((\hat{y},\hat{t}),\eta_{2},Y)\\
 & =\left|\eta_{1}\right|^{p-2}\tr(X-(p-2)\left|\eta_{1}\right|^{-2}\eta_{1}^{\prime}X\eta_{1})-\left|\tilde{\eta}\right|^{p-2}\tr(X-(p-2)\left|\tilde{\eta}\right|^{-2}\tilde{\eta}^{\prime}X\tilde{\eta})\\
 & \ \ \ +\left|\tilde{\eta}\right|^{p-2}\tr(Y-(p-2)\left|\tilde{\eta}\right|^{-2}\tilde{\eta}^{\prime}Y\tilde{\eta}))-\left|\eta_{2}\right|^{p-2}\tr(Y-(p-2)\left|\eta_{2}\right|^{-2}\eta_{2}^{\prime}Y\eta_{2})\\
 & \ \ \ +a(\hat{x},\hat{t})\left|\eta_{1}\right|^{q-2}\tr(X-(q-2)\left|\eta_{1}\right|^{-2}\eta_{1}^{\prime}X\eta_{1})-a(\hat{x},\hat{t})\left|\tilde{\eta}\right|^{q-2}\tr(X-(p-2)\left|\tilde{\eta}\right|^{-2}\tilde{\eta}^{\prime}X\tilde{\eta})\\
 & \ \ \ +a(\hat{y},\hat{t})\left|\tilde{\eta}\right|^{q-2}\tr(Y-(q-2)\left|\tilde{\eta}\right|^{-2}\tilde{\eta}^{\prime}Y\tilde{\eta}))-a(\hat{y},\hat{t})\left|\eta_{2}\right|^{q-2}\tr(Y-(q-2)\left|\eta_{2}\right|^{-2}\eta_{2}^{\prime}Y\eta_{2})\\
 & \leq C(N,p)\max(\left|\eta_{1}\right|^{p-3},\left|\eta_{2}\right|^{p-3},\left|\tilde{\eta}\right|^{p-3})(\left\Vert X\right\Vert +\left\Vert Y\right\Vert )(\left|\eta_{1}-\tilde{\eta}\right|+\left|\eta_{2}-\tilde{\eta}\right|)\\
 & \ \ \ +C(N,q,\left\Vert a\right\Vert _{L^{\infty}})\max(\left|\eta_{1}\right|^{q-3},\left|\eta_{2}\right|^{q-3},\left|\tilde{\eta}\right|^{q-3})(\left\Vert X\right\Vert +\left\Vert Y\right\Vert )(\left|\eta_{1}-\tilde{\eta}\right|+\left|\eta_{2}-\tilde{\eta}\right|)\\
 & \leq C(N,p)\left|\tilde{\eta}\right|^{p-3}\sqrt{K}\omega^{1/2}(\left|z\right|)L\frac{\varphi^{\prime}(\left|z\right|)}{\left|z\right|}+C(N,q,\left\Vert a\right\Vert _{L^{\infty}})\left|\tilde{\eta}\right|^{q-3}\sqrt{K}\omega^{1/2}(\left|z\right|)L\frac{\varphi^{\prime}(\left|z\right|)}{\left|z\right|},
\end{align*}
where in the last estimate we used (\ref{eq:matrix norms}), (\ref{eq:lemma gradient tilde est})
and that (\ref{eq:lemma gradient estimate}) implies $\left|\eta_{1}\right|,\left|\eta_{2}\right|\in[\frac{1}{2}\left|\tilde{\eta}\right|,2\left|\tilde{\eta}\right|]$.

\textbf{Estimate of $T_{3}$:} Since $F$ is quasilinear, we have
\begin{align*}
T_{3} & =F((\hat{x},\hat{t}),\eta_{1},X+KI)-F((\hat{x},\hat{t}),\eta_{1},X)-F((\hat{y},\hat{t}),\eta_{2},Y-KI)+F((\hat{y},\hat{t}),\eta_{2},Y)\\
 & =F((\hat{x},\hat{t}),\eta_{1},KI)+F((\hat{y},\hat{t}),\eta_{2},KI)\\
 & \le C(N,p,q,\left\Vert a\right\Vert _{L^{\infty}})K(\left|\eta_{1}\right|^{p-2}+\left|\eta_{2}\right|^{p-2}+\left|\eta_{1}\right|^{q-2}+\left|\eta_{2}\right|^{q-2})\\
 & \leq C(N,p,q,\left\Vert a\right\Vert _{L^{\infty}})K(\left|\tilde{\eta}\right|^{p-2}+\left|\tilde{\eta}\right|^{q-2}),
\end{align*}
where we also used (\ref{eq:lemma gradient tilde est}) and definition
of $\tilde{\eta}$.

\textbf{Estimate of $T_{4}$:} We have by (\ref{eq:lemma gradient estimate})
and definition of $\tilde{\eta}$
\begin{align*}
T_{4} & =g((\hat{x},\hat{t}),\left|\eta_{1}\right|)+g((\hat{y},\hat{t}),\left|\eta_{2}\right|)\\
 & =\left\Vert Da\right\Vert _{L^{\infty}}\left|\eta_{1}\right|^{q-1}+C_{f}(1+\left|\eta_{1}\right|^{\beta_{1}}+a(\hat{x},\hat{t})\left|\eta_{1}\right|^{\beta_{2}})\\
 & \ \ \ +\left\Vert Da\right\Vert _{L^{\infty}}\left|\eta_{2}\right|^{q-1}+C_{f}(1+\left|\eta_{2}\right|^{\beta_{1}}+a(\hat{x},\hat{t})\left|\eta_{2}\right|^{\beta_{2}})\\
 & \le C(q,\left\Vert Da\right\Vert _{L^{\infty}},C_{f},\beta_{1},\beta_{2})(\left|\tilde{\eta}\right|^{q-1}+\left|\tilde{\eta}\right|^{\beta_{1}}+(a(\hat{x},\hat{t})+a(\hat{y},\hat{t}))\left|\tilde{\eta}\right|^{\beta_{2}}).
\end{align*}

Combining the estimates of $T_{1}$ to $T_{4}$ and using $(\hat{t}-t_0)>-1,$  we have for some
number $C\geq1$, depending only on $N,p,q,\left\Vert a\right\Vert _{L^{\infty}},\left\Vert Da\right\Vert _{L^{\infty}},C_{f},\beta_{1}$
and $\beta_{2}$, that
\begin{align*}
-K & \leq CL\left|\tilde{\eta}\right|^{p-2}\varphi^{\prime\prime}(\left|z\right|)+C(a(\hat{x},\hat{t})+a(\hat{y},\hat{t}))L\left|\tilde{\eta}\right|^{q-2}\varphi^{\prime\prime}(\left|z\right|)+C\left|\tilde{\eta}\right|^{q-2}L\varphi^{\prime}(\left|z\right|)\\
 & \ \ \ +C\left|\tilde{\eta}\right|^{p-3}\sqrt{K}\omega^{1/2}(\left|z\right|)L\frac{\varphi^{\prime}(\left|z\right|)}{\left|z\right|}+C\left|\tilde{\eta}\right|^{q-3}\sqrt{K}\omega^{1/2}(\left|z\right|)L\frac{\varphi^{\prime}(\left|z\right|)}{\left|z\right|}\\
 & \ \ \ +CK(\left|\tilde{\eta}\right|^{p-2}+\left|\tilde{\eta}\right|^{q-2})+C(\left|\tilde{\eta}\right|^{q-1}+\left|\tilde{\eta}\right|^{\beta_{1}}+(a(\hat{x},\hat{t})+a(\hat{y},\hat{t}))\left|\tilde{\eta}\right|^{\beta_{2}}).
\end{align*}
Using that $\left|\tilde{\eta}\right|=L\varphi^{\prime}(\left|z\right|)$,
and rearranging the terms, we obtain
\begin{align*}
-K\leq\,\, & C\left|\tilde{\eta}\right|^{p-2}\Big(L\varphi^{\prime\prime}(\left|z\right|)+\left|\tilde{\eta}\right|^{q-p+1}+\sqrt{K}(1+\left|\tilde{\eta}\right|^{q-p})\Big(\frac{\omega^{1/2}(\left|z\right|)}{\left|z\right|}+\sqrt{K}\Big)+\left|\tilde{\eta}\right|^{2+\beta_{1}-p}\Big)\\
 & +C(a(\hat{x},\hat{t})+a(\hat{y},\hat{t}))\left|\tilde{\eta}\right|^{q-2}\left(L\varphi^{\prime\prime}(\left|z\right|)+\left|\tilde{\eta}\right|^{2+\beta_{2}-q}\right)
\end{align*}
Since $\varphi^{\prime}>c_{\varphi}$, assuming $L$ to be large enough
depending on $c_{\varphi}$, we can ensure that $\left|\tilde{\eta}\right|\geq1$.
This way, we have that $1+\left|\tilde{\eta}\right|^{q-p}\leq2\left|\tilde{\eta}\right|^{q-p}.$ Also, we note that $K\leq 8\omega(2).$
Finally, we can also estimate the various powers of $\left|\tilde{\eta}\right|$
by $\left|\tilde{\eta}\right|^{\gamma}$, where $\gamma$ is the largest
of these powers. Hence the desired estimate (\ref{eq:lemma main est})
follows.
\end{proof}
Next, we prove the local H\"{o}lder continuity of $u\in \texttt{S}(Q_{1}^-)$. In
the borderline case $p=q+1$ this result is only qualitative. The qualitative result also holds if $\beta_{1}=p$ or $\beta_{2}=q$.
\begin{lem}[H\"{o}lder continuity for class $\texttt{S}$]
\label{lem:holder for S} Suppose that $1\leq p\leq q \leq p+1$,
$\beta_{1}\in[1,p],$ $\beta_{2}\in[1,q]$. Let $u\in \texttt{S}(Q_{1}^-)$ be
bounded and suppose that there is an increasing modulus $\omega:[0,\infty)\rightarrow[0,\infty)$,
$\lim_{s\rightarrow0}\omega(s)=0$, such that 
\[
\left|u(x,t)-u(y,t)\right|\leq\omega(\left|x-y\right|)\quad\text{for all }(x,t),(y,t)\in Q_{1}.
\]
 Then for any $\alpha\in(0,1)$, there exists $C_{H}>0$ such that
\[
\left|u(x_{0},t_{0})-u(y_{0},t_{0})\right|\leq C_{H}\left|x_{0}-y_{0}\right|^{\alpha}\quad\text{for all }(x_{0},t_{0}),(y_{0},t_{0})\in Q_{1/2}^-.
\]
If $\gamma<2$, where $\gamma$ is as in (\ref{eq:gamma}),
then the constant $C_{H}$ depends only on $N$, $p$, $q$, $||a||_{L^\infty(Q_1)}$, $||Da||_{L^\infty(Q_1)}$, $C_f$, $\beta_1$, $\beta_2$ and $\osc_{Q_{1}}u$.
If $\gamma=2$, then the constant $C_{H}$ depends also on $\omega$.
\end{lem}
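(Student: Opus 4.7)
The plan is to apply Lemma \ref{lem:Ishii-Lions lemma} with the concave test function $\varphi(s)=s^{\alpha}$ and reach a contradiction for $L$ sufficiently large. Since $\varphi'(s)=\alpha s^{\alpha-1}\ge\alpha 2^{\alpha-1}=:c_\varphi$ on $[0,2]$, $\varphi''(s)=-\alpha(1-\alpha)s^{\alpha-2}<0$, and $|\varphi''(s)|/(\varphi'(s)/s)=1-\alpha<1$, this $\varphi$ satisfies all the structural assumptions \eqref{eq:Ishii-Lions lemma cnd}.

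Suppose, towards a contradiction, that the claimed estimate fails with $C_H=L$ for some $L$ to be fixed (larger than the threshold $L'$ of Lemma \ref{lem:Ishii-Lions lemma}). Then there exist $(x_0,t_0),(y_0,t_0)\in Q_{1/2}^-$ for which, without loss of generality, $u(x_0,t_0)-u(y_0,t_0)>L|x_0-y_0|^{\alpha}$. Defining $\Psi$ as in \eqref{eq:lemma blah} with this data and $K=8\osc_{Q_1}u$, we have $\Psi(x_0,y_0,t_0)>0$. By continuity, $\Psi$ attains a positive maximum at some $(\hat{x},\hat{y},\hat{t})\in\overline B_1\times\overline B_1\times[-1,0]$, and Lemma \ref{lem:Ishii-Lions lemma} yields the main estimate \eqref{eq:lemma main est}.

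The decisive feature of the choice $\varphi(s)=s^{\alpha}$ is the identity $L\varphi''(|z|)=-(1-\alpha)L\varphi'(|z|)/|z|$, which produces strictly negative leading contributions of order $(1-\alpha)(L\varphi'(|z|))^{p-1}/|z|$ and $(1-\alpha)(a(\hat x,\hat t)+a(\hat y,\hat t))(L\varphi'(|z|))^{q-1}/|z|$ on the right-hand side of \eqref{eq:lemma main est}. I compare each remaining positive term to this leading negative one. A direct computation gives
\[
\frac{(L\varphi')^{\gamma}}{L\varphi'/|z|}=(L\alpha)^{\gamma-1}|z|^{1-(1-\alpha)(\gamma-1)},\qquad \frac{\sqrt{K}(L\varphi')^{\gamma-1}\omega^{1/2}(|z|)/|z|}{L\varphi'/|z|}=\sqrt{K}(L\varphi')^{\gamma-2}\omega^{1/2}(|z|).
\]
From \eqref{eq:lemma derivative est}, $|z|^{\alpha}\leq\omega(|z|)/(L\alpha)\leq\omega(2)/(L\alpha)$, so $|z|\to 0$ as $L\to\infty$. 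Substituting this bound into the first ratio yields at most $C(L\alpha)^{(\gamma-2)/\alpha}$, which, when $\gamma<2$, is a negative power of $L$ with constant depending only on $\omega(2)\leq 2\osc_{Q_1}u$; the second ratio is handled analogously using $L\varphi'\geq Lc_\varphi$ together with $\omega(|z|)\to 0$. Both the $p$- and $q$-phase positive terms are therefore absorbed into half the corresponding negative leading term, reducing \eqref{eq:lemma main est} to $-K\leq -c(1-\alpha)(L\varphi'(|z|))^{p-1}/|z|$, whose right-hand side diverges to $-\infty$ with $L$. This contradicts the finiteness of $K=8\osc_{Q_1}u$ and yields the estimate with $C_H$ depending only on the stated quantities.

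The main obstacle is the borderline case $\gamma=2$ (which occurs exactly when $q=p+1$, $\beta_1=p$, or $\beta_2=q$): then $(\gamma-2)/\alpha=0$ and one no longer gains a negative power of $L$ purely from the bound $|z|^{\alpha}\leq\omega(2)/(L\alpha)$. In that regime we must instead exploit directly the vanishing of $\omega(|z|)$ at the origin to force the ratio to be small; the threshold $L$ then depends quantitatively on the rate at which $\omega$ vanishes, which accounts for the extra dependence of $C_H$ on $\omega$ stated in the lemma.
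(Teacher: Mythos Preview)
Your proof is correct and follows essentially the same route as the paper's: choose $\varphi(s)=s^{\alpha}$, apply Lemma~\ref{lem:Ishii-Lions lemma}, and show that for large $L$ the bracket $L\varphi''+(L\varphi')^{\gamma}+\sqrt{K}(L\varphi')^{\gamma-1}\omega^{1/2}(|z|)/|z|$ is dominated by its negative leading part, which then forces a contradiction after discarding the nonpositive $q$-phase term. The paper factors out $C_H|z|^{\alpha-2}$ and tracks the resulting power $C_H^{(\gamma-2)/\alpha}$ directly, while you phrase the same computation as a ratio against $L\varphi'/|z|$; the algebra and the $\gamma<2$ versus $\gamma=2$ dichotomy are identical. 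One small sharpening: for the second ratio when $\gamma<2$ you do not actually need $\omega(|z|)\to 0$, only the crude bound $\omega(|z|)\le\omega(2)$ together with $(L\varphi')^{\gamma-2}\le(Lc_\varphi)^{\gamma-2}\to 0$, which keeps $C_H$ independent of $\omega$ as required.
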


\begin{proof}
Following the notation in Lemma \ref{lem:Ishii-Lions lemma}, we set
$K:=8\osc_{Q_{1}}u$, fix points $(x_{0},t_{0}),(y_{0},t_{0})\in Q_{1/2}^-$,
and consider the auxiliary function
\[
\Psi(x,y,t):=u(x,t)-u(y,t)-C_{H}\varphi(\left|x-y\right|)-\frac{K}{2}\left|x-x_{0}\right|^{2}-\frac{K}{2}\left|y-y_{0}\right|^{2}-\frac{K}{2}\left|t-t_{0}\right|^{2}
\]
for some large $C_{H}>0$ to be selected later. We define $\varphi:[0,2]\rightarrow[0,\infty)$
by $\varphi(s):=s^{\alpha}$, where $\alpha\in(0,1)$. We have
\[
\varphi^{\prime\prime}(s)=-\alpha\left|\alpha-1\right|s^{\alpha-2}<0<\alpha2^{\alpha-1}<\varphi^{\prime}(s)\quad\text{and}\quad\left|\varphi^{\prime\prime}(s)\right|\leq\frac{\varphi^{\prime}(s)}{s}
\]
which means that the condition (\ref{eq:Ishii-Lions lemma cnd}) holds.
It now suffices to show that $\Psi$ is non-positive, so suppose on
the contrary that $\Psi$ has a positive maximum $(\hat{x},\hat{y},\hat{t})\in\overline{B}_{1}\times\overline{B}_{1}\times[-1,1]$. Now applying
Lemma \ref{lem:Ishii-Lions lemma}, we have the inequality
\begin{align}
-K & \leq C(C_H\varphi^{\prime})^{p-2}\Big(C_H\varphi^{\prime\prime}+(C_H\varphi^{\prime}){}^{\gamma}+\sqrt{K}(C_H\varphi^{\prime})^{\gamma-1}\frac{\omega^{1/2}(\left|z\right|)}{\left|z\right|}\Big)\nonumber \\
 & \ \ \ +C(a(\hat{x},\hat{t})+a(\hat{y},\hat{t}))(C_H\varphi^{\prime})^{q-2}(C_H\varphi^{\prime\prime}+(C_H\varphi^{\prime})^{\gamma}),\label{h=0000F6lder main}
\end{align}
where $\gamma$ is given by (\ref{eq:gamma}). By \eqref{eq:lemma derivative est} we also have the estimate
$\alpha\left|z\right|^{\alpha-1}\leq\omega(\left|z\right|)/(C_H\left|z\right|)$
so that
\begin{equation}
\left|z\right|\leq\left(\frac{\omega(\left|z\right|)}{\alpha C_H}\right)^{1/\alpha}.\label{eq:h=0000F6lder z est}
\end{equation}
Using (\ref{eq:h=0000F6lder z est}), that $(\gamma-1)(\alpha-1)+1>0$
and $(\gamma-2)(\alpha-1)\geq0$, we estimate
\begin{align}
 & C_H\varphi^{\prime\prime}+(C_H\varphi^{\prime})^{\gamma}+\sqrt{K}(C_H\varphi^{\prime})^{\gamma-1}\frac{\omega^{1/2}(\left|z\right|)}{\left|z\right|}\nonumber \\
 & =-\alpha\left|\alpha-1\right|C_H\left|z\right|^{\alpha-2}+\alpha^{\gamma}C_H^{\gamma}\left|z\right|^{\gamma(\alpha-1)}+\sqrt{K}\alpha^{\gamma-1}C_H^{\gamma-1}\left|z\right|^{(\gamma-1)(\alpha-1)-1}\omega^{1/2}(\left|z\right|)\nonumber \\
 & =C_H\left|z\right|^{\alpha-2}\Big(-\alpha\left|\alpha-1\right|+\alpha^{\gamma}C_H^{\gamma}\left|z\right|^{(\gamma-1)(\alpha-1)+1}+\sqrt{K}\alpha^{\gamma-1}C_H^{\gamma-2}\left|z\right|^{(\gamma-2)(\alpha-1)}\omega^{1/2}(\left|z\right|)\Big)\nonumber \\
 & \leq C_H\left|z\right|^{\alpha-2}\Big(-\alpha\left|\alpha-1\right|+C_0(K,\alpha,\gamma)\Big(\omega(\left|z\right|)^{\frac{(\gamma-1)(\alpha-1)+1}{\alpha}}C_H^{\gamma-1-\frac{(\gamma-1)(\alpha-1)+1}{\alpha}}+C_H^{\gamma-2}\omega^{1/2}(\left|z\right|)\Big)\Big)\nonumber \\
 & =C_H\left|z\right|^{\alpha-2}(-\alpha\left|\alpha-1\right|+C_0(K,\alpha,\gamma)\Big(\omega(\left|z\right|)^{\frac{(\gamma-1)(\alpha-1)+1}{\alpha}}C_H^{\frac{\gamma-2}{\alpha}}+\omega^{1/2}(\left|z\right|)C_H^{\gamma-2}\Big)\Big).\label{eq:h=0000F6lder mid}
\end{align}
If $\gamma<2$ then the power of $C_H$ at the right-hand side of (\ref{eq:h=0000F6lder mid})
is negative. In this case, we estimate $\omega(\left|z\right|)\leq K$
and observe that for large enough $C_H$, for instance $C_H\geq \max\{C, C_0, K, \alpha, \gamma\},$ (\ref{eq:h=0000F6lder mid}) implies that
\begin{equation}
C_H\varphi^{\prime\prime}+(C_H\varphi^{\prime})^{\gamma}+\sqrt{K}(C_H\varphi^{\prime})^{\gamma-1}\frac{\omega^{1/2}(\left|z\right|)}{\left|z\right|}\leq-\frac{\alpha\left|\alpha-1\right|}{2}\left|z\right|^{\alpha-2}C_H.\label{eq:h=0000F6lder second}
\end{equation}
If $\gamma=2$, then the power of $C_H$ in (\ref{eq:h=0000F6lder mid})
is zero. In this case, we still obtain (\ref{eq:h=0000F6lder second})
by estimating $\omega(\left|z\right|)$ using (\ref{eq:h=0000F6lder z est}),
and taking large enough $C_H$ as above. However, in this case, $C_H$ will also
depend on $\omega$.

Now, we combine (\ref{eq:h=0000F6lder second}) with (\ref{h=0000F6lder main}).
In particular, the term with $a$ is negative and can be discarded.
So we obtain
\begin{align*}
-K & \leq-\frac{\alpha\left|\alpha-1\right|C}{2}(C_H\varphi^{\prime})^{p-2}C_H\left|z\right|^{\alpha-2}\\
 & =-C\left|z\right|^{\alpha-2+(p-2)(\alpha-1)}C_H^{p-1}.
\end{align*}
Since $\alpha-2+(p-2)(\alpha-1)<0$ and $\left|z\right|\leq2$, a
contradiction for this choice of $C_H.$ This shows that $\Psi$ is non-positive and hence
\begin{align*}
u(x_0, t_0)-u(y_0, t_0)\leq C_H\varphi(|x_0-y_0|)= C_H|x_0-y_0|^{\alpha}.
\end{align*} 
The conclusion follows since $x_0, y_0, t_0$ is arbitrary.
\end{proof}
Next, we show that if a function $u\in \texttt{S}(Q_{1}^-)$ is H\"{o}lder continuous,
then it is also Lipschitz continuous. This result is valid for values of $\gamma > 1$ (even for some $\gamma >2$), where $\gamma$ is as in (\ref{eq:gamma}). The range of $\gamma$ is given by (\ref{eq:lipschitz gamma alpha ineq}) and depends on the \textit{a priori} known H{\"o}lder exponent of $u$. We state the lemma in this generality only for curiosity; in our applications of the lemma $\gamma \leq 2$.

\begin{lem}[Lipschitz continuity for class $\texttt{S}$]\label{lem:Lipschitz for S}
 Suppose that $1< p\leq q<\infty$, $\beta_{1}\in[0,\infty),$
$\beta_{2}\in[0,\infty)$. Let $u\in \texttt{S}(Q_{1}^-)$ be bounded. Suppose
that there is $C_{H}>0$ and $\alpha\in(0,1)$ such that
\begin{equation}
\left|u(x,t)-u(y,t)\right|\leq C_{H}\left|x-y\right|^{\alpha}\quad\text{for all }(x,t),(y,t)\in Q_{1}.\label{eq:lipschitz holder assumption}
\end{equation}
Suppose moreover that 
\begin{equation}
\frac{\alpha}{2}+(\gamma-1)(1-\alpha)<1,\label{eq:lipschitz gamma alpha ineq}
\end{equation}
where $\gamma$ is as in (\ref{eq:gamma}). Then there exists $L>0$
such that
\[
\left|u(x_{0},t_{0})-u(y_{0},t_{0})\right|\leq L\left|x_{0}-y_{0}\right|\quad\text{for all }(x_{0},t_{0}),(y_{0},t_{0})\in Q_{1/2}^-.
\]
The constant $L$ depends only on $N$, $p$, $q$, $||a||_{L^\infty (Q_1)}$, $||Da||_{L^\infty (Q_1)}$, $C_f$, $\beta_1$, $\beta_2$, $\osc_{Q_{1}}u$, $\alpha$ and
$C_{H}$.
\end{lem}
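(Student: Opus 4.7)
The plan is to apply Lemma \ref{lem:Ishii-Lions lemma} with the perturbed linear test function $\varphi(s) := s - \kappa s^{\beta}$, for constants $\kappa>0$ and $\beta \in (1,2)$ to be chosen below. Fix $(x_{0},t_{0}),(y_{0},t_{0})\in Q_{1/2}^-$ and set $K:=8\osc_{Q_{1}}u$. As in Lemma \ref{lem:holder for S}, I consider
\[
\Psi(x,y,t):=u(x,t)-u(y,t)-L\varphi(|x-y|)-\tfrac{K}{2}|x-x_{0}|^{2}-\tfrac{K}{2}|y-y_{0}|^{2}-\tfrac{K}{2}|t-t_{0}|^{2}
\]
and assume, toward a contradiction, that $\Psi$ attains a positive maximum at some $(\hat{x},\hat{y},\hat{t})\in\overline{B}_{1}\times\overline{B}_{1}\times[-1,0]$. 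The Lipschitz estimate will follow immediately, exactly as at the end of Lemma \ref{lem:holder for S}, once a contradiction is reached for sufficiently large $L$.

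First I would verify the structural hypotheses \eqref{eq:Ishii-Lions lemma cnd}. For $s\in(0,1]$ we have $\varphi'(s) = 1-\kappa\beta s^{\beta-1}$ and $\varphi''(s) = -\kappa\beta(\beta-1)s^{\beta-2}$. Choosing $\kappa$ small enough (depending only on $\beta$) ensures $\varphi'\in[\tfrac12,1]$ on $[0,1]$, that $\varphi''<0$, and that $|\varphi''(s)|<\varphi'(s)/s$; so $c_{\varphi}=1/2$ works. The derivative estimate \eqref{eq:lemma derivative est} combined with the a priori Hölder bound \eqref{eq:lipschitz holder assumption} yields
\[
\tfrac12 \leq \varphi'(|z|) \leq \frac{\omega(|z|)}{L|z|} \leq \frac{C_{H}|z|^{\alpha-1}}{L},
\]
so that $|z|:=|\hat{x}-\hat{y}|\leq (2C_{H}/L)^{1/(1-\alpha)}$; in particular $|z|\to 0$ as $L\to\infty$, and the restriction to small $s$ above is harmless.

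Substituting these bounds into the master inequality \eqref{eq:lemma main est}, the decisive negative contribution on the right hand side is
\[
L\varphi''(|z|) = -\kappa\beta(\beta-1)\,L\,|z|^{\beta-2},
\]
which, multiplied by $(L\varphi')^{p-2}$ and by $(a(\hat{x},\hat{t})+a(\hat{y},\hat{t}))(L\varphi')^{q-2}$, must absorb the positive terms $(L\varphi')^{\gamma}$ and $\sqrt{K}(L\varphi')^{\gamma-1}\omega^{1/2}(|z|)/|z|$. Using $\varphi'\leq 1$ and $\omega^{1/2}(|z|)/|z|\leq C_{H}^{1/2}|z|^{\alpha/2-1}$, a direct comparison of powers, after inserting the upper bound $|z|^{-1} \leq (L/2C_{H})^{1/(1-\alpha)}$, reduces the two domination requirements to
\[
\beta<2-(\gamma-1)(1-\alpha) \quad\text{and}\quad \beta<2-\tfrac{\alpha}{2}-(\gamma-1)(1-\alpha).
\]
Assumption \eqref{eq:lipschitz gamma alpha ineq} reads precisely $1<2-\alpha/2-(\gamma-1)(1-\alpha)$, so the second (stronger) inequality admits some $\beta\in(1,2)$, which we now fix; $\kappa$ is then fixed accordingly.

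The main obstacle is this exponent bookkeeping: one must check that the gain coming from $|z|$ being small (of order $L^{-1/(1-\alpha)}$) actually outweighs the loss of factors $L^{\gamma-1}$ and $L^{\gamma}$ generated by the $g$ and $F$ terms on the right hand side of \eqref{eq:lemma main est}, and it is precisely here that \eqref{eq:lipschitz gamma alpha ineq} enters essentially. Once $\beta$ and $\kappa$ are fixed, taking $L$ sufficiently large (depending on $N,p,q,\|a\|_{L^{\infty}},\|Da\|_{L^{\infty}},C_f,\beta_{1},\beta_{2},\osc_{Q_{1}}u,\alpha,C_{H}$) forces the right hand side of \eqref{eq:lemma main est} to be strictly less than $-K$, a contradiction. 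Hence $\Psi\leq 0$ on $\overline{B}_{1}\times\overline{B}_{1}\times[-1,0]$; evaluating at $(x_{0},y_{0},t_{0})$ and using $\varphi(s)\leq s$ gives $u(x_{0},t_{0})-u(y_{0},t_{0})\leq L|x_{0}-y_{0}|$, and swapping the roles of $x_{0}$ and $y_{0}$ yields the Lipschitz bound.
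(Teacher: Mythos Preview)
Your approach is exactly the paper's: apply Lemma~\ref{lem:Ishii-Lions lemma} with $\varphi(s)=s-\kappa s^{\beta}$ and balance exponents. Two small slips to tighten: first, from $|z|\le(2C_H/L)^{1/(1-\alpha)}$ you only obtain the \emph{lower} bound $|z|^{-1}\ge(L/2C_H)^{1/(1-\alpha)}$, not the upper bound you wrote; the comparison still works because after factoring out $L|z|^{\beta-2}$ the residual power of $|z|$ in the second term is $|z|^{\alpha/2+1-\beta}$, and bounding this via the \emph{upper} bound on $|z|$ requires also $\beta<1+\alpha/2$, a constraint the paper records explicitly in \eqref{eq:lipschitz beta cnd} and which is compatible with $\beta>1$. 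Second, the structural conditions \eqref{eq:Ishii-Lions lemma cnd} must hold on all of $(0,2)$ before the lemma can be invoked, so choose $\kappa$ for that interval rather than appealing to $|z|$ being small a posteriori.
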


\begin{proof}
Following the notation in Lemma \ref{lem:Ishii-Lions lemma}, we set
$K:=8\osc_{Q_{1}}u$ and fix points $(x_{0},t_{0}),(y_{0},t_{0})\in Q_{1/2}^-$.
Consider the auxiliary function
\[
\Psi(x,y,t):=u(x,t)-u(y,t)-L\varphi(\left|x-y\right|)-\frac{K}{2}\left|x-x_{0}\right|^{2}-\frac{K}{2}\left|y-y_{0}\right|^{2}-\frac{K}{2}\left|t-t_{0}\right|^{2}
\]
for some large $L>0$ to be selected later. Here $\varphi:[0,2]\rightarrow[0,\infty)$
is defined by 
\[
\varphi(s)=s-\kappa s^{\beta},\quad\text{where}\quad\kappa:=\beta^{-1}2^{-\beta-1},
\]
and
\[
\beta:=\min\Big(\frac{\alpha}{2}+1,\frac{1+\big(2-\big(\frac{\alpha}{2}+(\gamma-1)(1-\alpha)\big)\big)}{2}\Big)
\]
While the particular choice of $\beta$ is not important, this definition
together with the assumption (\ref{eq:lipschitz gamma alpha ineq})
ensures that $\beta\in(1,2)$ as well as that
\begin{equation}
\beta<\frac{\alpha}{2}+1\quad\text{and}\quad\beta+\frac{\alpha}{2}+(\gamma-1)(1-\alpha)<2.\label{eq:lipschitz beta cnd}
\end{equation}
By (\ref{eq:lemma main est}) in Lemma \ref{lem:Ishii-Lions lemma}
we have
\begin{align}
-K & \leq C(L\varphi^{\prime})^{p-2}\Big(L\varphi^{\prime\prime}+(L\varphi^{\prime}){}^{\gamma}+\sqrt{K}(L\varphi^{\prime})^{\gamma-1}\frac{\omega^{1/2}(\left|z\right|)}{\left|z\right|}\Big)\nonumber \\
 & \ \ \ +C(a(\hat{x},\hat{t})+a(\hat{y},\hat{t}))(L\varphi^{\prime})^{q-2}\left(L\varphi^{\prime\prime}+(L\varphi^{\prime})^{\gamma}\right).\label{eq:lipschitz first}
\end{align}
Moreover, by (\ref{eq:lemma derivative est}) we have the estimate
\begin{equation}
\varphi^{\prime}(\left|z\right|)\leq\frac{\omega(\left|z\right|)}{L\left|z\right|}\leq\frac{C_{H}\left|z\right|^{\alpha}}{L\left|z\right|}=\frac{C_{H}}{L}\left|z\right|^{\alpha-1},\label{eq:lipschitz varphi deriv est}
\end{equation}
where we used (\ref{eq:lipschitz holder assumption}). On the other
hand, by definition of $\varphi$ 
\[
\varphi^{\prime}(s)=1-\kappa\beta s^{\beta-1}=1-2^{-\beta-1}s^{\beta-1}\in(3/4,1],
\]
 so that we from (\ref{eq:lipschitz varphi deriv est}) we obtain
\begin{equation}
\left|z\right|\leq\left(\frac{4C_{H}}{3L}\right)^{\frac{1}{1-\alpha}}.\label{eq:lipschitz z est}
\end{equation}
Now using $\varphi^{\prime}(|z|)\leq 1$ and $\alpha/2+1-\beta>0$ which holds by the first inequality in \eqref{eq:lipschitz beta cnd}, we estimate 
\begin{align}
 & L\varphi^{\prime\prime}+(L\varphi^{\prime})^{\gamma}+\sqrt{K}(L\varphi^{\prime})^{\gamma-1}\frac{\omega^{1/2}(\left|z\right|)}{\left|z\right|}\nonumber \\
 & \leq-\kappa\beta(\beta-1)\left|z\right|^{\beta-2}L+L^{\gamma}+\sqrt{K}L^{\gamma-1}C_{H}^{1/2}\left|z\right|^{\alpha/2-1}\nonumber \\
 & =\left|z\right|^{\beta-2}L\Big(-\kappa(\beta-1)+L^{\gamma-1}\left|z\right|^{2-\beta}+\sqrt{K}C_{H}^{1/2}L^{\gamma-2}\left|z\right|^{\alpha/2+1-\beta}\Big)\nonumber \\
 & =\left|z\right|^{\beta-2}L\Big(-\kappa(\beta-1)+C_0(K,C_{H},\beta,\gamma)\Big(L^{\gamma-1-\frac{2-\beta}{1-\alpha}}+L^{\gamma-2-\frac{\alpha/2+1-\beta}{1-\alpha}}\Big)\Big)\nonumber \\
 & \leq\left|z\right|^{\beta-2}L\Big(-\kappa(\beta-1)+C_0(K,C_{H},\beta,\gamma)L^{\gamma-2-\frac{\alpha/2+1-\beta}{1-\alpha}}\Big),\label{eq:Lipschitz mid}
\end{align}
where in the last inequality we used that $L\geq1$, as well as that
\begin{align*}
\gamma-2-\frac{\alpha/2+1-\beta}{1-\alpha} & =\gamma-1-\left(\frac{\alpha/2+1-\beta+1-\alpha}{1-\alpha}\right)\\
 & =\gamma-1-\left(\frac{2-\beta-\alpha/2}{1-\alpha}\right)\\
 & \geq\gamma-1-\frac{2-\beta}{1-\alpha}.
\end{align*}
Observe that by the second inequality in (\ref{eq:lipschitz beta cnd})
we have
\begin{align*}
\gamma-2-\frac{\alpha/2+1-\beta}{1-\alpha} & =-\frac{2-\beta-\alpha/2-(\gamma-1)(1-\alpha)}{1-\alpha}\\
 & =\frac{-2+\beta+\alpha/2+(\gamma-1)(1-\alpha)}{1-\alpha}<0.
\end{align*}
Therefore, the power of $L$ at the RHS of (\ref{eq:Lipschitz mid})
is negative. It follows that by taking large enough $L$, depending
on $K,$ $C_{H}$, $\alpha$, $\kappa$, $\beta$, and $\gamma$,
that
\[
-L\varphi^{\prime\prime}+(L\varphi^{\prime})^{\gamma}+\sqrt{K}(L\varphi^{\prime})^{\gamma-1}\frac{\omega^{1/2}(\left|z\right|)}{\left|z\right|}\leq-\frac{\kappa(\beta-1)}{2}L\left|z\right|^{\beta-2}.
\]
The above display means in particular that therm in (\ref{eq:lipschitz first})
with $a$ is negative and can be discarded. Hence
\begin{align*}
-K & \leq-C\frac{\kappa(\beta-1)}{2}(\varphi^{\prime})^{p-2}L^{p-1}\left|z\right|^{\beta-2}.
\end{align*}
Since $\varphi^{\prime}\in[3/4,1]$, $\beta<2$ and $\left|z\right|\leq2$,
this implies a contradiction for large enough $L$.
\end{proof}
\begin{rem}
We conclude this section with some remarks on the role of the exponents $p$, $q$, $\beta_1$, and $\beta_2$ used throughout the analysis. First, observe that Lemma~\ref{lem:Ishii-Lions lemma} does not rely on the gap condition $q - p \leq 1$, nor on specific bounds for $\beta_1$ and $\beta_2$.  The gap condition $q - p \leq 1$ and the assumptions $\beta_1 \leq p$, $\beta_2 \leq q$ become relevant in the next step, namely Lemma~\ref{lem:holder for S}, where we establish the H\"older continuity of $u$. Finally, the Lipschitz continuity result in Lemma~\ref{lem:Lipschitz for S} is stated in its full generality. It shows that, given a H\"older continuous solution, the regularity can be improved to Lipschitz continuity if the \textit{a-priori} known H\"older exponent of the solution is large enough depending on how large $\beta_1$, $\beta_2$ and the gap $q-p$ are (as dictated by the condition \eqref{eq:lipschitz gamma alpha ineq}).
\end{rem}
\section{H{\"o}lder continuity in time and completion of Theorem \ref{Lip thm}}\label{sec: time holder}

In this section, we complete the proof of Theorem \ref{Lip thm}. The result follows by combining the spatial Lipschitz estimate established in the previous section with the following two lemmas, where we establish H{\"o}lder continuity in time. Their proof is based on comparison against suitable barriers, and so the assumption \eqref{assumption increasing} appears again.

\begin{lem}\label{lem:time holder}
Suppose that $1 < p \leq q \leq p + 1$. Let $a\geq 0$ be continuous in time, Lipschitz continuous in space, and suppose that it satisfies \eqref{assumption increasing} in $Q_1$. Suppose that $u$ is a continuous and bounded weak solution to \eqref{eq:p-para f} in $Q_1$, where $f$ satisfies the growth condition \eqref{eq:gcnd}. Moreover, assume that $u$ is $L$-Lipschitz continuous in space variable. Then
\[
    |u(0,t) - u(0,s)| \leq C|t-s|^{\frac{p}{p+q}} \quad\text{for all } t,s\in (-1,0),
\]
where the constant $C$ depends only on $N$, $p$, $q$, $||a||_{L^\infty (Q_1)}$, $||Da||_{L^\infty (Q_1)}$, the constants in the growth condition (\ref{eq:gcnd}), $\osc_{B_1 \times (-1,0)} u$ and $L$.
\end{lem}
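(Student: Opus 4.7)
The plan is to bound $u(0, t) - u(0, s)$ via comparison with an explicit radial super-barrier on a small parabolic cylinder, together with the comparison principle of Lemma \ref{thm:comparison principle}. Fix $s < t$ in $(-1, 0)$ and set $h := t - s$ and $\omega := \osc_{B_{1}\times(-1,0)} u$. By symmetry, it suffices to prove the one-sided bound $u(0, t) - u(0, s) \le C h^{p/(p+q)}$ (the reverse estimate follows from an analogous sub-barrier). Let $r \in (0, 1/4)$ be a radius to be optimized at the end, and work inside the cylinder $Q := B_{r}(0) \times (s, t)$.

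The super-barrier is taken of the form
\[
    \phi(y, \tau) := u(0, s) + L r + \frac{C_{0} \omega}{r^{\sigma}} |y|^{\sigma} + K (\tau - s),
\]
where $C_{0} \ge 1$ is a dimensional constant, the radial exponent $\sigma \ge p/(p-1)$ is large enough that $\Delta_{p}\phi$, $\Delta_{q}\phi$ and $|D\phi|^{q-1}$ remain bounded in $\overline Q$ (this is essential for $1 < p < 2$, where otherwise $|D\phi|^{p-2}$ would blow up at any critical point of $\phi$; note $\sigma \ge p/(p-1) \ge q/(q-1)$), and $K > 0$ is chosen below. The parabolic boundary check is direct: on $\{s\} \times B_{r}$, the elementary inequality $Lr + (C_{0}\omega / r^{\sigma})|y|^{\sigma} \ge L|y|$ for $|y| \le r$, combined with spatial Lipschitz of $u$, gives $\phi(\cdot, s) \ge u(\cdot, s)$; on $\partial B_{r} \times [s, t]$, boundedness of $u$ and $C_{0} \ge 1$ yield $\phi \ge u(0, s) + C_{0}\omega \ge u$. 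The super-solution inequality
\[
    \partial_{\tau}\phi - \div\bigl(|D\phi|^{p-2} D\phi + a |D\phi|^{q-2} D\phi\bigr) - f(\cdot, D\phi) \ge 0 \quad\text{in } Q
\]
is verified using the explicit formula $\Delta_{p}(c |y|^{\sigma}) = c^{p-1} \sigma^{p-1} (\sigma(p-1) - p + N) |y|^{\sigma(p-1) - p}$ and its $q$-analogue, the Lipschitz bound on $a$ in space, and the sub-critical growth \eqref{eq:gcnd} (the conditions $\beta_{1} < p$, $\beta_{2} < q$ ensure the $f$-contribution is non-dominant). This reduces to a condition of the form $K \ge C r^{-\kappa}$ where the exponent $\kappa = \kappa(p, q, \sigma)$ records the most $r$-singular contribution among the divergence and nonlinearity terms.

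Applying Lemma \ref{thm:comparison principle} gives $\phi \ge u$ in $Q$, and evaluating at $(0, t)$ yields
\[
    u(0, t) - u(0, s) \le \phi(0, t) - u(0, s) = L r + K h \le C \bigl(L r + h r^{-\kappa}\bigr).
\]
Optimizing in $r$ balances the two contributions and produces the H\"older exponent $\kappa/(1 + \kappa)$; the role of $\sigma$ (and the factor $C_{0}$) is to tune the interaction between the $p$- and $q$-phases so that the effective scaling is $\kappa = q/p$, giving the target estimate $C h^{p/(p+q)}$. The main obstacle lies in this bookkeeping: ensuring that the divergence scaling from the barrier, modulated by $\sigma$, matches precisely the prescribed $\kappa$ despite the competing $p$- and $q$-contributions together with the $Da$-term; the monotonicity hypothesis \eqref{assumption increasing} on $a$ enters only through the comparison principle.
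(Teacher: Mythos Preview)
Your overall strategy---build a radial super-barrier on a small cylinder and invoke Lemma~\ref{thm:comparison principle}---is the same as the paper's, but the scaling you assert does not follow from your barrier. In your function
\[
\phi(y,\tau)=u(0,s)+Lr+\frac{C_0\omega}{r^\sigma}|y|^\sigma+K(\tau-s),
\]
the spatial coefficient $C_0\omega/r^\sigma$ is forced by the lateral boundary requirement ($\phi\geq u(0,s)+\omega$ at $|y|=r$). Consequently $|D\phi|\sim \omega/r$ at the edge \emph{regardless of} $\sigma$, and a direct computation gives $|\Delta_p\phi|\lesssim r^{-p}$, $|a\,\Delta_q\phi|\lesssim r^{-q}$, $|D\phi|^{q-1}\|Da\|_\infty\lesssim r^{-(q-1)}$ uniformly in $B_r$. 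The dominant term is $r^{-q}$, so the supersolution condition only yields $K\geq Cr^{-q}$, i.e.\ $\kappa=q$. Optimizing $Lr+hr^{-q}$ then gives $h^{1/(1+q)}$, strictly worse than the target $h^{p/(p+q)}$ for $p>1$. The exponent $\sigma$ does not enter these $r$-scalings at all; it only affects multiplicative constants. (Your formula ``exponent $=\kappa/(1+\kappa)$'' is also a slip: optimizing $r+hr^{-\kappa}$ gives $h^{1/(1+\kappa)}$.)

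The paper attains the sharp exponent by decoupling the additive constant from the radius. Its barrier is $\varphi(x,\tau)=u(0,t_0)+A+\Theta(\tau-t_0)+K|x|^\beta$ with the specific choice $\beta=p/(p-1)$, and the radius $\rho$ is \emph{determined} by $K$ via $\rho=(C_0/K)^{1/\beta}$. The key move is on the initial slice: rather than the crude bound $L r$, one checks $A+K|x|^\beta\geq L|x|$ by minimizing over $|x|$, which permits $A$ to be small (effectively $A\sim\rho^p$, not $\rho$) provided one takes $K=C_0A^{1-\beta}\sim A^{-1/(p-1)}$ large. The choice $\beta=p/(p-1)$ then makes $\Delta_p\varphi$ constant and gives $\Theta\leq CK^{q/\beta}=CA^{-q/p}$; after comparison one obtains $u(0,s_0)-u(0,t_0)\leq A+CA^{-q/p}h$, and balancing yields $A\sim h^{p/(p+q)}$. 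In short, the Lipschitz hypothesis must be exploited through the curvature term $K|x|^\beta$ on the initial slice, not absorbed into a flat constant $Lr$.
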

\begin{proof}
\textbf{Step 1:} Let $t\in[-1,0)$
and $s_{0}\in(t_{0},0]$. We borrow the barrier function from \cite{IJS19}. We define
\[
\varphi(x,t):=u(0,t_{0})+A+\Theta(t-t_{0})+K\left|x\right|^{\beta},
\]
 where $\beta:=p/(p-1)$, $\Theta\geq0$ is chosen later,
\[
A:=(s_{0}-t_{0})^{\frac{p}{p+q}}\quad\text{and}\quad K:=C_{0}A^{1-\beta}:=2(\osc u +1)(L+1)^{\beta}\beta A^{1-\beta}.
\]
We also set $\rho:=A^{\frac{\beta-1}{\beta}}\leq1$. Then, for $x\in\partial B_{\rho}$, we have that
\[
K\left|x\right|^{\beta}=C_{0}A^{1-\beta}\left|x\right|^{\beta}\geq2(\osc u)A^{1-\beta}\left|x\right|^{\beta}=2\osc u.
\]
Hence, $u\leq\varphi$ on $[t_{0},0]\times\partial B_{\rho}$. We
also note that
\begin{align}
\rho= & \left(\frac{1}{A^{1-\beta}}\right)^{\frac{1}{\beta}}=\left(\frac{C_{0}}{K}\right)^{\frac{1}{\beta}}=C_{0}^{\frac{1}{\beta}}K^{-\frac{1}{\beta}}.\label{eq:rho in terms of K}
\end{align}
On the other hand, since $u$ is $L$-Lipschitz in space variable,
for all $x\in B_{1}$, we have that 
\begin{align}
\varphi(x,t_{0})-u(x,t_{0}) & =u(0,t_{0})-u(x,t_{0})+A+ \Theta (t-t_0)+K\left|x\right|^{\beta}\nonumber \\
 & \geq-L\left|x\right|+A+K\left|x\right|^{\beta}=:f(\left|x\right|),\label{eq:holder 1}
\end{align}
where we denoted $f(r):=A-Lr+Kr^{\beta}$. Observe that
\[
f^{\prime}(r)=-L+K\beta r^{\beta-1}=0\iff r=\left(\frac{L}{K\beta}\right)^{\frac{1}{\beta-1}}=:r_{0}
\]
so that
\begin{align*}
\min_{r\in[0,\infty)}f(r)=f(r_{0}) & =A-L\left(\frac{L}{K\beta}\right)^{\frac{1}{\beta-1}}+K\left(\frac{L}{K\beta}\right)^{\frac{\beta}{\beta-1}}\\
 & =A+K^{-\frac{1}{\beta-1}}L^{\frac{\beta}{\beta-1}}\left(-\left(\frac{1}{\beta}\right)^{\frac{1}{\beta-1}}+\left(\frac{1}{\beta}\right)^{\frac{\beta}{\beta-1}}\right)\\
 & \geq A-\left(\frac{1}{K}\frac{L^{\beta}}{\beta}\right)^{\frac{1}{\beta-1}}\geq0,
\end{align*}
where in the last inequality we used the definition of $K$. Therefore,
by (\ref{eq:holder 1}) we have $u\leq\varphi$ on $B_{1}\times\left\{ t_{0}\right\} $.
So far we have shown that for any $\Theta\geq0$
\begin{equation}
u\leq\varphi\quad\text{on }\partial_{\mathcal{P}}(B_{\rho}\times[t_{0},0]).\label{eq:holder 7}
\end{equation}

\textbf{Step 2:} Next, we select $\Theta$ so large that $\varphi$
will be a weak supersolution to the double phase equation in $B_{\rho}\times(t_{0},0)$.
To this end, let $\phi\in C_{0}^{\infty}(B_{\rho}\times(t_{0},0))$
be non-negative, and consider a point $(x,t)\in\supp\phi,$ $x\not=0$.
Since $x\not=0$, we have 
\begin{align}
\left|\Delta_{p}\varphi(x,t)\right| & \leq (p-1)\left|D\varphi(x,t)\right|^{p-2}\left\Vert D^{2}\varphi(x,t)\right\Vert \nonumber \\
 &=(p-1)\left|K\beta\left|x\right|^{\beta-1}\right|^{p-2}\left\Vert K\beta\left|x\right|^{\beta-2}I+K\beta(\beta-2)x\otimes x\left|x\right|^{\beta-4}\right\Vert \nonumber \\
 & \le C(N,p)K^{p-1}\left|x\right|^{(\beta-1)(p-1)-1}\nonumber \\
 & =C(N,p)K^{p-1},\label{eq:timeholder plap est}
\end{align}
where we used that $K\geq1$ and that $(\beta-1)(p-1)-1=0$. On the
other hand, denoting by $a_{j}$ the standard mollification of $a$,
and using (\ref{eq:rho in terms of K}) as well as that $\left|x\right|\leq\rho$,
we have also
\begin{align}
a_{j}(x,t)\left|\Delta_{q}\varphi(x,t)\right| & \leq C(N,p,q,\left\Vert a\right\Vert _{L^{\infty}})K^{q-1}\left|x\right|^{(\beta-1)(q-1)-1}\nonumber \\
 & \leq C(N,p,q,\left\Vert a\right\Vert _{L^{\infty}})K^{q-1}\big(C_{0}^{\frac{1}{\beta}}K^{-\frac{1}{\beta}}\big)^{(\beta-1)(q-1)-1}\nonumber \\
 & =C(N,p,q,\left\Vert a\right\Vert _{L^{\infty}},\osc u,L)K^{\frac{q}{\beta}}\label{eq:timeholder qlap est}
\end{align}
for all large $j$. Note that, here we used $(\beta-1)(q-1)\geq 1.$ Further, we have similarly
\begin{align}
\left|D\varphi(x,t)\right|^{q-2}D\varphi(x,t)\cdot Da_{j}(x,t) & \leq\beta^{q-1}\left\Vert Da\right\Vert _{L^{\infty}}K^{q-1}\left|x\right|^{(\beta-1)(q-1)}\nonumber \\
 & \leq C(p,q,\left\Vert Da\right\Vert _{L^{\infty}},\osc u,L)K^{\frac{q}{\beta}}.\label{eq:timeholder firstorder est}
\end{align}
and using also growth condition (\ref{eq:gcnd})
\begin{align}
\left|f(x,t,D\varphi)\right| & \leq C_{f}\left(1+\left|D\varphi(x,t)\right|^{\beta_{1}}+a(x,t)\left|D\varphi(x,t)\right|^{\beta_{2}}\right)\nonumber \\
 & =C_{f}\left(1+\beta^{\beta_{1}}K^{\beta_{1}}\left|x\right|^{(\beta-1)\beta_{1}}+a(x,t)\beta^{\beta_{2}}K^{\beta_{2}}\left|x\right|^{(\beta-1)\beta_{2}}\right)\nonumber \\
 & =C_{f}\left(1+\beta^{\beta_{1}}C_{0}^{\frac{\beta_{1}}{\beta}}K^{\beta_{1}\big(1-\frac{\beta-1}{\beta}\big)}+a(x,t)\beta^{\beta_{2}}C_{0}^{\frac{\beta_{2}}{\beta}}K^{\beta_{2}(1-\frac{\beta-1}{\beta})}\right)\nonumber \\
 & \leq C(p,q,\left\Vert a\right\Vert _{L^{\infty}},C_{f},\left\Vert u\right\Vert _{L^{\infty}},L)K^{\frac{q}{\beta}},\label{eq:timeholder f est}
\end{align}
where in the last inequality we used that $\beta_{1},\beta_{2} < q$ and $K\geq 1$.
Combining the estimates (\ref{eq:timeholder plap est})-(\ref{eq:timeholder f est}), we see that in the support of $\phi$ and outside of the origin, for
all large $j$, it holds that
\begin{align*}
\div(\left|D\varphi\right|^{p-2}D\varphi+a_{j}\left|D\varphi\right|^{q-2}D\varphi)+f(x,t,D\varphi) & \leq C_{1}K^{\frac{q}{\beta}},
\end{align*}
for some $C_{1}$ that depends only on data, oscillation of $u$ and
$L$. Taking 
\[
\Theta:=C_{1}K^{\frac{q}{\beta}},
\]
we obtain that
\[
\partial_{t}\varphi-\div(\left|D\varphi\right|^{p-2}D\varphi+a_{j}\left|D\varphi\right|^{q-2}D\varphi)\geq0\quad\text{in }\supp\phi\cap\big((B_{1}\setminus\left\{ 0\right\} )\times(t_{0},0)\big)
\]
for all large $j$. Multiplying this by $\phi$ and applying the divergence
theorem, we see that for all $r>0$
\begin{align*}
0 & \leq\int_{t_{0}}^{0}\int_{B_{\rho}\setminus B_{r}}\phi\partial_{t}\varphi-\phi\div(\left|D\varphi\right|^{p-2}D\varphi+a_{j}\left|D\varphi\right|^{q-2}D\varphi)\d x\d t\\
 & =\int_{t_{0}}^{0}\int_{B_{\rho}\setminus B_{r}}-\varphi\partial_{t}\phi+\left|D\varphi\right|^{p-2}D\varphi\cdot D\phi+a_{j}\left|D\varphi\right|^{q-2}D\varphi\cdot D\phi\d x\d t\\
 & \ \ \ +\int_{t_{0}}^{0}\int_{\partial B_{r}}(\left|D\varphi\right|^{p-2}+a_{j}\left|D\varphi\right|^{q-2})D\varphi\cdot\frac{x}{\left|x\right|}\d S(x)\d t.
\end{align*}
Letting $r\rightarrow0$ and $j\rightarrow \infty$, we see that $\varphi$
is a weak supersolution.

\textbf{Step 3:} From the comparison principle it follows that $u\leq\varphi$
in $B_{\rho}\times[t_{0},0]$. In particular, we have by definition of
$\Theta$ and $K$ 
\begin{align*}
u(0,s_{0})-u(0,t_{0})\leq\varphi(0,s_{0})-u(0,t_{0}) & =A+C_{1}K^{\frac{q}{\beta}}(s_{0}-t_{0})\\
 & =A+C_{1}C_{0}^{\frac{q}{\beta}}A^{-\frac{q}{p}}(s_{0}-t_{0}).
\end{align*}
Recalling that $A=(s_{0}-t_{0})^{\frac{p}{p+q}}$, we obtain
\begin{align*}
u(0,s_{0})-u(0,t_{0}) & \leq(s_{0}-t_{0})^{\frac{p}{p+q}}+C(s_{0}-t_{0})^{1-\frac{p}{p+q}\cdot\frac{q}{p}}\\
 & =(s_{0}-t_{0})^{\frac{p}{p+q}}+C(s_{0}-t_{0})^{\frac{p}{p+q}}.
\end{align*}
A lower bound follows similarly, but using the subsolution $u(0,t_{0})-A-\Theta(t-t_{0})-K\left|x\right|^{\beta}$.
Since $s_{0}\in(t_{0},0]$ was arbitrary, it follows that $u$ is
$\frac{p}{p+q}$-H{\"o}lder continuous in time at $(0,t_{0})$.
\end{proof}

In the degenerate range we can improve the time H{\"o}lder exponent to
$1/2$. The reason is that if $p\geq2$ and $u$ is Lipschitz continuous,
then the term $\left|Du\right|^{p-2}$ that appears in $p$-Laplacian
is bounded, and we can choose $\beta=2$ in the barrier function.
\begin{lem}\label{lem:time holder 1/2}
Suppose the same assumptions as in Lemma \ref{lem:time holder}, with
the exponents $p$ and $q$ restricted to the degenerate range $2\leq p\leq q\leq p+1$.
Then we have 
\[
\left|u(0,t)-u(0,s)\right|\leq C\left|t-s\right|^{1/2}\quad\text{for all }t,s\in(-1,0),
\]
where the constant $C$ depends only on $N$, $p$, $q$, $\left\Vert a\right\Vert _{L^{\infty}(B_{1}\times(-1,0))}$,
$\left\Vert Da\right\Vert _{L^{\infty}(B_{1}\times(-1,0))}$, the
constants in the growth condition, $\osc_{B_{1}\times(-1,0)}$ and
$L$.
\end{lem}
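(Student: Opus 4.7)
The plan is to reproduce the barrier-and-comparison strategy of Lemma~\ref{lem:time holder}, but replace the singular barrier $K|x|^{\beta}$ with $\beta=p/(p-1)$ by a smooth barrier whose gradient is uniformly bounded in $x$. Setting $M:=\max(L,\osc_{B_{1}\times(-1,0)}u)$, I will use
\[
\varphi(x,t):=u(0,t_{0})+M\sqrt{|x|^{2}+\delta^{2}}+\Theta(t-t_{0})
\]
on the cylinder $B_{\rho}\times(t_{0},s_{0}]$ with $\rho:=\osc_{B_1\times(-1,0)} u / M \leq 1$, and parameters $\delta,\Theta>0$ to be chosen. The crucial structural feature is that $|D\varphi|\leq M$ uniformly in $x$, which is precisely the content of the remark preceding the lemma: since $p,q\geq2$, the factors $|D\varphi|^{p-2}\leq M^{p-2}$ and $|D\varphi|^{q-2}\leq M^{q-2}$ appearing in the $p$- and $q$-Laplacians of $\varphi$ are then bounded by $M$-dependent constants, enabling the parabolic time scale $\delta^{2}\sim s_{0}-t_{0}$ characteristic of the degenerate regime.

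The boundary checks are straightforward. On $\{t_{0}\}\times B_{\rho}$, the spatial Lipschitz of $u$ combined with $\sqrt{|x|^{2}+\delta^{2}}\geq|x|$ and $M\geq L$ give $\varphi(x,t_{0})-u(x,t_{0})\geq M|x|-L|x|\geq 0$; on $\partial B_{\rho}\times[t_{0},s_{0}]$, $M\sqrt{\rho^{2}+\delta^{2}}\geq M\rho=\osc u\geq u(x,t)-u(0,t_{0})$. A direct computation using $\|D^{2}\varphi\|\leq CM/\delta$ then produces
\begin{align*}
|\Delta_{p}\varphi| &\leq CM^{p-1}/\delta, & |a\Delta_{q}\varphi| &\leq C\|a\|_{L^{\infty}}M^{q-1}/\delta, \\
\|Da\|_{L^{\infty}}|D\varphi|^{q-1} &\leq CM^{q-1}, & |f(\cdot,D\varphi)| &\leq C(1+M^{\beta_{1}}+M^{\beta_{2}}),
\end{align*}
so setting $\Theta:=C_{1}M^{q-1}/\delta+C_{2}$ with $C_{1},C_{2}$ depending only on the data makes $\varphi$ a weak supersolution in $B_{\rho}\times(t_{0},s_{0})$. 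As in Lemma~\ref{lem:time holder}, this is rigorously justified by first mollifying $a$ in space, checking the inequality for the smooth $a_{j}$, and passing to the limit $j\to\infty$. Theorem~\ref{thm:comparison principle} applied to the Lipschitz pair $u,\varphi$ (condition \eqref{assumption increasing} holding by assumption) then yields $u\leq\varphi$ on $B_{\rho}\times[t_{0},s_{0}]$, whence
\[
u(0,s_{0})-u(0,t_{0})\leq\varphi(0,s_{0})-u(0,t_{0})=M\delta+\Theta(s_{0}-t_{0}).
\]
Choosing $\delta:=\sqrt{C_{1}}\,M^{(q-2)/2}\sqrt{s_{0}-t_{0}}$ to balance the two right-hand terms yields $CM^{q/2}\sqrt{s_{0}-t_{0}}$, the desired bound. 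The matching lower bound arises symmetrically from the subsolution barrier $u(0,t_{0})-M\sqrt{|x|^{2}+\delta^{2}}-\Theta(t-t_{0})$.

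The main delicate point is the supersolution inequality at $x=0$, where $D\varphi(0)=0$. For $p,q\geq2$ this causes no issue: explicit computation on the radial profile $r\mapsto M\sqrt{r^{2}+\delta^{2}}$ shows that both $\Delta_{p}\varphi$ and $\Delta_{q}\varphi$ extend continuously to $x=0$ subject to the bounds above, so the supersolution inequality holds pointwise everywhere in $B_\rho\times(t_0,s_0)$; this is precisely where the degeneracy $p,q\geq2$ is used, as it ensures the singular factors $|D\varphi|^{p-2},|D\varphi|^{q-2}$ vanish at the origin rather than blowing up. The remaining subtlety, that $Da$ exists only almost everywhere and $\operatorname{div}(a|D\varphi|^{q-2}D\varphi)$ must be interpreted accordingly, is dispatched by the same mollification procedure employed in the proof of Lemma~\ref{lem:time holder}.
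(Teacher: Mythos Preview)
Your proof is correct but takes a genuinely different route from the paper. The paper keeps the quadratic barrier $\varphi=u(0,t_{0})+A+\Theta(t-t_{0})+K|x|^{2}$ with $A=(s_{0}-t_{0})^{1/2}$ and $K=C_{0}A^{-1}$, and in Step~2 does \emph{not} verify that $\varphi$ is a weak supersolution. Instead it argues directly by the viscosity framework: perturbing to $\varphi_{h}=\varphi-h/t$ and assuming $u-\varphi_{h}$ has an interior positive maximum at $(\hat{x},\hat{t})$, the $L$-Lipschitz bound on $u$ forces $|D\varphi_{h}(\hat{x},\hat{t})|\leq L$ at that single point, after which the viscosity subsolution inequality (Theorem~\ref{thm:weak is visc}) together with $D^{2}\varphi_{h}=2KI$ gives $\Theta\leq\partial_{t}\varphi_{h}(\hat{x},\hat{t})\leq C(L)K$, a contradiction for $\Theta=C_{1}K$. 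Your regularized-cone barrier $M\sqrt{|x|^{2}+\delta^{2}}$ instead builds the gradient bound $|D\varphi|\leq M$ in globally, so that $\varphi$ can be checked to be a weak supersolution and the weak comparison principle (Lemma~\ref{thm:comparison principle}) applied. Your approach stays entirely within the weak framework and avoids invoking the weak-to-viscosity implication; the paper's approach is a bit slicker in that it exploits the touching-point gradient bound and skips the mollification/integration-by-parts verification of the supersolution property. Both yield the same $|t-s|^{1/2}$ rate with constants of the claimed dependence.
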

\begin{proof}
\textbf{Step 1:} This step is same as in the proof of Lemma \ref{lem:time holder}, except
we consider the barrier
\[
\varphi(x,t):=u(0,t_{0})+A+\Theta(t-t_{0})+K\left|x\right|^{2},
\]
with $\Theta\geq0$ chosen later, 
\[
A:=(s_{0}-t_{0})^{\frac{1}{2}}\quad\text{and}\quad K:=C_{0}A^{-1}:=4(\osc u+1)(L+1)^{2}A^{-1}.
\]
One verifies as in Step 1 of proof of Lemma \ref{lem:time holder} that for any $\Theta\geq0$,
we have
\[
u\leq\varphi\quad\text{on }\partial_{\mathcal{P}}(B_{1}\times[t_{0},0]).
\]

\textbf{Step 2:} We use a comparison argument to show that, provided
$\Theta$ is chosen to be large enough, we have
\begin{equation}
u\leq\varphi\quad\text{in }B_{1}\times(t_{0},0].\label{eq:time2 step2 claim}
\end{equation}
To this end, consider the function
\[
\varphi_{h}(x,t):=\varphi(x,t)-\frac{h}{t}\quad\text{for }h>0.
\]
Then suppose on the contrary that $\sup_{B_{1}\times(t_{0},0)}u-\varphi_{h}>0$.
Since $u-\varphi_{h}\leq0$ on the boundary of $B_{1}\times(t_{0},0)$,
by continuity the function $u-\varphi_{h}$ has a maximum at some
$(\hat{x},\hat{t})\in B_{1}\times(t_{0},0)$. Since $\varphi_{h}$
is smooth and $u$ is a viscosity solution by Theorem \ref{thm:weak is visc}, we have by definition
of viscosity solutions that
\begin{align}
\liminf_{\underset{x\not=\hat x,(x,t)\not\in\Gamma}{(x,t)\rightarrow(\hat{x},\hat{t}),}} & \Big(\partial_{t}\varphi_{h}(x,t)-\div\big(\left|D\varphi_{h}(x,t)\right|^{p-2}D\varphi(x,t)+a(x,t)\left|D\varphi_{h}(x,t)\right|^{q-2}D\varphi_{h}(x,t)\big)\nonumber \\
 & -f(x,t,D\varphi_{h}(x,t)\Big)\leq0.\label{eq:time2 liminf}
\end{align}
Denote $\eta:=D\varphi_{h}(\hat{x},\hat{t})$. If $\eta\not=0$, then
the above implies
\begin{align*}
\partial_{t}\varphi_{h}(\hat{x},\hat{t}) & \leq\left|\eta\right|^{p-2}\Big(\tr D^{2}\varphi_{h}(\hat{x},\hat{t})+(p-2)\frac{\eta^{\prime}D^{2}\varphi_{h}(\hat{x},\hat{t})\eta}{\left|\eta\right|^{2}}\Big)\\
 & \ \ \ +a(\hat{x},\hat{t})\left|\eta\right|^{q-2}\Big(\tr D^{2}\varphi_{h}(\hat{x},\hat{t})+(q-2)\frac{\eta^{\prime}D^{2}\varphi_{h}(\hat{x},\hat{t})\eta}{\left|\eta\right|^{2}}\Big)\\
 & \ \ \ +\left\Vert Da\right\Vert _{L^{\infty}}\left|\eta\right|^{q-1}+f(x,t,\eta)\\
 & \leq2\left|\eta\right|^{p-2}(N+p-2)K+2\left\Vert a\right\Vert _{L^{\infty}}\left|\eta\right|^{q-2}(N+q-2)K\\
 & \ \ \ +\left\Vert Da\right\Vert _{L^{\infty}}\left|\eta\right|^{q-1}+C_{f}(1+\left|\eta\right|^{\beta_{1}}+\left|\eta\right|^{\beta_{2}}),
\end{align*}
where in the last estimate we used that $D^{2}\varphi_{h}(\hat{x},\hat{t})=2KI$
and the growth condition (\ref{eq:gcnd}). Observe that since $u-\varphi_{h}$ has
a maximum at $(\hat{x},\hat{t})$ and $u$ is $L$-Lipschitz in space,
we must have $\left|D\varphi_{h}(\hat{x},\hat{t})\right|\leq L$.
Consequently, the above display yields (using also that $\beta_{1},\beta_{2}\leq q$
and that $K\geq1$)
\begin{align}
\partial_{t}\varphi_{h}(\hat{x},\hat{t}) & \leq C(N,q,C_{f})(\left\Vert a\right\Vert _{L^{\infty}}+\left\Vert Da\right\Vert _{L^{\infty}})(L+1)^{q}K\nonumber \\
 & =C(N,q,\left\Vert a\right\Vert _{L^{\infty}},\left\Vert Da\right\Vert _{L^{\infty}},L)K.\label{eq:time2 timederiv est}
\end{align}
In the case $\eta=0$, the inequality (\ref{eq:time2 liminf}) still
implies (\ref{eq:time2 timederiv est}), as since $2\leq p\leq q$,
the divergence terms either vanish or reduce to Laplacians. Now, since
$\Theta\leq\partial_{t}\varphi_{h}(\hat{x},\hat{t})$, we have
\[
\Theta\leq C(N,q,\left\Vert a\right\Vert _{L^{\infty}},\left\Vert Da\right\Vert _{L^{\infty}},L)K,
\]
so that taking $\Theta=C_{1}K$, where $C_{1}$ depends only on $N$,
$q,$ $\left\Vert a\right\Vert _{L^{\infty}}$, $\left\Vert Da\right\Vert _{L^{\infty}}$
and $L$, we have a contradiction. Since $h>0$ was arbitrary, the
inequality (\ref{eq:time2 step2 claim}) follows.

\textbf{Step 3:} This step is again similar to Step 3 in the proof
of Lemma \ref{lem:time holder}. From (\ref{eq:time2 step2 claim}) we have in particular
\begin{align*}
u(0,s_{0})-u(0,t_{0})\leq\varphi(0,s_{0})-u(0,t_{0}) & =(s_{0}-t_{0})^{1/2}+C_{1}K(s_{0}-t_{0})\\
 & =(s_{0}-t_{0})^{1/2}+C_{1}C_{0}(s_{0}-t_{0})^{1/2}.
\end{align*}
A lower bound can be proven similarly, and so the claim follows.
\end{proof}

We are now ready to finish the proof of the main regularity result.

\begin{proof}[Proof of Theorem \ref{Lip thm}]
Let $u$ be a continuous weak solution to (\ref{eq:p-para f}) in $B_{3}\times(-3,0)$. In the case $q=p+1$, we suppose additionally that $u$ is uniformly continuous in space variable, i.e.\ there is a modulus $\omega$ such that
\[
\left|u(x,t)-u(y,t)\right|\leq\omega(\left|x-y\right|)\quad\text{for all }(x,t),(y,t)\in B_{3}\times(-3,0).
\]
Let $\varepsilon \in (0,1)$. By Lemma \ref{cws and S}, we have $u\in \texttt{S}(Q_{1}^-(x,t))$ for
all $(x,t)\in B_{1}\times(-1,-\varepsilon)$. Applying Lemma \ref{lem:holder for S}
in each of these cylinders, we find that
\[
\left|u(x,t)-u(y,t)\right|\leq C_{H}\left|x-y\right|^{\alpha}\quad\text{for all }(x,t),(y,t)\in B_{1}\times(-1-\varepsilon,-\varepsilon],
\]
where, if $p < q +1 $, the constant $C_{H}$ depends only on data ($N$, $p$, $q$, $||a||_{L^\infty (B_{3}\times(-3,0))}$, $||Da||_{L^\infty (B_{3}\times(-3,0))}$ and the constants in the growth condition (\ref{eq:gcnd})) and $\osc_{B_{3}\times(-3,0)}u$. If $p=q+1$, $C_{H}$ also depends on $\omega$. Next, we apply the Lemma \ref{lem:Lipschitz for S} in $B_{1}\times(-1-\varepsilon,-\varepsilon]$ to find that
\[
\left|u(x,t)-u(y,t)\right|\leq L\left|x-y\right| 
\quad\text{for all }(x,t),(y,t)\in B_{1/2}\times(-1/2-\varepsilon,-\varepsilon],
\]where $L$ depends only on data, $C_{H}$ and $\osc_{B_{3}\times(-3,0)}$.
Since $\varepsilon \in (0,1)$ was arbitrary, lemmas \ref{lem:time holder} and \ref{lem:time holder 1/2} together
with standard scaling and translation arguments finishes the proof.
\end{proof}
\section{Remarks on multi-phase equation}\label{sec: multiphase} 
In this section, we show that the Ishii-Lions method also works for parabolic multi-phase equation. We consider the parabolic version of \eqref{multi-phase} and only provide a sketch of the proof when $m=2:$
\begin{align}\label{multiphase eq}
    \partial_t u-\div\left(|Du|^{p-2}Du+a(z)|Du|^{q-2}Du+b(z)|Du|^{s-2}Du\right)=f(z, Du).
\end{align}
We also assume similar growth conditions on $f:$
\begin{align*}
    |f(z, Du)|\leq C_f(1+|Du|^{\beta_1}+a(z)|Du|^{\beta_2}+b(z)|Du|^{\beta_3})
\end{align*}
where $1\leq \beta_1<p, 1\leq\beta_2<q, \,\, \text{and}\,\, 1\leq\beta_3<s.$ Weak solutions to \eqref{multiphase eq} are defined analogously to Definition \ref{def:weak solutions} with the appropriate spaces corresponding to the modulus $$(z,\xi) \mapsto |\xi|^p+a(z)|\xi|^q+b(z)|\xi|^s .$$
Moreover, we point out that the comparison principle holds in a similar manner as in the double phase case. To prove the time approximation used in the proof of the comparison principle, we need both modulating coefficients to satisfy the \textit{almost increasing condition} in a suitable way. That is to say, we require that
\begin{align}\label{eq:multiphase ab cnd}
    a(x, t)\leq C \max(a(x, s),b(x,s)) \quad \text{and} \quad b(x, t)\leq C b(x, s)
\end{align}
for all $(x,t), (x,s) \in Q_1$ with $t\leq s$, where $C$ is some non-negative constant.
\begin{thm} \label{Lip thm 2}
Suppose that $1 < p \leq q \leq s \leq p + 1$ and assume that the coefficients $a$ and $b$
in the equation \eqref{multiphase eq} are nonnegative, continuous in time, Lipschitz continuous in space and satisfy \eqref{eq:multiphase ab cnd} in $Q_1$. If $1\leq p\leq q\leq s<p+1,$ and  $u$ is a weak solution to \eqref{multiphase eq} in $Q_1$, then there exists a constant $C>0$ such that
\begin{align}\label{Lip estimate sec 6}
|u(x_1,t_1) - u(x_2,t_2)| \leq C\left(|x_1-x_2|+|t_1-t_2|^{\frac{p}{p+s}}\right)
\end{align}
$\text{for all}\quad (x_1, t_1), (x_2, t_2) \in B_{1/2}\times(-1/2, 0), $ where $\alpha=\frac{p}{p+s}$ if $1<p<2$ and $\alpha=\frac{1}{2}$ if $p\geq 2.$
The constant $C$ depends only on $N$, $p$, $q$, $s,$ $\Vert a \Vert_{L^\infty}$, $\Vert b \Vert_{L^\infty}$, $\Vert Da \Vert_{L^\infty}$, $\Vert Db \Vert_{L^\infty}$, the constants appearing in the growth condition \eqref{eq:gcnd}, and $\osc_{B_1 \times (-1, 0)} u$. If $s=p+1,$ the constant $C$ will additionally depend on the optimal modulus of continuity of $u$ in space in $B_1 \times (-1,0)$.

\end{thm}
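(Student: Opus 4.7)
The plan is to adapt, term by term, the proof of Theorem~\ref{Lip thm} to the three-phase operator in \eqref{multiphase eq}. First I would introduce the multi-phase analogues of the quantities used in Section~\ref{sec:Lipschitz estimates}: set
\begin{align*}
F((x,t),\eta,X) :=\, &|\eta|^{p-2}\Big(\tr X+(p-2)\frac{\eta'X\eta}{|\eta|^2}\Big)+a(x,t)|\eta|^{q-2}\Big(\tr X+(q-2)\frac{\eta'X\eta}{|\eta|^2}\Big)\\
&+b(x,t)|\eta|^{s-2}\Big(\tr X+(s-2)\frac{\eta'X\eta}{|\eta|^2}\Big),
\end{align*}
and
\begin{equation*}
g((x,t),\eta):=\|Da\|_{L^\infty}|\eta|^{q-1}+\|Db\|_{L^\infty}|\eta|^{s-1}+C_f\big(1+|\eta|^{\beta_1}+a(x,t)|\eta|^{\beta_2}+b(x,t)|\eta|^{\beta_3}\big),
\end{equation*}
and define a class $\texttt{S}$ exactly as in Definition~\ref{def:Class S} with these new $F$ and $g$. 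Under assumption \eqref{eq:multiphase ab cnd}, the time-approximation Lemma~\ref{lem:steklov lemma} extends: the extra term $b(x,t)|[v]_h|^s$ is controlled by the same Steklov/Jensen computation with $b$ in place of $a$, using $b(x,t)\leq C_a b(x,s)$. The space mollification Theorem~\ref{lem: spcae approximation} goes through unchanged by adding a third identical block for $b$. With these two approximation tools, the comparison principles of Lemmas~\ref{lem: first comparison}--\ref{thm:comparison principle} extend by simply including the additional $b$-block inside $I_2$ (the monotonicity inequalities for $|\cdot|^{s-2}(\cdot)$ are already in our toolbox) and an $\varepsilon|Du-Dv|^p$ (or $|\cdot|^2$) contribution absorbed on the left; from this, Lemma~\ref{cws and S} gives \texttt{CWS}$\subset$\texttt{S} via the same mollification-plus-comparison argument.

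Next I would redo the Ishii--Lions estimate (Lemma~\ref{lem:Ishii-Lions lemma}) with the new $F$. The terms $T_1,\dots,T_4$ each split into three structurally identical blocks indexed by $(p,1),(q,a),(s,b)$. The $B$ and $B^2$ estimates are unchanged, so the $b$-block produces exactly
$C(b(\hat x,\hat t)+b(\hat y,\hat t))(L\varphi')^{s-2}\big(L\varphi''+(L\varphi')^{\tilde\gamma}\big)$
with
\begin{equation*}
\gamma:=\max(q-p,s-p,\beta_1-p+1,\beta_2-q+1,\beta_3-s+1)+1.
\end{equation*}
Under the standing hypothesis $1<p\leq q\leq s\leq p+1$ and $\beta_1<p$, $\beta_2<q$, $\beta_3<s$, one has $\gamma\leq 2$, with strict inequality precisely when $s<p+1$.

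Once the main inequality \eqref{eq:lemma main est} is extended (with the extra $b$-block), both Lemma~\ref{lem:holder for S} (H\"older regularity) and Lemma~\ref{lem:Lipschitz for S} (Lipschitz regularity) carry over verbatim: the two non-negative, viscous terms carrying $a$ and $b$ are discarded at the same step, and the argument is driven solely by the $p$-block. The only bookkeeping change is that in the borderline case $s=p+1$ one has $\gamma=2$ and, as in Lemma~\ref{lem:holder for S}, the H\"older constant depends on the a priori modulus of continuity $\omega$ of $u$; this is exactly the role of the extra hypothesis stated for the endpoint case. Hence we obtain
$|u(x,t)-u(y,t)|\leq L|x-y|$ in $Q_{1/2}^-$.

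For the time-regularity part we reuse the barriers $\varphi(x,t)=u(0,t_0)+A+\Theta(t-t_0)+K|x|^\beta$ from Lemma~\ref{lem:time holder} (with $\beta=p/(p-1)$ and $A=(s_0-t_0)^{p/(p+s)}$) and from Lemma~\ref{lem:time holder 1/2} (with $\beta=2$ in the degenerate range). The computations \eqref{eq:timeholder plap est}--\eqref{eq:timeholder f est} simply acquire a third term $b(x,t)|D\varphi|^{s-2}\cdot(\cdot)$ plus $|D\varphi|^{s-2}D\varphi\cdot Db$, both of which, because of the scaling choices $K=C_0 A^{1-\beta}$ and $\rho=A^{(\beta-1)/\beta}$ together with $(\beta-1)(s-1)\geq 1$, are dominated by $CK^{s/\beta}$. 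Setting $\Theta:=C_1 K^{s/\beta}$ then drives the new supersolution property, and the comparison principle of the multi-phase case (established in the first paragraph) yields the H\"older exponent $p/(p+s)$ in time; in the degenerate range $p\geq 2$ the Lipschitz bound on $Du$ makes $|D\varphi|^{s-2}$ bounded again, so $\beta=2$ works as in Lemma~\ref{lem:time holder 1/2} to give the sharp $1/2$-exponent. Combining with the spatial bound yields \eqref{Lip estimate sec 6}.

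The main obstacle I anticipate is establishing the comparison principle when \emph{both} modulating coefficients must satisfy a monotonicity condition simultaneously. The delicate point is in the Steklov-average step: the natural condition that guarantees $b(x,t)|[v]_h|^s\in L^1$ is $b(x,t)\leq C b(x,s)$ for $t\leq s$, while the condition for $a$ relative to the dominated term involves $\max(a,b)$ because in regions where $a$ vanishes but $b$ does not, the $q$-phase is already controlled by the $s$-phase and the Young-type absorption should be arranged accordingly. This is precisely why \eqref{eq:multiphase ab cnd} is formulated with a $\max$ in the first inequality and a pure bound in the second; making each line of the comparison proof compatible with this asymmetric condition is the only genuinely new computation.
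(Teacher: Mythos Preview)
Your proposal is correct and takes essentially the same approach as the paper's sketch: you define the three-phase $F$ and $g$, extend the comparison principle under \eqref{eq:multiphase ab cnd}, rederive the Ishii--Lions estimate with the extra $b$-block and the same $\gamma$ (noting $s-p\geq q-p$ makes your inclusion of $q-p$ redundant), and then run Lemmas~\ref{lem:holder for S}--\ref{lem:Lipschitz for S} and the barrier arguments with $A=(s_0-t_0)^{p/(p+s)}$ and $\Theta\sim K^{s/\beta}$. Your final paragraph on the asymmetric $\max$ condition in \eqref{eq:multiphase ab cnd} is a correct reading of why the Steklov step requires that particular formulation; the paper itself leaves this point implicit.
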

\begin{proof}
Since the proof is almost similar to the double phase case, we only point out the steps where it is different.

\noindent \textit{Step 1: Lipschitz regularity in space.} In this step, we sketch the proof of Lipschitz continuity of solution $u$ to the equation \eqref{multiphase eq}. Similar to the double phase case, for all $(x,t)\in Q_{1}$, $\eta\in\mathbb{R}^{N}$ and $X\in S(N)$,
we denote
\begin{align*}
F((x,t),\eta,X):= & \:\left|\eta\right|^{p-2}\left(\tr X+(p-2)\frac{\eta^{\prime}X\eta}{\left|\eta\right|^{2}}\right)\nonumber \\
 & \:+a(x,t)\left|\eta\right|^{q-2}\left(\tr X+(q-2)\frac{\eta^{\prime}X\eta}{\left|\eta\right|^{2}}\right)+b(x,t)\left|\eta\right|^{s-2}\left(\tr X+(s-2)\frac{\eta^{\prime}X\eta}{\left|\eta\right|^{2}}\right)
\end{align*}
and
\begin{align*}
g((x,t),\eta) & :=\left\Vert Da\right\Vert _{L^{\infty}}\left|\eta\right|^{q-1}+\left\Vert Db\right\Vert _{L^{\infty}}\left|\eta\right|^{s-1}+C_f(1+\left|\eta\right|^{\beta_{1}}+a(x,t)\left|\eta\right|^{\beta_{2}}+b(x,t)|\eta|^{\beta_3}),
\end{align*}
where $C_f\geq0$ and $0\leq\beta_{1}\leq p$, $0\leq\beta_{2}\leq q,$ and $0\leq\beta_{3}\leq s$. From the proof of Lemma \ref{lem:Ishii-Lions lemma}, we have the following.
\begin{align}\label{estimate T}
K(\hat{t}-t_{0})&\leq F((\hat{x},\hat{t}),\eta_{1},X+KI)-F((\hat{y},\hat{t}),\eta_{2},Y-KI)+g((\hat{x},\hat{t}),\left|\eta_{1}\right|)+g((\hat{y},\hat{t}),\left|\eta_{2}\right|)\nonumber\\
&=F((\hat{x},\hat{t}),\eta_{1},X)-F((\hat{y},\hat{t}),\eta_{2},Y)\nonumber\\
&+F((\hat{x},\hat{t}),\eta_{1},X+KI)-F((\hat{x},\hat{t}),\eta_{1},X)-F((\hat{y},\hat{t}),\eta_{2},Y-KI)+F((\hat{y},\hat{t}),\eta_{2},Y)\nonumber\\
&+g((\hat{x},\hat{t}),\left|\eta_{1}\right|)+g((\hat{y},\hat{t}),\left|\eta_{2}\right|)\nonumber\\
&=F((\hat{x},\hat{t}),\tilde{\eta},X)-F((\hat{y},\hat{t}),\tilde{\eta},Y)\nonumber\\
&+F((\hat{x},\hat{t}),\eta_{1},X)-F((\hat{x},\hat{t}),\tilde{\eta},X)+F((\hat{y},\hat{t}),\tilde{\eta},Y)-F((\hat{y},\hat{t}),\eta_{2},Y)\nonumber\\
&+F((\hat{x},\hat{t}),\eta_{1},X+KI)-F((\hat{x},\hat{t}),\eta_{1},X)-F((\hat{y},\hat{t}),\eta_{2},Y-KI)+F((\hat{y},\hat{t}),\eta_{2},Y)\nonumber\\
&+g((\hat{x},\hat{t}),\left|\eta_{1}\right|)+g((\hat{y},\hat{t}),\left|\eta_{2}\right|)\nonumber\\
& =:T_{1}+T_{2}+T_{3}+T_{4}.
\end{align}
\textbf{Estimate of $T_1$:} By definition, we have
\begin{align*}
T_1&=F((\hat{x}, \hat{t}), \tilde{\eta}, X)-F((\hat{y}, \hat{t}), \tilde{\eta}, Y)\\
&=|\tilde{\eta}|^{p-2}\tr (X-Y)+|\tilde{\eta}|^{p-2}(p-2)\frac{\tilde{\eta}^{\prime}(X-Y)\tilde{\eta}}{|\tilde{\eta}|^2}\\
&+\underbrace{\left|\tilde{\eta}\right|^{q-2}\left(\tr(a(\hat{x},\hat{t})X-a(\hat{y},\hat{t})Y)+(q-2)\frac{\tilde{\eta}^{\prime}(a(\hat{x},\hat{t})X-a(\hat{y},\hat{t})Y)\tilde{\eta}}{\left|\tilde{\eta}\right|^{2}}\right)}_{T_{11}}\\
&+\underbrace{\left|\tilde{\eta}\right|^{s-2}\left(\tr(b(\hat{x},\hat{t})X-b(\hat{y},\hat{t})Y)+(s-2)\frac{\tilde{\eta}^{\prime}(b(\hat{x},\hat{t})X-b(\hat{y},\hat{t})Y)\tilde{\eta}}{\left|\tilde{\eta}\right|^{2}}\right)}_{T_{12}}
\end{align*}
Following the estimates for the double phase case, we similarly have
\begin{align*}
    T_{11}\leq C(q)(a(\hat{x},\hat{t})+a(\hat{y},\hat{t}))\left|\tilde{\eta}\right|^{q-2}L\varphi^{\prime\prime}(\left|z\right|)+C(N,\left\Vert Da\right\Vert _{L^{\infty}})L\left|\tilde{\eta}\right|^{q-2}\varphi^{\prime}(\left|z\right|),\\
    T_{12}\leq C(s)(b(\hat{x},\hat{t})+b(\hat{y},\hat{t}))\left|\tilde{\eta}\right|^{s-2}L\varphi^{\prime\prime}(\left|z\right|)+C(N,\left\Vert Db\right\Vert _{L^{\infty}})L\left|\tilde{\eta}\right|^{s-2}\varphi^{\prime}(\left|z\right|)
\end{align*}
Combining these estimates, we get
\begin{align*}
    T_1 \leq &C(p)\left|\tilde{\eta}\right|^{p-2}L\varphi^{\prime\prime}(\left|z\right|)+C(q)(a(\hat{x},\hat{t})+a(\hat{y},\hat{t}))\left|\tilde{\eta}\right|^{q-2}L\varphi^{\prime\prime}(\left|z\right|)\\
 &+C(N,\left\Vert Da\right\Vert _{L^{\infty}})L\left|\tilde{\eta}\right|^{q-2}\varphi^{\prime}(|z|)+C(s)(b(\hat{x},\hat{t})+b(\hat{y},\hat{t}))\left|\tilde{\eta}\right|^{s-2}L\varphi^{\prime\prime}(\left|z\right|)\\
 &+C(N,\left\Vert Db\right\Vert _{L^{\infty}})L\left|\tilde{\eta}\right|^{s-2}\varphi^{\prime}(\left|z\right|)(\left|z\right|).
\end{align*}

\noindent {\bf Estimate of $T_2:$} Estimate of $T_2$ in this case is exactly same as the double phase case with a extra term and we have
\begin{align*}
    T_2 \leq\left( C(N,p)\left|\tilde{\eta}\right|^{p-3}+C(N,q,\left\Vert a\right\Vert _{L^{\infty}})\left|\tilde{\eta}\right|^{q-3}+C(N,s,\left\Vert b\right\Vert _{L^{\infty}})\left|\tilde{\eta}\right|^{s-3}\right)\sqrt{K}\omega^{1/2}(\left|z\right|)L\frac{\varphi^{\prime}(\left|z\right|)}{\left|z\right|}.
\end{align*}
\noindent {\bf Estimate of $T_3:$} This estimate is also same and we get
\begin{align*}
    T_3 \leq C(N,p,q, s, \left\Vert a\right\Vert _{L^{\infty}}, \left\Vert b\right\Vert _{L^{\infty}})K(\left|\tilde{\eta}\right|^{p-2}+\left|\tilde{\eta}\right|^{q-2}+ \left|\tilde{\eta}\right|^{s-2})
\end{align*}

\noindent {\bf Estimate of $T_4:$} For $T_4,$ we estimate
\begin{align*}
    T_{4} & =g((\hat{x},\hat{t}),\left|\eta_{1}\right|)+g((\hat{y},\hat{t}),\left|\eta_{2}\right|)\\
 & =\left\Vert Da\right\Vert _{L^{\infty}}\left|\eta_{1}\right|^{q-1}+\left\Vert Db\right\Vert_{L^{\infty}}|\eta_1|^{s-1}+C_{f}(1+\left|\eta_{1}\right|^{\beta_{1}}+a(\hat{x},\hat{t})\left|\eta_{1}\right|^{\beta_{2}}+b(\hat{x}, \hat{t})|\eta_1|^{\beta_3})\\
 & \ \ \ +\left\Vert Da\right\Vert _{L^{\infty}}\left|\eta_{2}\right|^{q-1}+ \left\Vert Db\right\Vert _{L^{\infty}}\left|\eta_{2}\right|^{s-1}+C_{f}(1+\left|\eta_{2}\right|^{\beta_{1}}+a(\hat{x},\hat{t})\left|\eta_{2}\right|^{\beta_{2}}+b(\tilde{x}, \tilde{t})|\eta_2|^{\beta_3})\\
 & \le C(q,\left\Vert Da\right\Vert _{L^{\infty}},\left\Vert Db\right\Vert _{L^{\infty}},C_{f},\beta_{1},\beta_{2}, \beta_3)(\left|\tilde{\eta}\right|^{q-1}+|\tilde{\eta}|^{s-1}+\left|\tilde{\eta}\right|^{\beta_{1}}\\
 &+(a(\hat{x},\hat{t})+a(\hat{y},\hat{t}))\left|\tilde{\eta}\right|^{\beta_{2}}+(b(\hat{x},\hat{t})+b(\hat{y},\hat{t}))\left|\tilde{\eta}\right|^{\beta_{3}}).
\end{align*}
Plugging the above estimates in \eqref{estimate T}, we have
\begin{align*}
-K \leq\,\, &C\left|\tilde{\eta}\right|^{p-2}L\varphi^{\prime\prime}(\left|z\right|)+C(a(\hat{x},\hat{t})+a(\hat{y},\hat{t}))\left|\tilde{\eta}\right|^{q-2}L\varphi^{\prime\prime}(\left|z\right|)\\
 &+CL\left|\tilde{\eta}\right|^{q-2}\varphi^{\prime}(|z|)+C(b(\hat{x},\hat{t})+b(\hat{y},\hat{t}))\left|\tilde{\eta}\right|^{s-2}L\varphi^{\prime\prime}(\left|z\right|)\\
 &+CL\left|\tilde{\eta}\right|^{s-2}\varphi^{\prime}(\left|z\right|)+ C \left(|\tilde{\eta}|^{p-3}+|\tilde{\eta}|^{q-3}+|\tilde{\eta}|^{s-3}\right)\sqrt{K}\omega^{1/2}(\left|z\right|)L\frac{\varphi^{\prime}(\left|z\right|)}{\left|z\right|}\\
 &+CK(\left|\tilde{\eta}\right|^{p-2}+\left|\tilde{\eta}\right|^{q-2}+ \left|\tilde{\eta}\right|^{s-2})+C(\left|\tilde{\eta}\right|^{q-1}+|\tilde{\eta}|^{s-1}+\left|\tilde{\eta}\right|^{\beta_{1}}\\
 &+(a(\hat{x},\hat{t})+a(\hat{y},\hat{t}))\left|\tilde{\eta}\right|^{\beta_{2}}+(b(\hat{x},\hat{t})+b(\hat{y},\hat{t}))\left|\tilde{\eta}\right|^{\beta_{3}}).
\end{align*}
Using that $\left|\tilde{\eta}\right|=L\varphi^{\prime}(\left|z\right|)$,
and rearranging the terms, we obtain
\begin{align*}
-K\leq\,\, &C\left|\tilde{\eta}\right|^{p-2}L\varphi^{\prime\prime}(\left|z\right|)+C (|\tilde{\eta}|^{q-1}+|\tilde{\eta}|^{s-1})+C(a(\hat{x},\hat{t})+a(\hat{y},\hat{t}))\left|\tilde{\eta}\right|^{q-2}L\varphi^{\prime\prime}(\left|z\right|)\\ &+C(b(\hat{x},\hat{t})+b(\hat{y},\hat{t}))\left|\tilde{\eta}\right|^{s-2}L\varphi^{\prime\prime}(\left|z\right|)\\
 &+ C \left(|\tilde{\eta}|^{p-2}+|\tilde{\eta}|^{q-2}+|\tilde{\eta}|^{s-2}\right)\sqrt{K}\frac{\omega^{1/2}(\left|z\right|)}{|z|}\\
 &+CK(\left|\tilde{\eta}\right|^{p-2}+\left|\tilde{\eta}\right|^{q-2}+ \left|\tilde{\eta}\right|^{s-2})+C(\left|\tilde{\eta}\right|^{q-1}+|\tilde{\eta}|^{s-1}+\left|\tilde{\eta}\right|^{\beta_{1}}\\
 &+(a(\hat{x},\hat{t})+a(\hat{y},\hat{t}))\left|\tilde{\eta}\right|^{\beta_{2}}+(b(\hat{x},\hat{t})+b(\hat{y},\hat{t}))\left|\tilde{\eta}\right|^{\beta_{3}})\\
& \leq C |\tilde{\eta}|^{p-2}\Big(L \varphi^{\prime \prime}(|z|)+|\tilde{\eta}|^{q-p+1}+|\tilde{\eta}|^{s-p+1}\\
&+\sqrt{K}(1+|\tilde{\eta}|^{q-p}+|\tilde{\eta}|^{s-p})\Big(\frac{\omega^{1/2}(|z|)}{|z|}+\sqrt{K}\Big)+|\tilde{\eta}|^{\beta_1+2-p}\Big)
\end{align*}
\begin{align*}
&+C (a(\hat{x}, \hat{t})+a(\hat{y}, \hat{t}))|\tilde{\eta}|^{q-2}\Big(L\varphi^{\prime\prime}(|z|)+|\tilde{\eta}|^{\beta_2-q+2}\Big)\\
&+C (b(\hat{x}, \hat{t})+b(\hat{y}, \hat{t}))|\tilde{\eta}|^{s-2}\Big(L\varphi^{\prime\prime}(|z|)+|\tilde{\eta}|^{\beta_3-s+2}\Big)
\end{align*}
Now taking $\gamma-1 = \max \left\{s-p,\beta_1-p+1, \beta_2-q+1, \beta_3-s+1\right\}$ and using $|\tilde{\eta}|\geq 1, $ we arrive at the estimate
\begin{equation}\label{eq:multi estimate}
\begin{aligned}
-K\leq & C(L\varphi^{\prime})^{p-2}\Big(L\varphi^{\prime\prime}+(L\varphi^{\prime})^{\gamma}+\sqrt{K}(L\varphi^{\prime})^{\gamma-1}\frac{\omega^{1/2}(\left|z\right|)}{\left|z\right|}\Big) \\
& +C(a(\hat{x},\hat{t})+a(\hat{y},\hat{t}))(L\varphi^{\prime})^{q-2}(L\varphi^{\prime\prime}+(L\varphi^{\prime})^{\gamma})\\
&+C(b(\hat{x},\hat{t})+b(\hat{y},\hat{t}))(L\varphi^{\prime})^{s-2}(L\varphi^{\prime\prime}+(L\varphi^{\prime})^{\gamma})  
\end{aligned}
\end{equation}
which is similar to (\ref{eq:lemma main est}). Now observe that, once we fixed the $\gamma, $ the H\"{o}lder estimate in Lemma \ref{lem:holder for S} and the final Lipschitz estimate in Lemma \ref{lem:Lipschitz for S} do not depend on the structure of the equation and solely depend on \eqref{eq:multi estimate}. Thus, the Lipschitz regularity for \eqref{multiphase eq} follows analogously.

\noindent \textit{Step 2: H\"{o}lder regularity in time.} The proof of time regularity follows from barrier arguments analogical to the case of the double phase equation.
\end{proof}
\section{Viscosity solutions are weak solutions}\label{sec: visc to weak}

This section is dedicated to the proof of Theorem \ref{final thm: visc is weak_intro}, i.e., that viscosity (super) solutions to \eqref{eq:p-para f} are weak (super) solutions. We use the method introduced in \cite{newequivalence} based on inf-convolution. The main difficulty is caused by the presence of the modulating coefficient $a$. Indeed, for equations that are independent of $(x,t)$, it is usually straightforward to show that the inf-convolution retains the supersolution property. This is because if a test function $\varphi$ touches the inf-convolution of a lower semicontinuous function from below, $\varphi$ also touches the original function from below at some nearby point $(x_{\varepsilon},t_{\varepsilon})$. However, since (\ref{eq:p-para f}) depends on $(x,t)$, this shift generates an error term. In particular, it can occur that $(x,t)$ and $(x_{\varepsilon},t_{\varepsilon})$ are on different sides of $\Gamma$. This makes the error term difficult to control, as $Da$ may be jump discontinuous near $\Gamma$. Our assumption on $a$ ensures that this scenario can occur only in a set of small measure, which gives sufficient control over the error. Similar issues occur near the set where $a$ vanishes. For these reasons, we need the assumption \eqref{eq:visc is weak acnd} to prove Theorem \ref{final thm: visc is weak_intro}.

In Lemma \ref{lem:infconv is super} we show that inf-convolution of a viscosity supersolution is a supersolution to a perturbed equation. In the proof, we apply the Theorem of sums, which causes jet-closures to appear. The following lemma verifies that the definition of viscosity supersolution behaves as expected in this context.

\begin{lem}\label{lem:jet closures}
Let $u$ be a viscosity supersolution in $\Xi$. Suppose that $a:\Xi \rightarrow [0, \infty)$ is continuous and Lipschitz continuous in space. Suppose that $(\theta,\eta,X)\in\overline{\mathcal{P}}^{2,-}u(x,t)$,
where $\eta\not=0$ and $(x,t)\in\Xi$. Then we have
\begin{align*}
 & 0 \leq  \theta-\left|\eta\right|^{p-2}(\tr X+(p-2)\frac{\eta^{\prime}X\eta}{\left|\eta\right|^{2}})-a(x,t)\left|\eta\right|^{q-2}\tr(X+(q-2)\frac{\eta^{\prime}X\eta}{\left|\eta\right|^{2}})-f(x, t, \eta)\\
 & \ \ \ \ \ -\liminf_{(y,s)\rightarrow(x,t),(y,s)\not\in\Gamma}\left|\eta\right|^{q-2}\eta\cdot Da(y,s).
\end{align*}
\end{lem}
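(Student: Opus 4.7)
The plan is to prove this via the standard closure-to-test-function technique combined with a lower semicontinuity argument for the liminf envelope of $|\eta|^{q-2}\eta\cdot Da$. By definition of $\overline{\mathcal{P}}^{2,-}u(x,t)$, I select a sequence $(x_n,t_n)\to(x,t)$ along which $u(x_n,t_n)\to u(x,t)$ and $(\theta_n,\eta_n,X_n)\in\mathcal{P}^{2,-}u(x_n,t_n)$ converges to $(\theta,\eta,X)$. Since $\eta\neq 0$, we have $\eta_n\neq 0$ for large $n$, and a standard construction (as in \cite[Lemma 10]{nikos}) yields $\varphi_n\in C^{2}$ touching $u$ strictly from below at $(x_n,t_n)$ with derivatives $(\theta_n,\eta_n,X_n)$ there. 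Adding a higher-order perturbation of the form $-\sigma(|y-x_n|^{4}+(s-t_n)^{2})$ ensures in addition that $D\varphi_n(y,s)\neq 0$ for $y\neq x_n$ in a small neighborhood of $(x_n,t_n)$, which is exactly what the viscosity supersolution definition requires of a test function.

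Applying the viscosity supersolution inequality to $\varphi_n$ at $(x_n,t_n)$ and expanding the divergence, I isolate the unique discontinuous piece $|D\varphi_n|^{q-2}D\varphi_n\cdot Da(y,s)$; all remaining pieces are continuous at $(x_n,t_n)$ and pass to their pointwise values. Using $\limsup(-g)=-\liminf(g)$, together with $D\varphi_n(y,s)\to\eta_n$ as $(y,s)\to(x_n,t_n)$ and the uniform bound $|Da|\leq\|Da\|_{L^{\infty}}$ (which allows me to replace $|D\varphi_n|^{q-2}D\varphi_n$ by $|\eta_n|^{q-2}\eta_n$ inside the liminf up to an error that vanishes as $(y,s)\to(x_n,t_n)$), I obtain
\begin{equation*}
\theta_n - F((x_n,t_n),\eta_n,X_n) - f(x_n,t_n,\eta_n)\;\geq\;L_n,\qquad L_n:=\liminf_{\substack{(y,s)\to(x_n,t_n)\\ (y,s)\notin\Gamma}} |\eta_n|^{q-2}\eta_n\cdot Da(y,s).
\end{equation*}

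To pass $n\to\infty$, the left-hand side converges to $\theta-F((x,t),\eta,X)-f(x,t,\eta)$ by continuity of $F$ (here using $\eta\neq 0$) and of $f$. For the right-hand side, I introduce $G(y,s):=|\eta|^{q-2}\eta\cdot Da(y,s)$ off $\Gamma$ together with its liminf envelope $G^{*}(z):=\liminf_{(y,s)\to z,\,(y,s)\notin\Gamma}G(y,s)$, which is precisely the quantity appearing in the conclusion. A standard diagonal argument shows that $G^{*}$ is lower semicontinuous on $\mathbb{R}^{N+1}$, so $\liminf_n G^{*}(x_n,t_n)\geq G^{*}(x,t)$. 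Combined with the uniform estimate $|L_n-G^{*}(x_n,t_n)|\leq\big||\eta_n|^{q-2}\eta_n-|\eta|^{q-2}\eta\big|\,\|Da\|_{L^{\infty}}\to 0$, this yields $\liminf_n L_n\geq G^{*}(x,t)$, and chaining with the previous display finishes the proof.

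The main obstacle is to handle the two nested liminfs cleanly: the inner one over $(y,s)\to(x_n,t_n)$ defining $L_n$, and the outer one over $n$. The subtlety is that the approximating points $(x_n,t_n)$ can sit in $\Gamma$ or outside it in an uncontrolled way, so one cannot simply replace the inner liminf by a pointwise value of $Da$. The key observation that makes the argument close is that, although $Da$ is merely bounded and measurable, the envelope $G^{*}$ is automatically lower semicontinuous, which provides precisely the direction of monotonicity needed to transfer the pointwise viscosity inequalities at $(x_n,t_n)$ to the limit point $(x,t)$.
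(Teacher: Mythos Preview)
Your proof is correct and follows essentially the same strategy as the paper's: approximate $(\theta,\eta,X)$ by jets $(\theta_n,\eta_n,X_n)\in\mathcal{P}^{2,-}u(x_n,t_n)$, apply the viscosity supersolution inequality via a $C^2$ test function at each $(x_n,t_n)$, and pass to the limit. The only difference is in how the nested limits are resolved: the paper extracts from the inner $\limsup$ a concrete point $(y_i,s_i)\notin\Gamma$ close to $(x_i,t_i)$ and then uses the single diagonal sequence $(y_i,s_i)\to(x,t)$ directly in the definition of $\limsup_{(y,s)\to(x,t),(y,s)\notin\Gamma}(-|\eta|^{q-2}\eta\cdot Da)$, whereas you keep the inner $\liminf$ intact as $L_n$ and invoke the lower semicontinuity of the envelope $G^{*}$ to pass $n\to\infty$. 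These are two phrasings of the same diagonal argument, and your estimate $|L_n-G^{*}(x_n,t_n)|\le\big||\eta_n|^{q-2}\eta_n-|\eta|^{q-2}\eta\big|\,\|Da\|_{L^\infty}$ is exactly the error control the paper writes out in its final display.
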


\begin{proof}
By definition of jet-closure, we have $(x_{i},t_{i},\theta_{i},\eta_{i},X_{i})\rightarrow(x,t,\theta,\eta,X)$
as $i\rightarrow\infty$, where $(\theta_{i},\eta_{i},X_{i})\in\mathcal{P}^{2,-}u(x_{i},t_{i})$.
Since $\eta\not=0$, we can assume that $\left|\eta_{i}\right|\not=0$
for all $i$. For each $i$, there is $\varphi_{i}\in C^{2}$ such
that it touches $u$ from below at $(x_{i},t_{i})$ and 
\[
(\partial_{t}\varphi_{i}(x_{i},t_{i}),D\varphi_{i}(x_{i},t_{i}),D^{2}\varphi_{i}(x_{i},t_{i}))=(\theta_{i},\eta_{i},X_{i}).
\]
Since $u$ is a viscosity supersolution and $\varphi_{i}$ touches
$u$ from below at $(x_{i},t_{i})$, we have
\begin{align*}
0\leq\limsup_{(y,s)\rightarrow(x_{i},t_{i}),(y,s)\not\in\Gamma,y\not=x_{i}}\Big( & \partial_{t}\varphi_{i}(y,s)-\Delta_{p}\varphi_{i}(y,s)-a(y,s)\Delta_{q}\varphi_{i}(y,s)-f(y, s, D\varphi(y,s ))\\
 & -\left|D\varphi_{i}(y,s)\right|^{q-2}D\varphi_{i}(y,s)\cdot Da(y,s)\Big).
\end{align*}
From continuity, the definition of limit supremum and the fact $\left|\Gamma\right|=0$,
it follows that for each $i$ there is $(y_{i},s_{i})\in(B_{i^{-1}}(x_{i},t_{i})\times(t_{i}-i^{-1},t_{i}+i^{-1})\setminus\Gamma$
such that
\begin{equation}
\left|\theta_{i}-\partial_{t}\varphi_{i}(y_{i},s_{i})\right|+\left|\eta_{i}-D\varphi_{i}(y_{i},s_{i})\right|+\left\Vert X_{i}-D^{2}\varphi_{i}(y_{i},s_{i})\right\Vert <i^{-1},\label{eq:jet closures blabla}
\end{equation}
and by the continuity of $f,$ we also have
\begin{align*}
    |f(x_i, t_i, D\varphi(x_i, t_i))-f(y_i, s_i, D\varphi(y_i, s_i))|< \frac{1}{i},
\end{align*}
$D\varphi_{i}(y_{i},s_{i})\not=0$, and
\begin{align}
-i^{-1} & \leq\partial_{t}\varphi_{i}(y_{i},s_{i})-\Delta_{p}\varphi_{i}(y_{i},s_{i})-a(y_{i},s_{i})\Delta_{q}\varphi_{i}(y_{i},s_{i})-f(y_i, s_i, D\varphi(y_i, s_i))\nonumber \\
 & \ \ \ -\left|D\varphi_{i}(y_{i},s_{i})\right|^{q-2}D\varphi_{i}(y_{i},s_{i})\cdot Da(y_{i},s_{i}).\label{eq:jet closures blabla2}
\end{align}
For brevity, we denote
\begin{align*}
A_{i} & :=\partial_{t}\varphi_{i}(y_{i},s_{i})-\Delta_{p}\varphi_{i}(y_{i},s_{i})-a(y_{i},s_{i})\Delta_{q}\varphi_{i}(y_{i},s_{i})-f(y_i, s_i, D\varphi(y_i, s_i))\quad\text{and}\\
(\tilde{\theta}_{i},\tilde{\eta}_{i},\tilde{X}_{i}) & :=(\partial_{t}\varphi_{i}(y_{i},s_{i}),D\varphi_{i}(y_{i},s_{i}),D^{2}\varphi_{i}(y_{i},s_{i})).
\end{align*}
Then by (\ref{eq:jet closures blabla2}), we have
\begin{equation}
-\left|\tilde{\eta}_{i}\right|^{q-2}\tilde{\eta}_{i}\cdot Da(y_{i},s_{i})\geq-A_{i}-i^{-1}\label{eq:A_i ineq}
\end{equation}
and (\ref{eq:jet closures blabla}) implies that 
\begin{equation}
(\tilde{\theta_{i}},\tilde{\eta}_{i},\tilde{X}_{i})\rightarrow(\theta,\eta,X)\quad\text{as }i\rightarrow\infty.\label{eq:jet closures convergence}
\end{equation}
Since $\left|\eta\right|>0$, it follows from (\ref{eq:jet closures convergence})
that
\begin{align}
A_{i} & \rightarrow\theta-\left|\eta\right|^{p-2}\tr\Big(X+(p-2)\frac{\eta^{\prime}X\eta}{\left|\eta\right|^{2}}\Big)-a(x,t)\left|\eta\right|^{q-2}\tr\Big(X+(q-2)\frac{\eta^{\prime}X\eta}{\left|\eta\right|^{2}}\Big)-f(x, t, \eta)\nonumber \\
 & =:A\quad\text{as }i\rightarrow\infty.\label{eq:jet closures A_i convergence}
\end{align}
Now, by definition of limit supremum, since $(y_{i},s_{i})\rightarrow(x,t)$,
we have
\begin{align*}
 & \limsup_{(y,s)\rightarrow(x,t),(y,s)\not\in\Gamma}-\left|\eta\right|^{q-2}\eta\cdot Da(y,s)\\
 & \geq\limsup_{i\rightarrow\infty}-\left|\eta\right|^{q-2}\eta\cdot Da(y_{i},s_{i})\\
 & =\limsup_{i\rightarrow\infty}\Big(-\left|\tilde{\eta}_{i}\right|^{q-2}\tilde{\eta}_{i}\cdot Da(y_{i},s_{i})+\Big(\left|\tilde{\eta}_{i}\right|^{q-2}\tilde{\eta}_{i}-\left|\eta\right|^{q-2}\eta\Big)\cdot Da(y_{i},s_{i})\Big)\\
 & \geq\limsup_{i\rightarrow\infty}\Big(-A_{i}-i^{-1}-\left|\left|\eta\right|^{q-2}\eta-\left|\tilde{\eta}_{i}\right|^{q-2}\tilde{\eta}_{i}\right|\left\Vert Da\right\Vert _{L^{\infty}(\Xi)}\Big),\\
 & =-A,
\end{align*}
where in the last inequality we used (\ref{eq:A_i ineq}), and last
identity follows from (\ref{eq:jet closures convergence}), (\ref{eq:jet closures A_i convergence}).
This implies the claim by definition of $A$.
\end{proof}

For $\varepsilon>0$, we define the inf-convolution
\begin{equation}
u_{\varepsilon}(x,t):=\inf_{(y,s)\in\Xi}\left\{ u(y,s)+\frac{\left|x-y\right|^{\ell}}{\ell\varepsilon^{\ell -1}}+\frac{\left|t-s\right|^{2}}{2\delta_{\varepsilon}}\right\} ,\label{eq:inf convolution}
\end{equation}
where $\ell > \max(3, p/(p-1))$ and $\delta_{\varepsilon}>0$ is a function of $\varepsilon$ 
(defined via the modulus of continuity of $a$, see \eqref{eq:delta_e} below). We gather some well-known properties of inf-convolution in the following lemma (a proof can be found in the appendix of \cite{LPS25}).
\begin{lem}
\label{lem:inf conv properties}Assume that $u:\Xi\rightarrow\mathbb{R}$
is lower semicontinuous and bounded. Suppose that $\delta_{\varepsilon}\rightarrow0$
as $\varepsilon\rightarrow0$. Then $u_{\varepsilon}$ has the following
properties.
\begin{enumerate}
\item We have $u_{\varepsilon}\leq u$ in $\Xi$ and $u_{\varepsilon}\rightarrow u$
pointwise as $\varepsilon\rightarrow0$.
\item Denote $r(\varepsilon):=\max((\ell\varepsilon^{\ell-1}\osc_{\Xi}u)^{\frac{1}{\ell}},(2\delta_{\varepsilon}\osc_{\Xi}u)^{\frac{1}{2}})$
and set
\[
\Xi_{\varepsilon}:=\left\{ (x,t)\in\Xi:B_{r(\varepsilon)}(x)\times(t-r(\varepsilon),t+r(\varepsilon))\Subset\Xi\right\} .
\]
Then, for any $(x,t)\in\Xi_{\varepsilon}$ there exists $(x_\varepsilon, t_\varepsilon) \in \overline B_{r(\varepsilon)}(x)\times[t-r(\varepsilon),t+r(\varepsilon)]$ such that
\[
u_{\varepsilon}(x,t)=u(x_{\varepsilon},t_{\varepsilon})+\frac{\left|x-x_{\varepsilon}\right|^{\ell}}{\ell\varepsilon^{l-1}}+\frac{\left|t-t_{\varepsilon}\right|^{2}}{2\delta_{\varepsilon}}.
\]
\item The function $u_{\varepsilon}$ is semi-concave in $\Xi_{\varepsilon}$.
In particular, the function $u_{\varepsilon}(x,t)-(C\left|x\right|^{2}+t^{2}/\delta_{\varepsilon})$
is concave in $\Xi_{\varepsilon}$, where $C=(\ell-1)r(\varepsilon)^{\ell-2}/\varepsilon^{\ell-1}$.
\item Suppose that $u_{\varepsilon}$ is twice differentiable in space and
time at $(x,t)\in\Xi_{\varepsilon}$. Then 
\begin{align*}
\partial_{t}u_{\varepsilon}(x,t) & =\frac{t-t_{\varepsilon}}{\delta_{\varepsilon}},\\
Du_{\varepsilon}(x,t) & =(x-x_{\varepsilon})\frac{\left|x-x_{\varepsilon}\right|^{\ell-2}}{\varepsilon^{\ell-1}},\\
D^{2}u_{\varepsilon}(x,t) & \leq(\ell-1)\frac{\left|x-x_{\varepsilon}\right|^{\ell-2}}{\varepsilon^{\ell-1}}I.
\end{align*}
\end{enumerate}
\end{lem}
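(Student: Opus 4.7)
The plan is to verify the four properties in order, since each builds on the previous ones. The only subtlety compared to the classical quadratic inf-convolution is that the spatial penalty $|x-y|^\ell/(\ell\varepsilon^{\ell-1})$ is not quadratic, so semi-concavity requires restricting the infimum to a compact set.

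For (1), the bound $u_{\varepsilon}\leq u$ is immediate from taking $(y,s)=(x,t)$ in the defining infimum. For the pointwise convergence, fix $(x,t)\in\Xi$; for small $\varepsilon$, item (2) provides approximate minimizers $(x_\varepsilon,t_\varepsilon)$ with $|x-x_\varepsilon|,|t-t_\varepsilon|\leq r(\varepsilon)\to0$. Lower semicontinuity then yields $\liminf_{\varepsilon \to 0} u_\varepsilon(x,t)\geq\liminf_{\varepsilon \to 0} u(x_\varepsilon,t_\varepsilon)\geq u(x,t)$, and combining with $u_\varepsilon\leq u$ gives the claim.

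For (2), I would use that $u$ is bounded so the penalty term at a true or approximate minimizer cannot exceed $\osc_\Xi u$ (otherwise $(x,t)$ itself would be a strictly better candidate). Solving $|x-y|^\ell/(\ell\varepsilon^{\ell-1})\leq\osc_\Xi u$ and $|t-s|^2/(2\delta_\varepsilon)\leq\osc_\Xi u$ yields $|x-y|\leq(\ell\varepsilon^{\ell-1}\osc_\Xi u)^{1/\ell}$ and $|t-s|\leq(2\delta_\varepsilon\osc_\Xi u)^{1/2}$, both bounded by $r(\varepsilon)$. So the infimum can be taken over the closed set $\overline B_{r(\varepsilon)}(x)\times[t-r(\varepsilon),t+r(\varepsilon)]\subset\Xi$, which is compact. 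By lower semicontinuity of the integrand in $(y,s)$, the infimum is attained.

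For (3), using (2), we may write
\[
u_\varepsilon(x,t)-\tfrac{C}{2}|x|^2-\tfrac{1}{2\delta_\varepsilon}t^2 = \inf_{(y,s)}\Bigl[u(y,s)+\tfrac{|x-y|^\ell}{\ell\varepsilon^{\ell-1}}-\tfrac{C}{2}|x|^2+\tfrac{|t-s|^2}{2\delta_\varepsilon}-\tfrac{1}{2\delta_\varepsilon}t^2\Bigr],
\]
where the infimum is over the compact set from (2). The time piece simplifies to $-ts/\delta_\varepsilon+s^2/(2\delta_\varepsilon)$, which is affine in $t$. For the space piece, the Hessian of $|x-y|^\ell/(\ell\varepsilon^{\ell-1})$ is bounded above by $(\ell-1)|x-y|^{\ell-2}/\varepsilon^{\ell-1}\,I\leq C\,I$ on the restriction set, so the integrand is concave in $(x,t)$ for each fixed $(y,s)$. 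An infimum of concave functions is concave, giving semi-concavity.

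For (4), the key identity from (2) gives $u_\varepsilon(x,t)=u(x_\varepsilon,t_\varepsilon)+|x-x_\varepsilon|^\ell/(\ell\varepsilon^{\ell-1})+|t-t_\varepsilon|^2/(2\delta_\varepsilon)$, while for any nearby $(x',t')$
\[
u_\varepsilon(x',t')\leq u(x_\varepsilon,t_\varepsilon)+\tfrac{|x'-x_\varepsilon|^\ell}{\ell\varepsilon^{\ell-1}}+\tfrac{|t'-t_\varepsilon|^2}{2\delta_\varepsilon}.
\]
Thus the right-hand side, viewed as a smooth function of $(x',t')$, touches $u_\varepsilon$ from above at $(x,t)$. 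Comparing first and second derivatives at $(x,t)$ (using that $u_\varepsilon$ is assumed twice differentiable there) produces the stated formulas for $\partial_t u_\varepsilon$ and $Du_\varepsilon$, and the matrix inequality $D^2 u_\varepsilon(x,t)\leq D^2\bigl[|x-x_\varepsilon|^\ell/(\ell\varepsilon^{\ell-1})\bigr]\leq(\ell-1)|x-x_\varepsilon|^{\ell-2}/\varepsilon^{\ell-1}\,I$.

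The main obstacle is part (3): without the restriction to the compact set from (2), the Hessian of the spatial penalty is unbounded as $|x-y|\to\infty$ (since $\ell>3$), and standard semi-concavity proofs that write the inf-convolution as an infimum of affine functions do not apply directly. Item (2) is therefore essential, and it must be established first. Everything else is routine once the structure of the minimizers is pinned down.
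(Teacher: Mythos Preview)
The paper does not give its own proof of this lemma; it simply cites the appendix of \cite{LPS25}. Your sketch is along the standard lines and parts (1), (2), (4) are fine as written.

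The one genuine gap is in part (3). You write $u_\varepsilon(x,t)$ as an infimum over ``the compact set from (2)'', but that set $\overline B_{r(\varepsilon)}(x)\times[t-r(\varepsilon),t+r(\varepsilon)]$ depends on the evaluation point $(x,t)$. So what you have is $u_\varepsilon(x,t)=\inf_{(y,s)\in K(x,t)}G(x,t,y,s)$ with a \emph{moving} index set, and the step ``an infimum of concave functions is concave'' does not apply: for a fixed $(y,s)$ the function $x\mapsto |x-y|^\ell/(\ell\varepsilon^{\ell-1})-C|x|^2$ is \emph{not} concave on all of $\Xi_\varepsilon$, only where $|x-y|$ is controlled. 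The fix is the localization you already hint at. Fix $(x_0,t_0)\in\Xi_\varepsilon$ and a small ball $B_\rho(x_0,t_0)\subset\Xi_\varepsilon$. By (2), for every $(x,t)\in B_\rho(x_0,t_0)$ the minimizer lies in the \emph{fixed} set $K:=\overline B_{r(\varepsilon)+\rho}(x_0)\times[t_0-r(\varepsilon)-\rho,t_0+r(\varepsilon)+\rho]$, so on $B_\rho(x_0,t_0)$ one has $u_\varepsilon=\inf_{(y,s)\in K}G(\cdot,\cdot,y,s)$. For $(x,t)\in B_\rho(x_0,t_0)$ and $(y,s)\in K$ one has $|x-y|\le r(\varepsilon)+2\rho$, hence the spatial Hessian of $G$ is at most $(\ell-1)(r(\varepsilon)+2\rho)^{\ell-2}/\varepsilon^{\ell-1}\,I$. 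Since the lemma subtracts $C|x|^2$ (Hessian $2CI$) rather than $\tfrac{C}{2}|x|^2$, the built-in factor of two absorbs the $2\rho$ overshoot for $\rho$ small, and each $G(\cdot,\cdot,y,s)-C|\cdot|^2-|\cdot|^2/\delta_\varepsilon$ is genuinely concave on $B_\rho(x_0,t_0)$ for all $(y,s)\in K$. Now the infimum-of-concave argument gives local concavity, hence concavity on convex subsets of $\Xi_\varepsilon$. This is precisely the obstacle you flagged at the end; you just need to carry the localization through rather than leave the index set depending on $(x,t)$.
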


The next lemma shows that by taking $\delta_{\varepsilon}$ to be
a suitable function of $\varepsilon$ depending on the time-modulus
of continuity of $a$, we can always ensure that $|a(x,t_{\varepsilon})-a(x,t)|<\varepsilon^{q(\ell -1)}$,
where $t_{\varepsilon}$ is as in Lemma \ref{lem:inf conv properties}.
\begin{lem} \label{lem:inf delta lemma} Let $a:\Xi \rightarrow [0, \infty)$ be uniformly continuous.
Suppose that $u:\Xi\rightarrow\mathbb{R}$ is lower semicontinuous
and bounded. Let $u_{\varepsilon}$ be defined by (\ref{eq:inf convolution}),
where 
\begin{equation}
\delta_{\varepsilon}:=\frac{(\omega_{a}^{-1}(\varepsilon^{q(\ell-1)}))^{2}}{2\osc_\Xi u}.\label{eq:delta_e}
\end{equation}
Here $\omega_{a}: [0, \infty) \rightarrow [0, \infty)$ is a bounded, strictly increasing function such that $\omega(s) \rightarrow 0$ as $s\rightarrow 0$, $\omega(0)= 0$, and
\[
    |a(x, t) - a(x, s)| \leq \omega_a(|t-s|) \quad \text{for all } (x, t), (x, s) \in \Xi.
\]
Let $(x_{\varepsilon},t_{\varepsilon})$ be such
that $u_{\varepsilon}(x,t)=u(x_{\varepsilon},t_{\varepsilon})+\frac{\left|x_{\varepsilon}-x\right|^{\ell}}{\ell\varepsilon^{\ell-1}}+\frac{\left|t_{\varepsilon}-t\right|^{2}}{2\delta_{\varepsilon}}$.
Then
\[
\left|t-t_{\varepsilon}\right|\leq\omega_{a}^{-1}(\varepsilon^{q(\ell-1)}).
\]
\end{lem}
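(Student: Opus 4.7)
The plan is to exploit the defining property of the inf-convolution: testing the infimum with the trivial candidate $(y,s)=(x,t)$ will force $u_\varepsilon(x,t) \leq u(x,t)$, and comparing this with the representation of $u_\varepsilon(x,t)$ via the minimizer $(x_\varepsilon,t_\varepsilon)$ will bound the time-penalty term by $\osc_\Xi u$. Substituting the chosen $\delta_\varepsilon$ then yields the stated estimate.

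More precisely, first I would note that taking $(y,s) = (x,t)$ in the infimum defining $u_\varepsilon$ in \eqref{eq:inf convolution} gives
\[
u_\varepsilon(x,t) \leq u(x,t) + 0 + 0 = u(x,t).
\]
On the other hand, by the assumption on $(x_\varepsilon, t_\varepsilon)$,
\[
u(x_\varepsilon, t_\varepsilon) + \frac{|x - x_\varepsilon|^\ell}{\ell \varepsilon^{\ell-1}} + \frac{|t - t_\varepsilon|^2}{2 \delta_\varepsilon} = u_\varepsilon(x,t) \leq u(x,t).
\]
Discarding the non-negative spatial penalty and rearranging gives
\[
\frac{|t - t_\varepsilon|^2}{2 \delta_\varepsilon} \leq u(x,t) - u(x_\varepsilon, t_\varepsilon) \leq \osc_\Xi u.
\]

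Finally, I would substitute the definition $\delta_\varepsilon = (\omega_a^{-1}(\varepsilon^{q(\ell-1)}))^2 / (2 \osc_\Xi u)$ into the above, obtaining
\[
|t - t_\varepsilon|^2 \leq 2 \delta_\varepsilon \osc_\Xi u = \bigl(\omega_a^{-1}(\varepsilon^{q(\ell-1)})\bigr)^2,
\]
and taking square roots concludes the proof. There is no genuine obstacle here; the lemma is essentially a bookkeeping step that justifies, downstream in the equivalence proof, why the specific calibration of $\delta_\varepsilon$ in \eqref{eq:delta_e} guarantees that the time-shift induced by the inf-convolution is small enough to control the error term $|a(x, t_\varepsilon) - a(x,t)|$ by $\varepsilon^{q(\ell-1)}$.
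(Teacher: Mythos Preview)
Your proof is correct and follows essentially the same approach as the paper. The only cosmetic difference is that the paper tests the infimum at $(y,s)=(x_\varepsilon,t)$ rather than $(x,t)$, so the spatial penalty cancels exactly instead of being discarded; either way the time penalty is bounded by $\osc_\Xi u$ and the conclusion follows identically.
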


\begin{proof}
By the definition of inf-convolution, we have
\[
\frac{\left|t-t_{\varepsilon}\right|^{2}}{2\delta_{\varepsilon}}\le u(x_{\varepsilon},t_{\varepsilon})-u(x_{\varepsilon},t).
\]
Therefore
\begin{align*}
\left|t-t_{\varepsilon}\right| & \leq\sqrt{2\delta_{\varepsilon}\osc_{\Xi}u}=\sqrt{(\omega_{a}^{-1}(\varepsilon^{q(\ell-1)}))^{2}}=\omega_{a}^{-1}(\varepsilon^{q(\ell-1)}).
\end{align*}
\end{proof}

\begin{lem}
\label{lem:nullset convergence} Suppose that $\mathcal{C}\subset\mathbb{R}^{N+1}$
is compact and $\left|\mathcal{C}\right|=0$. Then 
\[
\left|\mathcal{C}_{r}\right|:=\left|\left\{ x:\text{dist}(x,\mathcal{C})\leq r\right\} \right|\rightarrow0\quad\text{as }r\rightarrow0.
\]
\end{lem}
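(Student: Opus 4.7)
The statement is a standard measure-theoretic fact about the $r$-neighborhood of a compact null set, and the plan is to deduce it directly from continuity of Lebesgue measure from above along a decreasing family.

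First I would note that $\mathcal{C}_r$ is closed (hence Lebesgue measurable), because it is the preimage of the closed interval $[0,r]$ under the continuous map $x \mapsto \operatorname{dist}(x,\mathcal{C})$. Next I would observe the monotonicity $\mathcal{C}_{r_1} \subset \mathcal{C}_{r_2}$ whenever $r_1 \leq r_2$, and that since $\mathcal{C}$ is compact it is contained in some ball $B_R \subset \mathbb{R}^{N+1}$; consequently, for all $r \leq 1$ we have $\mathcal{C}_r \subset B_{R+1}$, so $|\mathcal{C}_r| \leq |B_{R+1}| < \infty$.

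The key step is the identification
\[
    \bigcap_{r > 0} \mathcal{C}_r \;=\; \{x \in \mathbb{R}^{N+1} : \operatorname{dist}(x,\mathcal{C}) = 0\} \;=\; \overline{\mathcal{C}} \;=\; \mathcal{C},
\]
where the last equality uses that $\mathcal{C}$ is compact and hence closed. Applying continuity of Lebesgue measure from above along the decreasing family $\{\mathcal{C}_{1/n}\}_{n \in \mathbb{N}}$ (which is justified because $|\mathcal{C}_1| < \infty$), we obtain
\[
    \lim_{n \to \infty} |\mathcal{C}_{1/n}| \;=\; \Bigl|\bigcap_{n} \mathcal{C}_{1/n}\Bigr| \;=\; |\mathcal{C}| \;=\; 0.
\]
By the monotonicity of $r \mapsto |\mathcal{C}_r|$, this is equivalent to $|\mathcal{C}_r| \to 0$ as $r \to 0$.

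There is no real obstacle here; the only point that requires a moment's care is the finiteness $|\mathcal{C}_1| < \infty$, which is needed to apply continuity from above — and this is exactly where compactness of $\mathcal{C}$ (as opposed to merely closedness) is used.
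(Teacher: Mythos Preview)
Your proof is correct. The paper takes a slightly different route: given $\varepsilon>0$, it uses outer regularity to find an open set $A\supset\mathcal{C}$ with $|A|<\varepsilon$, and then invokes compactness of $\mathcal{C}$ to deduce $\operatorname{dist}(\mathcal{C},\partial A)>\delta$ for some $\delta>0$, so that $\mathcal{C}_r\subset A$ for all $r<\delta$. Your argument instead uses compactness only through boundedness (to secure $|\mathcal{C}_1|<\infty$) and then appeals to continuity of measure from above on the nested family $\{\mathcal{C}_r\}$. Both are standard; your approach is arguably a touch more direct, while the paper's version makes the role of the open neighborhood explicit and avoids passing through a sequence.
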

\begin{proof}
Since $\left|\mathcal{C}\right|=0$, there exists an open set $A\supset \mathcal{C}$
such that $\left|A\right|<\varepsilon$. Since $\mathcal{C}$ is compact, we
have $\text{dist}(\mathcal{C},\partial A)>\delta$ for some $\delta>0$. Therefore,
for any $r<\delta$, we have $\mathcal{C}_{r}\subset A$ and thus $\left|\mathcal{C}_{r}\right|<\varepsilon$.
\end{proof}
We can now prove the key lemma of this section. It states that the inf-convolution of a viscosity supersolution is a supersolution to a perturbed equation. The proof is based on the Theorem of sums and its overall structure goes back to \cite{Ishii95}.
\begin{lem}\label{lem:infconv is super}
Let $1<p\leq q\leq p+1$. Suppose that $a:\Xi \rightarrow [0, \infty)$ is Lipschitz in space, uniformly continuous in time, and that \eqref{eq:visc is weak acnd} holds. Let $u$ be a bounded viscosity supersolution to (\ref{eq:p-para f})
in $\Xi$ and $K>0$ be a number. Let $u_\varepsilon$ be the inf-convolution of $u$ as in Lemma \ref{lem:inf delta lemma} Fix a smaller domain $\Xi ^\prime \Subset \Xi$ and suppose that $\varepsilon>0$ is so small that $\Xi^\prime \Subset \Xi_\varepsilon$. Let $(\theta,\eta,X)\in\mathcal{P}^{2,-}u_{\varepsilon}(x,t)$
with $(x,t)\in \Xi_\varepsilon \setminus\Gamma$.
Then, if $\eta \not=0$, we have
\begin{align} \label{eq:inf conv eq}
 & \theta-\left|\eta\right|^{p-2}\big(\tr X+(p-2)\frac{\eta^{\prime}X\eta}{\left|\eta\right|^{2}}\big)-a(x, t)\left|\eta\right|^{q-2}\big(\tr X+(q-2)\frac{\eta^\prime Y 
 \eta)}{|\eta|^2}\big)-\left|\eta\right|^{q-2}\eta \cdot Da(x, t) \nonumber \\ 
 & \geq f_{K,\varepsilon}(x, t, \eta)-(1+\left|\eta\right|^{q-1})E_{\varepsilon}(x,t),
\end{align}
where $f_{K,\varepsilon}$ is given in \eqref{eq:f_K,epsilon} and $E_{\varepsilon}: \Xi^\prime \rightarrow [0, \infty)$ is a bounded, measurable function that converges to zero almost everywhere in $\Xi^{\prime}$. Moreover, we have the estimate
\[
    E_\varepsilon(x,t) \leq C(N, q, \operatorname{diam}\Xi, \left \Vert Da \right \Vert_{L^\infty(\Xi)}) \quad \text{for all } (x,t)\in \Xi^\prime.
\]
Finally, in the case that $(\theta, 0, X) \in \mathcal{P}^{2,-} u_\varepsilon (x,t)$, we have $\theta \geq f_{K,\varepsilon}(x,t,0)$.
\end{lem}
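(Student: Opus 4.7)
The plan is to combine the classical inf-convolution argument with careful control of the coefficient perturbations generated by the shift from $(x,t)$ to $(x_\varepsilon,t_\varepsilon)$. Given $(\theta,\eta,X)\in\mathcal{P}^{2,-}u_\varepsilon(x,t)$ with $\eta\neq 0$, I would first invoke the standard translation trick: if $\varphi\in C^{2}$ touches $u_\varepsilon$ from below at $(x,t)$, then the translate
\[
\tilde\varphi(y,s):=\varphi\bigl(y+x-x_\varepsilon,\,s+t-t_\varepsilon\bigr)-\frac{|x-x_\varepsilon|^{\ell}}{\ell\varepsilon^{\ell-1}}-\frac{|t-t_\varepsilon|^{2}}{2\delta_\varepsilon}
\]
touches $u$ from below at $(x_\varepsilon,t_\varepsilon)$, because the penalty constant is absorbed exactly by the inf-convolution identity at $(x,t)$. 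Since $\tilde\varphi$ is $C^2$ with the same jet as $\varphi$, this yields $(\theta,\eta,X)\in\mathcal{P}^{2,-}u(x_\varepsilon,t_\varepsilon)\subset\overline{\mathcal{P}}^{2,-}u(x_\varepsilon,t_\varepsilon)$. Since $\eta\neq 0$ and $u$ is a viscosity supersolution, Lemma~\ref{lem:jet closures} then furnishes the inequality
\[
\theta\geq F((x_\varepsilon,t_\varepsilon),\eta,X)+f(x_\varepsilon,t_\varepsilon,\eta)+\liminf_{\substack{(y,s)\to(x_\varepsilon,t_\varepsilon)\\ (y,s)\notin\Gamma}}|\eta|^{q-2}\eta\cdot Da(y,s).
\]

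The second step is to transport this back to $(x,t)$. The symbol $F$ evaluated at $(x,t)$ differs from that at $(x_\varepsilon,t_\varepsilon)$ only through the $a$-factor in front of the $q$-Laplace part, with a gap bounded by $\|Da\|_{L^{\infty}}|x-x_\varepsilon|+\omega_a(|t-t_\varepsilon|)$; the spatial displacement $|x-x_\varepsilon|\leq r(\varepsilon)$ and the temporal bound $|t-t_\varepsilon|\leq \omega_a^{-1}(\varepsilon^{q(\ell-1)})$ from Lemma~\ref{lem:inf delta lemma} make this error an $o(1)$ quantity uniform in $(x,t)\in\Xi'$. The $f$-shift $f(x_\varepsilon,t_\varepsilon,\eta)-f(x,t,\eta)$ is precisely what $f_{K,\varepsilon}$ is designed to absorb, namely by infimising over the cylinder $B_{r(\varepsilon)}(x)\times(t-r(\varepsilon),t+r(\varepsilon))$ and truncating to $|\eta|\leq K$. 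The remaining gradient-of-$a$ discrepancy is $|\eta|^{q-2}\eta\cdot(Da(x,t)-Da^*(x_\varepsilon,t_\varepsilon))$ where $Da^*$ denotes the $\liminf$. I would collect all three contributions into a single error $E_\varepsilon(x,t)$; boundedness by $C(N,q,\operatorname{diam}\Xi,\|Da\|_{L^{\infty}})$ is immediate, and the split $1+|\eta|^{q-1}$ in the statement reflects that the $f$-error scales like $1$ whereas the $a$- and $Da$-errors scale like $|\eta|^{q-1}$.

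The genuine obstacle is proving $E_\varepsilon(x,t)\to 0$ for almost every $(x,t)\in\Xi'$. The $Da$-contribution fails on the $r(\varepsilon)$-neighborhood of $\Gamma$, where $Da$ can jump, and on the $r(\varepsilon)$-neighborhood of $\partial\{a=0\}$, where the vanishing of $a$ interacts with the $|\eta|^{q-2}$ factor. Assumption \eqref{eq:visc is weak acnd} is tailor-made here: compactness of $\Gamma$ together with $|\Gamma|=|\partial\{(x,t):a(x,t)=0\}|=0$ lets Lemma~\ref{lem:nullset convergence} squeeze both bad neighborhoods down to null sets as $\varepsilon\to 0$. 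For a.e.\ $(x,t)$ outside them, once $\varepsilon$ is small enough the nearby point $(x_\varepsilon,t_\varepsilon)$ also lies in the open set $\mathbb{R}^{N+1}\setminus\Gamma$ where $Da$ is continuous, so the $\liminf$ in Lemma~\ref{lem:jet closures} coincides with $Da(x_\varepsilon,t_\varepsilon)$ and converges to $Da(x,t)$, closing the argument. Finally, the degenerate case $\eta=0$ is handled separately: the identity $\eta=|x-x_\varepsilon|^{\ell-2}(x-x_\varepsilon)/\varepsilon^{\ell-1}$ in Lemma~\ref{lem:inf conv properties}(4) forces $x=x_\varepsilon$, so every term weighted by $|\eta|^{q-2}$ drops out and the claim reduces to $\theta\geq f_{K,\varepsilon}(x,t,0)$; this follows by testing $u$ at $(x_\varepsilon,t_\varepsilon)$ with a pure-time translate of $\tilde\varphi$, for which the supersolution inequality in the $D\tilde\varphi=0$ regime is exactly what the $\limsup$ in the definition of viscosity supersolution provides.
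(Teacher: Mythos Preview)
Your translation trick is correct and does yield $(\theta,\eta,X)\in\mathcal{P}^{2,-}u(x_\varepsilon,t_\varepsilon)$, so Lemma~\ref{lem:jet closures} applies at the shifted point. The problem is the next step, where you write that the difference $F((x,t),\eta,X)-F((x_\varepsilon,t_\varepsilon),\eta,X)$ is ``an $o(1)$ quantity uniform in $(x,t)\in\Xi'$'' and that the $a$-error ``scales like $|\eta|^{q-1}$''. That difference is
\[
\bigl(a(x,t)-a(x_\varepsilon,t_\varepsilon)\bigr)\,|\eta|^{q-2}\Bigl(\operatorname{tr}X+(q-2)\frac{\eta'X\eta}{|\eta|^{2}}\Bigr),
\]
and while the factor $|a(x,t)-a(x_\varepsilon,t_\varepsilon)|$ is indeed small, the second-order factor carries $\|X\|$, for which you have no bound: semi-concavity of $u_\varepsilon$ gives only $X\leq C_\varepsilon I$ from above, and the lemma must hold for \emph{every} $(\theta,\eta,X)$ in the subjet with an error $E_\varepsilon(x,t)$ that is independent of $X$. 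So your error cannot be packaged as $(1+|\eta|^{q-1})E_\varepsilon(x,t)$.

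This is exactly why the paper does \emph{not} use the translation trick but instead applies the parabolic Theorem of sums to the doubled function $(y,s,z,\tau)\mapsto -u(y,s)+\varphi(z,\tau)-\Phi(y,s,z,\tau)$. The Jensen--Ishii lemma produces matrices $Y,Z$ with $(\theta,\eta,-Y)\in\overline{\mathcal{P}}^{2,-}u(x_\varepsilon,t_\varepsilon)$, $X\leq -Z\leq -Y$, and the two-sided matrix inequality
\[
\begin{pmatrix}Y&0\\0&-Z\end{pmatrix}\leq \begin{pmatrix}B&-B\\-B&B\end{pmatrix}+\varepsilon^{\ell-1}\begin{pmatrix}B^{2}&-B^{2}\\-B^{2}&B^{2}\end{pmatrix},
\]
where $B$ is the Hessian of the penalty and satisfies $\|B\|\leq |x-x_\varepsilon|^{\ell-2}/\varepsilon^{\ell-1}$. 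Testing this inequality with the vector $(\sqrt{a(x_\varepsilon,t_\varepsilon)}\,\xi,\sqrt{a(x,t)}\,\xi)$ gives
\[
a(x,t)\,\xi'Z\xi-a(x_\varepsilon,t_\varepsilon)\,\xi'Y\xi \geq -C\,\frac{|x-x_\varepsilon|^{\ell-2}}{\varepsilon^{\ell-1}}\bigl(\sqrt{a(x,t)}-\sqrt{a(x_\varepsilon,t_\varepsilon)}\bigr)^{2},
\]
which bounds the mixed term $a(x,t)\operatorname{tr}Z-a(x_\varepsilon,t_\varepsilon)\operatorname{tr}Y$ \emph{without any reference to $\|X\|$}. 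Combined with the relation $|\eta|=|x-x_\varepsilon|^{\ell-1}/\varepsilon^{\ell-1}$ and the bound $|a(x,t)-a(x_\varepsilon,t_\varepsilon)|\leq \varepsilon^{q(\ell-1)}+\|Da\|_{L^\infty}|x-x_\varepsilon|$, this yields precisely an error of the form $(1+|\eta|^{q-1})E_\varepsilon(x,t)$. The remainder of your outline (handling the $Da$-shift via neighborhoods of $\Gamma$ and $\partial\{a=0\}$, the $f_{K,\varepsilon}$ truncation, and the $\eta=0$ case) is in line with the paper, but the second-order coefficient shift requires the Ishii matrix inequality, not the naive translation.
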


\begin{proof}
Let us consider the case $\eta \not = 0$ first. Since $(x,t)\in\Xi_{\varepsilon}$, by Lemma \ref{lem:inf conv properties},
there is $(x_{\varepsilon},t_{\varepsilon})$ such that 
\begin{equation}\label{eq: inf lemma 21}
u_{\varepsilon}(x,t)=u(x_{\varepsilon},t_{\varepsilon})+\frac{\left|x-x_{\varepsilon}\right|^{\ell}}{\ell\varepsilon^{\ell -1}}+\frac{\left|t-t_{\varepsilon}\right|^{2}}{2\delta_{\varepsilon}}. 
\end{equation}
Since $(\theta,\eta,X)\in\mathcal{P}^{2,-}u_{\varepsilon}(x, t)$, there exists
$\varphi\in C^{2}$ such that it touches $u_{\varepsilon}$ from below
at $(x,t)$ and $(\partial_{t}\varphi(x,t),D\varphi(x,t),D^{2}\varphi(x,t))=(\theta,\eta,X)$. Using the definition of inf-convolution and that $\varphi$ touches $u_{\varepsilon}$ from below at $(x,t)$, we obtain for all $(y,s),(z,\tau)\in\Xi$
that
\begin{align} \label{eq:infblabla}
-u(y,s)+\varphi(z,\tau)-\frac{\left|y-z\right|^{\ell}}{\ell\varepsilon^{\ell-1}}-\frac{\left|s-\tau\right|^{2}}{2\delta_{\varepsilon}} & =-u_{\varepsilon}(z,\tau)+\varphi(z,\tau)\nonumber\\
 & \leq-u_{\varepsilon}(x,t)+\varphi(x,t)\nonumber\\
 & =-u(x_{\varepsilon},t_{\varepsilon})+\varphi(x,t)-\frac{\left|x_{\varepsilon}-x\right|^{\ell}}{\ell\varepsilon^{\ell-1}}-\frac{\left|t_{\varepsilon}-t\right|^{2}}{2\delta_{\varepsilon}}.
\end{align}
In particular, \eqref{eq:infblabla} implies that $(z, \tau) \mapsto \varphi(z, \tau) -\frac{|x_\varepsilon -z|^\ell}{\ell \varepsilon^{\ell -1}}-\frac{|t_\varepsilon -\tau|^2}{2\delta_\varepsilon}$ has a maximum at $(z, \tau) = (x,t)$. Therefore, by direct computation
\begin{equation}
\theta=\frac{t-t_{\varepsilon}}{\delta_{\varepsilon}}\quad\text{and}\quad\eta=\frac{x-x_{\varepsilon}}{\varepsilon^{\ell-1}}\left|x-x_{\varepsilon}\right|^{\ell-2}.\label{eq:infconv is super derivatives}
\end{equation}
On the other hand, if we denote
\[
\Phi(y,s,z,\tau):=\frac{\left|y-z\right|^{\ell}}{\ell\varepsilon^{\ell-1}}+\frac{\left|s-\tau\right|^{2}}{2\delta_{\varepsilon}},
\]
then it follows from \eqref{eq:infblabla} that $(y,s,z,\tau)\mapsto-u(y,s)+\varphi(z,\tau)-\Phi(y,s,z,\tau)$ has a maximum at the point $(y,s,z,\tau)=(x_{\varepsilon},t_{\varepsilon},x,t)$. Invoking the Jensen-Ishii lemma, we conjure symmetric $N\times N$ matrices $Z$ and $Y$ such that
\begin{align*}
(\partial_{s}\Phi(b),D_{y}\Phi(b),Y) & \in\overline{\mathcal{P}}^{2,+}(-u(x_{\varepsilon},t_{\varepsilon})),\\
(-\partial_{\tau}\Phi(b),-D_{z}\Phi(b),Z) & \in\overline{\mathcal{P}}^{2,-}(-\varphi(x,t)).
\end{align*}
Since $\partial_{s}\Phi(b)=\frac{t_{\varepsilon}-t}{\delta_{\varepsilon}}=-\theta=-\partial_{\tau}\Phi(a)$
and $D_{y}\Phi(b)=\frac{x_{\varepsilon}-x}{\varepsilon^{q-1}}\left|x-x_{\varepsilon}\right|^{\ell-2}=-\eta=-D_{z}\Phi(x,x_{\varepsilon})$,
these can be written as
\[
(\theta,\eta,-Y)\in\overline{\mathcal{P}}^{2,-}u(x_{\varepsilon},t_{\varepsilon})\quad\text{and}\quad(\theta,\eta,-Z)\in\overline{\mathcal{P}}^{2,+}\varphi(x,t).
\]
Moreover, we have
\begin{align}
\begin{pmatrix}Y & 0\\
0 & -Z
\end{pmatrix} & \leq D_{(y,z)}^{2}\Phi(x_{\varepsilon},x)+\varepsilon^{\ell-1}(D_{(y,z)}^{2}\Phi(x_{\varepsilon},x))^{2}\nonumber \\
 & =\begin{pmatrix}B & -B\\
-B & B
\end{pmatrix}+\varepsilon^{\ell-1}\begin{pmatrix}B^{2} & -B^{2}\\
-B^{2} & B^{2}
\end{pmatrix},\label{eq:matrix ineq}
\end{align}
where 
\[
B=\frac{1}{\varepsilon^{\ell-1}}\left|x-x_{\varepsilon}\right|^{\ell-4}((q-2)(x_{\varepsilon}-x)\otimes(x_{\varepsilon}-x)+\left|x_{\varepsilon}-x\right|^{2}I).
\]
The matrix inequality \eqref{eq:matrix ineq} implies in particular that 
\begin{equation}\label{eq:XYZ eq}
    Y\leq Z\leq-D^{2}\varphi(x,t)=-X.
\end{equation}

Now, we denote
\begin{align*}
F(z, \tau,\xi,\mathcal{X}) & :=\left|\xi\right|^{p-2}(\tr\mathcal{X}+(p-2)\frac{\xi^{\prime}\mathcal{X}\xi}{\left|\xi\right|^{2}})+a(z,\tau)\left|\xi\right|^{q-2}(\tr\mathcal{X}+(q-2)\frac{\xi^{\prime}\mathcal{X}\xi}{\left|\xi\right|^{2}})\\
 & \phantom{:=} +\liminf_{(y,s)\rightarrow(z,\tau), (y,s) \not \in \Gamma}\left|\xi\right|^{q-2}\xi\cdot Da(y, s)
\end{align*}
for all $(z,\tau)\in \Xi,\xi\in\mathbb{R}^{N}$ and $\mathcal{X}\in S(N)$. Since $(\theta, \eta, -Y) \in \mathcal {\overline{\mathcal P}}^{2,-}u(x_\varepsilon,t_\varepsilon)$, by Lemma \ref{lem:jet closures} we have that
\begin{equation}
    \theta - F(x_\varepsilon, t_\varepsilon, \eta, -Y) \geq f(x_\varepsilon, t_\varepsilon, \eta).
\end{equation}
Therefore, since $u$ is a viscosity supersolution and $Da$ is continuous at
$(x, t)\not\in\Gamma$, we have by degenerate ellipticity of $-F$ (recall $X \leq - Z$ by \eqref{eq:XYZ eq})
\begin{align*}
 & \theta-F(x, t, \eta,X)\\
 & = F(x_{\varepsilon},t_{\varepsilon},\eta,-Y)-F(x,t,\eta,X) + \theta-F(x_{\varepsilon},t_{\varepsilon},\eta,-Y)\\
 & \geq F(x_\varepsilon, t_\varepsilon,\eta,-Y)-F(x, t, \eta,-Z)+f(x_\varepsilon, t_\varepsilon, \eta)\\
 & =\left|\eta\right|^{p-2}(\tr(Z-Y)+(p-2)\frac{\eta^{\prime}(Z-Y)\eta}{\left|\eta\right|^{2}}))\\
 & \phantom{=} +a(x,t)\left|\eta\right|^{q-2}(\tr Z+(q-2)\frac{\eta^{\prime}Z\eta}{\left|\eta\right|^{2}})-a(x_{\varepsilon},t_{\varepsilon})\left|\eta\right|^{q-2}(\tr Y+(q-2)\frac{\eta^{\prime}Y\eta}{\left|\eta\right|^{2}})\\
 & \phantom{=} +\left|\eta\right|^{q-2}\eta\cdot(Da(x,t)-\liminf_{(y,s)\rightarrow(x_{\varepsilon},t_{\varepsilon}), (y,s) \not \in \Gamma } Da(x_{\varepsilon}, t_\varepsilon))\\
 & \phantom{=} + f(x_\varepsilon, t_\varepsilon, \eta) \\
 & \geq a(x,t)\left|\eta\right|^{q-2}(\tr Z+(q-2)\frac{\eta^{\prime}Z\eta}{\left|\eta\right|^{2}})-a(x_{\varepsilon},t_{\varepsilon})\left|\eta\right|^{q-2}(\tr Y+(q-2)\frac{\eta^{\prime}Y\eta}{\left|\eta\right|^{2}})\\
 & \phantom{=} +\left|\eta\right|^{q-2}\eta\cdot(Da(x,t)-\liminf_{(y, s)\rightarrow (x_\varepsilon, t_\varepsilon), (y, s) \not \in \Gamma}Da(x_{\varepsilon},t_{\varepsilon}))\\
 & \phantom{=} + f(x_\varepsilon, t_\varepsilon, \eta)\\
 & =:T_{1}+T_{2} + T_3,
\end{align*}
where in the last inequality we used that $Z-Y \geq 0$ by \eqref{eq:XYZ eq}. We proceed to estimate these terms.

\textbf{Estimate of $T_{1}$:} We multiply the inequality (\ref{eq:matrix ineq})
by the vector $(\sqrt{a(x_{\varepsilon},t_{\varepsilon})}\eta,\sqrt{a(x,t)}\eta)$
to obtain
\begin{align*}
a(x,t)\frac{\eta^{\prime}Z\eta}{\left|\eta\right|^{2}}-a(x_{\varepsilon},t_{\varepsilon})\frac{\eta^{\prime}Y\eta}{\left|\eta\right|^{2}} & \geq-C\frac{\left|x-x_{\varepsilon}\right|^{\ell-2}}{\varepsilon^{\ell-1}}\left(\sqrt{a(x,t)}-\sqrt{a(x_{\varepsilon},t_{\varepsilon})}\right)^{2}.
\end{align*}
Similarly, multiplying (\ref{eq:matrix ineq}) by $(\sqrt{a(x_{\varepsilon},t_{\varepsilon})}e_{i},\sqrt{a(x,t)}e_{i})$,
$i\in\left\{ 1,\ldots,N\right\} $, and summing up the inequalities,
we get
\[
a(x,t)\tr Z-a(x_{\varepsilon},t_{\varepsilon})\tr Y\geq-C(N)\frac{\left|x-x_{\varepsilon}\right|^{\ell-2}}{\varepsilon^{\ell-1}}\left(\sqrt{a(x,t)}-\sqrt{a(x_{\varepsilon},t_{\varepsilon})}\right)^{2}.
\]
Thus,
\begin{align}\label{eq:T_1 first est}
T_{1} & \geq-C(N,q)\left|\eta\right|^{q-2}\frac{\left|x-x_{\varepsilon}\right|^{\ell-2}}{\varepsilon^{\ell-1}}\left(\sqrt{a(x,t)}-\sqrt{a(x_{\varepsilon},t_{\varepsilon})}\right)^{2} \nonumber \\
 & =-C(N,q)\frac{\left|x-x_{\varepsilon}\right|^{(q-1)(\ell-1)-1}}{\varepsilon^{(q-1)(\ell-1)}}\left(\sqrt{a(x,t)}-\sqrt{a(x_{\varepsilon},t_{\varepsilon})}\right)^{2},
\end{align}
where we used the formula of $\eta$ in (\ref{eq:infconv is super derivatives}).
Observe that the right-hand side vanishes if both $a(x,t)$ and $a(x_{\varepsilon},t_{\varepsilon})$
vanish. Therefore, we only need to estimate $T_{1}$ when $a(x,t)+a(x_{\varepsilon},t_{\varepsilon})>0$,
which we now assume in the rest of the estimate of $T_{1}$. By Lemma \ref{lem:infconv is super} we have that $\left|t-t_{\varepsilon}\right|\leq\omega_{a}^{-1}(\varepsilon^{q(\ell-1)})$, where $\omega_a$ is a strictly increasing modulus of continuity of $a$.
Using also that $a$ is Lipschitz in space, we obtain
\begin{align}\label{eq:a shift est}
|a(x, t) - a(x_\varepsilon, t_\varepsilon)|  \leq \omega_a(|t-t_\varepsilon|)+||Da||_{L^\infty}|x-x_\varepsilon| \nonumber 
&\leq \omega_a(\omega_a^{-1}(\varepsilon^{q(\ell -1)}))+||Da||_{L^\infty}|x-x_\varepsilon| \nonumber \\
& =\varepsilon^{q(\ell -1)} + ||Da||_{L^\infty}|x-x_\varepsilon|.
\end{align}
Now we denote 
\[
\mathcal{E}:=\frac{|a(x, t)-a(x_{\varepsilon}, t_{\varepsilon})|}{\left(\sqrt{a(x, t)}+\sqrt{a(x_{\varepsilon}, t_{\varepsilon})}\right)^2}
\]
 and continue the estimate of $T_{1}$ in \eqref{eq:T_1 first est} to obtain
\begin{align}
T_{1} & \geq-C(N,q)\frac{\left|x-x_{\varepsilon}\right|^{(q-1)(\ell-1)-1}}{\varepsilon^{(q-1)(\ell-1)}}\frac{|a(x, t)-a(x_{\varepsilon}, t_{\varepsilon})|^2}{\left(\sqrt{a(x, t)}+\sqrt{a(x_{\varepsilon}, t_{\varepsilon})}\right)^2}\nonumber \\
 & \geq-C(N,q)\frac{\left|x-x_{\varepsilon}\right|^{(q-1)(\ell-1)-1}}{\varepsilon^{(q-1)(\ell-1)}}\left(\varepsilon^{q(\ell-1)}+\left\Vert Da\right\Vert _{L^{\infty}}\left|x-x_{\varepsilon}\right|\right)\frac{|a(x, t)-a(x_{\varepsilon}, t_{\varepsilon})|}{\left(\sqrt{a(x, t)}+\sqrt{a(x_{\varepsilon}, t_{\varepsilon})}\right)^2}\nonumber \\
 & \geq-\left(C(N,q)\left|x-x_{\varepsilon}\right|^{(q-1)(\ell-1)-1}+C(N,q,\left\Vert Da\right\Vert _{L^{\infty}})\frac{\left|x-x_{\varepsilon}\right|^{(q-1)(\ell-1)}}{\varepsilon^{(q-1)(\ell-1)}}\right)\mathcal{E}\nonumber \\
 & \geq-C(N,q,\left\Vert Da\right\Vert _{L^{\infty}})(1+\left|\eta\right|^{q-1})\mathcal{E},\label{eq:T1 est}
\end{align}
where in the last estimate we used that $\left|x-x_{\varepsilon}\right|\leq \operatorname{diam}(\Xi)$,
$(q-1)(\ell-1)-1>0$ (since $\ell>\frac{p}{p-1}>\frac{q}{q-1}$), and the formula of $\eta$ in (\ref{eq:infconv is super derivatives}).
It now remains to estimate $\mathcal{E}$. To this end, denote 
\[
\Gamma_{\varepsilon}^{0}:=\left\{ (x,t)\in\Xi:(\overline B_{r(\varepsilon)}(x)\times[t-r(\varepsilon),t+r(\varepsilon)]\cap\Gamma^{0}\not=\emptyset\right\} ,
\]
where $r(\varepsilon)$ is the function given in Lemma \ref{lem:inf conv properties}
and $\Gamma^{0}=\partial\left\{ (x,t)\in\Xi:a(x,t)>0\right\} \cap\Xi$.
Observe that the assumption $a(x,t)+a(x_{\varepsilon},t_{\varepsilon})>0$
together with continuity of $a$ implies that if $a(x,t)=0$, then
$(x_{\varepsilon},t_{\varepsilon})\in\Gamma_{\varepsilon}^{0}$. We
use this to estimate
\begin{align}
\mathcal{E} & \leq (\chi_{\left\{ a>0\right\} }(x,t)+\chi_{\left\{ a=0\right\} }(x,t))\frac{\left|a(x,t)-a(x_{\varepsilon},t_{\varepsilon})\right|}{a(x,t)+a(x_{\varepsilon},t_{\varepsilon})}\nonumber \\
 & \leq\chi_{\left\{ a>0\right\} }(x,t)\frac{\left|a(x,t)-a(x_{\varepsilon},t_{\varepsilon})\right|}{a(x,t)}+\chi_{\Gamma_{\varepsilon}^{0}}(x,t)\frac{\left|a(x,t)-a(x_{\varepsilon},t_{\varepsilon})\right|}{a(x,t)+a(x_{\varepsilon},t_{\varepsilon})}\nonumber \\
 & \leq\chi_{\left\{ a>0\right\} }(x,t)\max_{(y,s)\in \overline B_{r(\varepsilon)}\times[t-r(\varepsilon),t+r(\varepsilon)]}\frac{\left|a(x,t)-a(y,s)\right|}{a(x,t)}+\chi_{\Gamma_{\varepsilon}^{0}}(x,t).\label{eq:mathcalE est}
\end{align}
The first term on the right-hand side is measurable as it is continuous almost everywhere in $\Xi^\prime$ by uniform continuity of $a$ and the assumption $|\Gamma^0| = 0$. Moreover, it is bounded and converges to zero almost everywhere
in $\Xi^{\prime}$. Furthermore, since $\left|\Gamma^{0}\right|=0$, it follows from Lemma \ref{lem:nullset convergence}
that $\chi_{\Gamma_{\varepsilon}^{0}}\rightarrow0$ almost everywhere.
Thus by (\ref{eq:T1 est}) and (\ref{eq:mathcalE est}) we have
\[
T_{1}\geq-E_{1}(x,t)(1+\left|\eta\right|^{q-1})
\]
for some bounded and measurable $E_{1}:\Xi^{\prime}\rightarrow[0,\infty)$
such that $E(x,t)\rightarrow0$ for almost every $(x,t)\in\Xi^{\prime}$.

\textbf{Estimate of $T_{2}$:} We denote 
\[\Gamma_{\varepsilon}:=\left\{ (x,t)\in\Xi:\overline B_{r(\varepsilon)}(x)\times[t-r(\varepsilon),t+r(\varepsilon)]\cap\Gamma\not=\emptyset\right\}.
\]
Note that if $(x,t)\in\Xi\setminus\Gamma_{\varepsilon}$, then $(x_{\varepsilon},t_{\varepsilon})\in\Xi\setminus\Gamma$,
in which case $Da$ is continuous at $(x_{\varepsilon},t_{\varepsilon})$.
Using this we estimate
\begin{align*}
T_{2} & =\left|\eta\right|^{q-2}(\eta\cdot Da(x,t)-\liminf_{(y,s)\rightarrow(x_{\varepsilon},t_{\varepsilon}),(y,s)\not\in\Gamma}\eta\cdot Da(y,s))\\
 & \geq\left|\eta\right|^{q-2}\chi_{\Xi\setminus\Gamma_{\varepsilon}}(x,t)(\eta\cdot Da(x,t)-\eta\cdot Da(x_{\varepsilon},t_{\varepsilon}))-2\left|\eta\right|^{q-1}\chi_{\Gamma_{\varepsilon}}(x,t)\left\Vert Da\right\Vert _{L^{\infty}(\Xi)})\\
 & \geq -\left|\eta\right|^{q-1}\chi_{\Xi\setminus\Gamma_{\varepsilon}}(x,t)\max_{(y,s)\in\overline B_{r(\varepsilon)}(x)\times[t-r(\varepsilon),t+r(\varepsilon)]}\left|Da(x,t)-Da(y,s)\right|\\
 & \phantom{=\,\,}-|\eta|^{q-1}2\chi_{\Gamma_\varepsilon}(x,t)\left\Vert Da\right\Vert _{L^{\infty}(\Xi)}.
\end{align*}
Here on the right-hand side, the coefficient of $\left|\eta\right|^{q-1}$
is measurable since the first term is continuous almost everywhere
by continuity of $Da$ in the set $\Xi\setminus\Gamma$. Moreover,
it is bounded and converges to zero almost everywhere in $\Xi^{\prime}$.
Finally, again by Lemma \ref{lem:nullset convergence} we have that
the second term converges to zero almost everywhere. Thus $T_{2}$
has an estimate of the type 
\[
T_{2}\leq-\left|\eta\right|^{q-1}E_{2}(x,t),
\]
where $E_{2}:\Xi^{\prime}\rightarrow[0,\infty)$ is bounded, measurable,
and converges to zero almost everywhere in $\Xi^{\prime}$.

\textbf{Estimate of $T_3$: } Using \eqref{eq:a shift est} and the formula $|\eta| = |x-x_\varepsilon|^{\ell-1}/\varepsilon^{\ell-1}$, we obtain
\begin{align*}
    |a(x_\varepsilon, t_\varepsilon)-a(x,t)||\eta|^{\beta_2} & \leq (\varepsilon^{\ell - 1} + ||Da||_{L^\infty}|x-x_\varepsilon|)|\eta|^{\beta_2} \\
    & = (|x-x_\varepsilon|^{\ell-1}+||Da||_{L^\infty}\frac{|x-x_\varepsilon|^{\ell}}{\varepsilon^{\ell-1}})|\eta|^{\beta_2-1} \\
    & \leq (|x-x_\varepsilon|^{\ell-1}+||Da||_{L^\infty}\ell(u(x,t)-u(x_\varepsilon, t_\varepsilon)))|\eta|^{\beta_2 -1} \\
    & \leq (2^{\ell-1} + \ell||Da||_{L^\infty}\osc_{\Xi}u)|\eta|^{\beta_2-1}
\end{align*}
and so by the growth condition \eqref{eq:gcnd} we have the estimate
\begin{align*}
    |f(x_\varepsilon, t_\varepsilon, \eta)| & \leq C_f (1 + |\eta|^{\beta_1}+a(x_\varepsilon, t_\varepsilon)|\eta|^{\beta_2}) \\
    & \leq C(1+|\eta|^{\beta_1}+|\eta|^{\beta_2-1}+a(x,t)|\eta|^{\beta_2}),
\end{align*}
where $C$ depends only on $C_f$, $\ell$, $||Da||_{L^\infty}$ and $\osc_{\Xi} u$.
Using the above estimate, we get for any $K>0$
\begin{align}\label{eq:f_K,epsilon}
    T_3 = f(x_\varepsilon, t_\varepsilon, \eta) & \geq \begin{cases}\inf_{(y,s)\in B_r(\varepsilon)\times(t-r(\varepsilon), t+r(\varepsilon))}f(y,s,\eta) &\text{if } |\eta| < K,\nonumber \\
    -C(1+|\eta|^{\beta_1}+|\eta|^{\beta_2  -1}+a(x,t)|\eta|^{\beta_2}) & \text{if }|\eta|\geq K.\end{cases} \\
    & =: f_{K, \varepsilon}(x, t, \eta).
\end{align}
Combining the estimates of $T_{1}$, $T_{2}$ and $T_3$, we obtain the claim in the case $\eta \not = 0$.

Finally, we consider the case $\eta = 0$. Since now $x = x_\varepsilon$, by (4) of Lemma \ref{lem:inf conv properties} we have
\begin{align*}
    u_{\varepsilon}(x, t)=u(x, t_{\varepsilon})+\frac{|t-t_{\varepsilon}|^2}{2\delta_{\varepsilon}}.
\end{align*}
Then by the definition of inf-convolution,
\begin{align*}
    u(y, s)+\frac{|x-y|^{\ell}}{\ell \varepsilon^{\ell-1}}+\frac{|t-s|^2}{2\delta_{\varepsilon}}\geq u_{\varepsilon}(x, t)=u(x, t_{\varepsilon})+\frac{|t-t_{\varepsilon}|^2}{2\delta_{\varepsilon}}
\end{align*}
it follows that the function $\phi(y,s) :=u(x, t_\varepsilon) - \frac{|y-x|^\ell}{\ell \varepsilon ^{\ell -1}} - \frac{|s-t|^2}{2\delta_\varepsilon} + \frac{|t_\varepsilon -t|^2}{2\delta_\varepsilon} \leq u(y, s)$ touches $u$ from below at $(y,s) = (x, t_\varepsilon)$. Hence, since $u$ is a viscosity supersolution and $D\phi(y,s) \not = 0$ when $y \not=x$, we have
\begin{align*}
\limsup_{\substack{\substack{(y,s)\rightarrow(x,t_\varepsilon)\\
y\not=x, (y,s)\not \in \Gamma
}
}
}&\Bigg[\partial_t\phi(y,s)-\Delta_p\phi(y,s)-a(y,s)\Delta_q\phi(y,s)-|D\phi(y,s)|^{q-2}D\phi(y,s)\cdot Da(y,s)\\
&-f\left(y, s, D\phi(y, s)\right)\Bigg]\geq0.    
\end{align*}
On the other hand, since $\ell > p/(p-1) \geq q/(q-1)$ and $\ell \geq 3 >2$, we have $\Delta_p \phi(y,s), \Delta_q\phi(y,s) \rightarrow 0$ as $y \rightarrow x$ whenever $1<p\leq q<\infty$. So the above display implies that
\[
\theta = \frac{t - t_\varepsilon}{\delta_\varepsilon} = \partial_s \phi(x,t_\varepsilon) \geq f(x,t_\varepsilon,0) \geq f_{K,\varepsilon}(x,t,0). \qedhere
\]
\end{proof}

\begin{rem}
Note that if $1<p\leq q <p+1<\infty$, then by the growth condition \eqref{eq:gcnd}, the function $f_{K,\varepsilon}$ in Lemma \ref{lem:infconv is super} satisfies the growth estimate
\begin{equation}\label{eq:f_M,epsilon growth estimate}
    |f_{K,\varepsilon}(x,t,\eta)| \leq C(1+|\eta|^{\tilde \beta} + a(x,t)|\eta|^{\beta_2}),
\end{equation}    
where $\tilde \beta = \max(\beta_1, \beta_2 -1) < p$ and $C$ depends only on $C_f$, $\beta_1$, $\beta_2$, $q$, $\ell$, $||a||_{L^\infty}$, $||Da||_{L^\infty}$, $\osc_\Xi u$ and $K$. In other words, $f_{K, \varepsilon}$ satisfies a similar growth estimate as $f$. This observation is used when deriving energy estimates for $u_\varepsilon$.
\end{rem}

\begin{lem}\label{lem:inf conv is weak}
Let $1<p\leq q\leq p+1$. Suppose that $a:\Xi \rightarrow [0, \infty)$ is Lipschitz in space, uniformly continuous in time, and that \eqref{eq:visc is weak acnd} holds. Let $u$ be a bounded viscosity supersolution to (\ref{eq:p-para f})
in $\Xi$ and $K>0$. Let $u_\varepsilon$ be the inf-convolution of $u$ as in Lemma \ref{lem:inf delta lemma}. Fix a smaller domain $\Xi^\prime \Subset \Xi$ and suppose that $\varepsilon >0$ is so small that $\Xi^\prime \Subset \Xi_\varepsilon$. Then $u_{\varepsilon}$ is a weak supersolution to
\begin{equation}\label{eq:inf conv weak}
\partial_t u_\varepsilon -\div(|Du_\varepsilon|^{p-2}Du_\varepsilon+a(z)|Du_\varepsilon|^{q-2}Du_\varepsilon) - f_{K,\varepsilon}(z,Du_\varepsilon)\geq-(1+|Du_\varepsilon|^{q-1})E_{\varepsilon}(z)
\end{equation}
in $\Xi^\prime$, where $f_{K,\varepsilon}$ and $E_\varepsilon$ are as in Lemma \ref{lem:infconv is super}.
\end{lem}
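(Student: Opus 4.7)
The proof proceeds in two steps: first, derive a pointwise PDE inequality at almost every point of $\Xi^\prime$ from Lemma~\ref{lem:infconv is super}; second, convert it to the weak formulation by multiplying against a nonnegative test function and integrating, the main obstacle being integration by parts in the divergence term.

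\textit{Step 1 (Pointwise inequality).} By Lemma~\ref{lem:inf conv properties}, $u_\varepsilon$ is semi-concave in $\Xi^\prime$, hence locally Lipschitz, and twice differentiable pointwise at almost every $(x,t)\in \Xi^\prime$ by Alexandrov's theorem. At each such point, the triple $(\partial_tu_\varepsilon(x,t), Du_\varepsilon(x,t), D^2u_\varepsilon(x,t))$ belongs to $\mathcal{P}^{2,-}u_\varepsilon(x,t)$. Since $|\Gamma|=0$, Lemma~\ref{lem:infconv is super} can be invoked at almost every such point. When $Du_\varepsilon(x,t) \neq 0$, combining Lemma~\ref{lem:infconv is super} with the classical identity
\[
    \div\bigl(|Du_\varepsilon|^{p-2}Du_\varepsilon + a(z)|Du_\varepsilon|^{q-2}Du_\varepsilon\bigr) = F(z, Du_\varepsilon, D^2 u_\varepsilon) + |Du_\varepsilon|^{q-2}Du_\varepsilon\cdot Da(z),
\]
valid at twice-differentiability points outside $\Gamma$, yields
\[
    \partial_t u_\varepsilon - \div\bigl(|Du_\varepsilon|^{p-2}Du_\varepsilon + a(z)|Du_\varepsilon|^{q-2}Du_\varepsilon\bigr) \geq f_{K,\varepsilon}(z,Du_\varepsilon) - (1+|Du_\varepsilon|^{q-1})E_\varepsilon(z).
\]
The case $Du_\varepsilon(x,t)=0$ is handled via the last assertion of Lemma~\ref{lem:infconv is super}, together with the fact that on a positive-measure subset of the level set $\{Du_\varepsilon=0\}$ the weak-differentiability argument gives $D^2u_\varepsilon=0$ a.e., so the divergence term vanishes there.

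\textit{Step 2 (Integration and the main obstacle).} Fix $\phi\in C_0^\infty(\Xi^\prime)$ with $\phi\geq0$, multiply the pointwise inequality by $\phi$, and integrate over $\Xi^\prime$. The time-derivative term is transferred onto $\phi$ thanks to the Lipschitz regularity of $u_\varepsilon$ in time. The principal difficulty is justifying
\[
    -\int_{\Xi^\prime} \phi\, \div\bigl(|Du_\varepsilon|^{p-2}Du_\varepsilon + a(z)|Du_\varepsilon|^{q-2}Du_\varepsilon\bigr) \d z \leq \int_{\Xi^\prime} \bigl(|Du_\varepsilon|^{p-2}Du_\varepsilon + a(z)|Du_\varepsilon|^{q-2}Du_\varepsilon\bigr)\cdot D\phi \d z,
\]
where the left-hand integrand is understood pointwise at twice-differentiability points. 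Since $u_\varepsilon$ is only semi-concave, $D^2u_\varepsilon$ may carry a matrix-valued Radon singular measure, and the standard integration by parts is not directly available. We resolve this through spatial mollification: set $u_\varepsilon^\rho(\cdot,t) := u_\varepsilon(\cdot,t)\ast\eta_\rho$ with a standard mollifier $\eta_\rho$. Semi-concavity is preserved, so $D^2u_\varepsilon^\rho\leq CI$ uniformly in $\rho$; moreover $Du_\varepsilon^\rho \to Du_\varepsilon$ in $L^r_{\mathrm{loc}}$ for every $r<\infty$ (using $Du_\varepsilon\in L^\infty_{\mathrm{loc}}$), and $D^2u_\varepsilon^\rho\to D^2u_\varepsilon$ a.e.\ by a classical theorem for semi-concave functions. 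Integration by parts is classical for the smooth $u_\varepsilon^\rho$, and passing $\rho\to 0$---via dominated convergence for the first-order terms, and using the favorable sign of the singular part of $D^2u_\varepsilon$ coming from semi-concavity together with the monotonicity of $F(z,\eta,\cdot)$ in the Hessian variable (ensured by $p,q>1$)---delivers the weak inequality~\eqref{eq:inf conv weak}. The subtle step is precisely this sign argument on the singular part, which is what makes the semi-concavity of $u_\varepsilon$ essential.
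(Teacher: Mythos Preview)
Your plan---obtain the pointwise inequality from Lemma~\ref{lem:infconv is super} at Alexandrov points, integrate against $\phi\geq0$, and justify integration by parts via spatial mollification together with the ``favorable sign'' coming from semi-concavity---mirrors the paper's architecture, but omits an ingredient that leaves a genuine gap in the singular range $1<p<2$ (and likewise if $q<2$).

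Your sign argument correctly disposes of the measure-theoretic singular part of $D^{2}u_{\varepsilon}$. What it does not control is the \emph{coefficient} singularity $|Du_{\varepsilon}^{\rho}|^{p-2}$ near gradient zeros. Once you expand $\div\big(|Du_{\varepsilon}^{\rho}|^{p-2}Du_{\varepsilon}^{\rho}\big)$ in the form $|Du_{\varepsilon}^{\rho}|^{p-2}\big(\Delta u_{\varepsilon}^{\rho}+(p-2)|Du_{\varepsilon}^{\rho}|^{-2}\Delta_{\infty}u_{\varepsilon}^{\rho}\big)$, you have no uniform-in-$\rho$ lower bound for $-\phi\,\Delta_{p}u_{\varepsilon}^{\rho}$ near $\{Du_{\varepsilon}=0\}$, so neither Fatou nor dominated convergence applies; and the monotonicity of $F(z,\eta,\cdot)$ you invoke is vacuous at $\eta=0$ when $p<2$. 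The paper closes exactly this gap with two devices you omit. First, it replaces $\Delta_{p}$ by the regularized operator $\Delta_{p,\delta}u=(\delta+|Du|^{2})^{(p-2)/2}\big(\Delta u+\tfrac{p-2}{\delta+|Du|^{2}}\Delta_{\infty}u\big)$, carries out the mollification/Fatou step at fixed $\delta>0$ (where the coefficient is bounded), and only then lets $\delta\to0$. Second, the $\delta\to0$ passage hinges on the inf-convolution Hessian bound $D^{2}u_{\varepsilon}\leq\tfrac{\ell-1}{\varepsilon}|Du_{\varepsilon}|^{(\ell-2)/(\ell-1)}I$ from Lemma~\ref{lem:inf conv properties}(4): together with the choice $\ell>p/(p-1)$ this gives $p-2+\tfrac{\ell-2}{\ell-1}>0$, and hence a lower bound for $-\Delta_{p,\delta}u_{\varepsilon}$ that is uniform as $\delta\to0$. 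Without this two-step regularization your limit argument does not close in the singular case.
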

\begin{proof}
\textbf{Step 1:} We define a concave function
\[
\phi(x,t):=u_{\varepsilon}(x,t)-C_\varepsilon\left(\left|x\right|^{2}+t^{2}\right)
\]
in $\smash{\Xi_{\varepsilon}}$, where $C_\varepsilon \geq 0$ is a semi-concavity constant from Lemma \ref{lem:inf conv properties}. Since $u_{\varepsilon}$ is concave, Alexandrov's theorem implies that $\smash{u_{\varepsilon}}$ is twice differentiable almost everywhere in $\smash{\Xi_{\varepsilon}}.$
Furthermore, the proof of Alexandrov's theorem in \cite[p273]{measuretheoryevans} establishes that if $\smash{\phi_{j}}$ is the standard mollification of $\phi$, then $\smash{D^{2}\phi_{j}\rightarrow D^{2}\phi}$ almost everywhere in $\smash{\Xi_{\varepsilon}}$ and thus we can approximate
it by smooth concave functions $\smash{\phi_{j}}$ so that $$\smash{\left(\phi_{j},\partial_{t}\phi_{j},D\phi_{j},D^{2}\phi_{j}\right)\rightarrow\left(\phi,\partial_{t}\phi,D\phi,D^{2}\phi\right)}$$
a.e.$\:$in $\smash{\Xi^\prime}$. We define
\[
u_{\varepsilon,j}(x,t):=\phi_{j}(x,t)+C_\varepsilon\left(\left|x\right|^{2}+t^{2}\right)
\]
and denote by $a_j$ the standard mollification of $a$. Consider the regularized $p$-Laplacian
\begin{equation}
\Delta_{p,\delta}u:=\left(\delta+\left|Du\right|^{2}\right)^{\frac{p-2}{2}}\bigg(\Delta u+\frac{p-2}{\delta+\left|Du\right|^{2}}\Delta_{\infty}u\bigg),\label{eq:regplap}
\end{equation}
and $\Delta_{q, \delta}$ defined in the analogical way, where $\Delta_{\infty}u:=\left\langle D^{2}uDu,Du\right\rangle$ and $\delta >0$. Fix a non-negative test function $\varphi\in C_{0}^{\infty}(\Xi^\prime)$.  Since $u_{\varepsilon,j}$ is smooth and $\varphi$ is compactly supported in $\Xi^\prime$, we calculate via integration by parts
\begin{align*}
&\int_{\Xi^\prime} \varphi\bigg(\partial_{t}u_{\varepsilon,j}-\left(\delta+\left|Du_{\varepsilon,j}\right|^{2}\right)^{\frac{p-2}{2}}\bigg(\Delta u_{\varepsilon,j}+\frac{p-2}{\delta+\left|Du_{\varepsilon,j}\right|^{2}}\Delta_{\infty}u_{\varepsilon,j}\bigg)\bigg)\d z\\
&-\int_{\Xi^\prime} a_j \left(\delta+\left|Du_{\varepsilon,j}\right|^{2}\right)^{\frac{q-2}{2}}\bigg(\Delta u_{\varepsilon,j}+\frac{q-2}{\delta+\left|Du_{\varepsilon,j}\right|^{2}}\Delta_{\infty}u_{\varepsilon,j}\bigg)+\left(\delta+|Du_{\varepsilon, j}|^2\right)^{\frac{q-2}{2}}Du_{\varepsilon, j}\cdot Da_j\, dz\\
&= \int_{\Xi^\prime}\varphi\partial_{t}u_{\varepsilon,j}-\varphi\div\left(\left(\delta+\left|Du_{\varepsilon,j}\right|^{2}\right)^{\frac{p-2}{2}}Du_{\varepsilon,j}+a_j \left(\delta+\left|Du_{\varepsilon,j}\right|^{2}\right)^{\frac{p-2}{2}}Du_{\varepsilon,j} \right)\d z\\
&= \int_{\Xi^\prime}-u_{\varepsilon,j}\partial_{t}\varphi+\left[\left(\delta+\left|Du_{\varepsilon,j}\right|^{2}\right)^{\frac{p-2}{2}}Du_{\varepsilon,j}+a_j \left(\delta+\left|Du_{\varepsilon,j}\right|^{2}\right)^{\frac{q-2}{2}}Du_{\varepsilon,j}\right]\cdot D\varphi\d z.
\end{align*}
Recalling the shorthand $\Delta_{p,\delta}$ defined in (\ref{eq:regplap}),
we deduce from the above that
\begin{align}
&\liminf_{j\rightarrow\infty} \int_{\Xi^\prime}\varphi\left(\partial_{t}u_{\varepsilon,j}-\Delta_{p,\delta}u_{\varepsilon,j}-a_j\Delta_{q, \delta}u_{\varepsilon, j}-\left(\delta+|Du_{\varepsilon, j}|\right)^{\frac{q-2}{2}}Du_{\varepsilon , j}\cdot Da_j\right)\d z\nonumber \\
&\leq  \lim_{j\rightarrow\infty}\int_{\Xi^\prime}-u_{\varepsilon,j}\partial_{t}\varphi+\left[\left(\delta+\left|Du_{\varepsilon,j}\right|^{2}\right)^{\frac{p-2}{2}}Du_{\varepsilon,j}+\left(\delta+\left|Du_{\varepsilon,j}\right|^{2}\right)^{\frac{q-2}{2}}Du_{\varepsilon,j}\right]\cdot D\varphi\d z.\label{eq:inf conv is weak p<2}
\end{align}
We use Fatou's lemma at the left-hand side and the dominated convergence at the right-hand side. Once we verify their assumptions, we arrive at the auxiliary inequality (note that $a_j \rightarrow a$ uniformly in $\Xi^\prime$ and $Da_j \rightarrow Da$ in $L^2(\Xi^\prime)$)
\begin{align}\label{eq:inf conv is weak aux p<2}
    & \int_{\Xi^\prime} \varphi\big(\partial_t u_\varepsilon - \Delta_{p,\delta}u_\varepsilon -a\Delta_{q, \delta} u_\varepsilon-(\delta+|Du_{\varepsilon, \delta}|^2)^{\frac{q-2}{q}}Du_\varepsilon \cdot Da \big) \d z \nonumber \\
    & \leq \int_{\Xi^\prime} -u_\varepsilon \partial_t\varphi + (\delta+|Du_\varepsilon|^2)^{\frac{p-2}{2}}Du_\varepsilon \cdot D\varphi+a(\delta+|Du_\varepsilon|^2)^{\frac{q-2}{2}}Du_\varepsilon\cdot D\varphi \d z.
\end{align}
Next we verify the assumptions of Fatou's lemma and the dominated
convergence theorem. Since $u_\varepsilon$ is Lipschitz in $\Xi^\prime$, $\smash{\left|u_{\varepsilon,j}\right|}$,
$\smash{\left|\partial_{t}u_{\varepsilon,j}\right|}$ and $\smash{\left|Du_{\varepsilon,j}\right|}$
are uniformly bounded by some constant $\smash{M>1}$ in the support
of $\varphi$, independently of $j$. Hence the assumptions of the dominated convergence theorem are satisfied. Observe then that by concavity of $\smash{\phi_{j}}$, we have $\smash{D^{2}u_{\varepsilon,j}\leq 2C_\varepsilon I}$.
Thus the integrand at the left-hand side of (\ref{eq:inf conv is weak p<2})
has a lower bound independent of $j$ when $\smash{Du_{\varepsilon,j}=0}$. When $\smash{Du_{\varepsilon,j}\not=0}$, we use that the operator $-\Delta v-\frac{p-2}{|Dv|^2}\Delta_\infty v$ is degenerate elliptic when $Dv \not= 0$ and $p>1$, to estimate \begingroup\allowdisplaybreaks
\begin{align*}
 & -\left(\delta+\left|Du_{\varepsilon,j}\right|^{2}\right){}^{\frac{p-2}{2}}\left(\Delta u_{\varepsilon,j}+\frac{p-2}{\delta+\left|Du_{\varepsilon,j}\right|^{2}}\Delta_{\infty}u_{\varepsilon,j}\right)\\
 & =-\frac{\left(\delta+\left|Du_{\varepsilon,j}\right|^{2}\right)^{\frac{p-2}{2}}}{\delta+\left|Du_{\varepsilon,j}\right|^{2}}\left(\left|Du_{\varepsilon,j}\right|^{2}\left(\Delta u_{\varepsilon,j}+\frac{p-2}{\left|Du_{\varepsilon,j}\right|^{2}}\Delta_{\infty}u_{\varepsilon,j}\right)+\delta\Delta u_{\varepsilon,j}\right)\\
 & \geq-\frac{\left(\delta+\left|Du_{\varepsilon,j}\right|^{2}\right)^{\frac{p-2}{2}}}{\delta+\left|Du_{\varepsilon,j}\right|^{2}}2C_\varepsilon\left(\left|Du_{\varepsilon,j}\right|^{2}(N+p-2)+\delta N\right)\\
 & \geq-2C_\varepsilon(\delta+\left|Du_{\varepsilon,j}\right|^{2})^{\frac{p-2}{2}}\max(N+p-2,N),
\end{align*}
which provides a lower bound independent of $j$ since $Du_{\varepsilon, j}$ is bounded. Similarly, one checks that the term with $q$ has a lower bound. Therefore, our use of Fatou's lemma is justified.

\textbf{Step 2:} We let $\delta\rightarrow0$ in the auxiliary inequality
(\ref{eq:inf conv is weak aux p<2}). Since $u_{\varepsilon}$ is
Lipschitz continuous, the dominated convergence theorem implies 
\begin{align}
\liminf_{\delta\rightarrow0} & \int_{\Xi^\prime}\varphi\left(\partial_{t}u_{\varepsilon}-\Delta_{p,\delta}u_{\varepsilon}-a\Delta_{q,\delta}u_\varepsilon-(\delta+|Du_\varepsilon|^2)^{\frac{q-2}{2}}Du_\varepsilon\cdot Da\right)\d z\nonumber \\
\leq & \int_{\Xi^\prime}-u_{\varepsilon}\partial_{t}\varphi+\left|Du_{\varepsilon}\right|^{p-2}Du_{\varepsilon}\cdot D\varphi+a|Du_\varepsilon|^{q-2}Du_\varepsilon\cdot D\varphi\d z.\label{eq:xiao1}
\end{align}
Applying Fatou's lemma (we verify assumptions at the end), we get
\begin{align}
\liminf_{\delta\rightarrow0} & \int_{\Xi^\prime}\varphi\left(\partial_{t}u_{\varepsilon}-\Delta_{p,\delta}u_{\varepsilon}-a\Delta_{q,\delta} u_\varepsilon -(\delta+|Du_\varepsilon|^2)^{\frac{q-2}{2}}Du_\varepsilon\cdot Da\right)\d z\nonumber \\
\geq & \int_{\Xi^\prime}\liminf_{\delta\rightarrow0}\varphi\left(\partial_{t}u_{\varepsilon}-\Delta_{p,\delta}u_{\varepsilon}-a\Delta_{q, \delta}u_\varepsilon -(\delta+|Du_\varepsilon|^2)^{\frac{q-2}{2}}Du_\varepsilon\cdot Da\right)\d z\nonumber \\
= & \int_{\Xi^\prime\cap\left\{ Du_{\varepsilon}\not=0\right\} }\liminf_{\delta\rightarrow0}\varphi\left(\partial_{t}u_{\varepsilon}-\Delta_{p,\delta}u_{\varepsilon}-a\Delta_{q, \delta} u_\varepsilon -(\delta+|Du_\varepsilon|^2)^{\frac{q-2}{2}}Du_\varepsilon\cdot Da\right)\d z\nonumber \\
 & +\int_{\Xi^\prime\cap\left\{ Du_{\varepsilon}=0\right\} }\liminf_{\delta\rightarrow0}\varphi(\partial_{t}u_{\varepsilon}-\delta^{\frac{p-2}{2}}\Delta u_{\varepsilon}-\delta^{\frac{q-2}{2}}a\Delta u_\varepsilon )\d z\nonumber \\
= & \int_{\Xi^\prime\cap\left\{ Du_{\varepsilon}\not=0\right\} }\varphi\left(\partial_{t}u_{\varepsilon}-\Delta_{p}u_{\varepsilon}-a\Delta_q u_\varepsilon - |Du_\varepsilon|^{q-2}Du_\varepsilon \cdot Da\right)\d z\nonumber \\
 & +\int_{\Xi^\prime\cap\left\{ Du_{\varepsilon}=0\right\} }\varphi\partial_{t}u_{\varepsilon}\d z\nonumber \\
 \geq & \int_{\Xi^\prime} \varphi (f_{K, \varepsilon}(x, t, Du_\varepsilon)-(1+|Du_\varepsilon|^{q-1})E_\varepsilon (x,t)) \d z,\label{eq:xiao2}
\end{align}
where the last inequality follows from Lemma \ref{lem:infconv is super}. The lemma could be applied at almost every $(x,t)\in \supp \varphi$, since $|\Gamma| = 0$ and $u_{\varepsilon}$ is twice differentiable almost everywhere in $\Xi_\varepsilon$, and at such points we have $(\partial_t u_\varepsilon(x,t),Du_\varepsilon(x,t),D^2 u_\varepsilon (x,t)) \in \mathcal P^{2,-} u(x,t)$.
Combining (\ref{eq:xiao1}) and (\ref{eq:xiao2}), we find that $u_{\varepsilon}$
is a weak supersolution in $\Xi^\prime$. It remains to verify
the assumptions of Fatou's lemma, i.e.\ that the integrand at the
left-hand side of (\ref{eq:xiao1}) has a lower bound independent
of $\delta$. When $Du_{\varepsilon}=0$, this follows directly from
the inequality 
\[
D^{2}u_{\varepsilon}\leq\frac{\ell-1}{\varepsilon}\left|Du_{\varepsilon}\right|^{\frac{\ell-1}{\ell-2}}I,
\]
which holds by Lemma \ref{lem:inf conv properties}. When $Du_{\varepsilon}\not=0$,
we recall that by Lipschitz continuity $\partial_{t}u_{\varepsilon}$
and $Du_{\varepsilon}$ are uniformly bounded in $\Xi_{\varepsilon}$,
and estimate
\begin{align*}
-\Delta_{p,\delta}u_{\varepsilon} & =\left(-\delta+\left|Du_{\varepsilon}\right|^{2}\right)^{\frac{p-2}{2}}\left(\Delta u_{\varepsilon}+\frac{p-2}{\delta+\left|Du_{\varepsilon}\right|^{2}}\left\langle D^{2}u_{\varepsilon}Du_{\varepsilon},Du_{\varepsilon}\right\rangle \right)\\
 & =-\frac{(\delta+\left|Du_{\varepsilon}\right|^{2})^{\frac{p-2}{2}}}{\delta+\left|Du_{\varepsilon}\right|^{2}}\left(\left|Du_{\varepsilon}\right|^{2}\left(\Delta u_{\varepsilon}+\frac{p-2}{\left|Du_{\varepsilon}\right|^{2}}\left\langle D^{2}u_{\varepsilon}Du_{\varepsilon},Du_{\varepsilon}\right\rangle \right)+\delta\Delta u_{\varepsilon}\right)\\
 & \geq-\frac{(\delta+\left|Du_{\varepsilon}\right|^{2})^{\frac{p-2}{2}}}{\delta+\left|Du_{\varepsilon}\right|^{2}}\frac{\ell-1}{\varepsilon}\left(\left|Du_{\varepsilon}\right|^{\frac{\ell-2}{\ell-1}+2}(N+p-2)+\left|Du_{\varepsilon}\right|^{\frac{\ell-2}{\ell-1}}\delta N\right)\\
 & \geq-\frac{(\delta+\left|Du_{\varepsilon}\right|^{2})^{\frac{p-2}{2}}}{\delta+\left|Du_{\varepsilon}\right|^{2}}\frac{\ell-1}{\varepsilon}\left((\left|Du_{\varepsilon}\right|^{2}+\delta)\left|Du_{\varepsilon}\right|^{\frac{\ell-2}{\ell-1}}(N+p-2)\right)\\
 & \geq-(\delta+\left|Du_{\varepsilon}\right|^{2})^{\frac{p-2}{2}}\left|Du_{\varepsilon}\right|^{\frac{\ell-2}{\ell-1}}\frac{\ell-1}{\varepsilon}\max(N,N+p-2)\\
 & \geq-(\delta+\left|Du_{\varepsilon}\right|^{2})^{\frac{1}{2}(p-2+\frac{\ell-2}{\ell-1})}\frac{\ell-1}{\varepsilon}\max(N,N+p-2),
\end{align*}
which is bounded independently of $\delta$ since $p-2 + \frac{\ell-2}{\ell-1}>0$. A similar estimate holds for $-\Delta_q u_{\varepsilon, \delta}$ and so the assumptions of Fatou's lemma hold.\endgroup
\end{proof}

\begin{lem}[Caccioppoli-type inequality]
\label{lem:Caccioppoli} Let $1<p\leq q \leq p+1<\infty$. Suppose that $a:\Xi \rightarrow [0, \infty)$ is bounded and measurable. Assume that $u$ is a  locally Lipschitz continuous weak supersolution to
\begin{align*}
    \partial_t u-\div\left(|Du|^{p-2}Du+a(z)|Du|^{q-2}Du\right)\geq -\tilde C(1+|Du|^{\tilde\beta_1}+a(z)|Du|^{\tilde \beta_2}) \quad \text{in } \Xi,
\end{align*}
where $\tilde C \geq0$ is a constant and $1\leq\tilde \beta_1 < p$, $1\leq\tilde \beta_2 <q$. Then there is a constant $C=C(p, q, \tilde C,\tilde\beta_1, \tilde\beta_2, )$ such that
for any non-negative test function $\xi\in C_{0}^{\infty}(\Xi)$, $\xi \leq 1$, we have
\begin{align*}
&\int_{\Xi}\xi^{q}\left(\left|Du\right|^{p}+a(z)|Du|^q\right)\d z\\
&\leq C\int_{\Xi}M^{2}\partial_{t}\xi^{q}+\max\{M^p, M^{q}\}\left(\left|D\xi\right|^{p}+a(z)|D\xi|^q\right)+(M^{\frac{p}{p-\beta_1}}+M^{\frac{p}{p-\beta_2}}+M)\xi^{q}\,dz,
\end{align*}
where $M=\left\Vert u\right\Vert _{L^{\infty}(\supp\xi)}$.
\end{lem}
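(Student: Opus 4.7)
The plan is to test the weak-supersolution formulation of $u$ against $\varphi := (u-m)\xi^q$, where $m := \inf_{\supp \xi} u$, so that $\varphi \ge 0$ and $|u-m|\le \osc_{\supp\xi} u \le 2M$. Since $u$ is only locally Lipschitz in space (so $\partial_t u$ need not exist pointwise), this testing must be justified through Steklov averages: I would mollify the weak supersolution inequality in time, test the averaged inequality with $([u]_h - m)\xi^q$, and then send $h \to 0$. Because $Du \in L^\infty_{\mathrm{loc}}(\Xi)$, both $|Du|^{p-2}Du$ and $a(z)|Du|^{q-2}Du$ belong to $L^\infty_{\mathrm{loc}}$, so their Steklov averages converge strongly in $L^s_{\mathrm{loc}}$ for every $s<\infty$, and every error term generated by mollification vanishes in the limit. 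Crucially, this step does \emph{not} require the monotonicity hypothesis \eqref{assumption increasing} on $a$ that was needed in Lemma~\ref{lem:steklov lemma}; the local boundedness of $Du$ bypasses that obstruction.

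Once the Steklov limit is performed, the parabolic chain-rule identity $u\,\partial_t u = \tfrac{1}{2}\partial_t u^2$ together with integration by parts in time produces
\[
    \int_\Xi -u\,\partial_t \varphi \d z \;=\; -\tfrac{1}{2}\int_\Xi (u-m)^2 \partial_t \xi^q \d z,
\]
which, after the pointwise bound $(u-m)^2 \le 4M^2$, gives the $M^2 \partial_t \xi^q$ contribution. The gradient pairing $(|Du|^{p-2}Du + a|Du|^{q-2}Du)\cdot D\varphi$ splits into the desired positive term $\int \xi^q(|Du|^p + a|Du|^q)\d z$ and a cross term
\[
    q\int_\Xi \xi^{q-1}(u-m)\bigl(|Du|^{p-2}Du + a|Du|^{q-2}Du\bigr)\cdot D\xi \d z,
\]
which is the standard object to absorb.

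The remainder is bookkeeping with Young's inequality. For the cross term I would split with exponents $(p/(p-1),p)$ on the $p$-piece and $(q/(q-1),q)$ on the $q$-piece to obtain
\[
    \varepsilon \int_\Xi \xi^q(|Du|^p + a|Du|^q) \d z + C_\varepsilon \int_\Xi |u-m|^p|D\xi|^p + a|u-m|^q|D\xi|^q \d z,
\]
and the pointwise bounds $|u-m|\le 2M$ and $\xi^{q-p}\le 1$ (valid since $\xi\le 1$ and $q \ge p$) convert the second integral into $\max\{M^p,M^q\}(|D\xi|^p + a|D\xi|^q)$. For the nonhomogeneity I would apply Young's inequality separately to $\tilde C \,M\,\xi^q|Du|^{\tilde\beta_1}$ with exponents $(p/\tilde\beta_1, p/(p-\tilde\beta_1))$, and to $\tilde C \,M\,a\xi^q|Du|^{\tilde\beta_2}$ with exponents $(q/\tilde\beta_2, q/(q-\tilde\beta_2))$, producing the terms $M^{p/(p-\tilde\beta_1)}\xi^q$ and (modulo what appears to be a typo in the $q$ versus $p$ denominator of the stated right-hand side) $M^{q/(q-\tilde\beta_2)}a\,\xi^q$; the constant term $\tilde C \,|u-m|\xi^q$ contributes $CM\xi^q$ directly. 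Choosing $\varepsilon$ small enough to absorb the $\varepsilon\,\xi^q(|Du|^p + a|Du|^q)$ contributions back into the left-hand side then yields the claimed estimate.

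The main obstacle is the Steklov-average step: one must verify that $\varphi$, which itself involves the non-smooth function $u$, is an admissible test function, and that $[|Du|^{p-2}Du]_h$ and $[a|Du|^{q-2}Du]_h$ converge sufficiently strongly to pair with $D\varphi$ in the limit. All other estimates are Young-type inequalities and are routine once the structure above is in place.
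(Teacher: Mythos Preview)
Your overall strategy matches the paper's, but there is a genuine sign error in the choice of test function that makes the argument fail as written. With $\varphi = (u-m)\xi^q$ you have $D\varphi = \xi^q Du + q(u-m)\xi^{q-1}D\xi$, so the pairing $A(Du)\cdot D\varphi$ produces $+\xi^q(|Du|^p + a|Du|^q)$ on the \emph{left} side of the supersolution inequality
\[
\int_\Xi -u\,\partial_t\varphi + A(Du)\cdot D\varphi \,\d z \;\geq\; -\tilde C\int_\Xi \varphi\bigl(1+|Du|^{\tilde\beta_1}+a|Du|^{\tilde\beta_2}\bigr)\,\d z .
\]
Rearranging then yields only a \emph{lower} bound on $\int \xi^q(|Du|^p+a|Du|^q)$, which is useless. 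The paper instead tests with $\varphi = (M-u)\xi^q$; then $D\varphi$ carries $-\xi^q Du$, the energy term enters with a minus sign, and the $\geq$ direction of the supersolution inequality delivers the desired upper bound. This is the standard sub/super dichotomy: $(u-m)$ works for subsolutions, $(M-u)$ for supersolutions.

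Once you swap to $(M-u)\xi^q$, everything else you wrote goes through essentially verbatim and coincides with the paper's proof: the time term becomes $\frac{1}{2}\int (M-u)^2\partial_t\xi^q$ (bounded by $CM^2|\partial_t\xi^q|$), the cross term is absorbed via Young's inequality with exponents $(p,p/(p-1))$ and $(q,q/(q-1))$, and the nonhomogeneity is split exactly as you describe. Your observation about the apparent typo ($M^{q/(q-\tilde\beta_2)}$ rather than $M^{p/(p-\beta_2)}$) is correct. Your Steklov-average discussion is more cautious than the paper, which simply asserts admissibility from local Lipschitz continuity; in the paper's application the function is the inf-convolution $u_\varepsilon$, which is Lipschitz in space \emph{and} time (being semi-concave), so no mollification is actually needed there.
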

\begin{proof}
Since $u$ is locally Lipschitz continuous, the function $\varphi:=\left(M-u\right)\xi^{q}$
is an admissible test function. Testing the weak formulation of (\ref{eq:p-para f})
with $\varphi$ yields
\begin{align}
&\int_{\Xi}\xi^{q}\left(\left|Du\right|^{p}+a(z)|Du|^q\right)\d z \nonumber\\
&\leq  \int_{\Xi}-u\partial_{t}\varphi \d z + \int _\Xi q\xi^{q-1}(M-u)\left(\left|Du\right|^{p-1}+a(z)|Du|^{q-1}\right)\left|D\xi\right|\,dz\nonumber\\
&\phantom{\leq} +\int _\Xi \varphi \tilde C (1+|Du|^{\tilde \beta_1} + a(z)|Du|^{\tilde \beta_2}) dz.\label{eq:caccioppoli 1}
\end{align}
We have by integration by parts
\begin{align*}
\int_{\Xi}-u\partial_{t}\varphi\d z= & \int_{\Xi}\xi^{p}u\partial_{t}u-u(M-u)\partial_{t}\xi^{p}\d z\\
= & \int_{\Xi}\frac{1}{2}\xi^{p}\partial_{t}u^{2}-u(M-u)\partial_{t}\xi^{p}\d z\\
= & \int_{\Xi}-\frac{1}{2}u^{2}\partial_{t}\xi^{p}-u(M-u)\partial_{t}\xi^{p}\d z\\
= &\int_{\Xi}\frac{1}{2}u^2\partial_t\xi^p-uM\partial_t\xi^p
\leq\int_{\Xi}CM^{2}\partial_{t}\xi^{p}\d z.
\end{align*}
Now we estimate the second term of \eqref{eq:caccioppoli 1}. Applying Young's inequality, we get for any $\delta >0$
\begin{align*}
& \int_{\Xi}q\xi^{q-1}(M-u)\left(\left|Du\right|^{p-1}+a(z)|Du|^{q-1}\right)\left|D\xi\right|\,dz\\
 & \leq \int_{\Xi} \delta \xi^{\frac{p(q-1)}{p-1}}|Du|^p+C(p, \delta)M^p|D\xi|^p\, dz\\
 &\ \ \ +\int_{\Xi} \delta a(z)\xi^q |Du|^q+C(q, \delta)M^qa(z)|D\xi|^q\, dz\\
 &\leq \int_{\Xi} \delta \xi^q\left(|Du|^p+a(z) |Du|^q\right)+C(p,q,\delta)\max\{M^p,M^q\}\left(|D\xi|^p+a(z)|D\xi|^q\right)\, dz.
\end{align*}
Next we estimate the third term of \eqref{eq:caccioppoli 1}. We have
\begin{align*}
 & \int_{\Xi}\varphi \tilde C (1+|Du|^{\tilde \beta_1}+a(z)|Du|^{\tilde \beta_2}\,dz  \\ & \leq \int_{\Xi}\tilde C(M-u)\xi^q\left(|Du|^{\tilde\beta_1}+a(z)|Du|^{\tilde\beta_2}\right)\,dz+\int_{\Xi}\tilde C(M-u)\xi^q \,dz.
\end{align*}
Applying Young's inequality with the exponent pairs $\left(\frac{p}{\tilde\beta_1}, \frac{p}{p-\tilde\beta_1}\right)$ and $\left(\frac{q}{\tilde\beta_1}, \frac{q}{q-\tilde\beta_2}\right), $ we obtain
\begin{align*}
&\int_{\Xi}\tilde C(M-u)\xi^q\left(|Du|^{\tilde\beta_1}+a(z)|Du|^{\tilde\beta_2}\right)\,dz\\
&\leq\int_{\Xi}\delta |Du|^p \xi^q + C(\delta) (\tilde C (M-u))^{\frac{p}{p-\tilde\beta_1}}\xi^{\left(q-\frac{q\tilde\beta_1}{p}\right)\frac{p}{p-\tilde\beta_1}}\,dz \\ &\ \ \ +\int_{\Xi}\delta a(z)|Du|^q \xi^q+ C(\delta) (\tilde C (M-u)^{\frac{q}{q-\tilde\beta_2}})\xi^q\, dz.
\end{align*}
Thus, the third term of \eqref{eq:caccioppoli 1} can be estimated by
\begin{equation*}
\int_{\Xi}\delta \left(|Du|^p+a(z)|Du|^q\right)\xi^q\,dz + C(\tilde C, \delta) \int_{\Xi}\left(M^{\frac{p}{p-\tilde\beta_1}}+M^{\frac{q}{q-\tilde\beta_2}}+M \right)\xi^q\, dz.
\end{equation*}
Combining these estimates with (\ref{eq:caccioppoli 1}) and absorbing the terms with $Du$ to the left-hand side by taking small enough $\delta>0$, we obtain the desired inequality.
\end{proof}
Before proving the next lemma, let us define for $1< r_1< p$ and $1<r_2< q$ the space
\begin{align*}
    L^{r_1, r_2}(\Xi):=\left\{u: \Xi\to \mathbb{R}\,\, \text{measurable}\,\, : \int_{\Xi}|u|^{r_1}+a(z)|u|^{r_2}\, dz< \infty\right\}.
\end{align*}
\begin{lem}
\label{lem:lr conv} Let $1<p\leq q\leq p+1$. Suppose that $a:\Xi \rightarrow [0, \infty)$ is bounded and measurable. Assume that $\smash{\left(u_{j}\right)}$
is a sequence of locally Lipschitz continuous weak supersolutions to 
\begin{align*}
    \partial_t u_j-\div\left(|Du_j|^{p-2}Du_j+a(z)|Du_j|^{q-2}Du_j\right)-f(z, Du_j)\geq \left(1+|Du_j|^{q-1}\right)E \quad \text{in } \Xi,
\end{align*}
where $E\geq0$ is a constant. Suppose moreover that for any $U\Subset\Xi$ we have $\sup_j ||u_j||_{L^\infty(U)}<\infty$, $u_j$ converges in $L^p(U)$ and that
\begin{equation}\label{eq:strong conv cnd}
    \sup_j \int _U |Du_j|^p + a(z)|Du_j|^q \,dz < \infty \quad \text{for any } U\Subset\Xi.
\end{equation}
Then $\smash{\left(Du_{j}\right)}$ is a Cauchy
sequence in $\smash{L_{loc}^{r_1, r_2}(\Xi)}$ for any $\smash{1<r_1<p}$ and $\smash{1<r_2<q}$. 
\end{lem}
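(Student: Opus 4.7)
The overall strategy is three-fold: extract weakly convergent subsequences of $(Du_j)$ from \eqref{eq:strong conv cnd}, upgrade weak convergence to convergence in measure via a Boccardo--Murat type monotonicity argument, and then conclude by Vitali's convergence theorem using the strict inequalities $r_1<p$ and $r_2<q$. For the first step, the bound \eqref{eq:strong conv cnd} and reflexivity of $L^p$ together with the $L^p_{\mathrm{loc}}$-convergence of $u_j$ (which forces the distributional limit of $Du_j$ to be $Du$) imply that, along a subsequence, $Du_j \rightharpoonup Du$ weakly in $L^p_{\mathrm{loc}}(\Xi)$ and $a^{1/q}Du_j \rightharpoonup a^{1/q}Du$ weakly in $L^q_{\mathrm{loc}}(\Xi)$, where $u:=\lim_j u_j$.

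For the second step, I fix $U\Subset V\Subset \Xi$ and a cutoff $\phi\in C_0^\infty(V)$ with $\phi\equiv 1$ on $U$, and consider for each $\lambda>0$ the non-negative Lipschitz test functions $\psi^{\pm}_\lambda:=\phi^q\bigl((\pm(u_j-u_k))_+\wedge \lambda\bigr)$, which are admissible thanks to the local Lipschitz regularity of $u_j$. Plugging these into the supersolution inequalities for $u_j$ and $u_k$ and exploiting the identity $\psi^+_\lambda - \psi^-_\lambda = \phi^q T_\lambda(u_j-u_k)$ with $T_\lambda(s):=\max(-\lambda,\min(s,\lambda))$, combined with a parabolic integration by parts, produces a Boccardo--Murat type estimate
\begin{equation*}
\int_U \phi^q \bigl\langle A(z,Du_j)-A(z,Du_k),\,Du_j-Du_k\bigr\rangle \chi_{\{|u_j-u_k|\leq \lambda\}}\d z \leq C\lambda + o_{j,k}(1),
\end{equation*}
where $A(z,\xi):=|\xi|^{p-2}\xi + a(z)|\xi|^{q-2}\xi$ and $o_{j,k}(1)\to 0$ as $j,k\to\infty$. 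The error $o_{j,k}(1)$ captures the contributions of $f$ and of $(1+|Du_j|^{q-1})E$, which are absorbed using the Caccioppoli bound of Lemma \ref{lem:Caccioppoli} and Young's inequality, crucially relying on the strict inequalities $\beta_1<p$, $\beta_2<q$ in the growth condition and on the gap $q\leq p+1$. Combining this Boccardo--Murat estimate with the vector monotonicity inequalities \eqref{eq:algebraic ineq 1<p<2}--\eqref{eq:algebraic ineq p>2} and a Chebyshev-type measure-splitting argument based on whether $|Du_j|+|Du_k|$ is large or small relative to some threshold $M$, I conclude that $(Du_j)$ is Cauchy in measure on $U$; passing to a further subsequence then yields $Du_j\to Du$ almost everywhere.

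For the third step, the strict inequalities $r_1<p$ and $r_2<q$ together with \eqref{eq:strong conv cnd} ensure that $(|Du_j|^{r_1})$ and $(a(z)|Du_j|^{r_2})$ are uniformly integrable on each $U\Subset \Xi$, so Vitali's convergence theorem delivers $Du_j\to Du$ in $L^{r_1,r_2}(U)$. A standard subsequence argument promotes this to convergence of the whole sequence. The principal obstacle is the Boccardo--Murat estimate in the second step: since both $u_j$ and $u_k$ are only supersolutions, no clean subtraction of the two weak inequalities is available, and the delicate point is to combine them in such a way that the monotone principal part dominates the one-sided error terms. The symmetric pairing of $\psi^+_\lambda$ and $\psi^-_\lambda$, together with the truncation level $\lambda$ that will be sent to zero after passing to the limit $j,k\to\infty$, is exactly what isolates the monotone contribution and enables the absorption of all other terms.
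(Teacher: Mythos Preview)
Your core argument is the same as the paper's: both test the supersolution inequality for $u_j$ with a non-negative truncated function and the one for $u_k$ with the complementary truncated function, then sum. The paper writes the test functions as $(\delta \mp w_{jk})\theta$ with $w_{jk}=T_\delta(u_j-u_k)$, which up to the shift by $\delta$ coincide with your $\psi^\pm_\lambda$. The resulting Boccardo--Murat estimate is the paper's display \eqref{eq:lr conv 3}.

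The routes diverge only at the final step. The paper does not pass through convergence in measure or Vitali; instead it splits $U$ into $\{|u_j-u_k|<\delta\}$ and its complement, applies H\"older's inequality directly to \eqref{eq:lr conv 3} together with the vector inequalities \eqref{eq:algebraic ineq 1<p<2}--\eqref{eq:algebraic ineq p>2} on the first set, and Chebyshev plus \eqref{eq:strong conv cnd} on the second, to obtain a quantitative bound of the form
\[
\int_U |Du_j-Du_k|^{r_1}+a(z)|Du_j-Du_k|^{r_2}\,dz \;\leq\; C\Big(\delta^{\alpha}+\delta^{-\beta}\|u_j-u_k\|_{L^p(U)}^{\gamma}\Big)
\]
with $\alpha,\beta,\gamma>0$, and concludes by first fixing $\delta$ small and then taking $j,k$ large. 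Your route via convergence in measure and Vitali is valid but less direct; the paper's argument avoids subsequences and the subsequence-of-subsequence promotion entirely. Two minor points: the Caccioppoli Lemma~\ref{lem:Caccioppoli} is not needed here---the assumption \eqref{eq:strong conv cnd} already supplies the energy bound; and all lower-order contributions (from $f$, the $E$-term, the time derivative, and the cutoff gradient) are bounded by $C\lambda$ uniformly in $j,k$, so the $o_{j,k}(1)$ in your Boccardo--Murat estimate is superfluous.
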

\begin{proof}
Let $U\Subset\Xi$ and take a cut-off function $\theta\in C_{0}^{\infty}(\Xi)$
such that $0\leq\theta\leq1$ and $\theta\equiv1$ in $U$. For $\delta>0$,
we set
\[
w_{jk}=\begin{cases}
\delta, & u_{j}-u_{k}>\delta,\\
u_{j}-u_{k}, & \left|u_{j}-u_{k}\right|\leq\delta,\\
-\delta, & u_{j}-u_{k}<-\delta.
\end{cases}
\]
Then the function $(\delta-w_{jk})\theta$ is an admissible test function
with a time derivative since it is Lipschitz continuous. Since $u_{j}$
is a weak supersolution, testing the weak formulation of (\ref{eq:p-para f})
with $(\delta-w_{jk})\theta$ yields
\begin{align*}
0\leq & \int_{\Xi}-u_{j}\partial_{t}((\delta-w_{jk})\theta)+\left(\left|Du_{j}\right|^{p-2}Du_{j}+a(z)\left|Du_{j}\right|^{q-2}Du_{j}\right)\cdot D((\delta-w_{jk})\theta)\\
&-(\delta-w_{jk})\theta f(z, Du_{j})-(\delta-w_{jk})\theta\left(1+|Du_j|^{q-1}\right)E\d z\\
= &\int_{\Xi}-\theta\left(\left|Du_{j}\right|^{p-2}Du_{j}+a(z)\left|Du_{j}\right|^{q-2}Du_{j}\right)\cdot Dw_{jk}\\
&+(\delta-w_{jk})\left(\left|Du_{j}\right|^{p-2}Du_{j}+a(z)\left|Du_{j}\right|^{q-2}Du_{j}\right)\cdot D\theta-(\delta-w_{jk})\theta f(z, Du_{j})\\
 & -(\delta-w_{jk})\theta\left(1+|Du_j|^{q-1}\right)E +u_{j}\partial_{t}(w_{jk}\theta)-(\delta-w_{jk})u_{j}\partial_{t}\theta\d z.
\end{align*}
Since $\left|w_{jk}\right|\leq\delta$ and $Dw_{jk}=\chi_{\left\{ \left|u_{j}-u_{k}\right|<\delta\right\} }\left(Du_{j}-Du_{k}\right)$,
the above becomes
\begin{align*}
&\int_{\left\{ \left|u_{j}-u_{k}\right|<\delta\right\} }\theta\left(\left|Du_{j}\right|^{p-2}Du_{j}+a(z)\left|Du_{j}\right|^{q-2}Du_{j}\right)\cdot\left(Du_{j}-Du_{k}\right)\d z\\
&\leq \int_{\Xi}2\delta\left(\left|Du_{j}\right|^{p-1}+a(z)|Du_j|^{q-1}\right)\left|D\theta\right|+2\delta\theta\left|f(z, Du_{j})\right|+u_{j}\partial_{t}(w_{jk}\theta)+2\delta\left|u_{j}\right|\left|\partial_{t}\theta\right|\\
&+2\delta \theta \left(1+|Du_j|^{q-1}\right)E\d z.
\end{align*}
Since $u_{k}$ is a weak supersolution, the same arguments as above
but testing this time with $(\delta+w_{jk})\theta$ yield the analogous
estimate
\begin{align*}
&\int_{\left\{ \left|u_{j}-u_{k}\right|<\delta\right\} } -\theta\left(\left|Du_{k}\right|^{p-2}Du_{k}+a(z)\left|Du_{k}\right|^{q-2}Du_{k}\right)\cdot\left(Du_{j}-Du_{k}\right)\d z\\
&\leq \int_{\Xi}2\delta\left(\left|Du_{k}\right|^{p-1}+a(z)\left|Du_{k}\right|^{q-1}\right)\left|D\theta\right|+2\delta\theta\left|f(z, Du_{k})\right|-u_{k}\partial_{t}\left(w_{jk}\theta\right)+2\delta\left|u_{k}\right|\left|\partial_{t}\theta\right|\\
&+2\delta \theta \left(1+|Du_k|^{q-1}\right)E\d z.
\end{align*}
Summing up these two inequalities we arrive at
\begin{align}
&\int_{\left\{ \left|u_{j}-u_{k}\right|<\delta\right\} }\theta\Big[\left(\left|Du_{j}\right|^{p-2}Du_{j}+a(z)\left|Du_{j}\right|^{q-2}Du_{j}\right)\nonumber\\
&\ \ \ -\left(\left|Du_{k}\right|^{p-2}Du_{k}+a(z)\left|Du_{k}\right|^{q-2}Du_{k}\right)\Big]\cdot\left(Du_{j}-Du_{k}\right)\d z\nonumber \\
&\leq 2\delta\int_{\Xi}\left|D\theta\right|\left(\left|Du_{j}\right|^{p-1}+a(z)\left|Du_{j}\right|^{q-1}+\left|Du_{k}\right|^{p-1}+a(z)\left|Du_{k}\right|^{q-1}\right)\d z\nonumber\\
&\ \ \ +2\delta\int_{\Xi}\theta\left(\left|f(z, Du_{j})\right|+\left|f(z, Du_{k})\right|\right)\d z\nonumber \\
 &\ \ \ +\int_{\Xi}(u_{j}-u_{k})\partial_{t}\left(w_{jk}\theta\right)\d z+2\delta\int_{\Xi}\left(\left|u_{j}\right|+\left|u_{k}\right|\right)\left|\partial_{t}\theta\right|\nonumber\\
 &\ \ \ +2\delta \int_{\Xi}\theta \left(2+|Du_j|^{q-1}+|Du_k|^{q-1}\right)E\d z =: I_{1}+I_{2}+I_{3}+I_{4}+I_{5}.\label{eq:lr conv 1}
\end{align}
We proceed to estimate these integrals. By the assumption \eqref{eq:strong conv cnd}, we have for some $C_0 \geq 0$
\begin{equation}
\sup_{j}\int_{\supp\theta}\left|Du_{j}\right|^{p}+a(z)|Du_j|^q\d z\leq C_0.\label{eq:lr conv 2}
\end{equation}
We have by estimate (\ref{eq:lr conv 2}) and H\"{o}lder's inequality
\[
    \int_\Xi |D\theta|a(z)|Du_j|^{q-1} \, dz \leq \left(\int_\Xi |D\theta|a(z)\,dz\right)^{\frac{1}{q}}\left(\int_\Xi |D\theta| a(z) |Du_j|^q\right)^{\frac{q-1}{q}} \leq C(q,a,\theta,C_0).
\]
The other terms in $I_1$ can be estimated similarly, and so we have
\[
I_{1}\leq\delta C(p,q, a, \theta, C_0).
\]
To estimate $I_{2}$, we also use the growth condition (\ref{eq:gcnd})
and the assumption $1<\beta_1<p$ and $1<\beta_2< q$. We get
\begin{align*}
I_{2}&\leq2\delta\int_{\Xi} \theta C_{f}\left(2+\left|Du_{j}\right|^{\beta_1}+\left|Du_{k}\right|^{\beta_1}+a(z)\left(\left|Du_{j}\right|^{\beta_2}+\left|Du_{k}\right|^{\beta_2}\right)\right)\d z\\
&\leq\delta C(p,q,\beta_1,\beta_2,C_{f},\theta,C_0),
\end{align*}
where in the last estimate we again used \eqref{eq:lr conv 2} and H{\"o}lder's inequality like in the estimate of $I_1$.
The integral $I_{3}$ is estimated using integration by parts and
that $\left|w_{jk}\right|\leq\delta$ 
\begin{align*}
I_{3}= & \int_{\Xi}\theta(u_{j}-u_{k})\partial_{t}\left(w_{jk}\right)+\left(u_{j}-u_{k}\right)w_{jk}\partial_{t}\theta\d z=\int_{\Xi}\frac{1}{2}\theta\partial_{t}w_{jk}^{2}+\left(u_{j}-u_{k}\right)w_{jk}\partial_{t}\theta\d z\\
= & \int_{\Xi}-\frac{1}{2}w_{jk}^{2}\partial_{t}\theta+(u_{j}-u_{k})w_{jk}\partial_{t}\theta\d z\leq\delta C(\theta,M),
\end{align*}
where $M := \sup_j \Vert u \Vert_{L^\infty(U)}<\infty$ by assumption. We have directly $I_{4}\leq\delta C(\theta,M)$. The bound of $I_5\leq 2\delta C(p,q,\theta,E, C_0)$ follows from the assumption $q-1 \leq p$ and \eqref{lem:lr conv}.
Combining these estimates with (\ref{eq:lr conv 1}) we arrive at
\begin{align} \label{eq:lr conv 3}
    \delta C \geq &\int_{\left\{ \left|u_{j}-u_{k}\right|<\delta\right\} }\theta\Big[\left(\left|Du_{j}\right|^{p-2}Du_{j}+a(z)\left|Du_{j}\right|^{q-2}Du_{j}\right) \nonumber \\ 
    &\ \ \ -\left(\left|Du_{k}\right|^{p-2}Du_{k}+a(z)\left|Du_{k}\right|^{q-2}Du_{k}\right)\Big]\cdot\left(Du_{j}-Du_{k}\right)\d z \nonumber \\ 
    & = \int_{\{|u_j - u_k|<\delta\}} \theta\Big[\Big(|Du_j|^{p-2}Du_j-|Du_k|^{p-2}Du_k\Big)\nonumber \\ 
    &\ \ \ +\Big(a(z)|Du_j|^{q-2}Du_j-a(z)|Du_k|^{q-2}Du_k\Big)\Big]\cdot (Du_j - Du_k)\,dz,
\end{align}
where $C$ is independent of $j$, $k$ and $\delta$. Now, we estimate the right-hand side. If $1<p, q<2$, H\"{o}lder's inequality and the algebraic inequality (\ref{eq:algebraic ineq 1<p<2}) give the estimate (recall that $1<r_1<p$ and $1<r_2<q$ and $\theta\equiv1$ in $U$)
\begin{align*}
 & \int_{U\cap\left\{ \left|u_{j}-u_{k}\right|<\delta\right\} }\left|Du_{j}-Du_{k}\right|^{r_1}\d z\\
 & \ \leq\bigg(\int_{U\cap\left\{ \left|u_{j}-u_{k}\right|<\delta\right\} }\left(1+\left|Du_{j}\right|^{2}+\left|Du_{k}\right|^{2}\right)^{\frac{r\left(2-p\right)}{2\left(2-r_1\right)}}\d z\bigg)^{\frac{2-r_1}{2}}\\
 & \ \ \ \ \ \ \cdot\bigg(\int_{U\cap\left\{ \left|u_{j}-u_{k}\right|<\delta\right\} }\frac{\left|Du_{j}-Du_{k}\right|^{2}}{\left(1+\left|Du_{j}\right|^{2}+\left|Du_{k}\right|^{2}\right)^{\frac{2-p}{2}}}\d z\bigg)^{\frac{r_1}{2}}\\
 & \ \leq C(p,\beta,r_1,C_{f},\theta,M)\\
 & \ \ \ \ \ \ \cdot\bigg(\int_{\left\{ \left|u_{j}-u_{k}\right|<\delta\right\} }\theta\left(\left|Du_{j}\right|^{p-2}Du_{j}-\left|Du_{k}\right|^{p-2}Du_{k}\right)\cdot\left(Du_{j}-Du_{k}\right)\d z\bigg)^{\frac{r_1}{2}},
\end{align*}
where in the last inequality we also used \eqref{eq:lr conv 2} with
the knowledge $\frac{r_1(2-p)}{(2-r_1)}\leq\frac{p\left(2-p\right)}{2-p}=p.$
Similarly, we have
\begin{align*}
 & \int_{U\cap\left\{ \left|u_{j}-u_{k}\right|<\delta\right\} }a(z)\left|Du_{j}-Du_{k}\right|^{r_2}\d z=\int_{U\cap\left\{ \left|u_{j}-u_{k}\right|<\delta\right\} }a(z)^{\frac{2-r_2}{2}}a(z)^{\frac{r_2}{2}}|Du_j-Du_k|^{r_2}\, dz\\
 & \ \leq\bigg(\int_{U\cap\left\{ \left|u_{j}-u_{k}\right|<\delta\right\} }a(z)\left(1+\left|Du_{j}\right|^{2}+\left|Du_{k}\right|^{2}\right)^{\frac{r_2\left(2-q\right)}{2\left(2-r_2\right)}}\d z\bigg)^{\frac{2-r_2}{2}}\\
 & \ \ \ \ \ \ \cdot\bigg(\int_{U\cap\left\{ \left|u_{j}-u_{k}\right|<\delta\right\} }\frac{a(z)\left|Du_{j}-Du_{k}\right|^{2}}{\left(1+\left|Du_{j}\right|^{2}+\left|Du_{k}\right|^{2}\right)^{\frac{2-q}{2}}}\d z\bigg)^{\frac{r_2}{2}}\\
 & \ \leq C(q,\beta,r_2,C_{f},\theta,M)\\
 & \ \ \ \ \ \ \cdot\bigg(\int_{\left\{ \left|u_{j}-u_{k}\right|<\delta\right\} }\theta a(z)\left(\left|Du_{j}\right|^{q-2}Du_{j}-\left|Du_{k}\right|^{q-2}Du_{k}\right)\cdot\left(Du_{j}-Du_{k}\right)\d z\bigg)^{\frac{r_2}{2}},   
\end{align*}
where again we use \eqref{eq:lr conv 2} with $\frac{r_2(2-q)}{2-r_1}\leq q.$\\
If $p\geq2$, H\"{o}lder's inequality and the algebraic inequality (\ref{eq:algebraic ineq p>2})
imply
\begin{align*}
 & \int_{U\cap\left\{ \left|u_{j}-u_{k}\right|<\delta\right\} }\left|Du_{j}-Du_{k}\right|^{r_1}\d z\\
 & \ \leq\left(\int_{\Xi}1\d z\right)^{\frac{p-r_1}{p}}\bigg(\int_{U\cap\left\{ \left|u_{j}-u_{k}\right|<\delta\right\} }\left|Du_{j}-Du_{k}\right|^{p}\d z\bigg)^{\frac{r_1}{p}}\\
 & \ \leq C(p,r_1)\bigg(\int_{\left\{ \left|u_{j}-u_{k}\right|<\delta\right\} }\theta\left(\left|Du_{j}\right|^{p-2}Du_{j}-\left|Du_{k}\right|^{p-2}Du_{k}\right)\cdot\left(Du_{j}-Du_{k}\right)\d z\bigg)^{\frac{r_1}{p}}.
\end{align*}
and 
\begin{align*}
 & \int_{U\cap\left\{ \left|u_{j}-u_{k}\right|<\delta\right\} }a(z) \left|Du_{j}-Du_{k}\right|^{r_2}\d z=\int_{U\cap\left\{ \left|u_{j}-u_{k}\right|<\delta\right\} }a(z)^{1-r_2/q}a(z)^{r_2/q} \left|Du_{j}-Du_{k}\right|^{r_2}\d z\\
 & \ \leq\left(\int_{\Xi}a(z)\d z\right)^{\frac{q-r_2}{q}}\bigg(\int_{U\cap\left\{ \left|u_{j}-u_{k}\right|<\delta\right\} }a(z)\left|Du_{j}-Du_{k}\right|^{q}\d z\bigg)^{\frac{r_2}{q}}\\
 & \ \leq C(q,r_2)\bigg(\int_{\left\{ \left|u_{j}-u_{k}\right|<\delta\right\} }\theta a(z)\left(\left|Du_{j}\right|^{q-2}Du_{j}-\left|Du_{k}\right|^{q-2}Du_{k}\right)\cdot\left(Du_{j}-Du_{k}\right)\d z\bigg)^{\frac{r_2}{q}}.
\end{align*}
The case $1<p\leq 2\leq q$ can be handled by combining the above cases. Hence (\ref{eq:lr conv 3}) leads to
\[
\int_{U\cap\left\{ \left|u_{j}-u_{k}\right|<\delta\right\} }\left|Du_{j}-Du_{k}\right|^{r_1}+a(z)\left|Du_{j}-Du_{k}\right|^{r_2}\d z\leq\delta^{\frac{\min\{r_1, r_2\}}{\max\{2,q\}}}C
\]
On the other hand, H\"{o}lder's and Tchebysheff's inequalities with (\ref{eq:lr conv 2})
imply
\begin{align*}
 & \int_{U\cap\left\{ \left|u_{j}-u_{k}\right|\geq\delta\right\} }\left|Du_{j}-Du_{k}\right|^{r_1}\d z\\
 & \ \leq\left|U\cap\left\{ \left|u_{j}-u_{k}\right|\geq\delta\right\} \right|^{\frac{p-r_1}{p}}\bigg(\int_{U\cap\left\{ \left|u_{j}-u_{k}\right|\geq\delta\right\} }\left|Du_{j}-Du_{k}\right|^{p}\d z\bigg)^{\frac{r_1}{p}}\\
 & \ \leq\delta^{r_1-p}\left\Vert u_{j}-u_{k}\right\Vert _{L^{p}(U)}^{p-r_1}C
\end{align*}
and
\begin{align*}
    & \int_{U\cap\left\{ \left|u_{j}-u_{k}\right|\geq\delta\right\} }a(z)\left|Du_{j}-Du_{k}\right|^{r_2}\d z\\
 & \ \leq||a||_{\infty}\left|U\cap\left\{ \left|u_{j}-u_{k}\right|\geq\delta\right\} \right|^{\frac{q-r_2}{q}}\bigg(\int_{U\cap\left\{ \left|u_{j}-u_{k}\right|\geq\delta\right\} }a(z)\left|Du_{j}-Du_{k}\right|^{q}\d z\bigg)^{\frac{r_2}{q}}\\
 & \ \leq||a||_{\infty}\delta^{\frac{(r_2-q)p}{q}}\left\Vert u_{j}-u_{k}\right\Vert _{L^{p}(U)}^{\frac{(q-r_2)p}{q}}C.
\end{align*}
So we arrive at
\[
\int_{U}\left|Du_{j}-Du_{k}\right|^{r_1}+a(z)|Du_j-Du_k|^{r_2}\d z\leq\left(\delta^{\frac{\min\{r_1, r_2\}}{\max\{2,q\}}}+\delta^{\frac{(r_2-q)p}{q}}\left\Vert u_{j}-u_{k}\right\Vert _{L^{p}(U)}^{\frac{(q-r_2)p}{q}}\right)C,
\]
where $C$ is independent of $j$, $k$ and $\delta$. Taking first small $\delta>0$ and then large $j,k$, we can make
the right-hand side arbitrarily small.
\end{proof}

Now we are ready to show that bounded viscosity supersolutions are weak supersolutions. In the borderline case, we only consider supersolutions that are Lipschitz continuous in space. This is because if the error term in inequality \eqref{eq:inf conv eq} has the critical exponent $q-1 = p$, the Caccioppoli's inequality in Lemma \ref{lem:Caccioppoli} no longer yields a bound for $Du_\varepsilon$ in a suitable Lebesgue space. Nevertheless, since we know that viscosity \emph{solutions} are locally Lipschitz in space by Section \ref{sec:Lipschitz estimates}, this assumption causes no loss of generality in the case of solutions.

\begin{thm}\label{final thm: visc is weak}
Let $1<p\leq q \leq p+1$. Suppose that $a:\Xi \rightarrow [0, \infty)$ is locally Lipschitz in space and continuous in time, and that \eqref{eq:visc is weak acnd} holds. Let $u$ be a bounded viscosity supersolution to
(\ref{eq:p-para f}) in $\Xi$. If $q = p +1$, suppose, moreover, that $u$ is locally Lipschitz continuous in space. Then $u$ is a local weak supersolution
to (\ref{eq:p-para f}) in $\Xi$.
\end{thm}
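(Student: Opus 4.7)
The plan is to approximate $u$ by its inf-convolutions $u_\varepsilon$ defined in \eqref{eq:inf convolution}, show that they are weak supersolutions to a perturbed version of \eqref{eq:p-para f}, obtain uniform energy estimates using the Caccioppoli inequality of Lemma \ref{lem:Caccioppoli}, extract a strong gradient convergence via Lemma \ref{lem:lr conv}, and finally pass to the limit $\varepsilon \to 0$ (and $K \to \infty$) in the weak formulation of the perturbed equation.

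First, I would fix $\Xi^\prime \Subset \Xi^{\prime\prime} \Subset \Xi$ and a number $K \geq 1$. For $\varepsilon>0$ small enough that $\Xi^{\prime\prime} \Subset \Xi_\varepsilon$, Lemma \ref{lem:inf conv is weak} tells me that $u_\varepsilon$ is a weak supersolution to
\[
    \partial_t u_\varepsilon - \div(|Du_\varepsilon|^{p-2}Du_\varepsilon + a(z)|Du_\varepsilon|^{q-2}Du_\varepsilon) \geq f_{K,\varepsilon}(z, Du_\varepsilon) - (1+|Du_\varepsilon|^{q-1})E_\varepsilon(z)
\]
in $\Xi^{\prime\prime}$. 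By the remark following Lemma \ref{lem:infconv is super}, $f_{K,\varepsilon}$ satisfies the growth bound \eqref{eq:f_M,epsilon growth estimate} with exponent $\tilde\beta := \max(\beta_1, \beta_2-1) < p$; moreover, $E_\varepsilon$ is uniformly bounded on $\Xi^{\prime\prime}$ and converges to zero almost everywhere.

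Next, I would derive a uniform energy estimate for $Du_\varepsilon$ on $\Xi^\prime$. In the subcritical case $1 < p \leq q < p+1$, we have $q-1 < p$, so the full right-hand side, including the error term, fits the structure required in Lemma \ref{lem:Caccioppoli} with some $\tilde \beta_1 < p$ and $\tilde \beta_2 < q$ (taking $\tilde\beta_1 := \max(\tilde\beta, q-1, 1)$ and $\tilde\beta_2 := \max(\beta_2, 1)$). Together with the uniform bound $\Vert u_\varepsilon \Vert_{L^\infty(\Xi^{\prime\prime})} \leq \Vert u \Vert_{L^\infty(\Xi)}$, this yields $\sup_\varepsilon \int_{\Xi^\prime} |Du_\varepsilon|^p + a(z)|Du_\varepsilon|^q \d z < \infty$. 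In the borderline case $q = p+1$, Caccioppoli breaks down because $q-1 = p$, but here the Lipschitz assumption on $u$ carries over to $u_\varepsilon$ (since the inf-convolution preserves spatial Lipschitz continuity with the same constant), so $|Du_\varepsilon|$ is uniformly bounded on $\Xi^{\prime\prime}$ and the energy bound is immediate.

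Finally, I would apply Lemma \ref{lem:lr conv} to conclude that, up to subsequences, $Du_\varepsilon$ is a Cauchy sequence in $L^{r_1,r_2}_{\mathrm{loc}}(\Xi^\prime)$ for any $1 < r_1 < p$, $1 < r_2 < q$, hence $Du_\varepsilon \to Du$ pointwise a.e. (after a further subsequence) and in $L^{r_1,r_2}_{\mathrm{loc}}$. To apply the lemma I need $u_\varepsilon \to u$ in $L^p_{\mathrm{loc}}$, which follows from pointwise convergence and the uniform $L^\infty$ bound via dominated convergence. With this convergence in hand, I pass to the limit in the weak formulation. The terms $|Du_\varepsilon|^{p-2}Du_\varepsilon$ and $a(z)|Du_\varepsilon|^{q-2}Du_\varepsilon$ converge in $L^{r_1/(p-1)}_{\mathrm{loc}}$ and $L^{r_2/(q-1)}_{\mathrm{loc}}(a\,dz)$ respectively by choosing $r_1$, $r_2$ slightly below $p$, $q$; continuity of $f$ and the growth condition, combined with uniform integrability from the energy bound and Vitali's theorem, give $f_{K,\varepsilon}(z,Du_\varepsilon) \to f_K(z,Du)$ in $L^1_{\mathrm{loc}}$, where $f_K$ agrees with $f$ on $\{|Du| < K\}$; the error term $(1+|Du_\varepsilon|^{q-1})E_\varepsilon$ tends to zero in $L^1_{\mathrm{loc}}$ by dominated convergence, using boundedness of $E_\varepsilon$, its a.e.\ convergence to zero, and uniform integrability of $|Du_\varepsilon|^{q-1}$ (here Lipschitz continuity is again decisive in the critical case). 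This yields that $u$ is a weak supersolution to a localized equation with $f_K$ in place of $f$; letting $K \to \infty$ and noting that $\{|Du| \geq K\}$ has measure tending to zero finishes the proof.

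The main obstacle will be the limit passage in the error term in the critical case $q = p+1$, because then $q-1 = p$ sits exactly at the integrability threshold; this is precisely what forces the extra Lipschitz hypothesis, since it converts the otherwise borderline term $|Du_\varepsilon|^p E_\varepsilon$ into a bounded one and allows dominated convergence. A secondary delicate point is verifying uniform integrability of $|Du_\varepsilon|^{\beta_2}$ against the measure $a(z)\,dz$ when passing to the limit in $f_{K,\varepsilon}$, which requires careful use of H\"older's inequality and the energy bound, exactly as in the proof of Lemma \ref{lem:lr conv}.
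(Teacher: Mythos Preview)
Your proposal is correct and follows essentially the same route as the paper: inf-convolution, Lemma \ref{lem:inf conv is weak} for the perturbed equation, Caccioppoli (or the Lipschitz hypothesis when $q=p+1$) for the uniform energy bound, Lemma \ref{lem:lr conv} for strong gradient convergence, and then passage to the limit followed by $K\to\infty$. The paper additionally makes explicit the identification $Du\in L^H(\Xi'')$ via weak compactness before invoking Lemma \ref{lem:lr conv}, and handles the $f_{K,\varepsilon}$ term by splitting over $\{|Du_\varepsilon|<K\}$ and $\{|Du_\varepsilon|\geq K\}$ rather than via Vitali, but these are cosmetic differences.
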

\begin{proof}
Without losing generality, we may suppose that $a$ is globally Lipschitz in space and uniformly continuous in time in $\Xi$. Let $\Xi^{\prime\prime} \Subset \Xi^\prime  \Subset \Xi$. Let $u_\varepsilon$ be the inf-convolution of $u$ as defined in Lemma \ref{lem:inf delta lemma} and suppose that $\varepsilon>0$ is so small that $\Xi^\prime\Subset\Xi_{\varepsilon}$.
Then by Lemma \ref{lem:inf conv is weak},  $u_\varepsilon$ is a weak supersolution to \eqref{eq:inf conv weak} in $\Xi^\prime$, for any choice of $K>0$. Using this for now with $K=1$, recalling that $E_\varepsilon$ is bounded independently of $\varepsilon$, and applying estimate \eqref{eq:f_M,epsilon growth estimate}, we see that $u_\varepsilon$ is in particular a weak supersolution to 
\begin{equation}\label{eq:visc to weak aux ineq}
    \partial_t u_\varepsilon - \div(|Du_\varepsilon|^{p-2} Du_\varepsilon + a(z)|Du_\varepsilon|^{q-2}Du_\varepsilon) \geq -\tilde C (1+|Du|^{\tilde \beta_1}+a(x,t)|Du|^{\beta_2})
\end{equation}
in $\Xi^\prime$, where $\tilde C\geq 0$, $1 \leq\tilde \beta_1 < \max(p, q-1)$ and $1 \leq \tilde\beta_2 <q$ are independent of $\varepsilon$. Therefore, if $q<p+1$, we can apply Caccioppoli's inequality in Lemma \ref{lem:Caccioppoli} on $u_\varepsilon$ to find that
\begin{equation}\label{eq:visc is weak energy est}
    \int_{\Xi^{\prime\prime}} |Du_\varepsilon|^p + a(z)|Du_\varepsilon|^q \d z < C,
\end{equation}
where $C$ is independent of $\varepsilon$. If $p=q+1$, then we have \eqref{eq:visc is weak energy est} directly by the Lipschitz assumption on $u$. Since $L^H(\Xi^{\prime\prime};\mathbb R^N)$ is a reflexive Banach space (see \cite[Lemma 2.3.16 and Remark 3.3.3]{orclizbook}), there exists $G \in L^H(\Xi^{\prime\prime};\mathbb R ^N)$ such that $Du_\varepsilon \rightharpoonup G$ weakly in $L^H(\Xi^{\prime\prime}; \mathbb R^N)$ up to a subsequence. In particular, for any $\varphi \in C_0^\infty (\Xi^{\prime\prime})$ and $i \in \{{1,\ldots, N}\}$, we have 
\begin{equation*}
    -\int_{\Xi^{\prime\prime}}u\partial_i \varphi \d z=\lim_{\varepsilon \rightarrow0}-\int_{\Xi^{\prime\prime}}u_\varepsilon\partial_i\varphi \d z =\lim_{\varepsilon \rightarrow 0} \int_{\Xi^{\prime\prime}} \varphi\partial_i u_\varepsilon \d z = \int_{\Xi ^{\prime\prime}} \varphi G_i \d z,
\end{equation*}
where we also used that $u_\varepsilon \rightarrow u$ almost everywhere. It follows that $G$ is the weak gradient of $u$ and so $Du \in L^{H}(\Xi^{\prime\prime})$.

Since $\smash{u_{\varepsilon}}$ is a weak supersolution to \eqref{eq:inf conv weak}, we have for any $K>0$ and $\varphi \in C_0^\infty(\Xi^{\prime\prime})$
\begin{align}\label{eq:visc is weak ineq}
    & \int_{\Xi^{\prime\prime}} -u_\varepsilon\partial_t \varphi + |Du_\varepsilon|^{p-2}Du_\varepsilon\cdot D\varphi + a(z)|Du_\varepsilon|^{q-2}Du_\varepsilon \cdot D\varphi +(1+|Du_\varepsilon|^{q-1})E_\varepsilon(z)\varphi\d z \nonumber\\
    & \geq \int_{\Xi^{\prime\prime}} f_{K,\varepsilon}(z, Du_\varepsilon)\varphi \d z.
\end{align}
Take open domain $U$ such that $\supp \varphi \Subset U\Subset\Xi^{\prime\prime}$. By \eqref{eq:visc is weak energy est}, the condition \eqref{eq:strong conv cnd} in Lemma \ref{lem:lr conv} is satisfied by $Du_\varepsilon$. Therefore, we have $\smash{Du_{\varepsilon}\rightarrow Du}$ in $\smash{L^{r_1, r_2}(U)}$
for any $\smash{1<r_1<p},\smash{1<r_2<q} $ and in particular $Du_\varepsilon \rightarrow Du$ almost everywhere up to a subsequence.  Furthermore, we have $E_\varepsilon \rightarrow 0$ almost everywhere in $\Xi ^\prime$. Therefore, if $q < p +1$, we can let $\varepsilon \rightarrow 0$ on the LHS of \eqref{eq:visc is weak ineq} with the help of the vector inequality
\[
\left|\left|a\right|^{p-2}a-\left|b\right|^{p-2}b\right|\leq\begin{cases}
2^{2-p}\left|a-b\right|^{p-1} & \text{when }p<2,\\
(p-1)\left(|a|^{p-2}+\left|b\right|^{p-2}\right)\left|a-b\right| & \text{when }p\geq2.
\end{cases}
\]
If $q = p + 1$, then $Du$ is bounded in the support of $\varphi$ by the Lipschitz assumption, and so we can pass to the limit on the LHS using the dominated convergence theorem. On the RHS of \eqref{eq:visc is weak ineq}, we have
\begin{align*}
\int_{\Xi^{\prime\prime}}f_{K,\varepsilon}(z,Du_{\varepsilon})\varphi\d z & = \int_{\Xi^{\prime\prime}\cap\left\{ \left|Du_{\varepsilon}\right|<K\right\} }\varphi\inf_{(y,s)\in B_{r(\varepsilon)}\times(t-r(\varepsilon),t+r(\varepsilon))}f(y,s,Du)\d z\\
 & \phantom= +\int_{\Xi^{\prime\prime}\cap\left\{ \left|Du_{\varepsilon}\right|\geq K\right\} }C\varphi (1+\left|Du_{\varepsilon}\right|^{\beta_{1}}+a(z)\left|Du_{\varepsilon}\right|^{\beta_{2}})\d z\\
 & =: T_{1}+T_{2}.
\end{align*}
Since $f$ is continuous, it follows from the Lebesgue's dominated
converge theorem that 
\[
T_{1}\rightarrow\int_{\Xi^{\prime\prime}\cap\left\{ \left|Du\right|<K\right\} }\varphi f(z,Du)\d z\quad\text{as }\varepsilon\rightarrow0.
\]
For the second term, we have
\begin{align*}
T_{2}\geq & \int_{\Xi^{\prime\prime}\cap\left\{ \left|Du_{\varepsilon}\right|>K/2\right\} }C\varphi(1+\left|Du\right|^{\beta_{1}}+a(z)\left|Du\right|^{\beta_{2}})\d z\\
 & -\int_{\Xi^{\prime\prime}}C\varphi(1+\left|Du-Du_{\varepsilon}\right|^{\beta_{1}}+a(z)\left|Du-Du_{\varepsilon}\right|^{\beta_{2}})\d z\\
\rightarrow & \int_{\Xi^{\prime\prime}\cap\left\{ \left|Du\right|>K/2\right\} }C\varphi(1+\left|Du\right|^{\beta_{1}}+a(z)\left|Du\right|^{\beta_{2}})\d z \quad \text{as } \varepsilon\rightarrow0,
\end{align*}
where we used Lebesgue's
dominated convergence and that $Du_{\varepsilon}\rightarrow Du$
in $L^{r_{1},r_{2}}(U)$ together with $1\leq\beta_{1}<p$, $1\leq\beta_{2}<q$.
Thus, as $\varepsilon\rightarrow0$, inequality \eqref{eq:visc is weak ineq} leads to
\begin{align*}
 & \int_{\Xi^{\prime\prime}}-u\partial_{t}\varphi+\left|Du\right|^{p-2}Du\cdot D\varphi+a(z)\left|Du\right|^{q-2}Du\cdot D\varphi \d z\\
 & \ge\int_{\Xi^{\prime\prime}\cap\left\{ \left|Du\right|<K\right\} }\varphi f(z,Du)\d z -\int_{\Xi^{\prime\prime}\cap\left\{ \left|Du\right|>K/2\right\} }C\varphi (1+\left|Du\right|^{\beta_{1}}+a(z)\left|Du\right|^{\beta_{2}})\d z.
\end{align*}
Finally, we let $K\rightarrow\infty$ and again apply the dominated convergence (an integrable dominant exists since $f$ satisfies the growth condition \eqref{eq:gcnd} and $Du\in L^{H}(\Xi^{\prime\prime})$). This way, we see that $u$ satisfies
the weak formulation of \eqref{eq:p-para f}.
\end{proof}

\subsection*{Acknowledgment} The first author is supported by postdoctoral research grants from the Alexander von Humboldt Foundation, Germany. The second author was supported by a postdoctoral research grant from the Emil Aaltonen Foundation, Finland.
\bibliographystyle{alpha}

\begin{thebibliography}{BGKT16}
	
	\bibitem[APR17]{OptimalC1}
	A.~Attouchi, M.~Parviainen, and E.~Ruosteenoja.
	\newblock ${C}^{1,\alpha}$ regularity for the normalized $p$-{P}oisson problem.
	\newblock {\em {J}. {M}ath. {P}ures {A}ppl.}, 108(4):553--591, 2017.
	
	\bibitem[Att12]{Attouchi12}
	A.~Attouchi.
	\newblock Well-posedness and gradient blow-up estimate near the boundary for a
	{H}amilton-{J}acobi equation with degenerate diffusion.
	\newblock {\em J. Diff. Eq.}, 253(8):2474--2492, 2012.
    
    \bibitem[BBO21]{BBO21}
     S. Baasandorj, S.-S. Byun and J. Oh. 
    \newblock Gradient estimates for multi-phase problems, 
    \newblock {\em Calc. Var. Partial Differential Equations} {\bf 60} (2021), no.~3, Paper No. 104, 48 pp.;
    
    \bibitem[BCM15]{BCM15}
    P. Baroni, M. Colombo and G. Mingione, 
    \newblock Regularity for general functionals with double phase, 
    \newblock {\em Calc. Var. Partial Differential Equations} {\bf 57} (2018), no.~2, Paper No. 62, 48 pp.
	
	
	\bibitem[BT]{bobkovTakac18}
	V.~Bobkov and P.~Tak{\'a}{\v c}.
	\newblock On maximum and comparison principles for parabolic problems with the
	$p$-{L}aplacian.
	\newblock To appear in \textit{RACSAM}.
	
	\bibitem[BT14]{bobkovTakac14}
	V.~Bobkov and P.~Tak{\'a}{\v c}.
	\newblock A strong maximum principle for parabolic equations with the
	$p$-{L}aplacian.
	\newblock {\em J. Math. Anal. Appl.}, 419(1):218--230, 2014.

     \bibitem[BDM13]{BDM13}
     V. B\"{o}gelein, F. Duzaar, P. Marcellini.
     \newblock Parabolic equations with  $p,q$ -growth.
     \newblock {\em J. Math. Pures. Appl.}, (9) 100 (2013), no. 4, 535--563.

     \bibitem[B24]{B24}
    M. Borowski, I. Chlebicka, F. De Filippis and B. Miasojedow.
     \newblock Absence and presence of Lavrentiev's phenomenon for double phase functionals upon every choice of exponents.
     \newblock {\em Calc. Var. Partial Differential Equations} {\bf 63} (2024), no.~2, Paper No. 35, 23 pp.

     \bibitem[BGS22]{BGS22}
     M. Bul\'i\v cek, P. Gwiazda and J. Skrzeczkowski, 
     \newblock On a range of exponents for absence of Lavrentiev phenomenon for double phase functionals, 
     \newblock {\em Arch. Ration. Mech. Anal.} {\bf 246} (2022), no.~1, 209--240.
     
     \bibitem[BO20]{BO20}
     S.-S. Byun and J. Oh, 
     \newblock Regularity results for generalized double phase functionals, 
     \newblock {\em Anal. PDE }{\bf 13} (2020), no.~5, 1269--1300.

     \bibitem[CDG17]{CDG17}
     J.Q. Chagas, N. M. L. Diehl and P. L. Guidolin,
     \newblock Some properties for the Steklov averages
     \newblock {\em https://doi.org/10.48550/arXiv.1707.06368}.

     \bibitem[CG\'SW21]{orclizbook}
     I. Chlebicka, P. Gwiazda, A. \'Swierczewska-Gwiazda and A. Wr\'oblewska-Kami\'nska.
     \newblock {\em Partial Differential Equations in Anisotropic Musielak-Orcliz Spaces}.
     \newblock Springer Monographs in Mathematics, Springer, Cham, 2021.
     
     
     \bibitem[CM15a]{CM15a}
     M. Colombo and G. Mingione, 
     \newblock Regularity for double phase variational problems, 
     \newblock {\em Arch. Ration. Mech. Anal.} {\bf 215} (2015), no.~2, 443--496.
     \bibitem[CM15b]{CM15b}
     M. Colombo and G. Mingione, 
     \newblock Bounded minimisers of double phase variational integrals, 
     \newblock {\em Arch. Ration. Mech. Anal.} {\bf 218} (2015), no.~1, 219--273.
	
	\bibitem[CIL92]{userguide}
	M.~G. Crandall, H.~Ishii, and P.-L. Lions.
	\newblock User's guide to viscosity solutions of second order partial
	differential equations.
	\newblock {\em Bull. Amer. Math. Soc.}, 27(1):1--67, 1992.
    
    \bibitem[DFO14]{DFO14}
    J. Diehl, P.K. Friz, and H. Oberhauser.
    \newblock Regularity theory for rough partial differential equations and parabolic comparison revisited, In Crisan D., Hambly B., Zariphopoulou T. (eds) 
    \newblock {\em Stochastic Analysis and Applications}, Springer Proceedings in Mathematics and  Statistics 100 2014, 203--238.
    
    \bibitem[DFO19]{DFO19}
    C. De~Filippis and J. Oh, 
    \newblock Regularity for multi-phase variational problems, 
    \newblock {\em J. Differential Equations} {\bf 267} (2019), no.~3, 1631--1670.
    
\bibitem[DF22]{DF22}
    C. De~Filippis, 
    \newblock Optimal gradient estimates for multi-phase integrals, 
    \newblock {Math. Eng.} {\bf 4} (2022), no.~5, Paper No. 043, 36 pp.
\bibitem[DF84]{DF84}
     E. DiBenedetto and A. Friedman.
    \newblock Regularity of solutions of nonlinear degenerate parabolic systems, 
    \newblock {\em J. Reine Angew. Math.} {\bf 349} (1984), 83--128.
\bibitem[DF85a]{DF85a}
    E. DiBenedetto and A. Friedman, 
    \newblock H\"older estimates for nonlinear degenerate parabolic systems, 
    \newblock {\em J. Reine Angew. Math.} {\bf 357} (1985), 1--22.
    
\bibitem[DF85b]{DF85b}
    E. DiBenedetto and A. Friedman, 
    \newblock Addendum to: ``H\"older estimates for nonlinear degenerate parabolic systems'', 
    \newblock{\em J. Reine Angew. Math.} {\bf 363} (1985), 217--220. 
	
	\bibitem[DiB93]{dibenedetto93}
	E.~Di{B}enedetto.
	\newblock {\em Degenerate parabolic equations}.
	\newblock Springer-Verlag, 1993.
	
	\bibitem[EG15]{measuretheoryevans}
	L.~C. Evans and R.~F. Gariepy.
	\newblock {\em Measure theory and fine properties of functions}.
	\newblock CRC Press, revised edition, 2015.
    
    \bibitem[FRZ22]{FRZ22}
    Y. Fang, V.D. Radulescu and C. Zhang. 
    \newblock Gradient estimates for multi-phase problems in Campanato spaces, 
    \newblock {\em Indiana Univ. Math. J.} {\bf 71} (2022), no.~3, 1079--1099.
    
    \bibitem[FZ22]{FZ22}
    Y. Fang and C. Zhang, 
    \newblock Equivalence between distributional and viscosity solutions for the double-phase equation, 
    \newblock {\em Adv. Calc. Var.} {\bf 15} (2022), no.~4, 811--829.
    
    \bibitem[FRZ24]{FRZ24} 
    Y. Fang, V.~D. R\u adulescu and C. Zhang, 
    \newblock Equivalence of weak and viscosity solutions for the nonhomogeneous double phase equation, 
    \newblock {\em Math. Ann.} {\bf 388} (2024), no.~3, 2519--2559.
	
    
    \bibitem[IJS19]{IJS19}
    C. Imbert, T. Jin and L.~E. Silvestre, 
    \newblock H\"older gradient estimates for a class of singular or degenerate parabolic equations, \newblock {\em Adv. Nonlinear Anal.} {\bf 8} (2019), no.~1, 845--867.

    \bibitem[IL90]{IL90} H. Ishii and P.-L. Lions,
    \newblock Viscosity solutions of fully nonlinear second-order elliptic partial differential equations, 
    \newblock {\em J. Differential Equations} 83 (1990), no. 1, 26--78.
	
	\bibitem[Ish95]{Ishii95}
	H.~Ishii.
	\newblock On the equivalence of two notions of weak solutions, viscosity
	solutions and distribution solutions.
	\newblock {\em Funkcialaj Ekvacioj}, 38:101--120, 1995.

    \bibitem[IMJ97]{IMJ97}A. V. Ivanov, P. Z. Mkrtychan, W. J\"ager. 
    \newblock Existence and uniqueness of a regular solution of the Cauchy-Dirichet problem for a class of doubly nonlinear parabolic equations.
    \newblock {\em Journal of Mathematical Sciences}, 84, 845--855 (1997).
	
	\bibitem[JJ12]{newequivalence}
	V.~Julin and P.~Juutinen.
	\newblock A new proof for the equivalence of weak and viscosity solutions for
	the $p$-{L}aplace equation.
	\newblock {\em Comm. Partial Differential Equations}, 37(5):934--946, 2012.
    \bibitem[Juu01]{parabolic viscosity solutions}
    P.~Juutinen.
    \newblock On the definition of viscosity solutions for parabolic equations.
    \newblock {\em Proc. Amer. Math. Soc.}, 129(10):2907--2911, 2001.
    
	\bibitem[JLM01]{equivalence_plaplace}
	P.~Juutinen, P.~Lindqvist, and J.J. Manfredi.
	\newblock On the equivalence of viscosity solutions and weak solutions for a
	quasi-linear equation.
	\newblock {\em SIAM J. Math. Anal.}, 33(3):699--717, 2001.
	
	\bibitem[Jun93]{junning90}
	Z.~Junning.
	\newblock Existence and nonexistence of solutions for $u_{t}=\div(|\nabla
	u|^{p-2}\nabla u)+f(\nabla u,u,x,t)$.
	\newblock {\em J. Math. Anal. Appl.}, 172(1):130--146, 1993.
	
	\bibitem[Kat15]{nikos}
	N.~Katzourakis.
	\newblock {\em An Introduction To Viscosity Solutions for Fully Nonlinear PDE
		with Applications to Calculus of Variations in $L^{\infty}$}.
	\newblock Springer, 2015.

    \bibitem[KL96]{KL96}
    T. Kilpel\"ainen and P. Lindqvist.
    \newblock On the Dirichlet boundary value problem for a degenerate parabolic equation.
    \newblock {\em SIAM J. Math. Anal.} 27 (1996), no. 3, 661-683.

    \bibitem[KMS25]{KMS25}
    W. Kim, K. Moring and L. S\"{a}rki\"{o}
    \newblock H\"{o}lder regularity for degenerate parabolic double-phase equations.
    \newblock {\em J. Differential Equations}, https://doi.org/10.1016/j.jde.2025.113231
    
    \bibitem[KS24]{KS24}
    W. Kim and L. S\"arki\"o, 
    \newblock Gradient higher integrability for singular parabolic double-phase systems, 
    \newblock {\em NoDEA Nonlinear Differential Equations Appl.} {\bf 31} (2024), no.~3, Paper No. 40, 38
    pp.

    \bibitem[Kim24]{Kim24}
    \newblock Calder\'{o}n-Zygmund type estimate for the parabolic double-phase system.
    \newblock {\em To appear in  Ann. Sc. Norm. Super. Pisa}

    \bibitem[Kim25]{Kim25}
    W. Kim,
    \newblock Calder\'{o}n-Zygmund type estimate for the singular parabolic double-phase system,
    \newblock {\em Journal of Mathematical Analysis and Applications,}2025,https://doi.org/10.1016/j.jmaa.2025.129593.

    \bibitem[KO24]{KO24} 
    B. Kim and J. Oh.
    \newblock Higher integrability for weak solutions to parabolic multi-phase equations, 
    \newblock {\em J. Differential Equations} {\bf 409} (2024), 223--298. 

    \bibitem[KOS25]{KOS25}
    B. Kim, J. Oh and A. Sen. 
    \newblock Parabolic Lipschitz truncation for multi-phase problems: The degenerate case. 
    \newblock Advances in Calculus of Variations (2025). https://doi.org/10.1515/acv-2025-0003

    \bibitem[KKM23]{KKM23}
    W. Kim, J. Kinnunen and K. Moring, 
    \newblock Gradient higher integrability for degenerate parabolic double-phase systems, 
    \newblock {\em Arch. Ration. Mech. Anal.} {\bf 247} (2023), no.~5, Paper No. 79, 46 pp.
	
	\bibitem[KKP10]{korteKuusiParv10}
	R.~Korte, T.~Kuusi, and M.~Parviainen.
	\newblock A connection between a general class of superparabolic functions and
	supersolutions.
	\newblock {\em J. Evol. Eq.}, 10(1):1--20, 2010.
	
\bibitem[QL25]{QL25}
   Qifan Li. 
   \newblock Continuity estimates for degenerate parabolic double-phase equations via nonlinear potentials.
   \newblock {\em https://arxiv.org/abs/2502.01097}
	
	\bibitem[Lin12]{lindqvist12reg}
	P.~Lindqvist.
	\newblock Regularity of supersolutions.
	\newblock In {\em Regularity estimates for nonlinear elliptic and parabolic
		problems}, volume 2045 of {\em Lecture Notes in Math}, pages 73--131. 2012.
	
	\bibitem[Lin17]{lindqvist_plaplace}
	P.~Lindqvist.
	\newblock Notes on the $p$-{L}aplace equation (second edition).
	\newblock \textit{{U}niv. Jyv{\"a}skyl{\"a}, {R}eport 161}, 2017.
	
	\bibitem[LM07]{lindqvistManfredi07}
	P.~Lindqvist and J.~J. Manfredi.
	\newblock Viscosity supersolutions of the evolutionary $p$-{L}aplace equation.
	\newblock {\em Differential Integral Equations}, 20(11):1303--1319, 2007.
    
    \bibitem[LPS25]{LPS25} 
    P. Lindqvist, M. Parviainen and J. Siltakoski.
    \newblock Lipschitz continuity and equivalence of positive viscosity and weak solutions to Trudinger's equation.
    \newblock{\em https://doi.org/10.48550/arXiv.2502.14670}
    
	
	\bibitem[Mic00]{miculescu00}
	R.~Miculescu.
	\newblock Approximation of continuous functions by {L}ipschitz functions.
	\newblock {\em Real Anal. Exchange}, 26(1):449--452, 2000.
	
	\bibitem[MO]{chilepaper}
	M.~Medina and P.~Ochoa.
	\newblock On viscosity and weak solutions for non-homogeneous $p$-{L}aplace
	equations.
	\newblock \textit{{A}dv. {N}onlinear {A}nal.} (2017) https://doi.org/10.1515/anona-2017-0005
	
	
	\bibitem[PV]{ParvVaz}
	M.~Parviainen and J.~L. V\'azquez.
	\newblock Equivalence between radial solutions of different parabolic
	gradient-diffusion equations and applications.
	\newblock arXiv:1801.00613.
    
   \bibitem[Sen25]{Sen25}
    A. Sen.
    \newblock Gradient Higher Integrability for Degenerate/Singular Parabolic Multi-phase Problems. 
    \newblock {J.\em  Geom. Anal.} 35, 170 (2025). https://doi.org/10.1007/s12220-025-01950-4
	
	\bibitem[Sil18]{siltakoski18}
	J.~Siltakoski.
	\newblock Equivalence of viscosity and weak solutions for the normalized
	$p(x)$-{L}aplacian.
	\newblock {\em Calc. Var. Partial Differential Equations}, 57(95), 2018.

    \bibitem[Sil21]{Sil21}
    J. Siltakoski, 
    \newblock Equivalence of viscosity and weak solutions for a $p$-parabolic equation, 
    \newblock{\em J. Evol. Equ.} {\bf 21} (2021), no.~2, 2047--2080.
    
    \bibitem[Sin15]{Sin15}
	T. Singer.
    \newblock Parabolic equations with $p,q$-growth: the subquadratic case, 
    \newblock {\em Q. J. Math.} {\bf 66} (2015), no.~2, 707--742.
    
    
    \bibitem[The75]{The75}
     C.~M. Theobald, 
    \newblock An inequality for the trace of the product of two symmetric matrices, 
    \newblock{\em Math. Proc. Cambridge Philos. Soc.} {\bf 77} (1975), 265--267
    
\bibitem[Zhi93]{Zhi93}
    V.~V. Zhikov, 
    \newblock Lavrentiev phenomenon and homogenization for some variational problems, \newblock{\em C. R. Acad. Sci. Paris S\'er. I Math.} {\bf 316} (1993), no.~5, 435--439.
\bibitem[Zhi95]{Zhi95}
    V.~V. Zhikov, 
    \newblock On Lavrentiev's phenomenon, 
    \newblock {\em Russian J. Math. Phys.} {\bf 3} (1995), no.~2, 249--269.
    \bibitem[Zhi97]{Zhi97}
    V.~V. Zhikov, 
    \newblock On some variational problems, 
    \newblock {\em Russian J. Math. Phys.} {\bf 5} (1997), no.~1, 105--116. 
	
\end{thebibliography}

\Addresses
\end{document}